\newtheorem{theorem}{Theorem}[section]
\newtheorem{lemma}[theorem]{Lemma}
\newtheorem{proposition}[theorem]{Proposition}
\newtheorem{definition}[theorem]{Definition}
\newtheorem{remark}[theorem]{Remark}
\numberwithin{equation}{section}
\def\bZ{{\mathbb Z}}
\def\bN{{\mathbb N}}
\def\bR{{\mathbb R}}
\def\cF{\mathcal{F}}
\def\cJ{\mathcal{J}}
\def\cM{\mathcal{M}}
\def\cN{\mathcal{N}}
\def\cP{\mathcal{P}}
\def\cS{\mathcal{S}}
\def\cX{\mathcal{X}}
\def\supp{{\rm{supp}\, }}
\def\eps{{\varepsilon}}
\def\Id{{\rm Id}}
\def\ONE{{\mathbbm 1}}
\def\tONE{{\tilde{\mathbbm 1}}}
\def\R{\mathbb{R}}
\def\XX{{\cX}}
\def\TT{T}
\def\SSS{S}
\def\cd{{c_\diamond}}
\def\cf{{c_\flat}}
\def\bb{\beta}
\def\ct{\tilde{c}}
\def\kap{\kappa}
\def\kaph{{\hat\kappa}}
\def\ch{{\hat c}}
\def\Cstar{{C^\star}}
\def\cstar{{c^\star}}
\def\KK{K}
\def\NN{N}
\def\BB{{\dot{B}}}
\def\tBB{{\dot{\tilde{B}}}}
\def\FF{{\dot{F}}}
\def\tFF{{\dot{\tilde{F}}}}
\def\bb{{\dot{b}}}
\def\tbb{{\dot{\tilde{b}}}}
\def\ff{{\dot{f}}}
\def\tff{{\dot{\tilde{f}}}}
\def\Aa{A}
\def\mm{N}
\def\sig{\sigma}
\def\AA{B}
\def\ad{{\mathbf{ad}}}
\def\sc{{c_*}}
\def\kk{{\ell}}
\def\LL{F}
\def\PP{\mathscr{P}}
\newcommand\norm[1]{\|#1\|}
\def\ddelta{\delta}
\def\MM{\mathscr{M}}
\def\fm{{\mathfrak m}}
\begin{document}

\title[Atomic and molecular decomposition of homogeneous spaces]
%{Atomic and molecular decomposition of Besov and Triebel-Lizorkin spaces
% associated to non-negative self-adjoint operators}
{Atomic and molecular decomposition of homogeneous spaces of distributions associated to non-negative self-adjoint operators}

\author{A. G. Georgiadis}
%\author{Athanasios G. Georgiadis}
\address{Department of Mathematical Sciences,
Aalborg University, Fredrik Bajers Vej 7G, DK-9220 Aalborg}
\email{nasos@math.aau.dk}

\author{G. Kerkyacharian}
%\author{Gerard Kerkyacharian}
\address{Laboratoire de Probabilit\'{e}s et Mod\`{e}les Al\'{e}atoires, CNRS-UMR 7599, and Crest}
%Universit\'{e} Paris VI et Universit\'{e} Paris VII, rue de Clisson, F-75013 Paris}
\email{kerk@math.univ-paris-diderot.fr}

\author{G. Kyriazis}
%\author{George Kyriazis}
\address{Department of Mathematics and Statistics,
University of Cyprus, 1678 Nicosia, Cyprus}
\email{kyriazis@ucy.ac.cy}

\author{P. Petrushev}
%\author{Pencho Petrushev}
\address{Department of Mathematics\\University of South Carolina\\
Columbia, SC 29208}
\email{pencho@math.sc.edu}

\subjclass[2010]{Primary 46E35, 58J35, 43A85 ; Secondary 42B25, 42B15, 42C15, 42C40}

\keywords{Algebra, almost diagonal operators, atomic decomposition, Besov spaces, frames, heat kernel, homogeneous spaces,
molecular decomposition, spectral multipliers, Triebel-Lizorkin spaces}

\thanks{Corresponding author: George Kyriazis,
Email: kyriazis@ucy.ac.cy}

\begin{abstract}
We deal with homogeneous Besov and Triebel-Lizorkin spaces in the setting of
a doubling metric measure space in the presence of a non-negative self-adjoint operator
whose heat kernel has Gaussian localization and the Markov property.
The class of almost diagonal operators on the associated sequence spaces is developed
and it is shown that this class is an algebra.
The boundedness of almost diagonal operators is utilized for establishing
smooth molecular and atomic decompositions for the above homogeneous Besov and Triebel-Lizorkin spaces.
Spectral multipliers for these spaces are established as well.
\end{abstract}

%\date{July 27, 2016}-Nasos
%\date{September 18, 2016}
\date{March 12, 2018}

\maketitle

\section{Introduction}\label{Introduction}

Homogeneous Besov and Triebel-Lizorkin spaces are developed in \cite{GKKP}
in the general setting of a metric measure space with the doubling property and in the presence
of a non-negative self-adjoint operator whose heat kernel has Gaussian localization and the Markov property.
Their inhomogeneous version was previously developed in \cite{CKP, KP}.
These spaces can be viewed as a natural generalization of the classical
Besov and Triebel-Lizorkin spaces on $\R^d$,
developed mainly by J. Peetre, H. Triebel, M. Frazier, and B. Jawerth,
see \cite{Peetre, Triebel-1, Triebel-2, FJ1, FJ2, FJW}.
Our goal here is to develop various aspects of the theory of
homogeneous Besov and Triebel-Lizorkin spaces in the general setting indicated above (see below),
including, almost diagonal operators on respective sequence spaces, %compactly supported frames,
atomic and molecular decompositions as well as spectral multipliers,
in analogy to the theory of Frazier and Jawerth \cite{FJ1, FJ2}.

We shall operate in the setting put forward in \cite{CKP, KP}, which we describe next:

%%%%%%%%%%%%% The setting

I. We assume that $(M,\rho,\mu)$ is a metric measure space satisfying the conditions:
$(M, \rho)$ is a locally compact  metric space with distance $\rho(\cdot, \cdot)$
and $\mu$ is a~positive Radon measure
such that the following {\em volume doubling condition} is valid
\begin{equation}\label{doubling-0}
0 < |B(x,2r)| \le c_0|B(x,r)|<\infty
\quad\hbox{for all $x \in M$ and $r>0$,}
\end{equation}
where $|B(x,r)|$ is the volume of the open ball $B(x,r)$ centered at $x$ of radius $r$ and $c_0>1$ is a constant.
From above it follows that
\begin{equation}\label{doubling}
|B(x,\lambda r)| \le c_0\lambda^d |B(x,r)|
\quad\hbox{for $x \in M$, $r>0$, and $\lambda >1$,}
\end{equation}
where $d:=\log_2 c_0 >0$ is a constant playing the role of a {\em dimension}.

We also assume that $\mu(M)=\infty$.

%%%%%%%%%%%

II. The main assumption is that the geometry of the space $(M,\rho,\mu)$ is related to
an essentially self-adjoint non-negative operator $L$ on $L^2(M, d\mu)$,
mapping real-valued to real-valued functions,
such that the associated semigroup $P_t=e^{-tL}$ consists of integral operators with
(heat) kernel $p_t(x,y)$ obeying the conditions:

\smallskip

\noindent
\begin{equation}\label{Gauss-local}
|p_t(x,y)|
\le \frac{ \Cstar\exp\{-\frac{\cstar\rho^2(x,y)}t\}}{\big[|B(x,\sqrt t)||B(y,\sqrt t)|\big]^{1/2}}
\quad\hbox{for} \;\;x,y\in M,\,t>0.
\end{equation}
%\begin{equation}\label{Gauss-local}
%|p_t(x,y)|
%\le \frac{ \Cstar\exp\{-\frac{\cstar\rho^2(x,y)}t\}}{\sqrt{\mu(B(x,\sqrt t))\mu(B(y,\sqrt t))}}
%\quad\hbox{for} \;\;x,y\in M,\,t>0.
%\end{equation}

\noindent
(b) {\em H\"{o}lder continuity:} There exists a constant $\alpha>0$ such that
\begin{equation}\label{lip}
\big|  p_t(x,y) - p_t(x,y')  \big|
\le \Cstar\Big(\frac{\rho(y,y')}{\sqrt t}\Big)^\alpha
\frac{\exp\{-\frac{\cstar\rho^2(x,y)}t \}}{\big[|B(x,\sqrt t)||B(y,\sqrt t)|\big]^{1/2}}
\end{equation}
for $x, y, y'\in M$ and $t>0$, whenever $\rho(y,y')\le \sqrt{t}$.

\smallskip

\noindent
(c) {\em Markov property:}
\begin{equation}\label{hol3}
\int_M p_t(x,y) d\mu(y)= 1
\quad\hbox{for $x\in M$ and $t >0$.}
\end{equation}
Above $\Cstar, \cstar>0$ are structural constants.

The following additional conditions on the geometry of $M$ are also stipulated:

\smallskip

\noindent
(d) {\em Noncollapsing condition:}
There exists a~constant $c_1>0$ such that
\begin{equation}\label{non-collapsing}
\inf_{x\in M}|B(x,1)|\ge c_1.
\end{equation}

\smallskip

\noindent
(e) {\em Reverse doubling condition:} There exists a constant
$c_2>1$ such that
\begin{equation}\label{RD}
|B(x,2r)|\ge c_2 |B(x,r)|
\quad\hbox{for $x \in M$ and $r>0$.}
\end{equation}

Condition (e) readily implies
\begin{equation}\label{GRD}
|B(x,\lambda r)|\ge c_3\lambda^{d^*}|B(x,r)|
\quad\hbox{for $x \in M$, $r>0$, and $\lambda>1$,}
\end{equation}
where $d^*:=\log_2 c_2\le d$ and $c_3=c_2^{-1}$.

Note that the reverse doubling condition (\ref{RD}) is not restrictive
because as shown in \cite[Proposition 2.2]{CKP} if $M$ is connected, then
(\ref{RD}) follows by the doubling condition (\ref{doubling-0}).

The above setting appears naturally in the general
framework of strictly local regular Dirichlet spaces with a complete intrinsic metric.
In~particular, this setting covers the cases of
Lie groups or homogeneous spaces with polynomial volume growth,
complete Riemannian manifolds with Ricci curvature bounded from below and satisfying
the volume doubling condition.
It also contains the classical setting on $\R^n$.
For details, see \cite{CKP}.

Analysis on metric measure spaces of homogeneous type (satisfying the doubling property) 
goes back to the celebrated work of Coifman and Weiss \cite{CW,CW2}. 
Spaces of functions or distributions associated with operators are studied during the last fifteen years. 
The literature on the subject is extensive but as a small sample we refer to \cite{DY,DY2,HLMMY} for Hardy spaces 
and \cite{BDY} for Besov spaces. 
The reverse doubling property is quite common (see e.g. \cite{HMY,YZ}), 
while more general Gaussian bounds can be used instead of (\ref{Gauss-local}) \cite{Grig,LYY}. 
Two-sided Gaussian estimates are occasionally used \cite{Barlow,Grig2}; 
in fact it is established that they imply the H\"{o}lder continuity (\ref{lip}). 
For more articles in the area we refer to \cite{Bo,CKP,Pese5,KP,LYY,YY} and the references therein.

\smallskip

This article is a followup of \cite{GKKP}, where the homogeneous Besov and Triebel-Lizorlin spaces
in the general setting described above are introduced and studied.
Our goal is to generalize a substantial part of the theory of Frazier and Jawerth \cite{FJ1, FJ2}
in the general setting of this article.
%This includes the development of almost diagonal operators on the associated sequence spaces,
%the establishment of molecular and atomic decompositions of homogeneous Besov and Triebel-Lizorlin spaces,
%and Mihlin type spectral multipliers for these spaces.

The main point in the present article is to show that in the general setting described above
it is possible to develop atomic and molecular decomposition of homogeneous Besov and Triebel-Lizorkin spaces
and Mihlin type multipliers
in almost complete generality as in the classical case on $\R^n$.
As an application, we cover new settings such as the ones on Lie groups and Riemannian manifolds.

The organization of the paper is as follows.
In Sections~\ref{Background} - \ref{sec:hom-spaces} we place all needed preliminaries from \cite{CKP, KP, GKKP},
including, smooth functional calculus, distributions, frames,
and frame decomposition of the homogeneous Besov and Triebel-Lizorkin spaces
$\BB^s_{pq}$, $\tBB^s_{pq}$, $\FF^s_{pq}$, $\tFF^s_{pq}$.
In Section~\ref{sec:almost-diag}, we develop almost diagonal operators on the associated sequence spaces,
improving on results of \cite{DKKP}, in analogy to the classical case on $\R^n$,
developed by Frazier and Jawerth \cite{FJ1, FJ2}.
In particular, we show that the almost diagonal operators form an algebra and
are bounded on the respective $\bb$- and $\ff$-sequence spaces.
%As an application we derive a decomposition in terms of compactly supported frames.
%
In analogy to the classical case on $\R^n$ (see \cite{FJ1, FJ2})
we introduce in Section~\ref{sec:molecules} smooth molecules for the spaces
$\BB^s_{pq}$, $\tBB^s_{pq}$, $\FF^s_{pq}$, $\tFF^s_{pq}$
and establish results for molecular decomposition of these spaces similar to the ones from \cite{FJ1, FJ2}.
We use these results and the compactly supported frames, essentialy developed in \cite{DKKP},
to establish atomic decomposition of the spaces as well (\S\ref{sec:atomic-decomp}).
In Section~\ref{sec:multipliers}, we use the molecular decomposition to obtain
Mihlin type spectral multipliers for the spaces
$\BB^s_{pq}$, $\tBB^s_{pq}$, $\FF^s_{pq}$, and $\tFF^s_{pq}$.
The atomic and molecular decompositions of inhomogeneous Besov and Triebel-Lizorkin spaces
are briefly discussed in Section~\ref{sec:inhomogeneous-case}.
Section~\ref{sec:appendix} is an appendix where we place the proofs of some claims from previous sections.

\smallskip

\noindent
{\em Notation}:
Throughout we shall denote
$|E|:= \mu(E)$
and $\ONE_E$ will stand for the characteristic function of  $E\subset M$,
$\|\cdot\|_p=\|\cdot\|_{L^p}:=\|\cdot\|_{L^p(M, d\mu)}$.
The Schwartz class on $\R$ will be denoted by $\cS(\R)$.
Positive constants will be denoted by $c$, $C$, $c_1$, $c'$, $\dots$ and will be allowed to vary
at every occurrence.
The notation $a\sim b$ will stand for $c_1\le a/b\le c_2$.
We shall also use the standard notation
$a\wedge b:= \min\{a, b\}$ and
$a\vee b:= \max\{a, b\}$.

\section{Background}\label{Background}

In this section we collect all basic ingredients for our theory, developed in \cite{CKP, GKKP, KP}.

\subsection{Functional calculus}\label{subsec:func-calc}

Let $E_\lambda$, $\lambda \ge 0$, be the spectral resolution associated with
the non-negative self-adjoint operator $L$ from our setting (\S\ref{Introduction}).
As $L$ maps real-valued to real-valued functions, for any real-valued, measurable and
bounded function $f$ on $\R_+$ the operator $f(L)$, defined by
$f(L):=\int_0^\infty f(\lambda)dE_\lambda$,
is bounded on $L^2(M)$, self-adjoint, and maps real-valued functions to real-valued functions.
Furthermore, if $f(L)$ is an integral operator, then its kernel $f(L)(x, y)$ is real-valued and
$f(L)(y, x)=f(L)(x, y)$,
in particular, $p_t(x, y)\in \R$ and $p_t(y,x) = p_t(x, y)$.

\smallskip

The {\bf finite speed propagation property} plays an important role in this study:
\begin{equation}\label{finite-speed}
\big\langle \cos(t\sqrt{L})f_1, f_2 \big\rangle=0,
\quad 0< \ct t<r,
\quad \ct:=\frac{1}{2\sqrt{\cstar}},
\end{equation}
for all open sets $U_j \subset M$, $f_j\in L^2(M)$, $\supp f_j\subset U_j$,
$j=1, 2$, where $r:=\rho(U_1, U_2)$.

This property is a consequence of the Gaussian localization of the heat kernel
and implies the following localization result for the kernels of operators of
the form $f(\delta\sqrt{L})$ whenever $\hat f$ is band limited, see \cite{KP}.
Here $\hat f(\xi):=\int_\R f(t)e^{-it\xi}dt$.

%%%%%%%%% Proposition

\begin{proposition}\label{prop:finite-sp}
Let $f$ be even, $\supp \hat f \subset [-A, A]$ for some $A>0$,
and $\hat f\in W^m_1$ for some $m>d$, i.e. $\|\hat f^{(m)}\|_1 <\infty$.
Then for any $\delta>0$ and $x, y\in M$
\begin{equation}\label{finite-speed-2}
f(\delta\sqrt{L})(x, y) = 0
\quad\hbox{if}\quad
%\ct \delta A < \rho(x, y).
\rho(x, y) > \ct \delta A.
\end{equation}
\end{proposition}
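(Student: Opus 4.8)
The plan is to express $f(\delta\sqrt L)$ as a superposition of the wave operators $\cos(t\sqrt L)$ via Fourier inversion, and then read off the localization directly from the finite speed propagation property \eqref{finite-speed}. Since $f$ is even, its Fourier transform $\hat f$ is even as well, so Fourier inversion gives, for every $\xi\in\R$,
\[
f(\xi)=\frac{1}{2\pi}\int_\R \hat f(t)e^{it\xi}\,dt=\frac{1}{2\pi}\int_\R \hat f(t)\cos(t\xi)\,dt .
\]
Because $\hat f\in W^m_1\subset L^1(\R)$ and $\|\cos(t\delta\sqrt L)\|_{L^2\to L^2}\le 1$ for all $t$, substituting the spectral resolution of $L$ (i.e.\ $\xi=\delta\sqrt\lambda$) turns this into a Bochner integral converging in the operator norm on $L^2(M)$:
\[
f(\delta\sqrt L)=\frac{1}{2\pi}\int_\R \hat f(t)\cos(t\delta\sqrt L)\,dt .
\]

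Next I would fix $x,y\in M$ with $\rho(x,y)>\ct\delta A$ and choose $\eps>0$ small enough that $r:=\rho\big(B(x,\eps),B(y,\eps)\big)>\ct\delta A$ still holds. For arbitrary $f_1,f_2\in L^2(M)$ with $\supp f_1\subset B(x,\eps)$ and $\supp f_2\subset B(y,\eps)$, pairing the previous display against $f_1$ and $f_2$ (legitimate since both lie in $L^2$ and the integral converges in operator norm) yields
\[
\big\langle f(\delta\sqrt L)f_1,f_2\big\rangle
=\frac{1}{2\pi}\int_\R \hat f(t)\,\big\langle \cos(t\delta\sqrt L)f_1,f_2\big\rangle\,dt .
\]
Since $\cos$ is even, \eqref{finite-speed} applies with $|t|\delta$ in place of $t$ and shows that the integrand vanishes whenever $\ct|t|\delta<r$, that is, for all $|t|<r/(\ct\delta)$ (the value $t=0$ giving $\langle f_1,f_2\rangle=0$ as the balls are disjoint). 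As $\supp\hat f\subset[-A,A]$ and $A<r/(\ct\delta)$, the whole integral is zero, so the bilinear form $\langle f(\delta\sqrt L)f_1,f_2\rangle$ vanishes for all $f_1,f_2$ supported in these two balls.

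Finally, the hypothesis $\hat f\in W^m_1$ with $m>d$ is precisely what guarantees — via the standard kernel bounds recorded in \cite{KP}, which are derived from the Gaussian localization of $p_t$ — that $f(\delta\sqrt L)$ is an integral operator with a continuous kernel $f(\delta\sqrt L)(\cdot,\cdot)$ on $M\times M$. The vanishing of the bilinear form on all pairs of $L^2$ functions supported in $B(x,\eps)$ and $B(y,\eps)$ forces $f(\delta\sqrt L)(u,v)=0$ for a.e.\ $(u,v)\in B(x,\eps)\times B(y,\eps)$, and continuity of the kernel then gives $f(\delta\sqrt L)(x,y)=0$. Since $x,y$ were arbitrary with $\rho(x,y)>\ct\delta A$, this establishes \eqref{finite-speed-2}. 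The only genuinely delicate point is this last step, the passage from the weak statement to the pointwise one, which rests on the continuity of the kernel; everything else is a direct combination of Fourier inversion with \eqref{finite-speed}.
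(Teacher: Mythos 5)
Your proposal is correct and is essentially the standard argument (the one used in \cite{KP}, to which the paper defers): write $f(\delta\sqrt L)=\frac1{2\pi}\int_{-A}^{A}\hat f(t)\cos(t\delta\sqrt L)\,dt$ using that $f$ is even, apply the finite speed propagation property \eqref{finite-speed} to kill the bilinear form on test pairs supported near $x$ and $y$, and then invoke the kernel bounds guaranteed by $\hat f\in W^m_1$, $m>d$, to upgrade the weak vanishing to pointwise vanishing by continuity of the kernel. You correctly identify the passage from the weak statement to the pointwise one as the delicate step.
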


We shall need the following result from the
smooth functional calculus induced by the heat kernel,
developed in \cite{CKP,KP}.

%%%%%%%% Theorem

\begin{theorem}\label{thm:S-local-kernels}%\cite{KP}
Suppose $f\in C^\mm(\bR)$, $\mm\ge d+1$, $f$ is real-valued and even, and
$$
\hbox{
$|f^{(\nu)}(\lambda)|\le A_\mm(1+|\lambda|)^{-r}$ for $\lambda\in\bR$ and $0\le \nu\le \mm$, where $r > \mm+d$.
}
$$
%and $f^{(2\nu+1)}(0)=0$ for $\nu\ge 0$ such that $2\nu+1 \le \mm$.
%
Then $f(\ddelta \sqrt L)$, $\ddelta>0$, is an integral operator with kernel $f(\ddelta \sqrt L)(x, y)$
satisfying
\begin{equation}\label{local-ker}
\big|f(\ddelta \sqrt L)(x, y)\big|
\le \frac{cA_\mm \big(1+\ddelta^{-1}\rho(x, y)\big)^{-\mm}}
{\big(|B(x, \delta)||B(y, \delta)|\big)^{1/2}}
\le \frac{c'A_\mm \big(1+\ddelta^{-1}\rho(x, y)\big)^{-\mm+d/2}}
{|B(x, \delta)|}
%\quad\hbox{and}
\end{equation}
and
\begin{equation}\label{lip-ker}
\big|f(\ddelta \sqrt L)(x, y)-f(\ddelta \sqrt L)(x, y')\big|
\le \frac{cA_\mm \big(\frac{\rho(y, y')}{\ddelta}\big)^\alpha\big(1+\ddelta^{-1}\rho(x, y)\big)^{-\mm}}
{\big(|B(x, \delta)||B(y, \delta)|\big)^{1/2}}
\end{equation}
whenever $\rho(y, y')\le \ddelta$.
Here $\alpha>0$ is from $(\ref{lip})$
and %$c_m= cA_m$ with
$c, c'>0$ are constants depending only on $r$, $\mm$, and
the structural constants $c_0, \Cstar, \cstar$, $\alpha$.
%the structural constant of the setting.

Moreover, $\int_M f(\ddelta \sqrt L)(x, y)d\mu(y)=f(0)$.
\end{theorem}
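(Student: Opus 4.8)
The plan is to build the kernel of $f(\ddelta\sqrt L)$ from band-limited pieces, exploiting the finite speed propagation property (Proposition~\ref{prop:finite-sp}) for the off-diagonal decay and the Gaussian heat kernel bound $(\ref{Gauss-local})$ for the size. First I would decompose $f$ dyadically on the Fourier side: fix a smooth even bump $\phi_0$ that equals $1$ on $[-1,1]$ and vanishes outside $[-2,2]$, set $\phi_j(\xi)=\phi_0(2^{-j}\xi)-\phi_0(2^{-j+1}\xi)$ for $j\ge1$ (so $1=\sum_{j\ge0}\phi_j$ and $\supp\phi_j\subset\{2^{j-1}\le|\xi|\le2^{j+1}\}$), and put $g_j:=\mathcal{F}^{-1}[\hat f\phi_j]$, so that $f=\sum_jg_j$ and each $g_j$ is even with $\supp\widehat{g_j}\subset[-2^{j+1},2^{j+1}]$. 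The hypotheses on $f$ translate into quantitative control of $\widehat{g_j}$ (in particular $|\hat f(\xi)|\le cA_\mm(1+|\xi|)^{-\mm}$, so $\|g_j\|_\infty$ and the derivative norms of $g_j$ decay geometrically in $j$), and $\widehat{g_j}$ is smooth enough and compactly supported for Proposition~\ref{prop:finite-sp} to apply; it gives
\[
g_j(\ddelta\sqrt L)(x,y)=0\qquad\text{whenever}\qquad\rho(x,y)>\ct\,\ddelta\,2^{j+1}.
\]

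For the size of each piece I would pass through $L^2$ and the heat kernel. Using $g_j(\ddelta\sqrt L)=\tfrac1{2\pi}\int\widehat{g_j}(\xi)\cos(\xi\ddelta\sqrt L)\,d\xi$ together with $\|\cos(\xi\ddelta\sqrt L)\|_{2\to2}\le1$, and testing against smooth bumps at scale $\ddelta$ localized near $x$ and $y$, one first gets a bound of the form $c\|\widehat{g_j}\|_1[|B(x,\ddelta)||B(y,\ddelta)|]^{-1/2}$; refining this with $(\ref{Gauss-local})$ (to produce the volume factor at the natural scale $2^j\ddelta$) and returning to scale $\ddelta$ via volume doubling $(\ref{doubling})$ yields a per-piece estimate $cA_\mm2^{-j\sigma}[|B(x,\ddelta)||B(y,\ddelta)|]^{-1/2}$ with $\sigma$ large; this is precisely where the slack $r>\mm+d$ must be spent, the extra $d$ absorbing the $2^{jd}$-type loss coming from doubling. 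Summing over $j$ — only the indices with $2^{j+1}\ge\ct^{-1}\ddelta^{-1}\rho(x,y)$ survive — the geometric series gives
\[
|f(\ddelta\sqrt L)(x,y)|\le cA_\mm\,(1+\ddelta^{-1}\rho(x,y))^{-\mm}\,[|B(x,\ddelta)||B(y,\ddelta)|]^{-1/2},
\]
which is the first inequality in $(\ref{local-ker})$; the second is purely geometric, since $(\ref{doubling})$ gives $|B(y,\ddelta)|\ge c(1+\ddelta^{-1}\rho(x,y))^{-d}|B(x,\ddelta)|$ and hence $[|B(x,\ddelta)||B(y,\ddelta)|]^{-1/2}\le c(1+\ddelta^{-1}\rho(x,y))^{d/2}|B(x,\ddelta)|^{-1}$.

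The H\"older estimate $(\ref{lip-ker})$ is obtained by running the same scheme with the H\"older continuity $(\ref{lip})$ of $p_t$ in place of $(\ref{Gauss-local})$ (and Bernstein-type bounds on the $g_j$), the constraint $\rho(y,y')\le\ddelta$ keeping the argument within the range of $(\ref{lip})$. For the last identity, the Markov property $(\ref{hol3})$ gives $P_t\ONE=\ONE$ for all $t$, so the constant function $\ONE$ is $L$-harmonic and $f(\ddelta\sqrt L)\ONE=f(0)\ONE$ in the weak sense; testing against compactly supported functions that approximate $\ONE$ and passing to the limit, using the decay just established in $(\ref{local-ker})$, gives $\int_M f(\ddelta\sqrt L)(x,y)\,d\mu(y)=f(0)$. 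I expect the main obstacle to be the size step together with the exponent bookkeeping in the summation: turning mere $L^2$-boundedness of the band-limited pieces into pointwise kernel bounds, and arranging the constants so that one recovers exactly the decay order $\mm$, uniformly in $\ddelta$, while absorbing the losses from volume doubling — which is what the condition $r>\mm+d$ is calibrated for, the finite speed propagation property serving as the substitute for an explicit Fourier inversion formula in this general setting.
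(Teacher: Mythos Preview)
The paper does not prove this theorem; it is quoted as background from the smooth functional calculus developed in \cite{CKP,KP}. Your outline is essentially the approach taken there: a dyadic decomposition of $f$ on the Fourier side into even pieces $g_j$ with $\supp\widehat{g_j}\subset[-2^{j+1},2^{j+1}]$, finite speed propagation (Proposition~\ref{prop:finite-sp}) to force $g_j(\ddelta\sqrt L)(x,y)=0$ for $\rho(x,y)>c\,2^{j}\ddelta$, a pointwise size bound on each piece, and geometric summation over the surviving $j$; the H\"older estimate and the Markov identity are handled exactly as you say.

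The one place where your sketch is vague, and where the actual argument in \cite{CKP,KP} differs from what you wrote, is the pointwise size bound on $g_j(\ddelta\sqrt L)(x,y)$. ``Testing against smooth bumps'' does not by itself produce a pointwise kernel estimate. In \cite{KP} this step is done by factoring through the heat semigroup: one writes (schematically) $g_j(\ddelta\sqrt L)=h_j(\ddelta\sqrt L)\,e^{-\ddelta^2 L}$ with $h_j(\lambda)=g_j(\lambda)e^{\lambda^2}$, uses the spectral theorem to control the $L^2\!\to\!L^2$ norm of $h_j(\ddelta\sqrt L)$ by $\|h_j\|_\infty$, and then combines this with the Gaussian bound $(\ref{Gauss-local})$ on $p_{\ddelta^2}(\cdot,\cdot)$ via Cauchy--Schwarz to obtain the factor $[|B(x,\ddelta)||B(y,\ddelta)|]^{-1/2}$. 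Your accounting of where the slack $r>\mm+d$ is spent (absorbing the $2^{jd}$ loss from doubling when passing from scale $2^j\ddelta$ back to scale $\ddelta$) is correct and is exactly the role that condition plays in \cite{CKP,KP}.
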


In the construction of frames we utilize operators of the form
$\varphi(\delta \sqrt{L})$
generated by cutoff functions $\varphi$ specified in the following

%%%%%%%%%%% Definition

\begin{definition}\label{cutoff-d1}
A real-valued function $\varphi\in C^\infty (\R_+)$ is said to be an admissible cutoff function if $\varphi \ne 0$,
 $\supp \varphi \subset [0, 2]$, and $\varphi^{(m)}(0)=0$ for $m\ge 1$.
Furthermore, $\varphi$ is said to be admissible of
type $(a)$, $(b)$ or $(c)$ if $\varphi$ is admissible and in addition obeys the respective condition:

$(a)$ $\varphi(t) = 1$, $t\in [0, 1]$,

$(b)$ $\supp \varphi \subset [1/2, 2]$ or

$(c)$ $\supp \varphi \subset [1/2, 2]$ and $\sum_{j\in\bZ} |\varphi(2^{-j}t)|^2=1$ for $t\in (0,\infty)$.
\end{definition}

The kernels of operators of the form $\varphi(\delta \sqrt{L})$ with
sub-exponential space localization
will be the main building blocks in constructing frames.

%%%%%%% Theorem

\begin{theorem}\label{thm:band-sub-exp} $($\cite{KP}$)$
For any $0<\eps<1$ there exists a cutoff function $\varphi$ of any type, $(a)$ or $(b)$ or $(c)$,
such that for any $\delta >0$ and $m\ge0$
\begin{equation}\label{band-sub-exp-m}
|[L^m\varphi(\delta \sqrt{L})](x, y)|
\le \frac{c_2\delta^{-2m}\exp\Big\{-\kap\big(\frac{\rho(x, y)}{\delta}\big)^{1-\eps}\Big\}}
{\big(|B(x, \delta)||B(y, \delta)|\big)^{1/2}},
\quad x, y\in M,
\end{equation}
where $c, \kap >0$ depend only on $\eps$, $m$ and
the constants $c_0, \Cstar, \cstar$ from $(\ref{doubling-0})-(\ref{lip}).$
\end{theorem}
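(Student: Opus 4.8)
The plan is to establish the band-limited sub-exponential estimate~\eqref{band-sub-exp-m} by combining the finite speed propagation property~\eqref{finite-speed} with a careful choice of $\varphi$ and a Fourier-analytic decomposition. First I would construct the cutoff function. Start from a fixed $C^\infty$ function $\psi$ with the required support ($[0,2]$, or $[1/2,2]$, or the Littlewood--Paley normalization for type $(c)$) and all derivatives vanishing at $0$; then I would regularize it by convolving on the Fourier side against a kernel whose Fourier transform decays like $\exp\{-|\xi|^{1-\eps}\}$. Concretely, writing $\varphi = \psi * g_\eps$ (or a suitable variant that preserves the support and vanishing conditions at $0$ up to an arbitrarily small perturbation), where $g_\eps$ is an even Schwartz function on $\R$ with $|\hat g_\eps(t)|\lesssim \exp\{-\kappa_0|t|^{1-\eps}\}$. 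Such $g_\eps$ exists by the classical Dziuba\'nski--type (or Ingham) construction of non-quasianalytic bump functions with prescribed sub-exponential Fourier decay; the point is that $\varphi$ can be taken as close to $\psi$ in $C^\infty$ norm as we wish, so the type $(a)$/$(b)$/$(c)$ normalization is retained (for type $(c)$ one renormalizes $\sum_j|\varphi(2^{-j}t)|^2$ afterward, which costs nothing in the estimate).

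Next I would write $L^m\varphi(\delta\sqrt L)$ through the cosine transform. Since $\varphi$ is even (after even extension) and $\lambda^{2m}\varphi(\delta\lambda)$ is a bounded even function, the spectral theorem gives $[L^m\varphi(\delta\sqrt L)] = \int_\R F_{m,\delta}(t)\cos(t\sqrt L)\,dt$ where $F_{m,\delta}$ is essentially the Fourier transform of $\lambda\mapsto\lambda^{2m}\varphi(\delta\lambda)$; by the sub-exponential Fourier decay of $\varphi$ and scaling, $|F_{m,\delta}(t)|\lesssim \delta^{-2m-1}\exp\{-\kappa_1(|t|/\delta)^{1-\eps}\}$ after adjusting constants (the polynomial factor $\lambda^{2m}$ only shifts $\kappa$ slightly). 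Now I would split the integral at $|t| = \ct^{-1}\rho(x,y)$: by the finite speed propagation property~\eqref{finite-speed}, for $|t| < \rho(x,y)/\ct$ the kernel $\cos(t\sqrt L)(x,y)$ — interpreted in the appropriate distributional/localized sense, as in~\cite{KP} — contributes nothing against test functions supported near $x$ and $y$, so only $|t|\ge \rho(x,y)/\ct$ survives. On that range $\exp\{-\kappa_1(|t|/\delta)^{1-\eps}\}\le \exp\{-\kappa_1(\rho(x,y)/(\ct\delta))^{1-\eps}\}$, which produces the claimed exponential factor $\exp\{-\kappa(\rho(x,y)/\delta)^{1-\eps}\}$.

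The remaining task is to control the $L^2\to L^2$-type operator bound that converts the integral estimate into a pointwise kernel bound with the Gaussian-volume denominator $(|B(x,\delta)||B(y,\delta)|)^{1/2}$. Here I would follow the standard device: use a further band-limited partition so that $\varphi(\delta\sqrt L)$ is written as $\sum$ of pieces $g_j(\delta\sqrt L)$ each with $\hat g_j$ supported in a dyadic annulus, apply Proposition~\ref{prop:finite-sp} to localize each piece to $\rho(x,y)\lesssim \ct\delta 2^j$, invoke the on-diagonal heat kernel bound~\eqref{Gauss-local} together with the doubling property~\eqref{doubling-0} to get the $L^2$ bound $\|g_j(\delta\sqrt L)\|_{2\to 2}\lesssim$ (value at the relevant scale), and combine with the localization and a Davies--Gaffney type argument to turn this into a pointwise bound $|g_j(\delta\sqrt L)(x,y)|\lesssim (|B(x,\delta)||B(y,\delta)|)^{-1/2}\cdot(\text{decay in }2^j)$. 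Summing in $j$ against the sub-exponential weight from $\hat\varphi$ gives~\eqref{band-sub-exp-m}.

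The main obstacle is the interplay between the sub-exponential (rather than compact) localization of $\varphi$ on the Fourier side and the finite speed propagation, which only gives genuine compact support: one cannot apply Proposition~\ref{prop:finite-sp} directly to $\varphi$ since $\hat\varphi$ is not compactly supported. The resolution — dyadically decomposing $\hat\varphi$, applying finite speed propagation on each band-limited piece with its own radius $\sim\delta 2^j$, and then summing the resulting geometric-type series against the $\exp\{-\kappa_0 2^{j(1-\eps)}\}$ tail of $\hat\varphi$ — is exactly where the $1-\eps$ exponent (as opposed to a clean exponential) is forced, and getting the constants $c_2,\kappa$ to depend only on $\eps,m$ and the structural constants requires bookkeeping care but no new ideas beyond those already in~\cite{KP}.
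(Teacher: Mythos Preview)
The paper does not supply its own proof of this theorem; it is quoted from \cite{KP} as background material in Section~\ref{Background}, so there is no in-paper argument to compare against. Your sketch is essentially the strategy carried out in \cite{KP}: build a Gevrey-regular cutoff $\varphi$ whose Fourier transform has sub-exponential decay, represent $L^m\varphi(\delta\sqrt L)$ through the cosine transform, decompose $\hat\varphi$ into band-limited dyadic pieces, invoke finite speed propagation (Proposition~\ref{prop:finite-sp}) on each piece, and sum against the sub-exponential tail. One wording slip worth fixing: for $\varphi=\psi*g_\eps$ to remain compactly supported you need $g_\eps$ to be compactly supported, not merely Schwartz; the Ingham/Dziuba\'nski-type construction you allude to does produce such $g_\eps$, so the argument goes through once this is stated correctly. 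With that adjustment the outline is sound.
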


\begin{remark}\label{rem:calculus}
Note that $[L^m\varphi(\delta \sqrt{L})](x, y)$ in $(\ref{band-sub-exp-m})$
is the kernel of the operator $L^m\varphi(\delta \sqrt{L})$,
however, it can be considered as $L^m$ acting on the kernel $\varphi(\delta \sqrt{L})(\cdot, y)$
or $L^m$ acting on $\varphi(\delta \sqrt{L})(x, \cdot)$ as well.
In fact the result is the same: For any $x, y\in M$
\begin{equation}\label{L-m-phi}
[L^m\varphi(\delta \sqrt{L})](x, y)= L^m[\varphi(\delta \sqrt{L})(\cdot, y)](x)
= L^m[\varphi(\delta \sqrt{L})(x, \cdot)](y).
\end{equation}
Furthermore, if $f, g$ are real-valued functions on $\R$, obeying the hypotheses of Theorem ~\ref{thm:S-local-kernels}
and $F(\sqrt{L}):=f(\sqrt{L})g(\sqrt{L})$, then
\begin{equation}\label{fg-F}
F(\sqrt{L})(x, y)= f(\sqrt{L})[g(\sqrt{L})(\cdot, y)](x)
= f(\sqrt{L})[g(\sqrt{L})(x,\cdot)](y),
\quad \forall x,y\in M.
\end{equation}
The above claims follow from the more general result in \cite[Proposition~2.7]{GKKP}.
\end{remark}

%%%%%%% Proposition
%
%\begin{proposition}\label{prop:F-HG}
%Assume $F$ and $G$ satisfy the hypotheses of Theorem~\ref{thm:S-local-kernels} with $m\ge 3d/2+1$
%and let $H$ be a real-valued measurable function on $\R_+$ such that
%\begin{equation}\label{F-HG-1}
%F(\lambda)=H(\lambda)G(\lambda)
%\quad\hbox{for almost all $\lambda\in \R_+$.}
%\end{equation}
%Then $F(\sqrt{L})$ and $G(\sqrt{L})$ are self-adjoint bounded on $L^2(M)$ operators,
%and $H(\sqrt{L})$ is a~self-adjoint operator $($defined densely in $M)$
%such that for all $x\in M$ we have
%$G(\sqrt{L})(x, \cdot)\in D(H(\sqrt{L}))$
%and
%\begin{equation}\label{F-HG-2}
%F(\sqrt{L})(x, y) = H(\sqrt{L})\big[G(\sqrt{L})(x, \cdot)\big](y)
%\quad \hbox{for a.a. $y \in M$.}
%%\;\; \hbox{for $\mu$-a.a. $y\in M$ and $\mu$-a.a. $x\in M$.}
%\end{equation}
%Above $D(H(\sqrt{L}))$ stands for the domain of $H(\sqrt{L})$
%and $F(\sqrt{L})(x, y)$, $G(\sqrt{L})(x, y)$ are the kernels of the operators $F(\sqrt{L})$, $G(\sqrt{L})$.
%\end{proposition}
%See \cite[Proposition 2.7]{GKKP}.

\subsection{Some properties related to the geometry of the underlying space}\label{subsec:geometry}

To compare the volumes of balls with different centers $x, y\in M$ and the same radius $r$
we shall use the inequality
%\begin{equation}\label{D2}
%|B(x, r)| \le c_0\big(1+ \rho(x,y)/r\big)^d  |B(y, r)|,
%\quad x, y\in M, \; r>0.
%\end{equation}
\begin{equation}\label{D2}
|B(x, r)| \le c_0\Big(1+ \frac{\rho(x,y)}{r}\Big)^d  |B(y, r)|,
\quad x, y\in M, \; r>0.
\end{equation}
As $B(x,r) \subset B(y, \rho(y,x) +r)$ the above inequality is immediate from (\ref{doubling}).

\smallskip

The following simple inequalities are established in \cite{GKKP,KP}:
\begin{lemma}If $\sigma >d$ and $\ddelta>0$, then for any $x\in M$
\begin{equation}\label{tech-1}
\int_M \big(1+\ddelta^{-1}\rho(x, u)\big)^{-\sigma} d\mu(u) \le c|B(x, \ddelta)|.
%\quad \forall x\in M
\end{equation}
\end{lemma}

\begin{lemma}\label{lem:gen-ineq}
Let $\sig_1, \sig_2> d$ and $\delta_1, \delta_2>0$, and
\begin{equation}\label{def-I}
I:=\int_M \frac{d\mu(u)}{\big(1+\delta_1^{-1}\rho(x, u))^{\sig_1} \big(1+\delta_2^{-1}\rho(y, u))^{\sig_2}}
\end{equation}
Then for any $x, y\in M$
\begin{equation}\label{int-est-1}
I \le \frac{c|B(x, \delta_1)|}{\big(1+\delta_2^{-1}\rho(x, y))^{\sig_2}}
+ \frac{c|B(y, \delta_2)|}{\big(1+\delta_1^{-1}\rho(x, y))^{\sig_1}}
\end{equation}
and consequently
\begin{equation}\label{int-est-2}
I \le \frac{c|B(x, \delta_1)|}{\big(1+\delta_{\max}^{-1}\rho(x, y))^{(\sig_1-d)\wedge\sig_2}}
\quad\hbox{and}\quad
I \le \frac{c|B(y, \delta_2)|}{\big(1+\delta_{\max}^{-1}\rho(x, y))^{\sig_1\wedge(\sig_2-d)}}.
\end{equation}
Here $\delta_{\max}:= \delta_1\vee\delta_2$ and the constant $c>0$ depends only on $\sig_1$, $\sig_2$, $d$, and $c_0$.
\end{lemma}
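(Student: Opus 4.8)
The plan is to split the domain of integration $M$ into the two regions where each distance dominates, namely $M_1:=\{u\in M:\rho(x,u)\le \rho(y,u)\}$ and $M_2:=\{u\in M:\rho(y,u)<\rho(x,u)\}$, estimate the contribution of each piece separately, and then sum. By symmetry (swapping the roles of $(x,\delta_1,\sigma_1)$ and $(y,\delta_2,\sigma_2)$) it suffices to bound $\int_{M_1}$; this will produce the first term on the right of \eqref{int-est-1}, and the analogous bound on $\int_{M_2}$ produces the second term.

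First I would dispose of the factor $\big(1+\delta_2^{-1}\rho(y,u)\big)^{-\sigma_2}$ on $M_1$. On this region $\rho(y,u)\ge \tfrac12\big(\rho(x,u)+\rho(y,u)\big)\ge \tfrac12\rho(x,y)$ by the triangle inequality, and hence $1+\delta_2^{-1}\rho(y,u)\ge c\big(1+\delta_2^{-1}\rho(x,y)\big)$ with an absolute constant $c$. Pulling this factor out of the integral reduces matters to
\[
\int_{M_1}\frac{d\mu(u)}{\big(1+\delta_1^{-1}\rho(x,u)\big)^{\sigma_1}}
\le \int_M \big(1+\delta_1^{-1}\rho(x,u)\big)^{-\sigma_1}\,d\mu(u)\le c\,|B(x,\delta_1)|,
\]
where the last inequality is exactly \eqref{tech-1} (applicable since $\sigma_1>d$). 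This gives $\int_{M_1}\le c\,|B(x,\delta_1)|\big(1+\delta_2^{-1}\rho(x,y)\big)^{-\sigma_2}$, and the symmetric argument on $M_2$ gives the companion term, establishing \eqref{int-est-1}.

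For the consequence \eqref{int-est-2}, I would take the bound \eqref{int-est-1} and absorb one volume factor into the other using the volume-comparison inequality \eqref{D2}, which gives $|B(y,\delta_2)|\le c_0\big(1+\delta_2^{-1}\rho(x,y)\big)^d|B(x,\delta_2)|\le c_0\big(1+\delta_2^{-1}\rho(x,y)\big)^d\,c_0(\delta_{\max}/\delta_2)^{?}\dots$ — more cleanly, I would first note $|B(y,\delta_2)|\le c\big(1+\delta_{\max}^{-1}\rho(x,y)\big)^d|B(y,\delta_{\max})|$-type manipulations are unnecessary, and instead just use \eqref{D2} directly to write $|B(y,\delta_2)| \le c_0\big(1+\delta_1^{-1}\rho(x,y)\big)^{d}\,|B(x,\delta_1)|$ when $\delta_1\ge\delta_2$ and a routine doubling adjustment otherwise. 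Substituting into the second term of \eqref{int-est-1} converts its decay exponent $\sigma_1$ into $\sigma_1-d$, and using $\delta_j^{-1}\le\delta_{\max}^{-1}$ to replace each $\delta_j$ by $\delta_{\max}$ in the denominators (which only weakens the bound), the smaller of the two exponents $\sigma_2$ and $\sigma_1-d$ survives, yielding the first inequality in \eqref{int-est-2}; the second follows by symmetry.

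The only mildly delicate point — the ``main obstacle,'' such as it is — is the bookkeeping when $\delta_1$ and $\delta_2$ differ substantially: one must be careful that in passing from \eqref{int-est-1} to \eqref{int-est-2} the volume $|B(x,\delta_1)|$ that appears is with radius $\delta_1$ (not $\delta_{\max}$), and that replacing $\delta_1,\delta_2$ by $\delta_{\max}$ in the $\big(1+\delta_j^{-1}\rho\big)$ terms goes in the safe direction. Both are handled by monotonicity and by \eqref{D2}/\eqref{doubling} with constants depending only on $\sigma_1,\sigma_2,d,c_0$, so no genuine difficulty arises.
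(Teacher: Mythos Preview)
The paper does not prove this lemma; it is merely quoted from \cite{GKKP,KP}. Your argument for \eqref{int-est-1} is correct and is the standard one: split at $\rho(x,u)\lessgtr\rho(y,u)$, freeze the far factor, and apply \eqref{tech-1}.

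Your derivation of \eqref{int-est-2} from \eqref{int-est-1}, however, has a genuine gap. Take the first inequality in \eqref{int-est-2} and suppose $\delta_2\gg\delta_1$ with $\rho(x,y)\le\delta_1$. Then \eqref{int-est-1} reads $I\le c|B(x,\delta_1)|+c|B(y,\delta_2)|$, and the second summand can be as large as $c_0(\delta_2/\delta_1)^d|B(x,\delta_1)|$ by \eqref{doubling}. No combination of \eqref{D2} and \eqref{doubling} will turn $|B(y,\delta_2)|$ into $c|B(x,\delta_1)|$ here, because the loss $(\delta_2/\delta_1)^d$ is real and there is no compensating decay in $(1+\delta_1^{-1}\rho(x,y))^{-\sigma_1}\sim 1$. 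So \eqref{int-est-2} does \emph{not} follow from \eqref{int-est-1} alone, despite the word ``consequently'' in the statement.

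The fix is minimal: alongside \eqref{int-est-1} you also need the trivial bound
\[
I\le \int_M \big(1+\delta_1^{-1}\rho(x,u)\big)^{-\sigma_1}\,d\mu(u)\le c\,|B(x,\delta_1)|,
\]
obtained by simply dropping the second kernel and invoking \eqref{tech-1}. This disposes of the regime $\delta_{\max}^{-1}\rho(x,y)\le 1$. When $\delta_{\max}^{-1}\rho(x,y)>1$, your plan does work: in the second term of \eqref{int-est-1} one has $\rho(x,y)>\delta_1,\delta_2$, and then $|B(y,\delta_2)|\le c(\delta_2/\delta_1)^d(1+\delta_2^{-1}\rho(x,y))^d|B(x,\delta_1)|$ combines with $(1+\delta_1^{-1}\rho(x,y))^{-\sigma_1}\le (\delta_1^{-1}\rho(x,y))^{-\sigma_1}$ to give the exponent $\sigma_1-d$ at scale $\delta_{\max}$ after a short computation. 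Two small slips to correct: the inequality you wrote as $\delta_j^{-1}\le\delta_{\max}^{-1}$ goes the other way (it is $\delta_j^{-1}\ge\delta_{\max}^{-1}$), though the resulting monotonicity of the bound is as you claim; and what you call a ``routine doubling adjustment'' in the case $\delta_2>\delta_1$ is exactly where the argument breaks without the extra trivial bound above.
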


\subsection*{Maximal \boldmath $\delta$-nets}

The construction of frames in our setting relies on a sequence of $\delta$-nets. 
Such $\delta$-nets on manifolds have been used earlier in \cite{Pese3} 
and in the current framework in \cite{CKP,GKKP,KP}. 
Note that in \cite{LYY} the frames where built in an alternative way 
by using the Christ's ``dyadic cubes" \cite{Chr}.

By definition $\cX\subset M$ is a $\delta$-net on $M$ ($\delta >0$)
if $\rho(\xi, \eta) \ge \delta$, $\forall \xi, \eta\in \cX$,
and $\cX\subset M$ is a {\em maximal $\delta$-net} on $M$ if $\cX$ is a $\delta$-net on $M$
that cannot be enlarged.

Some basic properties of maximal $\delta$-nets will be needed (see \cite[Proposition 2.5]{CKP}):
{\em A maximal $\delta$-net on $M$ always exists and
if $\cX$ is a maximal $\delta$-net on $M$, then
\begin{equation}\label{net-prop}
M=\cup_{\xi\in\cX} B(\xi, \delta)
\quad\hbox{and}\quad
B(\xi, \delta/2)\cap B(\eta, \delta/2)=\emptyset
\quad\hbox{if}\;\; \xi\ne\eta, \; \xi, \eta\in\cX.
\end{equation}
Furthermore, $\cX$ is countable
and there exists a~disjoint partition $\{A_\xi\}_{\xi\in\cX}$ of $M$ consisting
of measurable sets such that
\begin{equation}\label{B-Axi-B}
B(\xi, \delta/2) \subset A_\xi \subset B(\xi, \delta), \quad \xi\in\cX.
\end{equation}
}
%For future use we introduce the following notation for a given maximal $\delta$-net $\cX$ on $M$:
%\begin{equation}\label{B-xi}
%\AA_\xi:=B(\xi, \delta), \quad \xi\in\cX.
%\end{equation}

The next lemma is a discrete counterpart of Lemma~\ref{lem:gen-ineq}; its proof is deferred to the appendix.

\begin{lemma}\label{lem:est-discr-sum}
Let $\sigma>d$ and $0<\delta \le \delta_1\le \delta_2$.
Suppose $\cX\subset M$ is a $\delta$-net on $M$.
Then
\begin{equation}\label{est-discr-sum}
\sum_{\xi\in\cX}\frac{1}{\left(1+\delta_1^{-1}\rho(x,\xi)\right)^\sigma\left(1+\delta_2^{-1}\rho(y,\xi)\right)^\sigma}
\le \frac{c (\delta_1/\delta)^d}{\left(1+\delta_2^{-1}\rho(x,y)\right)^\sigma},
\quad \forall x, y\in M,
\end{equation}
where the constant $c>0$ depends only on $\sigma$, $d$, and the constant $c_0$ from $(\ref{doubling-0})$.
\end{lemma}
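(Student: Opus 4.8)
The plan is to reduce the two--center sum to a one--center estimate and then handle the two--center dependence by a geometric splitting, in the spirit of the proof of Lemma~\ref{lem:gen-ineq}.

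The one--center estimate I would establish first: for every $z\in M$, every $\sigma>d$ and every $\delta'\ge\delta$,
\begin{equation*}
\Sigma(z,\delta'):=\sum_{\xi\in\cX}\bigl(1+(\delta')^{-1}\rho(z,\xi)\bigr)^{-\sigma}\ \le\ c\,(\delta'/\delta)^d,\qquad c=c(\sigma,d,c_0).
\end{equation*}
This rests on a packing bound: for $R\ge\delta$, $\#\bigl(\cX\cap B(z,R)\bigr)\le c\,(R/\delta)^d$. Indeed, since $\cX$ is a $\delta$--net the balls $B(\xi,\delta/2)$, $\xi\in\cX$, are pairwise disjoint; for $\xi\in\cX\cap B(z,R)$ one has $B(\xi,\delta/2)\subset B(z,2R)$ and $B(z,2R)\subset B(\xi,3R)$, so by the doubling property \eqref{doubling}, $|B(z,2R)|\le|B(\xi,3R)|\le c_0(6R/\delta)^d|B(\xi,\delta/2)|$; summing the volumes $|B(\xi,\delta/2)|$ over these $\xi$ and comparing with $|B(z,2R)|$ gives the bound. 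Decomposing $\cX$ into the shells $\cX\cap\bigl(B(z,2^k\delta')\setminus B(z,2^{k-1}\delta')\bigr)$, on which the summand is $\le 2^{-(k-1)\sigma}$ while the shell holds at most $c\,2^{kd}(\delta'/\delta)^d$ points, the geometric series $\sum_k 2^{k(d-\sigma)}$ converges precisely because $\sigma>d$, which yields $\Sigma(z,\delta')\le c(\delta'/\delta)^d$.

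For the lemma, write $r:=\rho(x,y)$ and split $\cX=A\cup B$ (disjoint) with $A=\{\xi:\rho(x,\xi)\le\rho(y,\xi)\}$ and $B=\{\xi:\rho(x,\xi)>\rho(y,\xi)\}$. On $A$ the triangle inequality gives $\rho(y,\xi)\ge r/2$, hence $\bigl(1+\delta_2^{-1}\rho(y,\xi)\bigr)^{-\sigma}\le 2^\sigma\bigl(1+\delta_2^{-1}r\bigr)^{-\sigma}$; factoring this out and bounding the remaining $\sum_{\xi\in A}(1+\delta_1^{-1}\rho(x,\xi))^{-\sigma}$ by $\Sigma(x,\delta_1)\le c(\delta_1/\delta)^d$ settles this part. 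On $B$ I would distinguish two cases. If $r\le\delta_2$, then $\rho(x,\xi)\le r+\rho(y,\xi)\le\delta_2+\rho(y,\xi)$ forces $\bigl(1+\delta_2^{-1}\rho(y,\xi)\bigr)^{-\sigma}\le 2^\sigma\bigl(1+\delta_2^{-1}\rho(x,\xi)\bigr)^{-\sigma}$, so the summand is $\le 2^\sigma(1+\delta_1^{-1}\rho(x,\xi))^{-\sigma}$, whose sum over $\cX$ is $\le c(\delta_1/\delta)^d$, while $(1+\delta_2^{-1}r)^{-\sigma}\ge 2^{-\sigma}$ supplies the missing denominator. If $r>\delta_2$, then $\xi\in B$ together with the triangle inequality gives $\rho(x,\xi)>r/2$, so $\bigl(1+\delta_1^{-1}\rho(x,\xi)\bigr)^{-\sigma}\le 2^\sigma\bigl(1+\delta_1^{-1}r\bigr)^{-\sigma}$, and bounding the leftover $\sum_{\xi\in B}(1+\delta_2^{-1}\rho(y,\xi))^{-\sigma}\le\Sigma(y,\delta_2)\le c(\delta_2/\delta)^d$ leaves
\begin{equation*}
\sum_{\xi\in B}\frac{1}{(1+\delta_1^{-1}\rho(x,\xi))^\sigma(1+\delta_2^{-1}\rho(y,\xi))^\sigma}\ \le\ c\,2^\sigma\,(1+\delta_1^{-1}r)^{-\sigma}\,(\delta_2/\delta)^d .
\end{equation*}
It then remains to check that this is $\le c(\delta_1/\delta)^d(1+\delta_2^{-1}r)^{-\sigma}$, i.e.\ that $\bigl(\tfrac{1+\delta_2^{-1}r}{1+\delta_1^{-1}r}\bigr)^\sigma\le c(\delta_1/\delta_2)^d$; since $r>\delta_2\ge\delta_1$ one has $1+\delta_2^{-1}r\le2\delta_2^{-1}r$ and $1+\delta_1^{-1}r\ge\delta_1^{-1}r$, so the ratio is $\le 2\delta_1/\delta_2$, and $(2\delta_1/\delta_2)^\sigma\le2^\sigma(\delta_1/\delta_2)^d$ because $\sigma>d$ and $\delta_1\le\delta_2$. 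Adding the contributions of $A$ and $B$ proves the lemma.

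The step I expect to be the real obstacle is exactly the sub--case $\xi\in B$, $r>\delta_2$. There the only available source of decay in $r$ is the factor centered at $x$, which lives at the \emph{finer} scale $\delta_1$; after extracting $(1+\delta_2^{-1}r)^{-\sigma}$ from it one is forced to sum the coarse--scale factor $(1+\delta_2^{-1}\rho(y,\xi))^{-\sigma}$ and thereby lands on $(\delta_2/\delta)^d$ instead of the required $(\delta_1/\delta)^d$. The discrepancy is repaired by keeping the genuine gain $\bigl(\tfrac{1+\delta_2^{-1}r}{1+\delta_1^{-1}r}\bigr)^\sigma$ produced by the finer scale, which beats $(\delta_1/\delta_2)^d$ precisely because $\sigma>d$ and $r>\delta_2$; the case $r\le\delta_2$ has to be peeled off separately since that gain degenerates there. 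Everything else — the triangle--inequality comparisons and the tracking of absolute constants (which depend only on $\sigma$, $d$, $c_0$) — is routine.
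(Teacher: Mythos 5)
Your proposal is correct and follows essentially the same route as the paper: a packing bound for $\delta$-nets feeding into a one--center estimate $\sum_{\xi}(1+(\delta')^{-1}\rho(z,\xi))^{-\sigma}\le c(\delta'/\delta)^d$, followed by splitting the index set according to which of $x,y$ is far from $\xi$ (your $A/B$ vs.\ the paper's $\cX'/\cX''$), and finally a case distinction on whether $\rho(x,y)$ exceeds $\delta_2$ to repair the scale mismatch on the ``bad'' half, exploiting $\sigma>d$ in exactly the same inequality $(\delta_1/\delta_2)^\sigma\le(\delta_1/\delta_2)^d$. The only cosmetic difference is that your $A,B$ are disjoint while the paper's $\cX',\cX''$ may overlap, which does not affect the estimate.
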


\subsection{Maximal inequality}\label{max-operator}

We shall use the version $\cM_t$ ($t>0$) of the maximal operator defined by
\begin{equation}\label{def:max-op}
\cM_tf(x):=\sup_{B\ni x}\left(\frac1{|B|}\int_B |f|^t\, d\mu \right)^{1/t},
\quad x\in M,
\end{equation}
where the sup is over all balls  $B\subset M$ such that $x\in B$.

With $\mu$ being a Radon measure on $M$ obeying the doubling condition (\ref{doubling})
the general theory of maximal operators applies
and the Fefferman-Stein vector-valued maximal inequality holds \cite{Stein}:
If $0<p<\infty, 0<q\le\infty$, and
$0<t<\min\{p,q\}$ then for any sequence of functions
$\{f_\nu\}$ on $M$
\begin{equation}\label{max-ineq}
\Big\|\Bigl(\sum_{\nu}|\cM_tf_\nu(\cdot)|^q\Bigr)^{1/q} \Big\|_{L^p}
\le c\Big\|\Bigl(\sum_{\nu}| f_\nu(\cdot)|^q\Bigr)^{1/q}\Big\|_{L^p}.
\end{equation}

\subsection{Spectral spaces}\label{spectral-spaces}

As before $E_\lambda$, $\lambda\ge 0$, will be the spectral resolution associated with
the self-adjoint positive operator $L$ on $L^2:=L^2(M, d\mu)$.
We let $F_\lambda$, $\lambda\ge 0$, denote the spectral resolution associated with $\sqrt L$,
i.e. $F_\lambda=E_{\lambda^2}$.
Then for any measurable and bounded function $f$ on $\R_+$ the operator $f(\sqrt L)$ is defined by
$f(\sqrt L)= \int_0^\infty f(\lambda) d F_\lambda$ on $L^2$.
For the spectral projectors we have
$E_\lambda = \ONE_{[0, \lambda]}(L) := \int_0^\infty \ONE_{[0,\lambda]}(u) dE_u$
and
\begin{equation}\label{spect-projector}
F_\lambda = \ONE_{[0,  \lambda]}(\sqrt L)
:=\int_0^\infty  \ONE_{[0,\lambda]}(u)dF_u
=\int_0^\infty  \ONE_{[0,\lambda]}(\sqrt u)dE_u.
\end{equation}
For any compact $K \subset [0, \infty)$ the spectral space $\Sigma^p_K$ is defined by
$$
\Sigma^p_K
:= \{f\in L^p: \theta (\sqrt L)f = f \hbox{ for all }
\theta \in C^\infty_0(\bR_+), \;  \theta \equiv 1 \hbox{ on } \;  K\}.
$$
In general, given a space $Y$ of measurable functions on $M$ we set
$$
\Sigma_\lambda = \Sigma_\lambda(Y)
:= \{f\in Y: \theta (\sqrt L)f = f \hbox{ for all }
\theta \in C^\infty_0(\bR_+), \;  \theta \equiv 1 \hbox{ on } \;  [0, \lambda]\}.
$$

For the use of band limited functions on similar settings, such as Lie groups, we refer the reader to \cite{Pese,Pese2}.
\section{Distributions}\label{sec:distributions}

Homogeneous Besov and Triebel-Lizorkin spaces
associated with the operator $L$ are spaces of distributions modulo generalized polynomials,
and are introduced in \cite{GKKP}. Here we collect some basic facts from \cite{GKKP,KP}.

\subsection{Basic facts}\label{subsec:basic-facts}

In the setting of this article the class of \textit{test functions} $\cS$
is defined (see \cite{KP})  as the set of all complex-valued functions
$\phi\in  \cap_{m\ge 1} D(L^m)$ such that
\begin{equation}\label{norm-S}
\cP_{m}(\phi)
:= \sup_{x\in M} \big(1+\rho(x, x_0)\big)^m \max_{0\le \nu\le m}|L^\nu\phi(x)| < \infty,
\quad \forall m \ge 0.
\end{equation}
Here $x_0\in M$ is selected arbitrarily and fixed once and for all.
Note that $\cS$ is a complete locally convex space with topology generated by
the above sequence of norms, i.e. $\cS$ is a Fr\'{e}chet space, see \cite{RS}.

Observe also that if $\phi\in\cS$, then $\overline{\phi} \in \cS$,
which follows from the fact that $\overline{L\phi}=L \overline{\phi}$,
for $L$ maps real-valued to real-valued functions.

The space $\cS'$ of \textit{distributions} on $M$ is defined as the set of all
continuous linear functionals on $\cS$
and the action of $f\in \cS'$ on $\overline{\phi}\in \cS$ will be denoted by
$\langle f, \phi\rangle:= f(\overline{\phi})$,
which is consistent with the inner product on $L^2(M)$.

Let us clarify the action of operators of the form $\varphi(\sqrt L)$ on  $\cS'$.
%for even and real-valued functions in $\cS(\R)$.
Observe that if the function $\varphi\in \cS(\R)$, the Schwarz class on $\R$, is real-valued and even, then
from Theorem~\ref{thm:S-local-kernels} it follows that
$\varphi(\sqrt{L})(x, \cdot)\in \cS$
and $\varphi(\sqrt{L})(\cdot, y)\in \cS$.
Moreover, it is easy to see that  $\varphi(\sqrt L)$ maps continuously $\cS$  into $\cS$.

%%%%%%%%%%% Definition

\begin{definition}\label{def:phi-distr}
We define $\varphi(\sqrt L)f$ for any $f\in \cS'$ by
\begin{equation}\label{phi-distr}
\langle \varphi(\sqrt L)f, \phi \rangle := \langle f, \varphi(\sqrt L)\phi \rangle
\quad\hbox{for}\quad \phi \in \cS.
\end{equation}
\end{definition}

From above it follows that, $\varphi(\sqrt L)$ maps continuously $\cS'$  into $\cS'$.
Furthermore, if $\varphi, \psi\in \cS(\R)$ are real-valued and even, then
\begin{equation}\label{comute}
\varphi(\sqrt{L})\psi(\sqrt{L})f =  \psi(\sqrt{L})\varphi(\sqrt{L}) f, \quad \forall f\in\cS'.
\end{equation}

\subsection{Distributions modulo generalized polynomials}\label{subsec:distr-mod-pol}

We recall first the definition of generalized polynomials associated with the operator $L$.

\noindent
{\bf Generalized polynomials.} In the setting of this article,
we define the set $\PP_m$ of ``generalized polynomials" of degree $m$ $( m \ge 1)$ by
\begin{equation}\label{def-polyn}
%\Pi_m
\PP_m := \{g\in \cS': L^m g=0\}
\end{equation}
and set $\PP:= \cup_{m\ge 1} \PP_m$.
Clearly, $g\in \PP_m$ if and only if $\langle g, L^m\phi\rangle = 0$ for all $\phi\in \cS$.

%The homogeneous Besov and Triebel-Lizorkin spaces will consist of
%tempered distribution modulo generalized polynomials.

We define an equivalence $f\sim g$ on $\cS'$ by
$$
f\sim g \Longleftrightarrow f-g\in \PP.
$$
We denote by $\cS'/\PP$ the set of all equivalent classes in $\cS'$.
To avoid unnecessary complicated notation we shall make no difference between
any two elements $f_1, f_2$ belonging to one and the same equivalence class in $\cS'/\PP$.

Note that the null space of $L$ contains no nontrivial test functions:

\begin{proposition}\label{prop:null-space}
Let $\cN(L^k):=\{f\in D(L^k): L^k f=0\},\;\forall k\ge0$. Then
\begin{equation}\label{null-1}
\cN(L)\cap L^2(M) =\{0\}
\quad\hbox{and hence}\quad
\cN(L^k)\cap L^2(M) =\{0\}, \;\; \forall k\in \bN.
\end{equation}
\end{proposition}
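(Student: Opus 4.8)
The plan is to prove that $\cN(L)\cap L^2(M)=\{0\}$ by a Markov-semigroup / conservativeness argument, and then bootstrap to $\cN(L^k)$ by iterating. The key observation is that if $f\in L^2(M)$ and $Lf=0$, then $f$ is a fixed point of the heat semigroup: $P_t f=e^{-tL}f=f$ for every $t>0$, since $f$ lies in the zero eigenspace of $L$ (this follows from the spectral theorem: $E_\lambda f=f$ for all $\lambda\ge 0$, so $e^{-tL}f=\int_0^\infty e^{-t\lambda}dE_\lambda f=f$). So it suffices to show that the only $L^2$ function fixed by all $P_t$ is $0$.

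\smallskip

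First I would write $f(x)=\int_M p_t(x,y)f(y)\,d\mu(y)$ and estimate pointwise. Using the Gaussian bound \eqref{Gauss-local} together with the Markov property \eqref{hol3} and Cauchy--Schwarz, one gets, for any $t>0$,
\begin{equation*}
|f(x)|^2=\Big|\int_M p_t(x,y)f(y)\,d\mu(y)\Big|^2
\le \Big(\int_M p_t(x,y)\,d\mu(y)\Big)\Big(\int_M p_t(x,y)|f(y)|^2\,d\mu(y)\Big),
\end{equation*}
so $|f(x)|^2\le (P_t|f|^2)(x)$. Then I would bound $p_t(x,y)\le \Cstar [|B(x,\sqrt t)||B(y,\sqrt t)|]^{-1/2}$ crudely, and use the noncollapsing condition \eqref{non-collapsing} together with doubling to control $|B(y,\sqrt t)|$ from below by something like $c\min\{1,t^{d/2}\}$ uniformly in $y$ (for fixed $t$), giving $p_t(x,y)\le C(t)|B(x,\sqrt t)|^{-1/2}$. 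Hence
\begin{equation*}
|f(x)|^2\le \frac{C(t)}{|B(x,\sqrt t)|^{1/2}}\int_M |f(y)|^2\,d\mu(y)=\frac{C(t)\,\|f\|_2^2}{|B(x,\sqrt t)|^{1/2}}.
\end{equation*}
Now let $t\to\infty$: since $\mu(M)=\infty$ and, by \eqref{GRD} (reverse doubling), $|B(x,r)|\ge c_3 r^{d^*}|B(x,1)|\ge c_1c_3 r^{d^*}$ grows unboundedly as $r\to\infty$, while the $t$-dependence of $C(t)$ is at most polynomial (from the lower bound on $|B(y,\sqrt t)|$, which is $\ge c t^{d/2}$ for $t\ge 1$), a careful bookkeeping shows the right-hand side tends to $0$. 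Therefore $f\equiv 0$.

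\smallskip

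The main obstacle is getting the quantitative dependence on $t$ right: one needs the blow-up of $|B(x,\sqrt t)|^{1/2}$ (via reverse doubling, at rate $t^{d^*/4}$) to beat the growth of $C(t)$ coming from the uniform lower bound $\inf_y |B(y,\sqrt t)|\gtrsim t^{d^*/2}$ versus $t^{d/2}$. Since $d^*\le d$ this is not automatic; the cleaner route, which I would actually take, is to avoid the pointwise estimate and instead argue directly that $P_t$ acting on $L^2$ cannot fix a nonzero function when $\mu(M)=\infty$: by \eqref{Gauss-local}, $\|P_t f\|_\infty\le \Cstar (\inf_{x}|B(x,\sqrt t)|)^{-1/2}\|f\|_1$ is not directly available since $f$ need not be in $L^1$, so instead use $\|P_{t}f\|_2=\|f\|_2$ but $P_tf=P_{t/2}(P_{t/2}f)$ with $P_{t/2}f=f$, and note $P_{t/2}\colon L^2\to L^\infty$... — the honest fix is: $f=P_tf$ and $\|P_{2t}f\|_2^2=\langle P_t f, P_t f\rangle$; using $|f(x)|^2\le (P_t|f|^2)(x)$ from above and integrating against a fixed probe, combined with the fact (from \eqref{GRD} and \eqref{non-collapsing}) that volumes are unbounded, forces $|f|^2$ to have integral $0$. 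I would present the $t\to\infty$ limit version since it is the most transparent, being careful with constants. Finally, for $k\ge 2$: if $L^kf=0$ with $f\in D(L^k)\cap L^2$, then $g:=L^{k-1}f\in L^2$ satisfies $Lg=0$, so $g=0$ by the base case; inducting downward gives $f=0$, proving the second assertion in \eqref{null-1}.
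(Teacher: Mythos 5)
Your proposal is mathematically sound, and the route (spectral theorem $\Rightarrow$ $P_tf=f$, then Cauchy--Schwarz with the Markov property to get $|f(x)|^2\le (P_t|f|^2)(x)$, then the Gaussian on-diagonal bound plus non-collapsing plus $\mu(M)=\infty$ to kill the right-hand side as $t\to\infty$, then a downward induction for $k\ge 2$) is the natural one; the paper itself states the proposition without proof, referring to the companion paper \cite{GKKP}. Your induction step is also correct: if $f\in D(L^k)\cap L^2$ with $L^kf=0$, then $g:=L^{k-1}f\in D(L)\subset L^2$ and $Lg=0$, so $g=0$, and $f\in D(L^{k-1})$ then satisfies $L^{k-1}f=0$, etc.

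However, the mid-argument hand-wringing about ``getting the quantitative dependence on $t$ right'' and the muddled ``honest fix'' paragraph are both unnecessary --- you talked yourself out of an argument that already works. For $t\ge 1$ the noncollapsing condition \eqref{non-collapsing} gives $\inf_{y\in M}|B(y,\sqrt t)|\ge \inf_y|B(y,1)|\ge c_1$, so
\begin{equation*}
\sup_{y\in M}p_t(x,y)\le \Cstar\,\big(\inf_y|B(y,\sqrt t)|\big)^{-1/2}\,|B(x,\sqrt t)|^{-1/2}
\le \Cstar c_1^{-1/2}\,|B(x,\sqrt t)|^{-1/2},
\end{equation*}
that is, the constant in front is \emph{uniformly bounded in $t$} (and, with reverse doubling, actually decreases). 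There is no race between $d$ and $d^*$; you simply get
\begin{equation*}
|f(x)|^2\le (P_t|f|^2)(x)\le \Cstar c_1^{-1/2}\,|B(x,\sqrt t)|^{-1/2}\,\|f\|_2^2
\end{equation*}
and the right-hand side tends to $0$ as $t\to\infty$. Moreover you do not even need reverse doubling for that: since $M=\bigcup_{r>0}B(x,r)$ and $\mu(M)=\infty$, monotone convergence gives $|B(x,\sqrt t)|\to\infty$ already. (An equally clean alternative: from $f=P_tf$ and the semigroup identity $\|p_t(x,\cdot)\|_2^2=p_{2t}(x,x)\le \Cstar|B(x,\sqrt{2t})|^{-1}$ one gets directly $|f(x)|\le \Cstar^{1/2}|B(x,\sqrt{2t})|^{-1/2}\|f\|_2\to 0$, avoiding the Markov property altogether.) I would delete the ``honest fix'' digression entirely and present the pointwise $t\to\infty$ estimate above; as written, that paragraph is not a proof.
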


\noindent
{\bf The classes \boldmath $\cS_\infty$ and $\cS'_\infty$.}
Denote by $\cS_\infty$ the set of all functions $\phi\in\cS$ such that for
every $k\ge 1$ there exists $\omega_k \in \cS$ such that $\phi = L^k\omega_k$,
that is, $L^{-k}\phi\in\cS$ for all $k\ge 1$.
Note that from Proposition~\ref{prop:null-space} it follows that
$\omega_k$ above is unique and hence $L^{-k}\phi$ is well defined.

The topology in $\cS_\infty$ is defined by the sequence of norms
\begin{equation}\label{norm-S-infty}
\cP_{m}^\star(\phi)
:= \sup_{x\in M} \big(1+\rho(x, x_0)\big)^m \max_{-m\le \nu\le m}|L^\nu\phi(x)|,
\quad m \ge 0.
\end{equation}
We denote by $\cS'_\infty$ the set of all continuous linear functional on $\cS_\infty$.
As before the the action of $f\in \cS'_\infty$ on $\overline{\phi}\in \cS_\infty$ will be denoted by
$\langle f, \phi\rangle$.
Apparently, for any $f\in \cS'_\infty$ there exist constants $m\in \bZ_+$ and $c>0$ such that
\begin{equation}\label{distribution-1-infty}
|\langle f, \phi\rangle| \le c\cP_m^\star(\phi), \quad \forall \phi\in\cS_\infty.
\end{equation}

Several remarks are in order:

(1)
Let $\theta \in \cS(\R)$ %the Schwarz class on $\R$,
be real-valued and
$\theta^{(\nu)}(0)=0$ for $\nu=0, 1, \dots$.
Then for any $k\ge 1$ we have $\lambda^{-2k}\theta(\lambda)\in \cS(\R)$,
which implies that $L^{-k}\theta(\sqrt{L})\phi \in \cS$ for each $\phi\in\cS$
and hence $\theta(\sqrt{L})\phi \in \cS_\infty$, $\forall \phi\in \cS$.

\smallskip

(2)
Clearly, if $\phi\in\cS_\infty$, then $L^k\phi\in\cS_\infty$ and $L^{-k}\phi\in\cS_\infty$, $\forall k\ge0$.

\smallskip

(3)
It is important to note that $\cS_\infty$ is a Fr\'{e}chet space, since it is complete.

\smallskip

(4) If $\varphi \in \cS(\R)$ be even and real-valued, then
\begin{equation}\label{prop-1}
L^{-k}\varphi(\sqrt{L})\phi = \varphi(\sqrt{L})L^{-k}\phi , \quad \forall \phi\in \cS_\infty, \; \forall k\ge 1,
\end{equation}
and hence
\begin{equation}\label{prop-2}
\varphi(\sqrt{L})\phi \in \cS_\infty, \quad \forall \phi\in \cS_\infty.
\end{equation}
Moreover, $\varphi(\sqrt{L})$ maps $\cS_\infty$ into $\cS_\infty$ continuously.

(5)
The action of operators of the form $\varphi(\sqrt L)$ on $\cS'_\infty$,
where $\varphi\in\cS(\R)$ is real-valued and even,
needs some further clarification:

%%%%%%%%%%% Definition

\begin{definition}\label{def:phi-distr-infty}
We define $\varphi(\sqrt L)f$ for any $f\in \cS'_\infty$ by
\begin{equation}\label{phi-distr-infty}
\langle \varphi(\sqrt L)f, \phi \rangle := \langle f, \varphi(\sqrt L)\phi \rangle
\quad\hbox{for}\quad \phi \in \cS_\infty.
\end{equation}
\end{definition}

From (4) above it follows that, $\varphi(\sqrt L)$ maps continuously $\cS'_\infty$  into $\cS'_\infty$.
In addition, if $\varphi, \psi\in \cS(\R)$ are real-valued and even, then
\begin{equation}\label{comute-infty}
\varphi(\sqrt{L})\psi(\sqrt{L})f =  \psi(\sqrt{L})\varphi(\sqrt{L}) f, \quad \forall f\in\cS'_\infty.
\end{equation}

%%%%%%%%%%%%% Proposition

\begin{proposition}\label{prop:distr-infty}
Suppose $\varphi\in\cS(\R)$ is real-valued and even and $\varphi^{(\nu)}(0)=0$ for $\nu=0, 1, \dots$.
Then for any $f\in \cS'_\infty$
\begin{equation}\label{phif-infty}
\varphi(\sqrt L)f(x) = \langle  f , \varphi(\sqrt L)(x,\cdot)\rangle, \quad x\in M.
\end{equation}
Moreover, $\varphi(\sqrt L)f$ is a continuous and slowly growing function.
\end{proposition}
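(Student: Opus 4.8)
The plan is to expand $f\in\cS'_\infty$ against the kernel $\varphi(\sqrt L)(x,\cdot)$ and show that this pairing is legitimate and reproduces $\varphi(\sqrt L)f$ in the sense of Definition~\ref{def:phi-distr-infty}. First I would note that since $\varphi\in\cS(\R)$ is even, real-valued, and vanishes to infinite order at $0$, Theorem~\ref{thm:S-local-kernels} (applied with arbitrarily large $\mm$, using the rapid decay of $\varphi$ and all its derivatives) gives that $\varphi(\sqrt L)$ is an integral operator whose kernel satisfies, for every $\sigma>0$,
\begin{equation}\label{ker-decay-plan}
|\varphi(\sqrt L)(x,y)|\le \frac{c_\sigma\,(1+\rho(x,y))^{-\sigma}}{(|B(x,1)||B(y,1)|)^{1/2}},
\end{equation}
with analogous Lipschitz bounds, and moreover $\int_M\varphi(\sqrt L)(x,y)\,d\mu(y)=\varphi(0)=0$. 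Combined with the non-collapsing condition (\ref{non-collapsing}) and the doubling inequality (\ref{D2}), this shows that $y\mapsto\varphi(\sqrt L)(x,\cdot)$ lies in $\cS$, and in fact that $L^{-k}$ applied to this kernel still decays rapidly (because $\lambda^{-2k}\varphi(\lambda)\in\cS(\R)$ and vanishes to infinite order at $0$, so remark (1) after (\ref{distribution-1-infty}) applies), hence $\varphi(\sqrt L)(x,\cdot)\in\cS_\infty$. Therefore the right-hand side of (\ref{phif-infty}) makes sense as the action of $f\in\cS'_\infty$ on an element of $\cS_\infty$.

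Next I would verify the identity (\ref{phif-infty}) itself. For a test function $\phi\in\cS_\infty$ one has, by self-adjointness of $\varphi(\sqrt L)$ and Fubini (justified by the rapid decay in (\ref{ker-decay-plan}) and the corresponding decay of $\phi$), that $(\varphi(\sqrt L)\phi)(x)=\int_M \varphi(\sqrt L)(x,y)\phi(y)\,d\mu(y)$; this realizes the functional $\phi\mapsto\int_M\langle f,\varphi(\sqrt L)(x,\cdot)\rangle\,\overline{\phi(x)}\,d\mu(x)$ as $\langle f,\varphi(\sqrt L)\phi\rangle$, which by Definition~\ref{def:phi-distr-infty} equals $\langle\varphi(\sqrt L)f,\phi\rangle$. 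Interchanging the $f$-action with the $x$-integration is the delicate point and I would handle it by a Riemann-sum approximation: approximate $\int_M\varphi(\sqrt L)(x,\cdot)\,\overline{\phi(x)}\,d\mu(x)$ by finite sums, which converge in the topology of $\cS_\infty$ (here the uniform decay estimates from Theorem~\ref{thm:S-local-kernels} together with Lemma~\ref{lem:gen-ineq} and (\ref{tech-1}) give the needed equicontinuity), then pass the limit through the continuous functional $f$. This yields $\varphi(\sqrt L)f(x)=\langle f,\varphi(\sqrt L)(x,\cdot)\rangle$ pointwise.

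It remains to show the function $x\mapsto\langle f,\varphi(\sqrt L)(x,\cdot)\rangle$ is continuous and slowly growing. Continuity follows from the Lipschitz estimate (\ref{lip-ker}) (with $\varphi$ in place of $f$, again valid for all orders): as $x'\to x$, $\varphi(\sqrt L)(x',\cdot)\to\varphi(\sqrt L)(x,\cdot)$ in $\cS_\infty$, and likewise for all the $L^{-k}$-derivatives, so the scalar $\langle f,\cdot\rangle$ converges. For the growth bound, use (\ref{distribution-1-infty}): there are $m$ and $c$ with $|\langle f,\psi\rangle|\le c\,\cP_m^\star(\psi)$ for all $\psi\in\cS_\infty$, and then I would estimate $\cP_m^\star(\varphi(\sqrt L)(x,\cdot))$ using the kernel bounds (\ref{local-ker}) for $L^\nu\varphi(\sqrt L)=(\lambda^{2\nu}\varphi)(\sqrt L)$ and for $L^{-\nu}\varphi(\sqrt L)=(\lambda^{-2\nu}\varphi)(\sqrt L)$, $-m\le\nu\le m$; each such kernel decays like $(1+\rho(x,y))^{-\sigma}$ divided by $(|B(x,1)||B(y,1)|)^{1/2}$, and bounding $(1+\rho(y,x_0))^m$ against $(1+\rho(x,y))^m(1+\rho(x,x_0))^m$ shows $\cP_m^\star(\varphi(\sqrt L)(x,\cdot))\le c(1+\rho(x,x_0))^{m}/|B(x,1)|^{1/2}$, which together with the noncollapsing condition gives polynomial growth in $\rho(x,x_0)$. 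The main obstacle is the rigorous justification of moving the distributional action $f$ inside the integral over $M$ — equivalently, establishing convergence of the Riemann sums in the Fréchet topology of $\cS_\infty$ uniformly enough — and that is where the quantitative kernel estimates of Theorem~\ref{thm:S-local-kernels} and the integral lemmas of Section~\ref{Background} do the real work.
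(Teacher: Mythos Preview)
The paper does not actually prove this proposition: Section~\ref{sec:distributions} opens with ``Here we collect some basic facts from \cite{GKKP,KP}'', and Proposition~\ref{prop:distr-infty} is simply stated there without argument. So there is no in-paper proof against which to compare your attempt.

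That said, your approach is correct and is the standard one. The essential points are exactly as you identify: (i)~since $\varphi$ vanishes to infinite order at the origin, $\lambda^{2\nu}\varphi(\lambda)\in\cS(\R)$ for every $\nu\in\bZ$, so Theorem~\ref{thm:S-local-kernels} applies to each $L^{\nu}\varphi(\sqrt L)$ and gives $\varphi(\sqrt L)(x,\cdot)\in\cS_\infty$ with quantitative control on all the seminorms $\cP_m^\star$; (ii)~the identity (\ref{phif-infty}) amounts to interchanging the action of $f$ with the $d\mu(x)$-integral, and the Riemann-sum argument you sketch---finite sums converging in the Fr\'echet topology of $\cS_\infty$, with equicontinuity coming from the uniform kernel bounds---is the right mechanism (the symmetry $\varphi(\sqrt L)(x,y)=\varphi(\sqrt L)(y,x)$ is used here); (iii)~continuity in $x$ follows from the H\"older estimate (\ref{lip-ker}) applied to all $L^\nu\varphi(\sqrt L)$, and polynomial growth follows from (\ref{distribution-1-infty}) together with your estimate $\cP_m^\star(\varphi(\sqrt L)(x,\cdot))\le c(1+\rho(x,x_0))^{m}|B(x,1)|^{-1/2}$ and the non-collapsing condition. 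There is no gap.
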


%%%%%%% Proposition

\begin{proposition}\label{prop:identify}
We have the following identification:
\begin{equation}\label{identification}
\cS'/\PP = \cS'_\infty.
\end{equation}
\end{proposition}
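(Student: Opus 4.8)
The plan is to construct a natural pair of linear maps between $\cS'/\PP$ and $\cS'_\infty$ and show they are mutually inverse. First I would exhibit the ``restriction'' map $R\colon \cS'\to\cS'_\infty$ obtained by restricting the action of a distribution $f\in\cS'$ to the subspace $\cS_\infty\subset\cS$; since $\cS_\infty$ carries a finer topology than the one induced from $\cS$ (compare \eqref{norm-S} with \eqref{norm-S-infty}), $R f$ is indeed continuous on $\cS_\infty$, so $R$ is well defined. The key observation is that $R$ kills exactly $\PP$: if $g\in\PP_m$, i.e.\ $L^m g=0$, then for any $\phi\in\cS_\infty$ we may write $\phi=L^m\omega_m$ with $\omega_m\in\cS$ (the definition of $\cS_\infty$), and $\overline{\omega_m}\in\cS$ as well, whence $\langle g,\phi\rangle=\langle g, L^m\omega_m\rangle=\langle L^m g,\omega_m\rangle=0$; here one must be slightly careful with conjugation, using that $L$ preserves real-valued functions so $\overline{L^m\omega_m}=L^m\overline{\omega_m}$. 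Conversely, if $Rf=0$, I would show $f\in\PP$: since $f$ vanishes on $\cS_\infty\supset L^m(\cS_\infty)$ and, more to the point, on everything of the form $L^m\phi$ with $\phi\in\cS$ for $m$ large enough once we know $L^m\phi\in\cS_\infty$ — actually the cleanest route is to use a Calderón-type reproducing identity (available from the frame/functional-calculus apparatus of \cite{GKKP}) to write $\phi=\sum_j \psi_j(\sqrt L)\phi$ with each $\psi_j(\sqrt L)\phi\in\cS_\infty$ for $\phi\in\cS$, so that $\langle f,\phi\rangle$ depends only on the ``high-frequency part'' and the obstruction is precisely a generalized polynomial.

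Next I would construct the reverse map. Given $h\in\cS'_\infty$, I want to produce a distribution $f\in\cS'$, well defined modulo $\PP$, with $Rf=h$. The idea is to fix an admissible cutoff (say $\varphi$ of type $(c)$ from Definition~\ref{cutoff-d1}), so that $\sum_{j\in\bZ}\varphi^2(2^{-j}\sqrt L)$ resolves the identity on the relevant class, and define
\begin{equation}\label{eq:pf-def-f}
\langle f,\phi\rangle := \sum_{j\in\bZ} \big\langle h,\ \varphi(2^{-j}\sqrt L)\,\overline{\varphi(2^{-j}\sqrt L)\overline{\phi}}\,\big\rangle,\qquad \phi\in\cS.
\end{equation}
Each summand makes sense because $\varphi(2^{-j}\sqrt L)\phi\in\cS_\infty$ by Remark~(1)/(4) in \S\ref{subsec:distr-mod-pol} (as $\varphi$ vanishes to infinite order at $0$), and $h$ pairs with elements of $\cS_\infty$. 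The convergence of the series and the continuity of $\phi\mapsto\langle f,\phi\rangle$ on $\cS$ is where the real analytic work lies: one splits $\sum_{j\le 0}$ and $\sum_{j>0}$, using the bound \eqref{distribution-1-infty} together with the kernel estimates of Theorem~\ref{thm:S-local-kernels} (or the sub-exponential bounds of Theorem~\ref{thm:band-sub-exp}) to estimate $\cP_m^\star(\varphi(2^{-j}\sqrt L)\phi)$ in terms of the $\cS$-norms of $\phi$, gaining geometric decay in $|j|$ — for $j\to+\infty$ from the smoothness/moment conditions on $\varphi$, and for $j\to-\infty$ from the vanishing $\varphi^{(\nu)}(0)=0$ which forces extra powers of $2^{j}$.

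Finally I would verify that these two maps are inverse to each other, modulo $\PP$ on the $\cS'$ side. The composite $R\circ(\text{the map }h\mapsto f)$ is the identity on $\cS'_\infty$ by the reproducing identity restricted to $\cS_\infty$: for $\phi\in\cS_\infty$ the sum in \eqref{eq:pf-def-f} telescopes back to $\langle h,\phi\rangle$. For the other composite, starting from $f\in\cS'$, forming $h=Rf$ and then $\tilde f$ via \eqref{eq:pf-def-f}, one checks $\langle f-\tilde f,\phi\rangle=0$ for all $\phi$ in the dense-enough subclass $\cup_m L^m\cS$, and then argues that a distribution annihilating that subclass lies in $\PP$ — again reducing to Proposition~\ref{prop:null-space} applied frequency-band by frequency-band. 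The main obstacle, I expect, is not the formal algebra but making \eqref{eq:pf-def-f} rigorous: proving the two-sided geometric estimate for $\cP_m^\star(\varphi(2^{-j}\sqrt L)\phi)$ uniformly, and thereby the absolute convergence and the $\cS'$-continuity of $f$. Everything else — the identification of kernels of $R$, the telescoping, and the reduction to $\cN(L^k)\cap L^2=\{0\}$ — is comparatively routine given the functional calculus already assembled in \S\ref{Background}.
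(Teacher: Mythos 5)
The paper itself does not supply a proof of this proposition; it is one of the background facts recalled from the companion paper \cite{GKKP}, so I cannot compare your argument line-by-line with the authors'. Evaluated on its own terms, your overall architecture is sensible: a restriction map $R\colon\cS'\to\cS'_\infty$, the identification $\ker R=\PP$, and an explicit right inverse built from a dyadic resolution of the identity. The easy inclusion $\PP\subset\ker R$ is handled correctly (for $g\in\PP_m$ and $\phi=L^m\omega_m\in\cS_\infty$ one has $\langle g,\phi\rangle=\langle g,L^m\omega_m\rangle=0$ by the equivalent characterization in \eqref{def-polyn}), and your sketch of $\ker R\subset\PP$ via a Calder\'on-type decomposition, with the final reduction to Proposition~\ref{prop:null-space}, is plausible modulo the (nontrivial) convergence of the partial sums in the relevant $\cP_{m_0}$-seminorm.

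The gap is in the surjectivity step, and it is a genuine one. You assert that for $j\to-\infty$ the quantity $\cP_m^\star(\varphi(2^{-j}\sqrt L)\phi)$ decays geometrically ``from the vanishing $\varphi^{(\nu)}(0)=0$, which forces extra powers of $2^j$,'' and this is simply not so. The seminorms $\cP_m^\star$ of \eqref{norm-S-infty} include the \emph{negative} powers $L^{-k}$ for $0\le k\le m$, and these go the wrong way: writing $L^{-k}\varphi(2^{-j}\sqrt L)^2\phi = 2^{-2jk}\,\tilde\varphi_k(2^{-j}\sqrt L)\phi$ with $\tilde\varphi_k(u)=u^{-2k}\varphi^2(u)$ smooth and compactly supported in $(0,\infty)$, the scalar prefactor $2^{-2jk}$ grows without bound as $j\to-\infty$. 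The operator factor offers only polynomial compensation: by Theorem~\ref{thm:S-local-kernels}, $|\tilde\varphi_k(2^{-j}\sqrt L)\phi(x)|\lesssim \|\phi\|_1/|B(x,2^{-j})|$, and by \eqref{GRD} and \eqref{non-collapsing} one has $|B(x,2^{-j})|\gtrsim 2^{-jd^*}$ for $j<0$, so $\|L^{-k}\varphi(2^{-j}\sqrt L)^2\phi\|_\infty\lesssim 2^{j(d^*-2k)}\|\phi\|_1$, divergent once $k>d^*/2$; adding the weight $(1+\rho(\cdot,x_0))^m$ only worsens matters. The vanishing of $\varphi$ at the origin plays no role: what one would need is access to negative-power information on $\phi$ itself, which exists when $\phi\in\cS_\infty$ but not for general $\phi\in\cS$. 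Hence the termwise bound \eqref{distribution-1-infty} gives no control on the series in your \eqref{eq:pf-def-f}, which therefore has no reason to converge, and the proposed extension map is not actually defined. Closing this requires a materially different idea for the surjectivity half — e.g.\ showing that an $h$ bounded by $\cP_m^\star$ on $\cS_\infty$ is in fact bounded on $\cS_\infty$ by a seminorm that extends to $\cS$ (so Hahn--Banach applies), or a more careful construction with a low-frequency correction — not merely the estimate you indicate.
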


\smallskip

From Proposition~\ref{prop:identify} it follows that for a sequence
$\{f_j\}\subset \cS'/\PP$ and $f\in \cS'/\PP$ we have
\begin{equation}\label{converge}
f_j \to f \;\;\hbox{in}\;\;  \cS'/\PP
\quad\hbox{if and only if}\quad
\langle f_j, \phi\rangle \to \langle f, \phi\rangle, \quad \forall \phi\in \cS_\infty.
\end{equation}

%The proof of this proposition is a straightforward adaptation of the proof
%of Proposition~2.6 in \cite{DKKP2} and will be omitted.

The main decomposition result takes the form:

%%%%%% Theorem

\begin{theorem}\label{thm:decomp-SP}
Let $\Psi \in C^\infty(\R_+)$,
$\supp \Psi \subset [b^{-1}, b]$ with $b >1$, $\Psi$ real-valued, and
\begin{equation}\label{dec-unity}
\sum_{j\in \bZ}\Psi(b^{-j}\lambda) =1 \quad \hbox{for} \quad \lambda \in (0, \infty).
\end{equation}
Then for any $f\in \cS'/\PP$
\begin{equation}\label{decomp-SP1}
f=\sum_{j\in \bZ}\Psi(b^{-j}\sqrt L)f \;\;\mbox{ in }\; \cS'/\PP,
\end{equation}
that is, for any $f\in \cS'_\infty$
\begin{equation}\label{decomp-SP2}
\lim_{n, m\to \infty}\sum_{j=-n}^m \big\langle \Psi(b^{-j}\sqrt L)f, \phi\big\rangle
= \langle f, \phi\rangle,
%\quad\hbox{for all}\quad
\quad\forall \phi\in\cS_\infty.
\end{equation}

\end{theorem}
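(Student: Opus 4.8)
The plan is to strip off the duality and reduce (\ref{decomp-SP2}) to a convergence statement in the Fr\'echet space $\cS_\infty$. Since $\supp\Psi\subset[b^{-1},b]$ is compact and bounded away from $0$, each dilate $\lambda\mapsto\Psi(b^{-j}\lambda)$ and each partial sum $\Theta_{n,m}:=\sum_{j=-n}^{m}\Psi(b^{-j}\cdot)$ extend to even functions in $\cS(\R)$ all of whose derivatives vanish at the origin; hence Definition~\ref{def:phi-distr-infty} applies and $\Theta_{n,m}(\sqrt L)$ maps $\cS_\infty$ continuously into itself by (\ref{prop-1})--(\ref{prop-2}) and the remark after (\ref{prop-2}). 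For $\phi\in\cS_\infty$ this gives $\sum_{j=-n}^{m}\langle\Psi(b^{-j}\sqrt L)f,\phi\rangle=\langle f,\Theta_{n,m}(\sqrt L)\phi\rangle$, and since $f$ is continuous on $\cS_\infty$ by (\ref{distribution-1-infty}), it suffices to prove that $\Theta_{n,m}(\sqrt L)\phi\to\phi$ in $\cS_\infty$ as $n,m\to\infty$, for every $\phi\in\cS_\infty$; then (\ref{decomp-SP1}) follows from (\ref{decomp-SP2}) via Proposition~\ref{prop:identify} and (\ref{converge}).

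The heart of the matter is a decay estimate for the single blocks $\Psi(b^{-j}\sqrt L)\phi$. I would show that for each $k\ge0$ there are $\eps_k>0$ and $C_k<\infty$, depending on finitely many seminorms of $\phi$, with $\cP_k^\star\big(\Psi(b^{-j}\sqrt L)\phi\big)\le C_k\,b^{-\eps_k|j|}$ for all $j\in\bZ$; then $\sum_{j\in\bZ}\Psi(b^{-j}\sqrt L)\phi$ converges absolutely in $\cS_\infty$ and its tails tend to $0$. The tool is the pair of calculus identities --- valid because $\lambda\mapsto\lambda^{r}\Psi(\lambda)\in\cS(\R)$ for every $r\in\bZ$, together with the commutation rules (\ref{prop-1}), (\ref{comute-infty}) and the fact that $L^{N}\phi,L^{-K}\phi\in\cS$ for $\phi\in\cS_\infty$ --- which, for $|\nu|\le k$ and large integers $N,K$ to be fixed, read
\[
L^\nu\Psi(b^{-j}\sqrt L)\phi=b^{-2j(N-\nu)}\,\xi_\nu(b^{-j}\sqrt L)\big(L^{N}\phi\big)\quad(j\ge0),\qquad \xi_\nu(\lambda):=\lambda^{2(\nu-N)}\Psi(\lambda)\in\cS(\R),
\]
\[
L^\nu\Psi(b^{-j}\sqrt L)\phi=b^{\,2j(K+\nu)}\,\Phi_\nu(b^{-j}\sqrt L)\big(L^{-K}\phi\big)\quad(j<0),\qquad \Phi_\nu(\lambda):=\lambda^{2(\nu+K)}\Psi(\lambda)\in\cS(\R).
\]
Inserting the kernel bound (\ref{local-ker}) for $\xi_\nu(b^{-j}\sqrt L)$ and $\Phi_\nu(b^{-j}\sqrt L)$ at scale $\delta=b^{j}$ (their symbols being Schwartz, any smoothness index is permitted), using $|B(x,\delta)|\ge c_1$ for $\delta\ge1$ and $|B(x,\delta)|\gtrsim\delta^{d}$ for $0<\delta\le1$ (from (\ref{non-collapsing}) and (\ref{doubling})), distributing the weight $(1+\rho(\cdot,x_0))^{k}$ onto the integration variable via (\ref{D2}), and integrating against $L^{N}\phi$, resp.\ $L^{-K}\phi$, by (\ref{tech-1}), one finds the high-frequency contribution to be of order $b^{\,j(k-2(N-\nu))}$ and the low-frequency one of order $b^{\,j(2(K+\nu)-d)}$; taking, say, $N>\tfrac32 k$ and $K>k+\tfrac d2$ makes both exponents sign-definite and bounded away from $0$, which proves the claim.

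To identify the resulting sum I would use that $\cS_\infty\hookrightarrow\cS\hookrightarrow L^2(M)$ continuously (the second inclusion because $\mu$ has polynomial volume growth by (\ref{doubling})), so that $\cS_\infty$-convergence implies $L^2$-convergence; on the other hand $\|\phi-\Theta_{n,m}(\sqrt L)\phi\|_2^{2}=\int_0^\infty|1-\Theta_{n,m}(\lambda)|^{2}\,d\|F_\lambda\phi\|^{2}\to0$ by dominated convergence, since the integrand is uniformly bounded, tends to $0$ for every $\lambda>0$, and the spectral measure of $\phi$ carries no mass at $0$ because $F_0=E_0$ projects onto $\cN(L)\cap L^2(M)=\{0\}$ by Proposition~\ref{prop:null-space}. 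Comparing the two limits forces the $\cS_\infty$-limit of $\Theta_{n,m}(\sqrt L)\phi$ to be $\phi$, which by the first paragraph completes the proof of (\ref{decomp-SP2}) and hence of (\ref{decomp-SP1}).

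\textbf{The main obstacle} lies in the term-by-term bound, concretely in the high-frequency tail $j\to+\infty$: transferring the polynomial weight $(1+\rho(\cdot,x_0))^{k}$ onto the intrinsic scale $\delta=b^{j}$ of the kernel in (\ref{local-ker}) costs a factor $b^{jk}$ that must be absorbed by the spectral gain $b^{-2j(N-\nu)}$, which is what forces $N$ large relative to $k$ (and $d$); crucially this uses $\phi\in\cS_\infty$, not merely $\phi\in\cS$, since the widely spread kernel $\xi_\nu(b^{-j}\sqrt L)(x,\cdot)$ has to be integrated against a genuinely rapidly decreasing function, namely $L^{N}\phi\in\cS$. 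The low-frequency tail $j\to-\infty$ is the mirror image, relying on $L^{-K}\phi\in\cS$ and the non-collapsing lower bound $|B(x,\delta)|\gtrsim\delta^{d}$ for small $\delta$; all remaining steps amount to routine manipulations with (\ref{local-ker}), (\ref{D2}) and (\ref{tech-1}).
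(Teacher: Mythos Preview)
The paper does not actually prove this theorem here; it is quoted from \cite{GKKP} as background (see the opening sentence of Section~\ref{sec:distributions}), so there is no ``paper's own proof'' to compare against. Your overall strategy --- dualize to a convergence statement in $\cS_\infty$, prove term-by-term exponential decay of $\cP_k^\star\big(\Psi(b^{-j}\sqrt L)\phi\big)$ by shifting powers of $L$ onto $\phi$, and identify the limit via $L^2$-spectral theory using Proposition~\ref{prop:null-space} --- is sound and is essentially what the argument in \cite{GKKP} does.

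That said, your write-up swaps the two regimes, and this is not just a typo. The intrinsic scale of $\xi_\nu(b^{-j}\sqrt L)$ is $\delta=b^{-j}$, not $b^{j}$. Consequently for $j\to+\infty$ the kernel is \emph{highly concentrated}, the weight transfer $(1+\rho(x,y))^k\le(1+b^{j}\rho(x,y))^k$ is free, and the input $L^{N}\phi\in\cS$ is available for any $\phi\in\cS$ --- so this tail does \emph{not} require $\phi\in\cS_\infty$. It is the low-frequency tail $j\to-\infty$ where the kernel is widely spread at scale $b^{-j}\gg1$, where the weight transfer costs the factor $b^{|j|k}$ you describe, and where one genuinely needs $L^{-K}\phi\in\cS$, i.e.\ $\phi\in\cS_\infty$. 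Your ``main obstacle'' paragraph attributes all of this to the wrong end. The displayed exponents $b^{j(k-2(N-\nu))}$ and $b^{j(2(K+\nu)-d)}$ are also not quite what the computation gives (for $j\ge0$ the $|B(x,b^{-j})|^{-1}$ cancels against the integral bound from (\ref{tech-1}), so there is no $b^{jk}$ loss there), but this does not affect the conclusion: with $N>k$ and $K>3k/2$ (or similar) one gets the exponential decay needed, and the rest of your argument goes through.
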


\begin{remark}
In the case when $M=\R^d$ and $L=-\Delta$ $($the Laplacian$)$
the distributions modulo generalized polynomials $\cS'/\PP$
introduced in \S\ref{subsec:distr-mod-pol} are
just the classical tempered distributions modulo polynomials on $\R^d$.
Therefore, our general setting covers the classical case on $\R^d$.
\end{remark}

\section{Frames}\label{sec:frames}

In the setting of this article frames are constructed in \cite{CKP,GKKP,KP}. 
For frames on compact homogeneous manifolds see \cite{Pese4}.

We next recall the construction of frames.

\smallskip

\noindent
{\bf Construction of Frame \# 1.}
We first apply Theorem~\ref{thm:band-sub-exp}
for the construction of a real-valued cutoff function $\Phi$ with the following properties:
$\Phi\in C^\infty (\bR_+)$,
$\Phi(u)=1$ for $u\in[0, 1]$,
$0\le \Phi \le 1$, and
$\supp \Phi \subset [0, b]$, where $b > 1$ is a constant, see \cite{KP}.
Set
\begin{equation}\label{def:Psi}
\Psi(u):=\Phi(u)-\Phi(bu).
\end{equation}
Observe that $\Psi\in C^\infty (\bR_+)$ and
$\supp \Psi \subset [b^{-1}, b]$.
By Theorem~\ref{thm:band-sub-exp} $\Psi(\delta\sqrt{L})$
is an integral operator with kernel $\Psi(\delta\sqrt{L})(x, y)$
of sub-exponential localization, that is,
\begin{equation}\label{sub-exp-Phi}
|\Psi(\delta\sqrt{L})(x, y)|
\le \frac{\cd \exp\big\{-\kappa\big(\frac{\rho(x, y)}{\delta}\big)^\beta\big\}}
{\big(|B(x, \delta)||B(y, \delta)|)^{1/2}},
\quad \forall x, y\in M.
\end{equation}
%with
%\begin{equation}\label{def-E}
%E(x, y; \delta, \kap)
%:= \frac{\exp\big\{-\kappa\big(\frac{\rho(x, y)}{\delta}\big)^\beta\big\}}
%{\big(|B(x, \delta)||B(y, \delta)|)^{1/2}}.
%\end{equation}
Here $0<\beta<1$ is an arbitrary constant (as close to $1$ as we wish)
and $\kappa>0$ and $\cd >1$ are constants depending only on
$\beta$, $b$, and the constants $c_0, \Cstar, \cstar$ from (\ref{doubling-0})-(\ref{lip}).
Furthermore, for any $m\ge 1$
\begin{equation}\label{m-sub-exp-Phi}
|[L^m\Psi(\delta \sqrt{L})](x, y)|
\le \frac{c_m\delta^{-2m} \exp\big\{-\kappa\big(\frac{\rho(x, y)}{\delta}\big)^\beta\big\}}
{\big(|B(x, \delta)||B(y, \delta)|)^{1/2}},
\quad \forall x, y\in M.
\end{equation}
Set
\begin{equation}\label{def-Psi-j}
\Psi_j(u):=\Psi(b^{-j}u),\;\; j\in \bZ.
\end{equation}
Clearly, $\Psi_j\in C^\infty (\bR_+)$, $0\le \Psi_j \le 1$,
$\supp \Psi_j \subset [b^{j-1}, b^{j+1}]$, $j\in \bZ$, and
$$
\sum_{j\in \bZ}\Psi_j(u) = 1 \quad\hbox{ for $u\in (0, \infty)$.}
$$
Therefore, by Theorem~\ref{thm:decomp-SP} for any $f\in \cS'/\PP$
\begin{equation}\label{repres-Psi-j}
f= \sum_{j\in \bZ} \Psi_j(\sqrt L)f
\quad\hbox{(convergence in $\cS'/\PP$).}
\end{equation}

The sampling Theorem~4.2 from \cite{CKP} will play an important r\^{o}le in this construction.
In particular, this theorem yields the following

\begin{proposition}\label{prop:sampling}
For any $\varepsilon>0$
there exists a constant $\gamma$ $(0<\gamma<1)$ such that
for any maximal $\delta-$net $\cX$ on $M$ with $\delta:=\gamma \lambda^{-1}$, $\lambda>0$,
and a companion disjoint partition $\{A_\xi\}_{\xi\in\XX}$ of $M$ as in \S\ref{subsec:geometry}
consisting of measurable sets such that
$B(\xi, \delta/2) \subset  A_\xi \subset B(\xi,\delta)$, $\xi \in \XX$,
we have
\begin{equation}\label{sampling-L2}
(1-\eps)\| f \|_2^2 \le \sum_{\xi \in \XX} |A_\xi|| f(\xi)|^2 \le (1+\eps)\| f \|_2^2,
\quad \forall f\in \Sigma_\lambda^2.
\end{equation}
\end{proposition}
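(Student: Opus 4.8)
\textbf{Proof proposal for Proposition~\ref{prop:sampling}.}
The plan is to deduce the two-sided discrete norm equivalence \eqref{sampling-L2} on the spectral band space $\Sigma_\lambda^2$ directly from the sampling Theorem~4.2 of \cite{CKP}, which gives a Marcinkiewicz--Zygmund type inequality on such spaces. First I would recall the precise statement of that theorem: for band-limited functions $f\in\Sigma_\lambda^2$ and a maximal $\delta$-net $\cX$ with $\delta=\gamma\lambda^{-1}$ and $\gamma$ small, one controls the oscillation of $f$ over each cell $A_\xi$ by a fraction of the global $L^2$-norm. Concretely, one writes, using the partition $\{A_\xi\}_{\xi\in\cX}$ with $B(\xi,\delta/2)\subset A_\xi\subset B(\xi,\delta)$,
\begin{equation}\label{eq:decomp-sampling}
\|f\|_2^2=\sum_{\xi\in\cX}\int_{A_\xi}|f(x)|^2\,d\mu(x)
=\sum_{\xi\in\cX}|A_\xi|\,|f(\xi)|^2+\sum_{\xi\in\cX}\int_{A_\xi}\big(|f(x)|^2-|f(\xi)|^2\big)\,d\mu(x),
\end{equation}
so it suffices to bound the error term in \eqref{eq:decomp-sampling} by $\eps\|f\|_2^2$ for $\gamma$ chosen small enough depending on $\eps$.

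The key step is the error estimate. For $x\in A_\xi$ one has $\rho(x,\xi)<\delta$, so writing $|f(x)|^2-|f(\xi)|^2=(|f(x)|-|f(\xi)|)(|f(x)|+|f(\xi)|)$ and applying $\big||f(x)|-|f(\xi)|\big|\le|f(x)-f(\xi)|$, one is reduced to controlling the modulus of continuity of $f$ at scale $\delta$. Here is where band-limitedness enters: for $f\in\Sigma_\lambda^2$ one has $f=\theta(\sqrt L)f$ for a suitable $\theta\in C_0^\infty(\R_+)$ with $\theta\equiv1$ on $[0,\lambda]$ supported in $[0,2\lambda]$, and the kernel bounds \eqref{local-ker}--\eqref{lip-ker} of Theorem~\ref{thm:S-local-kernels} (applied with $\ddelta=\lambda^{-1}$, after rescaling $\theta$) give a Bernstein-type gradient/Hölder estimate: $|f(x)-f(\xi)|\lesssim(\lambda\rho(x,\xi))^\alpha\,\cM_t f(\xi)$ for $\rho(x,\xi)\le\lambda^{-1}$, or more simply a pointwise bound in terms of the Hardy--Littlewood maximal function. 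Since $\rho(x,\xi)<\delta=\gamma\lambda^{-1}$ on $A_\xi$, this factor is $\lesssim\gamma^\alpha$. Combining with Cauchy--Schwarz on $A_\xi$, summing over $\xi$, and using the Fefferman--Stein maximal inequality \eqref{max-ineq} (with $q=2$, $p=2$, $0<t<2$) together with the $L^2$-boundedness of $\cM_t$ on $\Sigma_\lambda^2$, one obtains
\begin{equation}\label{eq:error-bound}
\Big|\sum_{\xi\in\cX}\int_{A_\xi}\big(|f(x)|^2-|f(\xi)|^2\big)\,d\mu(x)\Big|\le c\,\gamma^\alpha\,\|f\|_2^2,
\end{equation}
and then choosing $\gamma$ so that $c\gamma^\alpha\le\eps$ yields \eqref{sampling-L2}.

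Alternatively, and this is really the cleanest route since the heavy lifting is already done in \cite{CKP}, I would simply invoke Theorem~4.2 of \cite{CKP} as a black box: that theorem is stated precisely as a sampling/quadrature result of the form \eqref{sampling-L2} (possibly for a single scale $\lambda$ or with $\lambda=1$ and a dilation argument), so the proposition is essentially a restatement specialized to the maximal-net setup and the partition $\{A_\xi\}$ described in \S\ref{subsec:geometry}. In that case the ``proof'' consists of matching notation — identifying the net parameter $\delta=\gamma\lambda^{-1}$, the cells $A_\xi$, and the space $\Sigma_\lambda^2$ — and observing that the constant $\gamma$ produced there depends only on $\eps$ and the structural constants.

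The main obstacle is the error estimate \eqref{eq:error-bound}: one needs a Bernstein-type inequality showing that functions in $\Sigma_\lambda^2$ vary by at most a small fraction of their norm over balls of radius $\gamma\lambda^{-1}$, uniformly in $\lambda$. This is exactly where the Hölder continuity \eqref{lip} of the heat kernel (hence \eqref{lip-ker}) and the doubling geometry are used, and where care is needed to make all constants structural and the smallness genuinely governed by $\gamma$. If instead one cites \cite[Theorem~4.2]{CKP} directly, then there is no real obstacle and the proof is a short reduction; I expect the paper takes this latter route.
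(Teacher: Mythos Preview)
Your proposal is correct, and you anticipated the paper's approach exactly: the paper does not give any argument for Proposition~\ref{prop:sampling} but simply presents it as an immediate consequence of \cite[Theorem~4.2]{CKP}, so the ``proof'' is just the citation you describe in your alternative route. Your first, more detailed sketch (decomposition \eqref{eq:decomp-sampling} plus a Bernstein/H\"older oscillation bound controlled by $\gamma^\alpha$) is essentially how that theorem is proved in \cite{CKP}, but the present paper does not reproduce any of it.
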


At this point, we introduce a constant $0<\varepsilon <1$ that will be specified later on.
We use the above proposition to produce for each $j\in\bZ$ a maximal $\delta_j$-net $\cX_j$ on $M$
with $\delta_j:=\gamma b^{-j-2}$
and a disjoint partition $\{A_\xi\}_{\xi\in\XX_j}$ of $M$ such that
\begin{equation}\label{sampling-Xj}
(1-\eps)\| f \|_2^2 \le \sum_{\xi \in \XX_j} |A_\xi|| f(\xi)|^2 \le (1+\eps)\| f \|_2^2,
\quad \forall f\in \Sigma_{b^{j+2}}^2.
\end{equation}
Set
$\cX:=\cup_{j\in \bZ}\cX_j$,
where equal points from different sets $\cX_j$ will be regarded as distinct elements of $\cX$,
and hence $\cX$ can be used as an index set.

Frame \# 1 $\{\psi_{\xi}\}_{\xi\in\cX}$ is defined by
\begin{equation}\label{def-frame}
\psi_{\xi} (x):= |A_\xi |^{1/2} \Psi_j(\sqrt L)(x,\xi),
\quad \xi \in \XX_j,\; j\in \bZ.
\end{equation}

\subsection*{Construction of Frame \# 2}

A dual frame $\{\tilde\psi_{\xi}\}$ was constructed similarly as in \cite{KP} with properties
similar to the properties of $\{\psi_{\xi}\}$.

The first step in this construction is to introduce a cutoff function
\begin{equation}\label{def:Gamma}
\Gamma(u)=\Phi(b^{-2}u)-\Phi(bu),
\end{equation}
where $\Phi$ is from the construction of Frame \#1.
Clearly, $\supp \Gamma\subset[b^{-1},b^3]$ and $\Gamma=1$ on $[1, b^2]$,
implying $\Gamma(u)\Psi_1(u)=\Psi_1(u)$.

The construction of Frame \# 2 hinges on the following

%%%%%%%%% Lemma

\begin{lemma}\label{lem:instrument}$($\cite{CKP}$)$
There exists a constant $0<\varepsilon<1$ such that the following claim holds true.
Given $\lambda>0$, let $\XX$ be a~maximal $\delta-$net on $M$,
where
$\delta:=\gamma\lambda^{-1}b^{-3}$
with $\gamma$ the constant from Proposition~\ref{prop:sampling},
and suppose $\{A_\xi\}_{\xi\in\XX}$ is a companion disjoint partition of $M$
consisting of measurable sets such that
$B(\xi, \delta/2) \subset  A_\xi \subset B(\xi,\delta)$, $\xi \in \XX$ $(\S\ref{subsec:geometry})$.
%just as in \S\ref{sec:max-d-nets}.
%
Set $\omega_\xi := (1+\varepsilon)^{-1}|A_\xi| \sim  |B(\xi, \delta)|$.
Then there exists a linear operator $\TT_\lambda: L^2(M)\to L^2(M)$
of the form $\TT_\lambda = \Id + \SSS_\lambda$ such that

$(a)$
$$
\|f \|_2\le \|\TT_\lambda f\|_2 \le \frac 1{1-2\varepsilon}\|f \|_2,
\quad \forall f \in L^2.
$$

$(b)$
$\SSS_\lambda$ is an integral operator with kernel
$\SSS_\lambda(x, y)$ verifying
\begin{equation}\label{local-S}
|\SSS_\lambda(x,y)|
\le \frac{c\exp\big\{-\frac{\kappa}{2}\big(\lambda\rho(x, y)\big)^\beta\big\}}
{\big(|B(x, \lambda^{-1})||B(y, \lambda^{-1})|)^{1/2}},
%\le  cE(x,y; \lambda^{-1}, \kappa/2),
\quad \forall x,y\in M.
\end{equation}

$(c)$
$\SSS_\lambda (L^2)\subset \Sigma_{[\lambda b^{-1}, \lambda b^3]}^2$.

$(d)$ For any $f\in L^2(M)$ such that $ \Gamma(\lambda^{-1} \sqrt L)f =f$ we have
\begin{equation}\label{instr-1}
f(x) =  \sum_{\xi \in \XX}  \omega_\xi  f(\xi)  \TT_\lambda [\Gamma_\lambda(\cdot, \xi)](x),
\quad \forall x\in M,
\end{equation}
where $\Gamma_\lambda(\cdot, \cdot)$ is the kernel of the operator
$\Gamma_\lambda:=\Gamma(\lambda^{-1} \sqrt L)$ with $\Gamma$ from $(\ref{def:Gamma})$.
\end{lemma}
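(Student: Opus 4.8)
The plan is to build the operator $\TT_\lambda$ explicitly from a sampling/reproducing identity and then verify the four properties in turn. First I would fix $\lambda>0$ and consider the operator $U_\lambda$ defined on $L^2$ by $U_\lambda f(x) := \sum_{\xi\in\XX}\omega_\xi f(\xi)\,\Gamma_\lambda(x,\xi)$. By Proposition~\ref{prop:sampling} (applied with the appropriate net and an auxiliary cutoff equal to $1$ on $[0,\lambda b^3]$, so that every $f$ with $\Gamma(\lambda^{-1}\sqrt L)f=f$ lies in a spectral space where the sampling inequality is valid) this $U_\lambda$ is close to the identity on the relevant subspace; more precisely, for $f\in\Sigma^2_{[\lambda b^{-1},\lambda b^3]}$ one has $\|U_\lambda f - f\|_2\le \eps'\|f\|_2$ with $\eps'$ controllable by the sampling parameter $\eps$. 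The key point is that $\Gamma_\lambda$ acts as a reproducing kernel on this band: $\Gamma(\lambda^{-1}u)=1$ on $[\lambda,\lambda b^2]$, so $\Gamma_\lambda$ fixes $\Sigma^2_{[\lambda,\lambda b^2]}$, and the discretization error is exactly what the sampling theorem bounds. Then I would set $\TT_\lambda := \Id + \SSS_\lambda$ where $\SSS_\lambda$ is chosen so that $\TT_\lambda U_\lambda = \Id$ on the subspace $\{f:\Gamma(\lambda^{-1}\sqrt L)f=f\}$; concretely, letting $R_\lambda := \Gamma_\lambda - U_\lambda\Gamma_\lambda$ (the "defect'' operator, small in norm and spectrally localized by construction), one takes $\TT_\lambda := \sum_{k\ge 0} R_\lambda^k\,\Gamma_\lambda\,(\text{suitably normalized})$, i.e. a Neumann series, and $\SSS_\lambda := \TT_\lambda - \Id$. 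This is the standard route in \cite{KP} and it yields (a) and (d) essentially by construction: (d) is the statement that $\TT_\lambda\circ(\text{sampling})$ reproduces $f$, and (a) follows from the Neumann series geometric estimate $\|\SSS_\lambda\|\le \eps'/(1-\eps')$ with $\eps'<1/2$ once $\eps$ is taken small enough.

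The substantive work is property (b), the sub-exponential kernel bound on $\SSS_\lambda$. Here I would argue as follows. Each summand $R_\lambda^k\Gamma_\lambda$ is a composition of integral operators whose kernels all enjoy sub-exponential localization: the kernel of $\Gamma_\lambda=\Gamma(\lambda^{-1}\sqrt L)$ satisfies a bound of the form \eqref{sub-exp-Phi} by Theorem~\ref{thm:band-sub-exp} (since $\Gamma$ is a difference of admissible cutoffs), and the kernel of $U_\lambda$ is a discrete superposition $\sum_\xi \omega_\xi \Gamma_\lambda(x,\xi)\Gamma_\lambda(\xi,y)$ whose localization is controlled by the discrete summation Lemma~\ref{lem:est-discr-sum} together with the $\delta$-net spacing $\delta\sim\lambda^{-1}$. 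The composition of two sub-exponentially localized kernels is again sub-exponentially localized with the same exponent $\beta$ (losing only a constant in $\kappa$), by an integration estimate of the type in Lemma~\ref{lem:gen-ineq}, adapted to the sub-exponential weight $\exp\{-\kappa(\lambda\rho(\cdot,\cdot))^\beta\}$ via the elementary inequality $(a+b)^\beta\le a^\beta+b^\beta$. Summing the geometric series in $k$ (each step contributing a factor $\le\eps'$ in operator norm and a bounded factor in the localization constant, which one must arrange so that the constant in $\kappa$ does not degrade to $0$ — this is why the stated bound carries $\kappa/2$ rather than $\kappa$, leaving room for the loss) produces the claimed bound \eqref{local-S}. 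Property (c), $\SSS_\lambda(L^2)\subset\Sigma^2_{[\lambda b^{-1},\lambda b^3]}$, is then immediate: every factor $\Gamma_\lambda$ on the left of each term projects onto the spectral band $\supp\Gamma(\lambda^{-1}\cdot)=[\lambda b^{-1},\lambda b^3]$, so the whole range of $\SSS_\lambda$ sits inside that spectral space.

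The main obstacle I anticipate is the bookkeeping in (b): one must track how the sub-exponential constant $\kappa$ and the multiplicative constant $c$ behave under each of the two operations (composition of kernels, and discretization via the $\delta$-net), and then verify that the geometric-series summation over $k$ converges not only in operator norm but also \emph{in the weighted kernel norm} $\sup_{x,y}|K(x,y)|(|B(x,\lambda^{-1})||B(y,\lambda^{-1})|)^{1/2}\exp\{\tfrac{\kappa}{2}(\lambda\rho(x,y))^\beta\}$. This requires choosing $\eps$ (equivalently the net density) small enough that the product of the norm-contraction factor and the constant-growth factor in this stronger norm is still $<1$. Everything else — the Neumann series, properties (a), (c), (d) — is then routine, following the template of \cite{KP,CKP}.
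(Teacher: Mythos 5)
The overall template you propose — build a small ``defect'' operator by discretizing a reproducing integral, invert by a Neumann series, and then propagate the sub-exponential kernel bounds through the composition — is indeed the route taken in \cite{CKP,KP}, and your anticipation of the bookkeeping difficulties in part (b) is apt. However, there is a genuine error in the definition of your defect operator that would make the Neumann series diverge.

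You set $R_\lambda := \Gamma_\lambda - U_\lambda\Gamma_\lambda$ and assert it is ``small in norm.'' This is false. Note that $U_\lambda\Gamma_\lambda$ is precisely the operator $V_\lambda$ whose kernel is $\sum_{\xi}\omega_\xi\Gamma_\lambda(x,\xi)\Gamma_\lambda(\xi,y)$, i.e.\ the Riemann-sum discretization (over the $\delta$-net, with weights $\omega_\xi$) of the \emph{iterated} kernel
$\Gamma_\lambda^2(x,y)=\int_M\Gamma_\lambda(x,u)\Gamma_\lambda(u,y)\,d\mu(u)$, not of $\Gamma_\lambda(x,y)$ itself. What the sampling theorem actually makes small is therefore $\Gamma_\lambda^2 - V_\lambda$, not $\Gamma_\lambda - V_\lambda$. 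Your $R_\lambda$ is approximately $\Gamma_\lambda - \Gamma_\lambda^2 = \Gamma(\lambda^{-1}\sqrt L)\big(\Id - \Gamma(\lambda^{-1}\sqrt L)\big)$, a spectral multiplier whose symbol $\Gamma(u)(1-\Gamma(u))$ is uniformly of size about $1/4$ near the edges of $\supp\Gamma$; its $L^2\to L^2$ norm is bounded away from $0$ independently of $\eps$, so $\sum_{k\ge 0}R_\lambda^k$ does not converge. The correct defect operator used in \cite{CKP,KP} (and recalled explicitly in the proof of Lemma~\ref{lem:frame-molec} in this paper) is $R_\lambda := \Gamma_\lambda^2 - V_\lambda$; with that choice the sampling theorem gives $\|R_\lambda\|_{L^2\to L^2}<1$ for $\eps$ small, one sets $\TT_\lambda := \sum_{k\ge 0}R_\lambda^k = \Id + \SSS_\lambda$ with $\SSS_\lambda = R_\lambda\TT_\lambda$, and (a)--(d) follow by the computations you sketch. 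Note also that with your $R_\lambda$ you wrote $\TT_\lambda := \sum_{k\ge 0}R_\lambda^k\Gamma_\lambda$, whose $k=0$ term is $\Gamma_\lambda$ rather than $\Id$; then $\SSS_\lambda := \TT_\lambda - \Id$ would contain the non-band-limited piece $\Gamma_\lambda - \Id$ and property (c) would fail as stated. Replacing $\Gamma_\lambda - V_\lambda$ by $\Gamma_\lambda^2 - V_\lambda$ resolves both problems at once, because then $\TT_\lambda = \sum_{k\ge 0}R_\lambda^k$ has $\Id$ as its $k=0$ term and every $R_\lambda^k$ with $k\ge 1$ lands in $\Sigma^2_{[\lambda b^{-1},\lambda b^3]}$.

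Once the defect operator is fixed, the remaining steps you outline for (b) — sub-exponential bounds for $\Gamma_\lambda$ from Theorem~\ref{thm:band-sub-exp}, discrete sums via Lemma~\ref{lem:est-discr-sum}, continuous compositions via Lemma~\ref{lem:gen-ineq} with the elementary inequality $(a+b)^\beta\le a^\beta+b^\beta$, and tracking the weighted kernel norm through the geometric series — are the right ingredients and match the argument in \cite{CKP,KP}.
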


We use the above lemma to select the constant $\varepsilon$ ($0<\varepsilon <1$)
that was used in the construction of Frame \#1.

Let $\cX_j$ and $\{A_\xi\}_{\xi\in\cX_j}$ be as in the definition of Frame \#1.
Denote briefly $\Gamma_{\lambda_{j}}=\Gamma(b^{-j+1}\sqrt{L})$
for $j\in \bZ$ with $\lambda_{j}:=b^{j-1}$,
and let $\TT_{\lambda_j} = \Id + \SSS_{\lambda_j}$
be the operator from Lemma~\ref{lem:instrument}, applied with $\lambda=\lambda_j$.
The dual frame $\{\tilde\psi_{\xi}\}_{\xi\in\cX}$ is defined by
\begin{equation}\label{def-dual}
\tilde\psi_{\xi}(x):=c_{\varepsilon}|A_{\xi}|^{1/2}T_{\lambda_{j}}\big[\Gamma_{\lambda_{j}}(\cdot,\xi)\big](x),
\quad \xi\in\cX_{j},\; j\in\bZ,
\quad c_{\varepsilon}:=(1+\varepsilon)^{-1}.
\end{equation}

In the next theorem we record the main properties of
$\{\psi_{\xi}\}_{\xi\in\cX}$ and $\{\tilde\psi_{\xi}\}_{\xi\in\cX}$.

%%%%%%%%%%%%% Theorem

\begin{theorem}\label{thm:frames}
$(a)$ {\rm Representation:}
For any $f\in\cS'/\PP$,
\begin{equation}\label{rep-L2}
f = \sum_{\xi \in \XX} \langle f, \tilde\psi_{\xi}\rangle \psi_{\xi}
= \sum_{\xi \in \XX} \langle f, \psi_{\xi}\rangle \tilde\psi_{\xi}
\quad\hbox{in}\;\; \cS'/\PP.
\end{equation}

$(b)$ {\rm Space localization:} For any $0<\kaph<\kap/2$, $m \in \bZ$, and any $\xi\in\XX_j$, $j\in\bZ$,
\begin{equation}\label{prop-tpsi-1}
|L^m\psi_{\xi} (x)|, |L^m\tilde\psi_{\xi} (x)|
\le c_m b^{2jm}|B(\xi, b^{-j})|^{-1/2} \exp\big\{-\kaph(b^j\rho(x, \xi))^\beta\big\}.
\end{equation}

$(c)$ {\rm Spectral localization:}
$\psi_{\xi}\in \Sigma_{[b^{j-1}, b^{j+1}]}^p$
and
$\tilde\psi_{\xi}\in \Sigma_{[b^{j-2}, b^{j+2}]}^p$ for $\xi\in \XX_j$, $j\in\bZ$,
$0<p\le\infty$.

$(d)$ {\rm Norms:} For any $\xi\in\XX_j$, $j\in\bZ$,
\begin{equation}\label{frame-norms}
\|\psi_{\xi}\|_p \sim
\|\tilde\psi_{\xi}\|_p \sim |B(\xi, b^{-j})|^{\frac 1p-\frac 12}
\quad\hbox{for} \;\; 0< p \le \infty.
\end{equation}

$(e)$ {\rm Frame:}
The system $\{\tilde\psi_{\xi}\}$ as well as $\{\psi_{\xi}\}$
is a frame for $L^2$, namely, there exists a constant $c>0$ such that
\begin{equation}\label{frame-tpsi}
 c^{-1}\|f\|_2^2 \le
\sum_{\xi \in \XX} |\langle f, \tilde\psi_{\xi}\rangle|^2
\leq  c\| f\|_2^2,
\quad \forall f\in L^2.
\end{equation}
\end{theorem}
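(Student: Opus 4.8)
\textbf{Proof proposal for Theorem~\ref{thm:frames}.}
The plan is to derive all five parts from the construction of the two frames together with the kernel estimates already collected in Sections~\ref{Background}--\ref{sec:frames}. I would begin with part $(b)$, since the localization estimates are the technical backbone for everything else. For $\psi_\xi$ with $\xi\in\XX_j$, we have $\psi_\xi(x)=|A_\xi|^{1/2}\Psi_j(\sqrt L)(x,\xi)$, so $L^m\psi_\xi(x)=|A_\xi|^{1/2}[L^m\Psi_j(\sqrt L)](x,\xi)$ by Remark~\ref{rem:calculus}; then \eqref{m-sub-exp-Phi} applied with $\delta=b^{-j}$ gives the bound $c_m b^{2jm}(|B(x,b^{-j})||B(\xi,b^{-j})|)^{-1/2}\exp\{-\kappa(b^j\rho(x,\xi))^\beta\}$. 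Using \eqref{D2} to replace $|B(x,b^{-j})|$ by $|B(\xi,b^{-j})|$ at the cost of a polynomial factor $(1+b^j\rho(x,\xi))^d$, which is absorbed into the sub-exponential term after shrinking the exponent from $\kappa$ to any $\kaph<\kappa$, yields \eqref{prop-tpsi-1} for $\psi_\xi$. For $\tilde\psi_\xi$ one argues similarly starting from \eqref{def-dual}: the operator $\TT_{\lambda_j}=\Id+\SSS_{\lambda_j}$ applied to the kernel $\Gamma_{\lambda_j}(\cdot,\xi)$ produces, via the Gaussian/sub-exponential bounds on $\Gamma(\delta\sqrt L)(x,y)$ from Theorem~\ref{thm:band-sub-exp} and on $\SSS_{\lambda_j}(x,y)$ from \eqref{local-S}, a convolution-type integral that is controlled by Lemma~\ref{lem:gen-ineq} (or a sub-exponential analogue), giving the same kind of estimate.

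Granting $(b)$, part $(c)$ is immediate from the spectral support of the defining multipliers: $\Psi_j$ is supported in $[b^{j-1},b^{j+1}]$, so $\psi_\xi=|A_\xi|^{1/2}\Psi_j(\sqrt L)(\cdot,\xi)$ lies in the spectral space $\Sigma^p_{[b^{j-1},b^{j+1}]}$; for $\tilde\psi_\xi$, item $(c)$ of Lemma~\ref{lem:instrument} places $\SSS_{\lambda_j}(L^2)$ inside $\Sigma^2_{[b^{j-2},b^{j+2}]}$ while $\Gamma_{\lambda_j}(\cdot,\xi)$ itself has spectrum in $[b^{j-2},b^{j+2}]$ (from $\supp\Gamma\subset[b^{-1},b^3]$ with $\lambda_j=b^{j-1}$), and the membership extends from $p=2$ to all $0<p\le\infty$ by the finite-speed-propagation localization. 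Part $(d)$ follows from $(b)$ combined with a matching lower bound: the upper bound $\|\psi_\xi\|_p\le c|B(\xi,b^{-j})|^{1/p-1/2}$ comes by integrating the $m=0$ case of \eqref{prop-tpsi-1} using \eqref{tech-1}, while for the lower bound one uses that $\Psi_j(\sqrt L)(\xi,\xi)$ is comparable to $|B(\xi,b^{-j})|^{-1}$ (a standard consequence of the lower Gaussian/Markov structure), so $\psi_\xi$ is bounded below on a ball $B(\xi,cb^{-j})$ by $c|B(\xi,b^{-j})|^{-1/2}$; integrating over that ball gives $\|\psi_\xi\|_p\ge c|B(\xi,b^{-j})|^{1/p-1/2}$, and the same for $\tilde\psi_\xi$.

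For part $(a)$, the representation, I would start from \eqref{repres-Psi-j}, $f=\sum_j\Psi_j(\sqrt L)f$ in $\cS'/\PP$. Fix $j$ and note $\Gamma_{\lambda_j}$ acts as the identity on the range of $\Psi_j(\sqrt L)$ because $\Gamma(u)\Psi_1(u)=\Psi_1(u)$ rescales to $\Gamma_{\lambda_j}(\sqrt L)\Psi_j(\sqrt L)=\Psi_j(\sqrt L)$; hence $g:=\Psi_j(\sqrt L)f$ satisfies $\Gamma_{\lambda_j}(\sqrt L)g=g$, and the sampling/reproducing identity \eqref{instr-1} of Lemma~\ref{lem:instrument}(d) gives $g(x)=\sum_{\xi\in\XX_j}\omega_\xi g(\xi)\,\TT_{\lambda_j}[\Gamma_{\lambda_j}(\cdot,\xi)](x)=\sum_{\xi\in\XX_j}\langle f,\psi_\xi\rangle\,\tilde\psi_\xi(x)$ after unwinding the normalizations $\omega_\xi=(1+\varepsilon)^{-1}|A_\xi|$, $\psi_\xi=|A_\xi|^{1/2}\Psi_j(\sqrt L)(\cdot,\xi)$, $\tilde\psi_\xi=c_\varepsilon|A_\xi|^{1/2}\TT_{\lambda_j}[\Gamma_{\lambda_j}(\cdot,\xi)]$ and using $g(\xi)=\langle f,\overline{\Psi_j(\sqrt L)(\cdot,\xi)}\rangle$ with $\Psi_j$ even real-valued. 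Summing over $j$ and invoking \eqref{converge} (it suffices to test against $\phi\in\cS_\infty$) produces $f=\sum_\xi\langle f,\psi_\xi\rangle\tilde\psi_\xi$; the companion identity $f=\sum_\xi\langle f,\tilde\psi_\xi\rangle\psi_\xi$ follows by the same scheme with the roles interchanged, or by a duality argument. The convergence of the double sum and the interchange of the $j$-sum with the $\xi$-sum must be justified in $\cS'/\PP$; here the decay \eqref{prop-tpsi-1} against the rapidly decreasing test functions in $\cS_\infty$ (using $\cP^\star_m$ and that $L^{-k}\phi\in\cS$) makes the sums absolutely convergent. Finally, part $(e)$: $\{\tilde\psi_\xi\}$ and $\{\psi_\xi\}$ being frames for $L^2$ follows by combining the representation $(a)$ restricted to $f\in L^2$ with the $L^2$-boundedness $\|\TT_{\lambda_j}f\|_2\le(1-2\varepsilon)^{-1}\|f\|_2$ from Lemma~\ref{lem:instrument}(a) and the sampling inequalities \eqref{sampling-Xj}; the upper frame bound comes from $\sum_{\xi\in\XX_j}\omega_\xi|g(\xi)|^2\sim\|g\|_2^2$ applied to $g=\Psi_j(\sqrt L)f$ together with the almost-orthogonality of the blocks $\{\Psi_j(\sqrt L)f\}_j$ (their spectra overlap only for $|j-j'|\le1$), and the lower bound from the representation formula and Cauchy--Schwarz.

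The main obstacle I anticipate is part $(a)$: rigorously justifying that the double series $\sum_{j\in\bZ}\sum_{\xi\in\XX_j}\langle f,\psi_\xi\rangle\tilde\psi_\xi$ converges in $\cS'/\PP=\cS'_\infty$ and may be rearranged, for a general distribution $f$ (not just $f\in L^2$). This requires quantitative control of $\langle f,\psi_\xi\rangle$ via \eqref{distribution-1-infty}, the localization \eqref{prop-tpsi-1} of both $\psi_\xi$ and $\tilde\psi_\xi$, and summation estimates in the spirit of Lemma~\ref{lem:est-discr-sum}, all uniform across scales $j\to\pm\infty$. The reproducing identity \eqref{instr-1} is itself only asserted for $f\in L^2$ with $\Gamma_{\lambda_j}(\sqrt L)f=f$, so one must first apply it to the band-limited piece $\Psi_j(\sqrt L)f\in L^2$ (which is legitimate, since $\Psi_j(\sqrt L)f$ is a slowly growing continuous function by Proposition~\ref{prop:distr-infty}) and only afterward pass to the limit in $j$; making that passage precise against the topology of $\cS_\infty$ is the crux.
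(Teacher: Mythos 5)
This theorem is stated in the paper without proof: it is a summary of constructions and results drawn from \cite{CKP,GKKP,KP} (the paper itself says, just before the statement, that it merely ``records the main properties'' of the two frames). So there is no in-paper proof to compare against; what follows is an assessment of your reconstruction on its own terms, against the lemmas that the present paper does make available.

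The overall architecture you propose is the right one, and parts $(b)$, $(c)$, and the $\Sigma_1$-style manipulations are sound as sketched. Two places need more care.

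First, part $(d)$. Integrating the $m=0$ case of $(\ref{prop-tpsi-1})$ against $(\ref{tech-1})$ does give the upper bound $\|\psi_\xi\|_p\le c|B(\xi,b^{-j})|^{1/p-1/2}$ for $p\le\infty$, but only for $p>d/(\text{decay exponent})$ unless one uses the sub-exponential (not just polynomial) decay, which is what one has here — fine. The lower bound, however, is not the ``standard consequence'' you assert. Nothing in the hypotheses gives a \emph{pointwise} lower bound on $\Psi_j(\sqrt L)(\xi,\xi)$: there is no two-sided heat kernel bound in this setting, only the upper Gaussian estimate $(\ref{Gauss-local})$. What actually works is to prove the lower bound first for $p=2$ via Plancherel and the fact that $\Psi$ does not vanish identically (so $\|\Psi_j(\sqrt L)(\cdot,\xi)\|_2^2 = \Psi_j^2(\sqrt L)(\xi,\xi)$ is bounded below by a separate argument in the cited construction), and then to bootstrap to all $p$ via the spectral localization in $(c)$ and the Nikolskii-type inequalities for spectral spaces. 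Your direct pointwise approach would need the Markov property exploited in a specific way (e.g.\ $\int p_t(\xi,y)\,d\mu(y)=1$ plus the upper Gaussian decay to show most of the mass lies in $B(\xi,\sqrt t)$), which you do not spell out.

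Second, the gap you yourself flag in part $(a)$ is real and is not resolved by the remark that ``$\Psi_j(\sqrt L)f$ is a slowly growing continuous function.'' The reproducing formula $(\ref{instr-1})$ in Lemma~\ref{lem:instrument}$(d)$ is only asserted for $f\in L^2$ with $\Gamma_{\lambda_j}(\sqrt L)f=f$; a slowly growing function of band-limited type is generally not in $L^2$ when $\mu(M)=\infty$, so you cannot apply $(\ref{instr-1})$ to $\Psi_j(\sqrt L)f$ directly. The standard way around this (and what the cited works do) is a duality pass: prove the representation $(\ref{rep-L2})$ first for $f\in L^2$ by your argument, and then extend to $f\in\cS'/\PP$ by testing $\langle f,\phi\rangle$ against $\phi\in\cS_\infty$, expanding $\phi=\sum_\xi \langle\phi,\tilde\psi_\xi\rangle\psi_\xi$ using the already-established $L^2$ case, and interchanging the sum and the pairing using the decay $(\ref{prop-tpsi-1})$ together with a quantitative bound $|\langle f,\psi_\xi\rangle|\le c\,\cP_m^\star(\psi_\xi)$ drawn from $(\ref{distribution-1-infty})$. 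Your plan stops short of this duality step, so as written part $(a)$ does not quite close.

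\end{document}
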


\section{Homogeneous Besov and Triebel-Lizorkin spaces}\label{sec:hom-spaces}

Homogeneous Besov and Triebel-Lizorkin spaces in the setting of this article are developed in \cite{GKKP}.
Next, we recall the definition of these spaces and and some basic results on them.

\smallskip

\noindent
{\bf Definition of homogeneous Besov and Triebel-Lizorkin spaces.}
To deal with possible anisotropic geometries we introduced in \cite{GKKP} two types of
homogeneous Besov (B) and Triebel-Lizorkin (F) spaces:

(i)  Classical homogeneous B-spaces  $\BB_{pq}^{s}=\BB_{pq}^{s}(L)$ and F-spaces $\FF_{pq}^{s}=\FF_{pq}^{s}(L)$,
and

(ii)  Nonclassical homogeneous B-spaces $\tBB_{pq}^{s}=\tBB_{pq}^{s}(L)$ and F-spaces $\tFF_{pq}^{s}=\tFF_{pq}^{s}(L)$.

Let the function
$\varphi\in C^\infty(\bR_+)$
satisfy
\begin{equation}\label{cond_varphi}
\supp \varphi \subset   [1/2, 2], \;\;
|\varphi(\lambda)| \ge c>0 \;\hbox{ for } \lambda\in [2^{-3/4}, 2^{3/4}].
\end{equation}
Then
$\sum_{j\in \bZ} |\varphi(2^{-j}\lambda)| \ge c >0$
for $\lambda \in  \R_+$.
Set $\varphi_j(\lambda):= \varphi(2^{-j}\lambda)$ for $j\in \bZ$.

%%%%%%%%%%%% Definition

\begin{definition}\label{def-B-spaces}
Let $s \in \R$ and $0<p,q \le \infty$.

$(i)$ The Besov space  $\BB_{pq}^{s}=\BB_{pq}^{s}(L)$
is defined as the set of all $f \in \cS'/\PP$ such that
\begin{equation}\label{def-Besov-space1}
\|f\|_{\BB_{pq}^{s}} :=
\Big(\sum_{j\in\bZ} \Big(2^{js}
\|\varphi_j(\sqrt{L}) f(\cdot)\|_{L^p}
\Big)^q\Big)^{1/q} <\infty.
\end{equation}

$(ii)$ The Besov space  $\tBB_{pq}^{s}= \tBB_{pq}^{s}(L)$ is defined as the set
of all $f \in \cS'/\PP$ such that
\begin{equation}\label{def-Besov-space2}
\|f\|_{\tBB_{pq}^{s}} :=
\Big(\sum_{j\in\bZ} \Big(
\| |B(\cdot, 2^{-j})|^{-s/d}
\varphi_j(\sqrt{L}) f(\cdot)\|_{L^p}
\Big)^q\Big)^{1/q} <\infty.
\end{equation}
%Above the $\ell^q$-norm is replaced by the sup-norm if $q=\infty$.
\end{definition}

%%%%%%%%% Definition

\begin{definition}\label{def-F-spaces}
Let $s\in \R$, $0<p<\infty$, and $0<q \le \infty$.

$(a)$
The Triebel-Lizorkin space  $\FF_{pq}^{s}= \FF_{pq}^{s}(L)$
is defined as the set of all $f \in \cS'/\PP$
such that
\begin{equation}\label{def-F-space1}
\|f\|_{\FF_{pq}^s} :=
\Big\|\Big(\sum_{j\in \bZ} \Big(
2^{js}
|\varphi_j(\sqrt L) f(\cdot)|
\Big)^q\Big)^{1/q}\Big\|_{L^p} <\infty.
\end{equation}

$(b)$
The Triebel-Lizorkin space  $\tFF_{pq}^{s}= \tFF_{pq}^{s}(L)$
is defined as the set of all $f \in \cS'/\PP$
such that
\begin{equation}\label{def-F-space2}
\|f\|_{\tFF_{pq}^s} :=
\Big\|\Big(\sum_{j\in \bZ} \Big(
|B(\cdot, 2^{-j})|^{-s/d}
|\varphi_j(\sqrt L) f(\cdot)|
\Big)^q\Big)^{1/q}\Big\|_{L^p} <\infty.
\end{equation}
\end{definition}
Above in both definitions the $\ell^q$-norm is replaced by the sup-norm if $q=\infty$.

\smallskip

Several remarks regarding the homogeneous Besov and Triebel-Lizorkin spaces are in order.

(1) The above definitions of
the spaces $\BB_{pq}^s$, $\tBB_{pq}^s$, $\FF_{pq}^s$, and $\tFF_{pq}^s$ are independent of
the particular selection of the function $\varphi\in C^\infty(\R_+)$ obeying $(\ref{cond_varphi})$.

(2) In the definitions of the $\BB_{pq}^s$, $\tBB_{pq}^s$, $\FF_{pq}^s$, and $\tFF_{pq}^s$
spaces above the role of the constant $2$ can be played by an arbitrary $\beta>1$,
then e.g. $2^{js}$ in $(\ref{def-Besov-space1})$ and $(\ref{def-F-space1})$ will be replaced by $\beta^{js}$.
and then the resulting norms are equivalent
to the ones from Definitions~\ref{def-B-spaces} and \ref{def-F-spaces}.

(3) The space $\cS_\infty$ is continuously embedded in each of the spaces
$\BB_{pq}^s$, $\tBB_{pq}^s$, $\FF_{pq}^s$, and $\tFF_{pq}^s$ and each of the last
is continuously embedded in $\cS'/\PP=\cS_\infty '$.

(4) Each of the spaces $\BB_{pq}^s$, $\tBB_{pq}^s$, $\FF_{pq}^s$, and $\tFF_{pq}^s$
is continuously embedded in $\cS'/\PP$, that is,
there exist constants $m\ge 0$ and $c>0$, depending on $s, p, q$, such that
\begin{equation}\label{cont-embed-B}
|\langle f, \phi\rangle| \le c\|f\|_{\BB_{pq}^s}\cP_m^\star(\phi),
\quad \forall f\in \BB_{pq}^s, \;\; \forall \phi\in \cS_\infty,
\end{equation}
and similar inequalities hold for $\tBB_{pq}^s$, $\FF_{pq}^s$, and $\tFF_{pq}^s$.

(5)  By a standard argument the above assertion readily implies that
the spaces $\BB_{pq}^s$, $\tBB_{pq}^s$, $\FF_{pq}^s$, and $\tFF_{pq}^s$ are complete
and hence they are quasi-Banach spaces $($Banach spaces if $p, q \ge 1$$)$.

\subsection*{Frame decomposition of homogeneous Besov and Triebel-Lizorkin spaces}

One of the main results in \cite{GKKP} asserts that the homogeneous Besov and Triebel-Lizorkin spaces
in the setting of this article
can be characterized in terms of respective sequence norms of the frame coefficients of distributions,
using the frames
$\{\psi_\xi\}_{\xi\in\cX}$, $\{\tilde\psi_\xi\}_{\xi\in\cX}$
from \S\ref{sec:frames}.
As is \S\ref{sec:frames}
$\cX:= \cup_{j\in \bZ} \cX_j$ will denote the sets of the centers of the frame elements and
$\{A_\xi\}_{\xi\in\cX_j}$ will be the associated partitions of $M$.

In the following we first recall the definition of the homogeneous sequence spaces
$\bb_{pq}^s$, $\tbb_{pq}^s$, and $\ff_{pq}^s$, $\tff_{pq}^s$,
associated with the $\BB$- and $\FF$-spaces,
and then give the frame characterization of the $\BB$- and $\FF$-spaces.

%%%%%%%%%%%%%%%% Definition

\begin{definition}\label{def:b-spaces}
Let $s\in \R$ and $0<p,q \le \infty$.

$(a)$
$\bb_{pq}^s$
is defined as the space of all complex-valued sequences
$a:=\{a_{\xi}\}_{\xi\in \cX}$ such that
\begin{equation}\label{def-tilde-berpq}
\|a\|_{\bb_{pq}^s}
:=\Bigl(\sum_{j\in \bZ}b^{jsq}
\Bigl[\sum_{\xi\in \cX_j}\Big(|B(\xi, b^{-j})|^{1/p-1/2}|a_\xi|\Big)^p
\Bigr]^{q/p}\Bigr)^{1/q} <\infty.
\end{equation}

$(b)$
$\tbb_{pq}^s$
is defined as the space of all complex-valued sequences
$a:=\{a_{\xi}\}_{\xi\in \cX}$ such that
\begin{equation}\label{def-berpq}
\|a\|_{\tbb_{pq}^s}
:=\Bigl(\sum_{j\in \bZ}
\Bigl[\sum_{\xi\in \cX_j}\Big(|B(\xi, b^{-j})|^{-s/d+1/p-1/2}|a_\xi|\Big)^p
\Bigr]^{q/p}\Bigr)^{1/q} <\infty.
\end{equation}
%Above as usual the $\ell^p$ or $\ell^q$ norm is replaced by the $\sup$-norm if
%$p=\infty$ or $q=\infty$.
\end{definition}

%%%%%%%%%%%%%% Definition

\begin{definition}\label{def-f-spaces}
Suppose $s\in \R$, $0<p<\infty$, and $0<q\le\infty$.

$(a)$
$\ff_{pq}^s$
is defined as the space of all complex-valued sequences
$a:=\{a_{\xi}\}_{\xi\in \cX}$ such that
\begin{equation}\label{def-f-space}
\|a\|_{\ff_{pq}^s}
:=\Big\|\Big(\sum_{j\in\bZ}b^{jsq}\sum_{\xi \in \cX_j}
\big[|a_{\xi}|\tONE_{A_\xi}(\cdot)\big]^q
\Big)^{1/q}
\Big\|_{L^p} <\infty.
\end{equation}

$(b)$
$\tff_{pq}^s$
is defined as the space of all complex-valued sequences
$a:=\{a_{\xi}\}_{\xi\in \cX}$ such that
\begin{equation}\label{def-tf-space}
\|a\|_{\tff_{pq}^s}
:=\Big\|\Big(\sum_{\xi \in \cX}
\big[|A_\xi|^{-s/d}|a_{\xi}|\tONE_{A_\xi}(\cdot)\big]^q
\Big)^{1/q}
\Big\|_{L^p} <\infty.
\end{equation}
Here $\tONE_{A_\xi}:=|A_\xi|^{-1/2}\ONE_{A_\xi}$
with $\ONE_{A_\xi}$ being the characteristic function of $A_\xi$.
\end{definition}

Above as usual the $\ell^p$ or $\ell^q$ norm is replaced by the $\sup$-norm if
$p=\infty$ or $q=\infty$.

The ``analysis" and ``synthesis" operators are defined by
\begin{equation}\label{anal_synth_oprts2}
S_{\tilde\psi}: f\rightarrow \{\langle f, \tilde\psi_\xi\rangle\}_{\xi \in \cX}
\quad\text{and}\quad
T_{\psi}: \{a_\xi\}_{\xi \in \cX}\rightarrow \sum_{\xi\in \cX}a_\xi\psi_\xi.
\end{equation}
Here the roles of $\{\psi_\xi\}$ and $\{\tilde\psi_\xi\}$ can be interchanged.

%%%%%%%%%%%%%%%% Theorem

\begin{theorem}\label{thm:B-character}
Let $s \in \R$ and  $0< p,q\le \infty$.
%
%\noindent
$(a)$
The operators
$S_{\tilde\psi}: \BB_{pq}^s \rightarrow  \bb_{pq}^s$ and
$T_{\psi}: \bb_{pq}^s \rightarrow \BB_{pq}^s$
are bounded and $T_{\psi}\circ S_{\tilde\psi}=Id$ on $\BB_{pq}^s$.
Consequently,  for $f\in \cS'/\PP$ we have $f\in \BB_{pq}^s$ if and
only if $\{\langle f, \tilde\psi_\xi\rangle\}_{\xi \in \cX}\in \bb_{pq}^s$.
Moreover, if $f\in \BB_{pq}^s$, then
$
\|f\|_{\BB_{pq}^s} \sim  \|\{\langle f,\tilde\psi_\xi\rangle\}\|_{\bb_{pq}^s}
$
and % under the reverse doubling condition $(\ref{rdc})$
\begin{equation}\label{Bnorm-equivalence-1}
\|f\|_{\BB_{pq}^s} % \sim  \|\{\langle f,\psi_\xi\rangle\}\|_{\tb_{pq}^s}
\sim \Big(\sum_{j\in\bZ} b^{jsq}\Bigl[\sum_{\xi\in \cX_j}
\|\langle f,\tilde\psi_{\xi}\rangle\psi_{\xi}\|_p^p\Bigr]^{q/p}\Bigr)^{1/q}.
\end{equation}

\noindent
$(b)$
The operators
$S_{\tilde\psi}: \tBB_{pq}^s \rightarrow  \tbb_{pq}^s$ and
$T_{\psi}: \tbb_{pq}^s \rightarrow \tBB_{pq}^s$
are bounded and $T_{\psi}\circ S_{\tilde\psi}=Id$ on $\tBB_{pq}^s$.
%Consequently,  for $f\in \cD'$ we have
%$f\in \tB_{pq}^s$ if and only if $\{\langle f, \psi_\xi\rangle\}_{\xi \in \cX}\in \tbb_{pq}^s$.
Hence,
$f\in \tBB_{pq}^s \Longleftrightarrow \{\langle f, \tilde\psi_\xi\rangle\}_{\xi \in \cX}\in \tbb_{pq}^s$.
Furthermore, if $f\in \tBB_{pq}^s$, then
$\|f\|_{\tBB_{pq}^s} \sim  \|\{\langle f,\tilde\psi_\xi\rangle\}\|_{\tbb_{pq}^s}
$
and % under the reverse doubling condition $(\ref{rdc})$
\begin{equation}\label{tBnorm-equiv-1}
\|f\|_{\tBB_{pq}^s} % \sim  \|\{\langle f,\psi_\xi\rangle\}\|_{\tbb_{pq}^s}
\sim \Big(\sum_{j\in\bZ} \Bigl[\sum_{\xi\in \cX_j}
\Big(|B(\xi, b^{-j})|^{-s/d}
\|\langle f,\tilde\psi_{\xi}\rangle\psi_{\xi}\|_{p}\Big)^p\Bigr]^{q/p}\Bigr)^{1/q}.
\end{equation}
Above in $(a)$ and $(b)$ the roles of $\{\psi_\xi\}$ and $\{\tilde\psi_\xi\}$ can be interchanged.
\end{theorem}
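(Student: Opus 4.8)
Granting the reproducing identity $T_{\psi}\circ S_{\tilde\psi}=\Id$ of Theorem~\ref{thm:frames}(a), the whole theorem reduces to the two boundedness estimates
\[
\big\|S_{\tilde\psi}f\big\|_{\bb_{pq}^s}\le c\,\|f\|_{\BB_{pq}^s}
\qquad\text{and}\qquad
\big\|T_{\psi}a\big\|_{\BB_{pq}^s}\le c\,\|a\|_{\bb_{pq}^s},
\]
together with their analogues for $\tbb_{pq}^s$, $\tBB_{pq}^s$. Indeed, if $f\in\cS'/\PP$ and $\{\langle f,\tilde\psi_\xi\rangle\}\in\bb_{pq}^s$, then $f=T_{\psi}S_{\tilde\psi}f\in\BB_{pq}^s$ and $\|f\|_{\BB_{pq}^s}=\|T_{\psi}S_{\tilde\psi}f\|_{\BB_{pq}^s}\le c\|S_{\tilde\psi}f\|_{\bb_{pq}^s}\le c\|f\|_{\BB_{pq}^s}$, so $\|f\|_{\BB_{pq}^s}\sim\|\{\langle f,\tilde\psi_\xi\rangle\}\|_{\bb_{pq}^s}$; finally (\ref{Bnorm-equivalence-1}) and (\ref{tBnorm-equiv-1}) follow from this by writing $\|\langle f,\tilde\psi_\xi\rangle\psi_\xi\|_p=|\langle f,\tilde\psi_\xi\rangle|\,\|\psi_\xi\|_p$ and inserting $\|\psi_\xi\|_p\sim|B(\xi,b^{-j})|^{1/p-1/2}$ from (\ref{frame-norms}). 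Throughout I would work with a Littlewood--Paley resolution of unity $\{\varphi_j(\sqrt L)\}$ of base $b$ furnished by Theorem~\ref{thm:decomp-SP}, for which $\|f\|_{\BB_{pq}^s}\sim\big(\sum_{j\in\bZ}(b^{js}\|\varphi_j(\sqrt L)f\|_p)^q\big)^{1/q}$ by the remarks of \S\ref{sec:hom-spaces}.

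\emph{The analysis bound.} Fix $\xi\in\cX_j$. Since $\tilde\psi_\xi\in\Sigma^2_{[b^{j-2},b^{j+2}]}$ (Theorem~\ref{thm:frames}(c)) and, by Theorem~\ref{thm:decomp-SP}, $f=\sum_k\varphi_k(\sqrt L)f$ in $\cS'/\PP$, orthogonality of spectral projections gives $\langle f,\tilde\psi_\xi\rangle=\sum_{|k-j|\le N_0}\langle\varphi_k(\sqrt L)f,\tilde\psi_\xi\rangle$ for a fixed $N_0$. For each such $k$ I would combine the $m=0$ case of the space localization (\ref{prop-tpsi-1}) of $\tilde\psi_\xi$ with the fact that $\varphi_k(\sqrt L)f$ is band limited at scale $b^{-k}\sim b^{-j}$, hence reproduced by an operator with the kernel decay of Theorem~\ref{thm:S-local-kernels}; this yields a Peetre-type maximal estimate
\[
\big|\langle\varphi_k(\sqrt L)f,\tilde\psi_\xi\rangle\big|\le c\,|B(\xi,b^{-j})|^{1/2}\inf_{y\in A_\xi}\cM_t\big(\varphi_k(\sqrt L)f\big)(y),\qquad 0<t<p.
\]
Raising this to the power $p$, summing over $\xi\in\cX_j$, and using that $\{A_\xi\}_{\xi\in\cX_j}$ is a disjoint partition with $|A_\xi|\sim|B(\xi,b^{-j})|$,
\[
\sum_{\xi\in\cX_j}\big(|B(\xi,b^{-j})|^{1/p-1/2}\,|\langle f,\tilde\psi_\xi\rangle|\big)^p\le c\sum_{|k-j|\le N_0}\big\|\cM_t(\varphi_k(\sqrt L)f)\big\|_p^p.
\]
Multiplying by $b^{jsq}$, taking the $\ell^q$-norm in $j$, shifting the finitely many indices $k$, and invoking the maximal inequality (\ref{max-ineq}) (its $q=p$ case, with $t<p$) gives $\|S_{\tilde\psi}f\|_{\bb_{pq}^s}\le c\|f\|_{\BB_{pq}^s}$.

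\emph{The synthesis bound.} Let $a=\{a_\xi\}\in\bb_{pq}^s$ and set $f:=\sum_\xi a_\xi\psi_\xi$. To see the series converges in $\cS'/\PP=\cS'_\infty$, pair with $\phi\in\cS_\infty$: since $L^{-m}\phi\in\cS$ for all $m\ge1$, one may write $\langle\psi_\xi,\phi\rangle=\langle L^{\mp m}\psi_\xi,L^{\pm m}\phi\rangle$, and (\ref{prop-tpsi-1}) together with the rapid decay of $L^{\pm m}\phi$ produces a factor $b^{\mp 2jm}$ which, for $m$ large, dominates the polynomial-in-$b^{j}$ bounds on $|a_\xi|$ (coming from $a\in\bb_{pq}^s$) and on $\#(\cX_j\cap B(x_0,R))$, so that $\sum_\xi|a_\xi|\,|\langle\psi_\xi,\phi\rangle|<\infty$. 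For the norm estimate, note $\psi_\xi\in\Sigma^p_{[b^{j-1},b^{j+1}]}$, so $\varphi_k(\sqrt L)\psi_\xi=0$ unless $|j-k|\le N_0$, while for $|j-k|\le N_0$ the kernel of $\varphi_k(\sqrt L)\psi_\xi$ is sub-exponentially localized (by (\ref{prop-tpsi-1}) and the calculus of Theorem~\ref{thm:S-local-kernels}), whence
\[
\big|\varphi_k(\sqrt L)f(x)\big|\le c\sum_{|j-k|\le N_0}\sum_{\xi\in\cX_j}|a_\xi|\,|B(\xi,b^{-j})|^{-1/2}\exp\big\{-c'(b^{j}\rho(x,\xi))^{\beta}\big\}.
\]
By Lemma~\ref{lem:est-discr-sum} and the volume comparison (\ref{D2}) (the polynomial growth being swallowed by the sub-exponential factor), the inner sum over $\cX_j$ is $\le c\,\cM_t\big(\sum_{\xi\in\cX_j}|a_\xi|\,|B(\xi,b^{-j})|^{-1/2}\ONE_{A_\xi}\big)(x)$ for $t$ small. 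Taking $L^p$-norms, applying (\ref{max-ineq}) with $0<t<p$, using $\big\|\sum_{\xi\in\cX_j}|a_\xi|\,|B(\xi,b^{-j})|^{-1/2}\ONE_{A_\xi}\big\|_p^p\sim\sum_{\xi\in\cX_j}\big(|B(\xi,b^{-j})|^{1/p-1/2}|a_\xi|\big)^p$, and finally weighting by $b^{ksq}$ and taking $\ell^q$ in $k$, we obtain $\|f\|_{\BB_{pq}^s}\le c\|a\|_{\bb_{pq}^s}$.

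The \textbf{main obstacle} I foresee is carrying the two band-limited maximal estimates (the Peetre bound in the analysis step, the domination by $\cM_t$ in the synthesis step) through the regime $p<1$ and $q<1$: there one must take the decay exponent in Lemma~\ref{lem:est-discr-sum} large enough, i.e.\ $t<p\wedge q$ small enough, exploiting the reverse doubling (\ref{GRD}), and replace every triangle inequality by the corresponding $p$- or $q$-subadditivity. The spaces $\tBB_{pq}^s$, $\tbb_{pq}^s$ are handled by exactly the same two steps, with the weight $b^{js}$ replaced by $|B(\cdot,2^{-j})|^{-s/d}$; the only additional ingredient is the interchange of $|B(x,2^{-j})|^{-s/d}$ and $|B(\xi,b^{-j})|^{-s/d}$ inside the localized estimates, again via (\ref{D2}), the extra factor $(1+b^{j}\rho(x,\xi))^{|s|}$ being harmless against the (sub-)exponential localization.
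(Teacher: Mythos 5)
This theorem is stated in the paper without proof: Section~\ref{sec:hom-spaces} merely recalls it as one of the main results of \cite{GKKP} (``One of the main results in \cite{GKKP} asserts that\dots''), so there is no in-text argument to compare yours against. That said, your sketch is the standard Frazier--Jawerth frame-characterization strategy and is the one used in \cite{GKKP}: reduce everything to boundedness of $S_{\tilde\psi}$ and $T_\psi$ via the reproducing identity from Theorem~\ref{thm:frames}(a), prove the analysis bound by a band-limited Peetre maximal estimate followed by the Fefferman--Stein inequality, and prove the synthesis bound by localizing $\varphi_k(\sqrt L)\psi_\xi$ to the diagonal band $|j-k|\le N_0$, dominating the resulting discrete sum by $\cM_t$, and again applying \eqref{max-ineq}. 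The discrete $\cM_t$-domination you invoke in the synthesis step is exactly Lemma~\ref{lem:mrax} (Lemma 7.1 of \cite{DKKP}) in the appendix, which you should cite rather than re-derive; and the Peetre maximal bound for spectrally band-limited functions is not stated in this paper, so it must be imported from \cite{KP,GKKP}. Two minor remarks: for the $\bb$-scale it suffices to take $t<p$ (not $t<p\wedge q$), since the $\ell^q$ sum over $j$ acts outside the $L^p$ norm; and in the convergence argument for $T_\psi a$ you should make explicit that the sign of $m$ in $L^{\pm m}$ is chosen according to the sign of $j$ (high vs.\ low frequency) and that the growth of $\#(\cX_j\cap B(x_0,R))$ is handled by \eqref{num-net}. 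With those points filled in, the outline is correct and consistent with the cited proof.
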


%%%%%%%%%%%%% Theorem

\begin{theorem}\label{thm:F-character}
Let $s\in \R$, $0< p< \infty$ and $0<q\le \infty$.
$(a)$
The operators
$S_{\tilde\psi}:\FF_{pq}^s \rightarrow \ff_{pq}^s$ and $T_{\psi}:\ff_{pq}^s \rightarrow \FF_{pq}^s$
are bounded and $T_{\tilde\psi}\circ S_\psi=Id$ on $\FF_{pq}^s$.
Consequently, $f\in \FF_{pq}^s$ if and only if
$\{\langle f, \tilde\psi_\xi\rangle\}_{\xi \in \cX}\in \ff_{pq}^s$,
and if $f\in \FF_{pq}^s$, then
$\|f\|_{\FF_{pq}^s} \sim  \|\{\langle f, \tilde\psi_\xi\rangle\}\|_{\ff_{pq}^s}$.
Furthermore, %under the reverse doubling condition $(\ref{reverse-doubling})$
\begin{equation}\label{F-norm-equivalence-1}
\|f\|_{\FF_{pq}^s}
%\sim  \|\{\langle f, \tilde\psi_\xi\rangle\}\|_{\tf_{pq}^s}
%
\sim \Big\|\Big(
\sum_{j\in\bZ}b^{jsq}\sum_{\xi\in \cX_j}
\big[|\langle f, \tilde\psi_\xi \rangle||\psi_\xi(\cdot)|\big]^q
\Big)^{1/q}\Big\|_{L^p}.
\end{equation}

\noindent
$(b)$
The operators
$S_{\tilde\psi}:\tFF_{pq}^s \rightarrow \tff_{pq}^s$ and $T_{\psi}: \tff_{pq}^s \rightarrow \tFF_{pq}^s$
are bounded and $T_{\tilde\psi}\circ S_\psi=Id$ on $\tFF_{pq}^s$.
Hence, $f\in \tFF_{pq}^s$ if and only if
$\{\langle f, \tilde\psi_\xi\rangle\}_{\xi \in \cX}\in \tff_{pq}^s$,
and if $f\in \tFF_{pq}^s$, then
$\|f\|_{\tFF_{pq}^s} \sim  \|\{\langle f, \tilde\psi_\xi\rangle\}\|_{\tff_{pq}^s}$.
Furthermore, % under the reverse doubling condition $(\ref{reverse-doubling})$
\begin{equation}\label{F-norm-equivalence-2}
\|f\|_{\tFF_{pq}^s}
%\sim  \|\{\langle f, \tilde\psi_\xi\rangle\}\|_{\tf_{pq}^s}
%
\sim \Big\|\Big(
\sum_{\xi\in \cX}
\big[|B(\xi, b^{-j})|^{-s/d}|\langle f, \tilde\psi_\xi \rangle||\psi_\xi(\cdot)|\big]^q
\Big)^{1/q}\Big\|_{L^p}.
\end{equation}
As before the roles of $\psi_\xi$ and $\tilde\psi_\xi$ can be interchanged.
\end{theorem}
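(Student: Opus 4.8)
The plan is to prove Theorem~\ref{thm:F-character} by reducing it to three ingredients, all of which are either already available in the excerpt or are the $\FF$-analogues of standard machinery: (i) the frame representation $f=\sum_\xi\langle f,\tilde\psi_\xi\rangle\psi_\xi$ in $\cS'/\PP$ from Theorem~\ref{thm:frames}(a); (ii) boundedness of the synthesis operator $T_\psi\colon\ff_{pq}^s\to\FF_{pq}^s$; and (iii) boundedness of the analysis operator $S_{\tilde\psi}\colon\FF_{pq}^s\to\ff_{pq}^s$ (and likewise with the roles of $\psi$ and $\tilde\psi$ interchanged). Granting (ii) and (iii), the composition identity $T_{\tilde\psi}\circ S_\psi=\Id$ on $\FF_{pq}^s$ is just (i), and then $f\in\FF_{pq}^s\iff\{\langle f,\tilde\psi_\xi\rangle\}\in\ff_{pq}^s$ together with $\|f\|_{\FF_{pq}^s}\sim\|\{\langle f,\tilde\psi_\xi\rangle\}\|_{\ff_{pq}^s}$ follows formally: the forward inequality is (iii), the reverse is (ii) applied to the sequence $\{\langle f,\tilde\psi_\xi\rangle\}$ combined with $f=T_\psi S_{\tilde\psi}f$. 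The equivalence \eqref{F-norm-equivalence-1} then comes from comparing $|\langle f,\tilde\psi_\xi\rangle||\psi_\xi(x)|$ with $|\langle f,\tilde\psi_\xi\rangle||B(\xi,b^{-j})|^{-1/2}\ONE_{A_\xi}(x)$ up to the maximal operator, using the space localization \eqref{prop-tpsi-1} and \eqref{frame-norms}; I would record this as a separate lemma since it is used for both directions.

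First I would establish (ii). Given $a=\{a_\xi\}\in\ff_{pq}^s$, one must show $\sum_\xi a_\xi\psi_\xi$ converges in $\cS'/\PP$ and satisfies $\|\sum_\xi a_\xi\psi_\xi\|_{\FF_{pq}^s}\le c\|a\|_{\ff_{pq}^s}$. Write $f=\sum_{k}\sum_{\xi\in\cX_k}a_\xi\psi_\xi$ and apply $\varphi_j(\sqrt L)$; because $\psi_\xi\in\Sigma^p_{[b^{k-1},b^{k+1}]}$ (Theorem~\ref{thm:frames}(c)) and $\supp\varphi_j\subset[2^{j-1},2^{j+1}]$, only finitely many $k$ (with $|k-j|\le c$) contribute. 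For each such $k$, estimate $|\varphi_j(\sqrt L)\psi_\xi(x)|$ using Theorem~\ref{thm:S-local-kernels} for the kernel of $\varphi_j(\sqrt L)$ convolved against the localized bump $\psi_\xi$: this gives a bound of the form $c\,|B(\xi,b^{-j})|^{-1/2}(1+b^j\rho(x,\xi))^{-\sigma}$ for $\sigma$ as large as we like. Summing over $\xi\in\cX_k$ via Lemma~\ref{lem:est-discr-sum}, and then controlling $\sum_{\xi\in\cX_k}|a_\xi||B(\xi,b^{-j})|^{-1/2}(1+b^j\rho(x,\xi))^{-\sigma}$ pointwise by $c\,\cM_t\big(\sum_{\xi\in\cX_k}|a_\xi|\tONE_{A_\xi}\big)(x)$ for a suitable $0<t<\min\{p,q\}$ — this is the standard ``sum of bumps dominated by the Hardy–Littlewood maximal function'' trick, here adapted to the doubling space — one arrives at $2^{js}|\varphi_j(\sqrt L)f(x)|\le c\sum_{|k-j|\le c}b^{ks}\cM_t\big(\sum_{\xi\in\cX_k}|a_\xi|\tONE_{A_\xi}\big)(x)$. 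Taking $\ell^q$ in $j$, then $L^p$, and invoking the Fefferman–Stein maximal inequality \eqref{max-ineq} yields the bound by $\|a\|_{\ff_{pq}^s}$.

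Next I would establish (iii). Set $a_\xi:=\langle f,\tilde\psi_\xi\rangle=c_\varepsilon|A_\xi|^{1/2}\TT_{\lambda_k}[\Gamma_{\lambda_k}(\cdot,\xi)](f)$ for $\xi\in\cX_k$; one needs $\|\{a_\xi\}\|_{\ff_{pq}^s}\le c\|f\|_{\FF_{pq}^s}$. The key point is that $\tilde\psi_\xi$ is spectrally localized in $[b^{k-2},b^{k+2}]$, so $\langle f,\tilde\psi_\xi\rangle=\langle\sum_{|m-k|\le c}\varphi_m(\sqrt L)f,\tilde\psi_\xi\rangle$, and using the reproducing identity plus the kernel bounds for $\tilde\psi_\xi$ from \eqref{prop-tpsi-1} one gets the pointwise domination $|a_\xi|\le c\,|A_\xi|^{1/2}\inf_{y\in A_\xi}\sum_{|m-k|\le c}\cM_t(\varphi_m(\sqrt L)f)(y)$; averaging over $A_\xi$ (using $A_\xi\subset B(\xi,\delta_k)$ and doubling) then gives $|a_\xi|\tONE_{A_\xi}(x)\le c\sum_{|m-k|\le c}\cM_t\big(\varphi_m(\sqrt L)f\big)(x)\ONE_{A_\xi}(x)$. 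Since $\{A_\xi\}_{\xi\in\cX_k}$ is a partition of $M$, summing over $\xi\in\cX_k$ is harmless; then proceed as before with $\ell^q$, $L^p$, and \eqref{max-ineq} to land in $\|f\|_{\FF_{pq}^s}$. The interchange of roles of $\psi$ and $\tilde\psi$ is symmetric since both families satisfy \eqref{prop-tpsi-1}, \eqref{frame-norms}, and the spectral localization in part (c) of Theorem~\ref{thm:frames}. Part (b) for the nonclassical spaces $\tFF_{pq}^s,\tff_{pq}^s$ is entirely parallel: replace the factor $b^{js}$ by $|B(\cdot,2^{-j})|^{-s/d}$ throughout, and use that $|B(\xi,b^{-k})|^{-s/d}\sim|A_\xi|^{-s/d}$ is essentially constant on $A_\xi$ and comparable on neighboring scales $|m-k|\le c$ (this last comparability, a consequence of doubling, is the only extra wrinkle).

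The main obstacle I anticipate is the pointwise ``maximal-function domination'' step in both (ii) and (iii) — converting a weighted sum of translated bumps $\sum_\xi|a_\xi||B(\xi,b^{-j})|^{-1/2}(1+b^j\rho(x,\xi))^{-\sigma}$ into $c\,\cM_t$ applied to $\sum_\xi|a_\xi|\tONE_{A_\xi}$ — carried out correctly in the doubling metric measure setting (not $\R^n$), where one must choose $\sigma$ large relative to $d/t$ and carefully use \eqref{D2}, \eqref{doubling}, and the net geometry \eqref{net-prop}–\eqref{B-Axi-B} to compare $|B(\xi,b^{-j})|$, $|A_\xi|$, and $|B(x,b^{-j})|$. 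Everything else is bookkeeping: the finitely-many-scales reduction from spectral localization, the kernel estimates from Theorem~\ref{thm:S-local-kernels}, and the Fefferman–Stein inequality. I would isolate the bump-domination estimate as a standalone lemma (valid uniformly in the scale $j$) so it can be quoted verbatim in the $\BB$-space proof, the $\FF$-space proof, and both tilde-variants.
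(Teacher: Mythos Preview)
The paper does not prove Theorem~\ref{thm:F-character}; it is recalled from \cite{GKKP} as background (see the opening of \S\ref{sec:hom-spaces}, where Theorems~\ref{thm:B-character} and \ref{thm:F-character} are quoted without proof). Your strategy is the standard one and is essentially what is carried out in \cite{GKKP} (and in \cite{KP} for the inhomogeneous analogue): reduce via spectral localization to finitely many nearby scales, use kernel decay to dominate the relevant sums pointwise by the maximal operator $\cM_t$, and invoke the Fefferman--Stein inequality \eqref{max-ineq}. The ``bump-domination'' estimate you single out as the main obstacle is exactly Lemma~\ref{lem:mrax} in the appendix of the present paper (quoted there from \cite{DKKP}), so the tool you need is already packaged and available.

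One minor bookkeeping point: since the frames $\{\psi_\xi\}$, $\{\tilde\psi_\xi\}$ are built with dilation parameter $b$ rather than $2$, you should take $\varphi_j(\lambda)=\varphi(b^{-j}\lambda)$ when computing the $\FF$-norm, so that the spectral supports of $\varphi_j(\sqrt L)$ and of $\psi_\xi$, $\xi\in\cX_k$, match cleanly and the reduction to $|k-j|\le c$ is immediate; this change of base is legitimate by remark~(2) following Definition~\ref{def-F-spaces}.
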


%\begin{remark}\label{rem:heat-ker-decom}
%Observe that heat kernel characterizations of the homogeneous Besov and Triebel-Lizorkin spaces
%similar to the ones in the inhomogeneous case $($\cite{KP}, Theorems~6.7 and 7.5$)$ are valid
%with almost identical proofs.
%We shall not elaborate on these results here.
%\end{remark}

\section{Almost diagonal operators}\label{sec:almost-diag}

As in the classical case on $\R^n$ (see \cite{FJ2}),
we shall introduce almost diagonal operators acting on the sequence homogeneous Besov and Triebel-Lizorkin spaces.
In fact, our definition for almost diagonal operators is a refinement of the one given in \cite{DKKP}
in the inhomogeneous case.

As in the definition of the sequence Besov and Triebel-Lizorkin spaces in \S\ref{sec:hom-spaces}
\begin{equation}\label{cX}
\cX:=\cup_{j\in\bZ}\cX_j
\end{equation}
will be the set of centers of the frame elements
$\psi_\xi$ and $\tilde\psi_\xi$, $\xi\in\cX$,
and
\begin{equation}\label{A-xi}
\{A_\xi\}_{\xi\in\cX_j}, \quad \cup_{\xi\in\cX_j}A_\xi =M, \quad j\in\bZ,
\end{equation}
%$\{A_\xi\}_{\xi\in\cX_j}$
will denote the companion disjoint partitions of $M$.
%Recall that $\cX_j$ is a maximal $\delta_j-$net on $M$ with $\delta_j=\gamma b^{-j-2}$
%and $B(\xi, \delta_j/2)\subset A_\xi \subset B(\xi, \delta_j)$ for $\xi\in\cX_j$.

\begin{remark}\label{rem:X-A-xi}
As indicated above the sets $\cX_j$, $j\in\bZ$, $\cX:=\cup_{j\in\bZ}\cX_j$ are from the definition of the frames
$\{\psi_\xi\}_{\xi\in\cX}$, $\{\tilde\psi_\xi\}_{\xi\in\cX}$ in \S\ref{sec:frames}.
Note that once the constant $\gamma>0$ is fixed $($see \S\ref{sec:frames}$)$,
each set $\cX_j$ is an arbitrary maximal $\delta_j-$net on $M$ with $\delta_j=\gamma b^{-j-2}$.
Therefore, there is no uniqueness in the selection of these sets.
In what follows we shall assume that once selected these sets are fixed, once and for all.

A similar observation is valid about the sets $A_\xi$, $\xi\in\cX_j$, from $(\ref{A-xi})$.
They form an arbitrary disjoint partition of $M$ consisting of measurable sets such that
\begin{equation}\label{B-A-B}
B(\xi, \delta_j/2)\subset A_\xi \subset B(\xi, \delta_j), \quad \xi\in\cX_j.
\end{equation}
Again there is no uniqueness.
We shall consider them fixed, once and for all.
\end{remark}

It will be convenient to us to use the notation %(see also (\ref{B-xi}))
\begin{equation}\label{def-ell}
\ell(\xi):=b^{-j}
\quad\hbox{and}\quad
\AA_\xi:= B(\xi, \delta_j)
\quad\mbox{for} \quad\xi\in\cX_j, \; j\in\bZ.
\end{equation}
Here $b>1$ is the constant from the construction of the frames in \S\ref{sec:frames}.
Observe that by (\ref{doubling}) and \eqref{B-A-B} it follows that  $|A_\xi|\sim |\AA_\xi|\sim |B(\xi, \ell(\xi))|$.

%%%%%%%%% Definition

\begin{definition}\label{def:almost-diagonal}
Let $\Aa$ be a linear operator acting on one of the spaces
$\tbb^s_{pq}$, $\tff^s_{pq}$, $\bb^s_{pq}$, $\ff^s_{pq}$,
with associated matrix $(a_{\xi\eta})_{\xi, \eta\in \cX}$.
Let also $\cJ:= d/\min \{1, p\}$ for $\bb^s_{pq}$, $\tbb^s_{pq}$ and
$\cJ:= d/\min \{1, p, q\}$ for $\ff^s_{pq}$, $\tff^s_{pq}$.
We say that the operator $\Aa$ is {\it almost diagonal} on the respective $b$- or $f$-space
if there exists $\delta > 0$ such that
$$
\sup_{\xi,\eta\in \cX}\frac{|a_{\xi \eta}|}{\omega_{\xi\eta}(\delta)}<\infty,
$$
where
\begin{align}
\omega_{\xi\eta}(\delta)
&:=
\biggl(\frac{\ell(\xi)}{\ell(\eta)}\biggr)^{s}\biggl(\frac{|\AA_\xi|}{|\AA_\eta|}\biggr)^{1/2} \biggl(1+\frac{\rho(\xi,\eta)}
{\max \{\ell(\xi),\ell(\eta)\}}\biggr)^{-\cJ-\delta} \label{def-omega-d-1}
\\
&\times \min
\biggl\{\biggl(\frac{\ell(\xi)}{\ell(\eta)}\biggr)^{\delta},
     \biggl(\frac{\ell(\eta)}{\ell(\xi)}\biggr)^{\cJ+\delta}\biggr\} \notag
\end{align}
in the case of the spaces $\bb^s_{pq}$ or $\ff^s_{pq}$, and
\begin{align}
\omega_{\xi\eta}(\delta)
&:=
\biggl(\frac{|\AA_\xi|}{|\AA_\eta|}\biggr)^{s/d+1/2} \biggl(1+\frac{\rho(\xi,\eta)}
{\max \{\ell(\xi),\ell(\eta)\}}\biggr)^{-\cJ-\delta} \label{def-omega-d-2}
\\
&\times \min
\biggl\{\biggl(\frac{\ell(\xi)}{\ell(\eta)}\biggr)^{\delta},
     \biggl(\frac{\ell(\eta)}{\ell(\xi)}\biggr)^{\cJ+\delta}\biggr\} \notag
\end{align}
in the case of $\tbb^s_{pq}$ or $\tff^s_{pq}$.
\end{definition}

\subsection{Boundedness of almost diagonal operators}\label{subsec:bound-alm-diag-op}

We next show that the almost diagonal operators are bounded on
$\bb^s_{pq}$, $\tbb^s_{pq}$, $\ff^s_{pq}$, or $\tff^s_{pq}$, respectively.
More precisely, with the notation
\begin{equation}\label{AlmDiag-Norm}
\|\Aa\|_\delta:=\sup_{\xi,\eta\in \cX}\frac{|a_{\xi \eta}|}{\omega_{\xi\eta}(\delta)}
\end{equation}
the following result holds:

%%%%%%%% Theorem

\begin{theorem}\label{thm:AlmDiag}
Suppose  $s\in\R$, $0<q\le \infty$, and $0<p<\infty$ $(0<p\le\infty$ in the case of $\bb$-spaces$)$
and let $\|\Aa\|_\delta<\infty $ $($in the sense of Definition~\ref{def:almost-diagonal}$)$
for some $\delta>0$.
Then there exists a constant $c>0$ such that for any sequence
$h:=\{h_\xi\}_{\xi\in \cX}\in \bb^s_{pq}$
\begin{equation}\label{b-AlmDiag}
\|\Aa h\|_{\bb^s_{pq}}\le c\|\Aa\|_\delta \|h\|_{\bb^s_{pq}},
\end{equation}
and the same holds true with $\bb^s_{pq}$ replaced by
$\tbb^s_{pq}$, $\ff^s_{pq}$, or $\tff^s_{pq}$.
\end{theorem}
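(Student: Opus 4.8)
The plan is to follow the classical Frazier--Jawerth strategy, reducing the four cases to estimates on two model quadratic sums. Since the almost diagonal hypothesis is formulated uniformly via $\|\Aa\|_\delta$, it suffices to prove $\|\Aa h\|_X\le c\|h\|_X$ for the operator whose matrix is exactly $\omega_{\xi\eta}(\delta)$, for each of $X\in\{\bb^s_{pq},\tbb^s_{pq},\ff^s_{pq},\tff^s_{pq}\}$. A preliminary reduction I would make is to absorb the homogeneity factors: writing $\tilde h_\eta := \ell(\eta)^{-s}|\AA_\eta|^{-1/2}h_\eta$ (resp. $|\AA_\eta|^{-s/d-1/2}h_\eta$ in the tilde case), the weighted sum $\sum_\eta \omega_{\xi\eta}(\delta)|h_\eta|$ becomes $\ell(\xi)^s|\AA_\xi|^{1/2}$ (resp. $|\AA_\xi|^{s/d+1/2}$) times $\sum_\eta K_{\xi\eta}|\tilde h_\eta|$, where $K_{\xi\eta}$ is the scale-and-distance kernel $\bigl(1+\rho(\xi,\eta)/\max\{\ell(\xi),\ell(\eta)\}\bigr)^{-\cJ-\delta}\min\{(\ell(\xi)/\ell(\eta))^\delta,(\ell(\eta)/\ell(\xi))^{\cJ+\delta}\}$. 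So everything comes down to bounding the operator with kernel $K_{\xi\eta}$ on the \emph{unweighted} sequence spaces $\dot b_{pq}^0$ and $\dot f_{pq}^0$ (and their tilde analogues), which is where the structural hypotheses $\cJ=d/\min\{1,p\}$, resp. $d/\min\{1,p,q\}$, enter.

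Next I would split $K$ into its ``fine-to-coarse'' part $K^0$ (terms with $\ell(\eta)\le\ell(\xi)$, i.e. $\eta$ at a finer or equal scale, carrying the gain $(\ell(\eta)/\ell(\xi))^{\cJ+\delta}$) and its ``coarse-to-fine'' part $K^1$ (terms with $\ell(\eta)>\ell(\xi)$, carrying $(\ell(\xi)/\ell(\eta))^\delta$), and estimate each separately, grouping the sum over $\eta$ by the scale $m$ of $\eta$: $\sum_{m}\sum_{\eta\in\cX_m}$. For a fixed pair of scales $j$ (scale of $\xi$) and $m$, the inner sum $\sum_{\eta\in\cX_m}\bigl(1+b^{j\wedge m}\rho(\xi,\eta)\bigr)^{-\cJ-\delta}|\tilde h_\eta|$ is controlled by the discrete summation Lemma~\ref{lem:est-discr-sum} together with the doubling property \eqref{doubling} and \eqref{D2}; in the $f$-space case it is dominated pointwise on $A_\xi$ by (a constant times) $\cM_t$ applied to the function $\sum_{\eta\in\cX_m}|\tilde h_\eta|\ONE_{A_\eta}$ for a suitable $t<\min\{1,p,q\}$, which is exactly the mechanism that forces $\cJ=d/\min\{1,p,q\}$ and makes the Fefferman--Stein inequality \eqref{max-ineq} applicable. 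For the $b$-space case one instead uses H\"older/the embedding $\ell^{\min\{1,p\}}\hookrightarrow\ell^p$ directly, which is why there $\cJ=d/\min\{1,p\}$ suffices. After this pointwise/norm reduction at each scale pair, one is left with a one-dimensional convolution in the scale index: a sum of the form $\sum_m b^{-|j-m|\delta} c_m$ (or $b^{-(j-m)_+\delta}b^{-(m-j)_+(\cJ+\delta-\text{something})}c_m$), and Young's inequality for $\ell^q(\bZ)$—or, when $q<1$, the quasi-norm triangle inequality with the geometrically decaying weight—closes the estimate, giving the bound by $\|h\|_X$. The tilde cases are handled the same way, replacing the factors $\ell(\xi)^s$ by $|\AA_\xi|^{s/d}$ throughout and using $|\AA_\xi|\sim|B(\xi,\ell(\xi))|$ together with \eqref{doubling}, \eqref{GRD} to compare volume powers across scales; the reverse doubling \eqref{RD} is what guarantees these comparisons do not degenerate.

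\emph{Main obstacle.} The delicate point is the $f$-space (Triebel--Lizorkin) case: one must interchange the $\ell^q$ sum over scales with the $L^p$ norm and with the majorization by the maximal operator, and this requires choosing the auxiliary exponent $t$ strictly below $\min\{1,p,q\}$ \emph{and} splitting off the ``diagonal band'' $|j-m|$ bounded, where no decay in $|j-m|$ is available and one must rely purely on the spatial decay $(\cdot)^{-\cJ-\delta}$ with $\cJ=d/\min\{1,p,q\}$ to reproduce the maximal function. Getting the off-diagonal geometric factors to survive the $\ell^q$-quasi-triangle inequality when $q<1$ (so that one genuinely needs $\delta>0$, not merely $\delta\ge0$) is the technical heart; once that is arranged, \eqref{max-ineq} and Lemma~\ref{lem:est-discr-sum} do the rest. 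I would present the $\ff^s_{pq}$ argument in full detail and indicate the (easier) modifications for $\bb^s_{pq}$, $\tbb^s_{pq}$, $\tff^s_{pq}$, deferring any purely computational volume-comparison lemmas to the appendix.
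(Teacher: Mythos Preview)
Your proposal is correct and follows essentially the same Frazier--Jawerth strategy as the paper: split according to $\ell(\eta)\le\ell(\xi)$ versus $\ell(\eta)>\ell(\xi)$, control the spatial sum at each fixed pair of scales by the maximal operator $\cM_t$ with $t<\min\{1,p,q\}$ (this is the content of the paper's Lemma~\ref{lem:mrax}, which is what you are describing rather than Lemma~\ref{lem:est-discr-sum}), then collapse the remaining sum over scales by the Hardy inequalities (your ``Young's inequality in $\ell^q(\bZ)$''), and finish with the Fefferman--Stein inequality~\eqref{max-ineq}. Two small remarks: the paper uses the maximal-function route uniformly for both the $\ff$- and the $\tbb$-cases rather than a separate H\"older/$\ell^{\min\{1,p\}}\hookrightarrow\ell^p$ argument for the Besov side; and the reverse doubling condition~\eqref{RD} is \emph{not} needed here---the volume factors $(|\AA_\xi|/|\AA_\eta|)^{s/d+1/2}$ built into the tilde almost-diagonal weight~\eqref{def-omega-d-2} cancel exactly against the $\tbb$/$\tff$ norm weights, so no cross-scale volume comparison arises in this proof (reverse doubling enters later, in the molecular theory of \S\ref{subsec:molecules-tilde-BF}).
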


The proof of this theorem will be carried out similarly as the proof of Theorem~3.3
in \cite{FJ2} or Theorem~4.4 in \cite{DKKP}. We place it in the appendix.

\subsection{The algebra of almost diagonal operators}\label{algebra-alm-diag-op}

Let $s\in\R$, $0<p<\infty$ $(0<p\le\infty$ in the case of $\bb$-spaces$)$, and $0<q\le \infty$ be fixed.
We denote by $\ad^{s}_{pq}$ %or simply by $\ad$
the class of almost diagonal operators
on $\bb^{s}_{pq}$, $\ff^{s}_{pq}$, $\tbb^s_{pq}$, or $\tff^s_{pq}$, equipped with the norm
\begin{equation}\label{def:operator-norm}
\|\Aa\|_{\ad^{s}_{pq}}:=\inf_{\varepsilon>0}\|\Aa\|_{\varepsilon},
\end{equation}
where
$\|\Aa\|_{\varepsilon}:=\sup_{\xi,\eta}|a_{\xi\eta}|/\omega_{\xi\eta}(\varepsilon)$,
see (\ref{def-omega-d-1})-(\ref{AlmDiag-Norm}).

This is a nondecreasing function of $\varepsilon$ and, therefore, $\|\Aa\|_{\ad^{s}_{pq}}$ is indeed a norm.

Our next goal is to prove that the class $\ad^{s}_{pq}$ is an algebra under composition.

\begin{theorem}\label{thm:algebra}
Let $s\in\R$, $0<p<\infty$, and $0<q\le\infty$ $(0<p\le\infty$ in the case of $\bb$-spaces$)$.
Then for the respective $\bb$- and $\ff$-spaces the following claims hold:

$(i)$ If $A,B\in\ad^{s}_{pq}$, then $A\circ B\in\ad^{s}_{pq}$.

$(ii)$ For any $\varepsilon >0$ there exists $\delta>0$ such that
if $A\in\ad^{s}_{pq}$ and $\|I-A\|_{\varepsilon}<\delta$, then
$A$ is invertible and $A^{-1}\in\ad^{s}_{pq}$.
\end{theorem}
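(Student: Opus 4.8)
The plan is to prove both parts by directly estimating the matrix entries of the composition $A\circ B$ against the weight $\omega_{\xi\eta}$, imitating the classical argument of Frazier--Jawerth \cite{FJ2} but with the metric-measure adaptations already present in the definition of $\omega_{\xi\eta}$. For part $(i)$, write $(AB)_{\xi\eta}=\sum_{\zeta\in\cX}a_{\xi\zeta}b_{\zeta\eta}$ and fix $\varepsilon_A,\varepsilon_B>0$ with $\|A\|_{\varepsilon_A},\|B\|_{\varepsilon_B}<\infty$; set $\delta:=\tfrac12\min\{\varepsilon_A,\varepsilon_B\}$. The goal is to show $|(AB)_{\xi\eta}|\le c\,\|A\|_{\varepsilon_A}\|B\|_{\varepsilon_B}\,\omega_{\xi\eta}(\delta)$. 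The key structural fact is a \emph{near-multiplicativity} of the weights: for a fixed $\delta$ one has, up to a constant,
\begin{equation}\label{plan-semigroup}
\sum_{\zeta\in\cX}\omega_{\xi\zeta}(\varepsilon_A)\,\omega_{\zeta\eta}(\varepsilon_B)\le c\,\omega_{\xi\eta}(\delta).
\end{equation}
To establish \eqref{plan-semigroup} I would first split the sum over $\zeta$ according to the scale $k$ with $\zeta\in\cX_k$, handling separately the ranges $b^{-k}\le\ell(\xi)\wedge\ell(\eta)$, $b^{-k}\ge\ell(\xi)\vee\ell(\eta)$, and the intermediate range. Within each scale the spatial sum $\sum_{\zeta\in\cX_k}(1+\rho(\xi,\zeta)/\cdots)^{-\cJ-\delta}(1+\rho(\zeta,\eta)/\cdots)^{-\cJ-\delta}$ is controlled by Lemma~\ref{lem:est-discr-sum} (the discrete counterpart of Lemma~\ref{lem:gen-ineq}), which converts it into a single factor $(1+\rho(\xi,\eta)/\cdots)^{-\cJ-\delta}$ times a power of $b^{-k}$ relative to the net spacing; the volume ratios are reconciled via \eqref{D2}. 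Summing the resulting geometric series in $k$ — which converges precisely because $\delta<\varepsilon_A$ and $\delta<\varepsilon_B$ leave room in the exponents of the $\min\{\cdots\}$ factors — yields \eqref{plan-semigroup}. The computation is essentially identical for the $\bb$/$\ff$ weight \eqref{def-omega-d-1} and the tilde weight \eqref{def-omega-d-2}, the only difference being whether one tracks $(\ell(\xi)/\ell(\eta))^s$ or the volume power $(|\AA_\xi|/|\AA_\eta|)^{s/d}$, and in the latter case \eqref{D2} again absorbs the discrepancy between $(|\AA_\xi|/|\AA_\eta|)^{s/d}$ and $(|\AA_\xi|/|\AA_\zeta|)^{s/d}(|\AA_\zeta|/|\AA_\eta|)^{s/d}$.

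\textbf{Part $(ii)$.} Given $(i)$, the space $\ad^s_{pq}$ is a normed algebra (with the norm \eqref{def:operator-norm}); one checks submultiplicativity $\|AB\|_{\ad^s_{pq}}\le c\|A\|_{\ad^s_{pq}}\|B\|_{\ad^s_{pq}}$ from \eqref{plan-semigroup} — note the constant $c$, so after rescaling the norm by $c$ one gets an honest submultiplicative norm. To get completeness, observe that $\ad^s_{pq}$ embeds (boundedly, by Theorem~\ref{thm:AlmDiag}) into the Banach algebra of bounded operators on the respective sequence space, and that a Cauchy sequence in $\|\cdot\|_{\ad^s_{pq}}$ has entrywise limits that again satisfy the weight bound with a fixed $\delta$ — so $\ad^s_{pq}$ is a Banach algebra after renorming. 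Then the standard Neumann series argument applies: if $\|I-A\|_\varepsilon<\delta$ for $\delta$ small enough (depending on the algebra constant $c$ from submultiplicativity so that $c\|I-A\|_{\ad^s_{pq}}<1$), the series $\sum_{k\ge0}(I-A)^k$ converges in $\ad^s_{pq}$, its sum lies in $\ad^s_{pq}$, and it equals $A^{-1}$. One must also verify that $A$ is genuinely invertible as an operator on the sequence space and that this operator-inverse coincides with the series, which follows because the series converges in operator norm as well via Theorem~\ref{thm:AlmDiag}.

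\textbf{Main obstacle.} The crux is inequality \eqref{plan-semigroup}, specifically the bookkeeping in the intermediate scale range $\ell(\xi)\wedge\ell(\eta)\le b^{-k}\le\ell(\xi)\vee\ell(\eta)$, where neither the $(\ell(\xi)/\ell(\zeta))^\delta$ nor the $(\ell(\zeta)/\ell(\eta))^{\cJ+\delta}$ branch of the $\min$ dominates uniformly and one must interpolate; combined with the fact that in this range $\max\{\ell(\xi),\ell(\zeta)\}$ and $\max\{\ell(\zeta),\ell(\eta)\}$ need not both equal $\max\{\ell(\xi),\ell(\eta)\}$, so the triangle-inequality comparison of the $(1+\rho/\max\{\cdots\})^{-\cJ-\delta}$ factors needs care. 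This is where the refinement over \cite{DKKP} matters, and where I would expect to spend most of the effort; everything else is either a direct application of Lemma~\ref{lem:est-discr-sum} and \eqref{D2}, or soft functional analysis. For this reason the detailed verification is best deferred to the appendix, as the authors indicate for the companion Theorem~\ref{thm:AlmDiag}.
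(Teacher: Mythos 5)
Your part $(i)$ is essentially the paper's plan: decompose over scales $\zeta\in\cX_k$, use Lemma~\ref{lem:est-discr-sum} to collapse the spatial sum, and sum the resulting geometric series. The paper packages this in Lemma~\ref{lem:algebra4}, and — this is the point you miss — formulates it with a \emph{two-parameter} weight $\omega_{\xi\eta}(\beta,\gamma)$ in which the spatial-decay exponent $\beta$ and the scale-ratio exponent $\gamma$ are decoupled. The composition estimate then reads $W_{\xi\eta}(\beta,\gamma_1,\gamma_2)\le c\,\omega_{\xi\eta}(\beta,\gamma_1\wedge\gamma_2)$ with the crucial feature that $\beta$ is \emph{not} diminished. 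Your single-parameter inequality with $\delta=\tfrac12\min\{\varepsilon_A,\varepsilon_B\}$ is correct and suffices for $(i)$, so that part is fine.

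Part $(ii)$ has a genuine gap, and it is precisely where the two-parameter refinement earns its keep. You propose to show that $\ad^s_{pq}$ is a Banach algebra (after renorming) and then invoke the abstract Neumann-series argument. Two problems. First, completeness of $\ad^s_{pq}$ under $\|A\|_{\ad^s_{pq}}=\inf_{\varepsilon>0}\|A\|_\varepsilon$ is asserted, not proved, and is doubtful: since the norm is an infimum, a Cauchy sequence can have its attainable $\varepsilon$'s shrinking to $0$, and the entrywise limit need not satisfy the weight bound for any fixed $\delta>0$. Second, and more fundamentally, your ``submultiplicativity'' constant is not uniform: the constant $c$ in \eqref{plan-semigroup} depends on $\varepsilon_A$ and $\varepsilon_B$ (via Lemma~\ref{lem:est-discr-sum} and the tail of the geometric series), and the resulting parameter is $\delta=\tfrac12\min\{\varepsilon_A,\varepsilon_B\}$. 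Iterating this on $D^n=(I-A)^n$ gives a parameter that halves at each step and constants that blow up correspondingly, so the series $\sum_n D^n$ does not visibly land in $\ad^s_{pq}$ with any fixed $\delta$. The ``renorm by the algebra constant'' trick does not repair this, since there is no single algebra constant — it degrades with the parameters.

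The paper's induction avoids exactly this trap. Fix $\varepsilon$ (given) and pick any $\varepsilon_1<\varepsilon$. Writing $\omega_{\xi\eta}(\varepsilon)\le\omega_{\xi\eta}(\varepsilon_1,\varepsilon)$ (same decay $\varepsilon_1$, larger scale exponent $\varepsilon$) and $\omega_{\xi\eta}(\varepsilon_1)=\omega_{\xi\eta}(\varepsilon_1,\varepsilon_1)$, one always applies Lemma~\ref{lem:algebra4} with the \emph{same} triple $(\beta,\gamma_1,\gamma_2)=(\varepsilon_1,\varepsilon,\varepsilon_1)$ — note $\gamma_1\ne\gamma_2$ and $\beta<\gamma_1+\gamma_2$ hold automatically — so the constant $c^*$ is fixed, and the output weight is $\omega_{\xi\eta}(\varepsilon_1,\varepsilon\wedge\varepsilon_1)=\omega_{\xi\eta}(\varepsilon_1)$, i.e.\ the \emph{same} weight as the inductive hypothesis. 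This yields $|d^{(n)}_{\xi\eta}|\le(\delta c^*)^n\omega_{\xi\eta}(\varepsilon_1)$ with a constant that does not drift, and then $\delta<1/c^*$ closes the argument. Without decoupling $\beta$ from $\gamma$ (or some equivalent device) you cannot run the Neumann series; the two-parameter weight is not a cosmetic refinement but the load-bearing idea of part $(ii)$.
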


We shall carry out the proof of this theorem in the spirit of the proof of Theorem~9.1 in \cite{FJ2}.
We need some additional notation.
For any $\beta,\gamma>0$ and $\xi,\eta\in\cX$  we set:

$(i)$ in the case of $\bb^s_{pq}$ and  $\ff^s_{pq}$,
\begin{align}
\omega_{\xi\eta}(\beta,\gamma)
&:=
\biggl(\frac{\ell(\xi)}{\ell(\eta)}\biggr)^{s}
\biggl(\frac{|\AA_\xi|}{|\AA_\eta|}\biggr)^{1/2} \biggl(1+\frac{\rho(\xi,\eta)}
{\max \{\ell(\xi),\ell(\eta)\}}\biggr)^{-\cJ-\beta} \label{def-omega-detail-1}
\\
&\times \min
\biggl\{\biggl(\frac{\ell(\xi)}{\ell(\eta)}\biggr)^{\gamma},
     \biggl(\frac{\ell(\eta)}{\ell(\xi)}\biggr)^{\cJ+\gamma}\biggr\}, \notag
\end{align}

$(ii)$ in the case of $\tbb^s_{pq}$ and $\tff^s_{pq}$,
\begin{align}
\omega_{\xi\eta} (\beta,\gamma)
&:=
\biggl(\frac{|\AA_\xi|}{|\AA_\eta|}\biggr)^{s/d+1/2} \biggl(1+\frac{\rho(\xi,\eta)}
{\max \{\ell(\xi),\ell(\eta)\}}\biggr)^{-\cJ-\beta}  \label{def-omega-detail-2}
\\
&\times \min
\biggl\{\biggl(\frac{\ell(\xi)}{\ell(\eta)}\biggr)^{\gamma},
     \biggl(\frac{\ell(\eta)}{\ell(\xi)}\biggr)^{\cJ+\gamma}\biggr\}. \notag
\end{align}

Furthermore, given $\beta,\gamma_1,\gamma_2>0$ and $\xi,\eta\in\cX$, we set
\begin{equation}\label{def-W}
W_{\xi\eta}(\beta,\gamma_1,\gamma_2)
:=\sum_{\zeta\in\cX}\omega_{\xi\zeta}(\beta,\gamma_1)\omega_{\zeta\eta}(\beta,\gamma_2).
\end{equation}
The following lemma will be instrumental in the proof of Theorem~\ref{thm:algebra}.

\begin{lemma}\label{lem:algebra4}
Let $\beta,\gamma_1,\gamma_2>0$ be such that
$\gamma_1\neq\gamma_2$ and $\beta<\gamma_1+\gamma_2$. Then for any $\xi,\eta\in\cX$
\begin{equation}\label{algd}
W_{\xi\eta}(\beta,\gamma_1,\gamma_2)\le c \omega_{\xi\eta}(\beta,\gamma_1\wedge\gamma_2),
\end{equation}
where the constant $c>0$ depends on $\beta$, $\gamma_1$, $\gamma_2$, $\cJ$,
and the constant $c_0$ from $(\ref{doubling-0})$.
\end{lemma}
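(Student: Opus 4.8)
The plan is to estimate the convolution sum $W_{\xi\eta}(\beta,\gamma_1,\gamma_2)=\sum_{\zeta\in\cX}\omega_{\xi\zeta}(\beta,\gamma_1)\omega_{\zeta\eta}(\beta,\gamma_2)$ by splitting the index set $\cX=\cup_{k\in\bZ}\cX_k$ according to the scale $\ell(\zeta)=b^{-k}$, and within each scale summing over the centers $\zeta\in\cX_k$ using the discrete kernel estimate of Lemma~\ref{lem:est-discr-sum}. I will treat the case of $\bb^s_{pq}$/$\ff^s_{pq}$ (definition \eqref{def-omega-detail-1}); the $\tbb$/$\tff$ case is analogous, with $|\AA_\xi|^{s/d}$ playing the role of $\ell(\xi)^{-s}$ and the volume-comparison inequality \eqref{D2} used to relate $|B(\xi,b^{-k})|$ and $|B(\zeta,b^{-k})|$. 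The scaling factors $(\ell(\xi)/\ell(\zeta))^s$ and $(\ell(\zeta)/\ell(\eta))^s$ telescope to give the desired prefactor $(\ell(\xi)/\ell(\eta))^s$, and similarly the $|\AA|^{1/2}$ factors telescope, so the real work is the remaining ``geometric'' part involving the decay factors $(1+\rho/\max\{\cdot,\cdot\})^{-\cJ-\beta}$ and the $\min$-terms.

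\textbf{Key steps.} First I would reduce, by symmetry in the roles of $\xi$ and $\eta$ and by relabeling, to comparing scales: fix $j,\ell$ with $\xi\in\cX_j$, $\eta\in\cX_\ell$, and assume WLOG $j\le \ell$ (so $\ell(\xi)\ge\ell(\eta)$); the target bound is then $\omega_{\xi\eta}(\beta,\gamma_1\wedge\gamma_2)\sim (\ell(\xi)/\ell(\eta))^{s}(|\AA_\xi|/|\AA_\eta|)^{1/2}(1+\rho(\xi,\eta)/\ell(\xi))^{-\cJ-\beta}(\ell(\eta)/\ell(\xi))^{\cJ+\gamma_1\wedge\gamma_2}$. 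Second, for each intermediate scale $k$ I would sum $\sum_{\zeta\in\cX_k}$; since $\cX_k$ is a $\delta_k$-net with $\delta_k\sim b^{-k}$, and the decay exponent $\cJ+\beta$ exceeds $d$ (because $\cJ\ge d$ and $\beta>0$), Lemma~\ref{lem:est-discr-sum} (applied with $\delta=\delta_k$, $\delta_1=b^{-(k\wedge j)}$ or similar, $\delta_2=b^{-(k\wedge\ell)}$, $\sigma=\cJ+\beta$) collapses the $\zeta$-sum at scale $k$ to a single kernel $(1+\rho(\xi,\eta)/\max\{b^{-j},b^{-\ell},b^{-k}\})^{-(\cJ+\beta)}$ times volume factors, up to a constant independent of $k$. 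Third, I would collect the three regimes $k\le j\le\ell$, $j\le k\le\ell$, and $j\le\ell\le k$: in each regime the $\min$-terms in $\omega_{\xi\zeta}$ and $\omega_{\zeta\eta}$ become explicit powers of $b$, and the resulting sum over $k$ within each regime is a geometric series in $b^{k}$ whose ratio is bounded away from $1$ precisely because $\gamma_1\neq\gamma_2$ (this prevents a logarithmically divergent borderline) and because $\beta<\gamma_1+\gamma_2$ (this controls the regime $j\le k\le\ell$ where both $\min$-terms contribute and one needs the combined exponent to have a definite sign). Summing the three geometric series and checking that each is dominated by the claimed $\omega_{\xi\eta}(\beta,\gamma_1\wedge\gamma_2)$ finishes the argument.

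\textbf{Main obstacle.} The delicate point is the middle regime $j\le k\le\ell$, where the intermediate scale $b^{-k}$ lies between $\ell(\eta)$ and $\ell(\xi)$. There $\omega_{\xi\zeta}(\beta,\gamma_1)$ contributes the factor $(\ell(\xi)/\ell(\zeta))^{\gamma_1}=b^{-(k-j)\gamma_1}$ (since $\ell(\xi)\ge\ell(\zeta)$) while $\omega_{\zeta\eta}(\beta,\gamma_2)$ contributes $(\ell(\eta)/\ell(\zeta))^{\cJ+\gamma_2}=b^{(k-\ell)(\cJ+\gamma_2)}$ together with a volume ratio $|\AA_\zeta|/|\AA_\eta|$ that, via \eqref{doubling}, costs at most $b^{(k-\ell)(-d)}$ in the worst direction — these must be balanced against the factor $b^{d(k\wedge j - k)}=1$ or $b^{0}$ coming from Lemma~\ref{lem:est-discr-sum}'s prefactor $(\delta_1/\delta)^d$, and against the splitting of the $(1+\rho/\max)^{-\cJ-\beta}$ kernel. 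One has to verify that the net exponent of $b^{k}$ in this regime is nonzero (using $\beta<\gamma_1+\gamma_2$), so that the geometric series over $k\in[j,\ell]$ sums to a constant times its largest term, and that this largest term is exactly $\omega_{\xi\eta}(\beta,\gamma_1\wedge\gamma_2)$ — i.e. that the optimizing scale is an endpoint and the endpoint value matches. Careful bookkeeping of which of $\gamma_1$, $\gamma_2$ is smaller (this is where $\gamma_1\wedge\gamma_2$ enters the conclusion) is the crux; the hypothesis $\gamma_1\neq\gamma_2$ guarantees strict monotonicity of the summand in $k$ so no borderline $\log$ factor appears.
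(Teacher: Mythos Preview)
Your approach is essentially the paper's: split the sum over $\zeta$ by the scale $k$ with $\zeta\in\cX_k$, use Lemma~\ref{lem:est-discr-sum} to collapse each $\cX_k$-sum to a single kernel in $\rho(\xi,\eta)$, and then sum the resulting geometric series over $k$ in the three regimes determined by the ordering of $k$ relative to the levels of $\xi$ and $\eta$.

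However, your bookkeeping in the ``main obstacle'' paragraph is scrambled. With your convention $\ell(\xi)\ge\ell(\zeta)\ge\ell(\eta)$ in the middle regime, the $\min$ in $\omega_{\xi\zeta}(\beta,\gamma_1)$ is the branch $(\ell(\zeta)/\ell(\xi))^{\cJ+\gamma_1}$, not $(\ell(\xi)/\ell(\zeta))^{\gamma_1}$; and in $\omega_{\zeta\eta}(\beta,\gamma_2)$ it is $(\ell(\eta)/\ell(\zeta))^{\cJ+\gamma_2}$. More importantly, you have the roles of the two hypotheses reversed: it is the \emph{middle} regime that needs $\gamma_1\neq\gamma_2$ (the $k$-summand there is proportional to $b^{k(\gamma_2-\gamma_1)}$, which would give a $\log$ if $\gamma_1=\gamma_2$), while the condition $\beta<\gamma_1+\gamma_2$ is needed in the \emph{outer} regime where $\ell(\zeta)$ exceeds both $\ell(\xi)$ and $\ell(\eta)$ --- there the kernel scale is $\ell(\zeta)$ rather than $\ell(\xi)$, and converting $(1+\rho(\xi,\eta)/\ell(\zeta))^{-\cJ-\beta}$ to $(1+\rho(\xi,\eta)/\ell(\xi))^{-\cJ-\beta}$ costs a factor $b^{(\cJ+\beta)}$ per level, which must be beaten by the combined $\min$-factors $b^{-\gamma_1-\cJ-\gamma_2}$ per level. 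Once these identifications are corrected, your sketch goes through exactly as in the paper.
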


\begin{proof}
We shall only carry out the proof for the spaces $\bb^{s}_{pq}$ and $\ff^{s}_{pq}$.
The proof for the spaces $\tbb^s_{pq}$ and $\tff^s_{pq}$ is similar.

Assume $\ell(\xi)\le \ell(\eta)$. Clearly, by (\ref{def-omega-detail-1}) - (\ref{def-W})
\begin{align*}
W_{\xi\eta}(\beta,\gamma_1,\gamma_2)
&=\sum_{\zeta\in\cX}
\biggl(\frac{\ell(\xi)}{\ell(\eta)}\biggr)^{s}\biggl(\frac{|\AA_\xi|}{|\AA_\eta|}\biggr)^{1/2}
\\
&\times \biggl(1+\frac{\rho(\xi,\zeta)}
{\max \{\ell(\xi),\ell(\zeta)\}}\biggr)^{-\cJ-\beta}
\biggl(1+\frac{\rho(\zeta,\eta)}
{\max \{\ell(\zeta),\ell(\eta)\}}\biggr)^{-\cJ-\beta}
\\
&\times \min\biggl\{\biggl(\frac{\ell(\xi)}{\ell(\zeta)}\biggr)^{\gamma_1},
     \biggl(\frac{\ell(\zeta)}{\ell(\xi)}\biggr)^{\cJ+\gamma_1}\biggr\}
\min\biggl\{\biggl(\frac{\ell(\zeta)}{\ell(\eta)}\biggr)^{\gamma_2},
     \biggl(\frac{\ell(\eta)}{\ell(\zeta)}\biggr)^{\cJ+\gamma_2}\biggr\}.
\end{align*}
Let $\ell(\xi)=b^{-j}$, $\ell(\eta)=b^{-\nu}$, $\nu\le j$.
Then
$$
W_{\xi\eta}(\beta,\gamma_1,\gamma_2)
=\biggl(\frac{\ell(\xi)}{\ell(\eta)}\biggr)^{s}\biggl(\frac{|\AA_\xi|}{|\AA_\eta|}\biggr)^{1/2}
\left(\Sigma_1+\Sigma_2+\Sigma_3\right),
$$
where
\begin{align*}
\Sigma_1 =\sum_{m=j+1}^{\infty}\sum_{\zeta\in\cX_m}
&\biggl(1+\frac{\rho(\xi,\zeta)}
{\max \{\ell(\xi),\ell(\zeta)\}}\biggr)^{-\cJ-\beta}
\biggl(1+\frac{\rho(\zeta,\eta)}
{\max \{\ell(\zeta),\ell(\eta)\}}\biggr)^{-\cJ-\beta}
\\
&\times
\min \biggl\{\biggl(\frac{\ell(\xi)}{\ell(\zeta)}\biggr)^{\gamma_1},
     \biggl(\frac{\ell(\zeta)}{\ell(\xi)}\biggr)^{\cJ+\gamma_1}\biggr\}
\min \biggl\{\biggl(\frac{\ell(\zeta)}{\ell(\eta)}\biggr)^{\gamma_2},
     \biggl(\frac{\ell(\eta)}{\ell(\zeta)}\biggr)^{\cJ+\gamma_2}\biggr\}
\end{align*}
and $\Sigma_2=\sum\limits_{m=\nu}^{j}\sum\limits_{\zeta\in\cX_m}\cdots$,
$\Sigma_3=\sum\limits_{m=-\infty}^{\nu-1}\sum\limits_{\zeta\in\cX_m} \cdots$
for the same quantity.
Applying (\ref{est-discr-sum}) we get
\begin{align*}\Sigma_1 &=\sum_{m=j+1}^{\infty}\sum_{\zeta\in\cX_m}
\frac{b^{(\cJ+\gamma_1)(j-m)}b^{\gamma_2(\nu-m)}}{\left(1+b^{j}\rho(\xi,\zeta)\right)^{\cJ+\beta}
\left(1+b^{\nu}\rho(\zeta,\eta)\right)^{\cJ+\beta}}
\\
&\le c\left(1+b^{\nu}\rho(\xi,\eta)\right)^{-\cJ-\beta}
\sum_{m=j+1}^{\infty}b^{(\cJ+\gamma_1)(j-m)}b^{\gamma_2(\nu-m)}b^{d(m-j)}
\\
&\le c\left(1+b^{\nu}\rho(\xi,\eta)\right)^{-\cJ-\beta}b^{\gamma_2(\nu-j)},
\end{align*}
where for the last inequality we used that $\gamma_1+\gamma_2>0$ and $\cJ\ge d$.

We use again (\ref{est-discr-sum}) to obtain
\begin{align*}\Sigma_2 &=\sum_{m=\nu}^{j}\sum_{\zeta\in\cX_m}
\frac{b^{\gamma_1(m-j)}b^{\gamma_2(\nu-m)}}{\left(1+b^{m}\rho(\xi,\zeta)\right)^{\cJ+\beta}
\left(1+b^{\nu}\rho(\zeta,\eta)\right)^{\cJ+\beta}}
\\
&\le c\left(1+b^{\nu}\rho(\xi,\eta)\right)^{-\cJ-\beta}
\sum_{m=\nu}^{j}b^{\gamma_1(m-j)}b^{\gamma_2(\nu-m)}.
\end{align*}
If $\gamma_1<\gamma_2$, then
$$
\sum_{m=\nu}^{j}b^{\gamma_1(m-j)}b^{\gamma_2(\nu-m)}
\le \sum_{m=\nu}^{\infty}b^{(\gamma_1-\gamma_2)m}b^{-\gamma_1 j}b^{\gamma_2 \nu}
\le cb^{\gamma_1(\nu-j)}.
$$
If $\gamma_1>\gamma_2$, then
$$
\sum\limits_{m=\nu}^{j}b^{\gamma_1(m-j)}b^{\gamma_2(\nu-m)}
\le \sum_{m=-\infty}^{j}b^{(\gamma_1-\gamma_2)m}b^{-\gamma_1 j}b^{\gamma_2 \nu}
\le cb^{\gamma_2(\nu-j)}.
$$
In both cases we get
$$
\Sigma_2 \le c\left(1+b^{\nu}\rho(\xi,\eta)\right)^{-\cJ-\beta}b^{(\gamma_1\wedge\gamma_2)(\nu-j)}.
$$

To estimate $\Sigma_3$ we use again (\ref{est-discr-sum}) and obtain

\begin{align*}\Sigma_3
&=\sum_{m=-\infty}^{\nu-1}\sum_{\zeta\in\cX_m}
\frac{b^{\gamma_1(m-j)}b^{(\cJ+\gamma_2)(m-\nu)}}{\left(1+b^{m}\rho(\xi,\zeta)\right)^{\cJ+\beta}
\left(1+b^{m}\rho(\zeta,\eta)\right)^{\cJ+\beta}}
\\
&\le c\sum\limits_{m=-\infty}^{\nu-1}\left(1+b^{m}\rho(\xi,\eta)\right)^{-\cJ-\beta}b^{\gamma_1(m-j)}b^{(\cJ+\gamma_2)(m-\nu)}.
\end{align*}
However, if $m<\nu$, then
$$
\left(1+b^{m}\rho(\xi,\eta)\right)^{-\cJ-\beta}
\le b^{(\cJ+\beta)(\nu-m)}\left(1+b^{\nu}\rho(\xi,\eta)\right)^{-\cJ-\beta}
$$
and due to $\gamma_1+\gamma_2>\beta$
\begin{align*}
&\sum_{m=-\infty}^{\nu-1}b^{\gamma_1(m-j)}b^{(\cJ+\gamma_2)(m-\nu)}b^{(\cJ+\beta)(\nu-m)}
\\
&=\sum_{m=-\infty}^{\nu-1}b^{(\gamma_1+\gamma_2-\beta)m}b^{-\gamma_1 j}b^{-\gamma_2 \nu}b^{\beta \nu}
\le cb^{\gamma_1(\nu-j)}.
\end{align*}
Hence
$$
\Sigma_3\le c\left(1+b^{\nu}\rho(\xi,\eta)\right)^{-\cJ-\beta}b^{\gamma_1(\nu-j)}.
$$

Putting the above estimates together we get for $\ell(\xi)\le \ell(\eta)$
$$
W_{\xi\eta}(\beta,\gamma_1,\gamma_2)\leq c
\biggl(\frac{\ell(\xi)}{\ell(\eta)}\biggr)^{s}\biggl(\frac{|\AA_\xi|}{|\AA_\eta|}\biggr)^{1/2}
\biggl(1+\frac{\rho(\xi,\eta)}
{\ell(\eta)}\biggr)^{-\cJ-\beta}
\biggl(\frac{\ell(\xi)}{\ell(\eta)}\biggr)^{\gamma_1\wedge\gamma_2}.
$$
Just in the same way one shows that if $\ell(\xi)>\ell(\eta)$, then
$$
W_{\xi\eta}(\beta,\gamma_1,\gamma_2)\leq c
\biggl(\frac{\ell(\xi)}{\ell(\eta)}\biggr)^{s}\biggl(\frac{|\AA_\xi|}{|\AA_\eta|}\biggr)^{1/2}
\biggl(1+\frac{\rho(\xi,\eta)}
{\ell(\xi)}\biggr)^{-\cJ-\beta}
\biggl(\frac{\ell(\eta)}{\ell(\xi)}\biggr)^{\cJ+\gamma_1\wedge\gamma_2}.
$$
The proof is complete.
\end{proof}

\smallskip

\noindent
{\em Proof of Theorem~\ref{thm:algebra}.}
(i)
Assume $A,B\in\ad^s_{pq}$ and let $\{a_{\xi\eta}\}_{\xi,\eta}$, $\{b_{\xi\eta}\}_{\xi,\eta}$
be their respective matrices.
Then there exist $\varepsilon_{a}$, $\varepsilon_{b}>0$ such that
$$
|a_{\xi\eta}|\le c\omega_{\xi\eta}(\varepsilon_a),
\quad
|b_{\xi\eta}|\le c\omega_{\xi\eta}(\varepsilon_b).
$$
Evidently, $\omega_{\xi\eta}(\varepsilon)$ is a nonincreasing function of $\varepsilon$
and hence we may assume that $\varepsilon_{a}>\varepsilon_{b}$.
Note that by the definitions it follows that
\begin{equation}\label{prop-omega}
\omega_{\xi\eta}(\beta,\beta)=\omega_{\xi\eta}(\beta),
\quad\hbox{and}\quad
\omega_{\xi\eta}(\varepsilon)\le \omega_{\xi\eta}(\beta,\gamma),
\;\;\hbox{if $0<\beta,\gamma\le\varepsilon$.}
\end{equation}
Denote by $\{c_{\xi\eta}\}_{\xi,\eta}$ the matrix of the composition $A\circ B$.
Applying Lemma~\ref{lem:algebra4} we get
\begin{align*}
|c_{\xi\eta}|&=\Big|\sum\limits_{\zeta\in\cX}a_{\xi\zeta}b_{\zeta\eta}\Big|
\le c\sum_{\zeta\in\cX}\omega_{\xi\zeta}(\varepsilon_a)\omega_{\zeta\eta}(\varepsilon_b)
\le c\sum_{\zeta\in\cX}
\omega_{\xi\zeta}(\varepsilon_b,\varepsilon_a)\omega_{\zeta\eta}(\varepsilon_b,\varepsilon_b)
\\
&=cW_{\xi\eta}(\varepsilon_b,\varepsilon_a,\varepsilon_b)\leq c^{*}\omega_{\xi\eta}(\varepsilon_b,\varepsilon_b)=c^{*}\omega_{\xi\eta}(\varepsilon_b)
\end{align*}
and the proof of (i) is complete.

(ii) Let $D:=I-A$ with matrix $\{d_{\xi\eta}\}_{\xi,\eta}$ and fix $\varepsilon >0$.
Assume $\|I-A\|_\varepsilon <\delta$ for some $\delta>0$,
implying
$|d_{\xi\eta}|\le \delta\omega_{\xi\eta}(\varepsilon)$.
Denote by $\{d_{\xi\eta}^{(n)}\}_{\xi,\eta}$ the matrix of $D^n$, $n\ge 1$.
Fix $0<\varepsilon_1 <\varepsilon$.
We claim that there exists a constant $c^*>1$, independent of $\delta$,  such that
for any $n\in\bN$
\begin{equation}\label{est-d-n}
|d_{\xi\eta}^{(n)}|\le (\delta c^{*})^n \omega_{\xi\eta}(\varepsilon_1),
\quad\forall \xi, \eta\in \cX.
\end{equation}
Indeed, from (\ref{prop-omega})
$\omega_{\xi\eta}(\varepsilon) \le \omega_{\xi\eta}(\varepsilon_1, \varepsilon)$
and just as in the proof of (i) we infer
$$
|d_{\xi\eta}^{(2)}| \le c^*\delta^2\omega_{\xi\eta}(\varepsilon_1) \le (c^*\delta)^2\omega_{\xi\eta}(\varepsilon_1).
$$
Suppose that (\ref{est-d-n}) holds for some $n\in\bN$.
Then from $|d_{\xi\eta}|\le \delta\omega_{\xi\eta}(\varepsilon)$ and (\ref{est-d-n})
it follows just as above that
$|d_{\xi\eta}^{(n+1)}| \le (\delta c^{*})^{n+1} \omega_{\xi\eta}(\varepsilon_1)$.
Therefore, (\ref{est-d-n}) holds for all $n\in\bN$.

Now, choose $\delta<1/c^{*}$. Then the Neumann series $\sum_{n\ge 0}D^n$ converges to
the operator $(I-D)^{-1}=A^{-1}$
and for its matrix $\{a^{-1}_{\xi\eta}\}_{\xi,\eta}$ it holds that
$$
|a^{-1}_{\xi\eta}|\le (1-\delta c^{*})^{-1}\omega_{\xi\eta}(\varepsilon_1).
$$
Therefore, $A^{-1}$ exists and is almost diagonal.
\qed

\subsection{Compactly supported frames}\label{subsec:construction}

Frames for inhomogeneous Besov and Triebel-Lizorkin spaces in the setting of this article
with compactly supported elements are developed in \cite{DKKP}.
We next show how this construction can be modified for homogeneous Besov and Triebel-Lizorkin spaces.

Let $\Psi$ be the compactly supported $C^\infty$ functions from the construction of Frame~\#~1, see (\ref{def:Psi}).
The first step is to construct a band limited function $\Theta$, which approximates
$\Psi$ in the specific sense given next.

%%%%%%%%%%%%% Proposition

\begin{proposition}\label{prop:constr-1}
For any $\eps>0$ and $\NN \ge\KK\ge 1$ there exists a function $\Theta\in C^\infty(\R)$ and $R>0$
such that $\Theta$ is even and real-valued,
$\supp \hat\Theta \subset [-R, R]$, and
\begin{equation}\label{approx-1}
|\Psi^{(\nu)}(u)- \Theta^{(\nu)}(u)| \le \frac{\eps|u|^\NN}{(1+|u|)^{2\NN}},
\quad u\in \R, \quad \nu=0, 1, \dots, \KK.
\end{equation}
Furthermore,
\begin{equation}\label{supp-hat-Theta}
\supp \cF(u^{-m}\Theta(u)) \subset [-R, R]
\quad\hbox{for $0\le m\le N$}
\end{equation}
with $\cF$ being the Fourier transform.
\end{proposition}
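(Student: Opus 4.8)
The plan is to produce $\Theta$ by a smooth cutoff in the Fourier (cosine-transform) variable applied to $\Psi$, and then to control the error via a quantitative version of the fact that a Schwartz-type function is approximated by its low-frequency truncation. First I would recall that $\Psi\in C^\infty(\R_+)$ with $\supp\Psi\subset[b^{-1},b]$, so after the standard even extension $\Psi$ is an even $C^\infty$ function on $\R$ vanishing to infinite order at $0$ (indeed $\Psi(u)=O(|u|^L)$ near $0$ for every $L$, since $\Phi^{(m)}(0)=0$ for $m\ge1$ in Definition~\ref{cutoff-d1}), and with compact support bounded away from $0$. Its Fourier transform $\widehat\Psi$ is therefore entire, rapidly decaying, and real-valued (by evenness). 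Fix a smooth even cutoff $\chi\in C^\infty_0(\R)$ with $\chi\equiv1$ on $[-1,1]$, $\supp\chi\subset[-2,2]$, and set $\Theta_R:=\cF^{-1}\bigl(\chi(\cdot/R)\widehat\Psi\bigr)$; this is even, real-valued, and band limited with $\supp\widehat{\Theta_R}\subset[-2R,2R]$, so the first structural requirements hold with that $R$ (rename $2R\to R$).

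The heart of the matter is the pointwise estimate \eqref{approx-1}. Here I would argue that
$$
\Psi^{(\nu)}(u)-\Theta_R^{(\nu)}(u)=\frac{1}{2\pi}\int_{|\xi|\ge R}(i\xi)^\nu\bigl(1-\chi(\xi/R)\bigr)\widehat\Psi(\xi)e^{iu\xi}\,d\xi,
$$
and, because $\widehat\Psi$ is rapidly decreasing (with all the implied constants depending only on $\Psi$, hence on fixed structural data), for every $\nu\le K$ and every $L$ one gets $\bigl|\Psi^{(\nu)}(u)-\Theta_R^{(\nu)}(u)\bigr|\le c_{L}R^{-L}$ uniformly in $u$, simply by the tail bound $\int_{|\xi|\ge R}|\xi|^{K}\,|\widehat\Psi(\xi)|\,d\xi\le c_L R^{-L}$. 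Thus the $L^\infty$ error can be made as small as we wish by taking $R$ large; choosing $L$ large enough (say $L\ge 2N$) and then $R$ large enough yields $\bigl|\Psi^{(\nu)}(u)-\Theta_R^{(\nu)}(u)\bigr|\le \eps/(1+|u|)^{2N}\le \eps|u|^N/(1+|u|)^{2N}$ for $|u|\ge1$. For $|u|\le1$ the weight $|u|^N/(1+|u|)^{2N}$ degenerates, so there one must instead exploit that the difference itself vanishes to high order at $0$: since $\Psi$ vanishes to infinite order at $0$ and $\Theta_R$ is band limited, I would write $\Theta_R(u)=u^N\Omega_R(u)$ with $\Omega_R$ again band limited (this is exactly the content of \eqref{supp-hat-Theta}, and follows because $\widehat\Psi$ vanishes to infinite order at... no---rather because one may arrange $\Theta_R$ to inherit the vanishing; concretely, replace $\widehat\Psi$ by $\chi(\cdot/R)\widehat\Psi$ and note $\cF(u^{-m}\Theta_R)$ is, up to constants, an $m$-fold antiderivative of $\chi(\cdot/R)\widehat\Psi$, still supported in $[-R,R]$ provided $\int\xi^k\chi(\xi/R)\widehat\Psi(\xi)\,d\xi=0$ for $k<m$, which holds since $\Psi^{(k)}(0)=0$). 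Then on $|u|\le1$,
$$
\bigl|\Psi^{(\nu)}(u)-\Theta_R^{(\nu)}(u)\bigr|\le c\,|u|^{N-\nu}\sup_{|v|\le1}\bigl|(\text{something})\bigr|\le c\,|u|^N\cdot\eps' ,
$$
where $\eps'$ is again governed by the same tail bound on $\widehat\Psi$; matching powers of $|u|$ gives the stated bound there as well.

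The main obstacle, as the paragraph above signals, is the behaviour near $u=0$: the target majorant in \eqref{approx-1} carries the factor $|u|^N$, so an $L^\infty$ bound alone is insufficient and one genuinely needs the $N$-fold vanishing of $\Psi$ (and of the approximant) at the origin together with the band-limitedness of all the quotients $u^{-m}\Theta(u)$, which is precisely why \eqref{supp-hat-Theta} is bundled into the statement. I would therefore organize the proof so that the factorization $\Theta(u)=u^N\Omega(u)$ with $\widehat\Omega$ still supported in $[-R,R]$ is established first (checking the moment conditions $\int\xi^k\chi(\xi/R)\widehat\Psi(\xi)\,d\xi=0$ for $0\le k<N$, which are inherited from $\Psi^{(k)}(0)=0$), and only then run the two-region estimate $|u|\ge1$ vs.\ $|u|\le1$, in both cases reducing everything to a single high-order tail bound $\int_{|\xi|\ge R}(1+|\xi|)^{K+N}|\widehat\Psi(\xi)|\,d\xi\le c_{\eps}$ that is made small by choosing $R=R(\eps,N,K)$ large. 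All constants depend only on $\Psi$ (hence on $b$ and the structural constants) and on $N,K$, as required.
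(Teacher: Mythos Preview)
The paper does not give a proof of this proposition; the construction is taken over from \cite{DKKP}. So I assess your argument on its own.

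The overall plan---smooth Fourier truncation of $\Psi$---is the right idea, but there is a genuine gap in the treatment near $u=0$. You assert that the moment conditions
\[
\int \xi^k\,\chi(\xi/R)\,\widehat\Psi(\xi)\,d\xi = 0,\qquad 0\le k<N,
\]
``are inherited from $\Psi^{(k)}(0)=0$.'' This is false. The identities $\Psi^{(k)}(0)=0$ give $\int \xi^k\widehat\Psi(\xi)\,d\xi=0$, but once the cutoff $\chi(\cdot/R)$ is inserted the integral differs from this by the nonzero tail $\int \xi^k\bigl(1-\chi(\xi/R)\bigr)\widehat\Psi(\xi)\,d\xi$. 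Consequently your $\Theta_R$ does \emph{not} vanish to any prescribed order at $0$, the quotient $u^{-m}\Theta_R(u)$ is singular there, and \eqref{supp-hat-Theta} simply fails. Your own Leibniz computation already exposes the problem: you arrive at a factor $|u|^{N-\nu}$ and then write $|u|^{N}$, which does not follow. (A~secondary issue: for $|u|\ge 1$ you claim a bound $\eps(1+|u|)^{-2N}$ from a mere $L^\infty$ tail estimate $c_L R^{-L}$; since the right-hand side decays in $u$, one must also integrate by parts in $\xi$ to produce that decay.)

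The standard repair is to perform the truncation not on $\widehat\Psi$ but on $\widehat G$, where $G(u):=u^{-L}\Psi(u)$ for a fixed integer $L\ge N+K$. Since $\supp\Psi\subset[b^{-1},b]$, this $G$ is a genuine function in $C^\infty_0(\R)$. Setting $\Omega_R:=\cF^{-1}\bigl(\chi(\cdot/R)\widehat G\bigr)$ and $\Theta(u):=u^{L}\Omega_R(u)$, one has $u^{-m}\Theta(u)=u^{L-m}\Omega_R(u)$, a polynomial times a band-limited function, hence band-limited with the same Fourier support; this gives \eqref{supp-hat-Theta} for free. For \eqref{approx-1} near $0$ one writes $\Psi-\Theta=u^{L}(G-\Omega_R)$; by Leibniz each term in $(\Psi-\Theta)^{(\nu)}$ carries a factor $u^{L-j}$ with $j\le\nu\le K$, hence at least $|u|^{L-K}\ge|u|^{N}$, while the remaining factor is controlled by $\sup_{k\le K}\|(G-\Omega_R)^{(k)}\|_\infty$, which is made small by the tail bound $\int_{|\xi|\ge R}|\xi|^K|\widehat G(\xi)|\,d\xi\to 0$. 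The region $|u|\ge 1$ is handled by the same tail bound combined with integration by parts in $\xi$ to obtain the required decay in $u$.
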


The constants $\NN, \KK$ and $\eps$ (sufficiently small) will be selected later on.
With these constants fixed, we use the functions $\Theta$ from Proposition~\ref{prop:constr-1}
to define the new frame.
%Similarly as in (\ref{def-Psi-j}) we set
%\begin{equation}\label{def-Theta-j}
%\Theta_j(u):=\Theta(b^{-j}u),\;\; j\in\bZ.
%\end{equation}
Let the sets $\cX_j$, $\cX:=\cup_{j\in\bZ}\cX_j$, and $\{A_\xi\}_{\xi\in \cX_j}$
be as in the definition of Frame \# 1.
We define a new system $\{\theta_{\xi}\}_{\xi\in\cX}$ by
\begin{equation}\label{new-frame}
\theta_{\xi} (x):= |A_\xi |^{1/2} \Theta(b^{-j}\sqrt{L})(x,\xi),
\quad \xi \in \XX_j,\; j\in\bZ.
\end{equation}

Observe that by the fact that $\supp \hat\Theta \subset [-R, R]$
and the final speed propagation property (Proposition~\ref{prop:finite-sp}) %(see \cite[Proposition~2.8]{KP})
it follows that each $\theta_\xi$ is compactly supported, more precisely
\begin{equation}\label{support}
\supp \theta_{\xi} \subset B(\xi, \ct Rb^{-j}), \quad \xi\in \cX_j,\; j\in\bZ.
\end{equation}
The construction of a dual frame $\{\tilde{\theta}_{\xi}\}_{\xi\in\cX}$ is more involved,
see below and for more details see \cite{DKKP}.

The basic result we need here is that the systems
$\{\theta_{\xi}\}_{\xi\in\cX},\;\{\tilde{\theta}_{\xi}\}_{\xi\in\cX}$
form a pair of frames for Besov and Triebel-Lizorkin spaces for
the following range $\Omega$ of indices determined by constants $s_0\geq 0, p_0,p_1,q_0>0$:
\begin{equation}\label{omega}
\Omega:=\{(s,p,q):|s|\leq s_0,\;p_0\leq p\leq p_1,\;q_0\leq q<\infty\}.
\end{equation}
We introduce the following notation:
$\cJ_0:= d/\min\{1, p_0\}$ in the case of $\BB$-spaces and
$\cJ_0:= d/\min\{1, p_0, q_0\}$ in the case of $\FF$-spaces.

%%%%%%% Theorem

\begin{theorem}\label{thm:main}
Suppose $(s,p,q)\in\Omega$ and let $\{\theta_\xi\}_{\xi\in\cX}$ be the system constructed
in $(\ref{new-frame})$, where
\begin{equation}\label{cond-NK}
K \geq s_0+\cJ_0+d/2+1
\quad\hbox{and}\quad
N \geq K+s_0+\cJ_0+3d/2+1
\end{equation}
Then for sufficiently small $\eps$ in the construction of $\{\theta_\xi\}_{\xi\in\cX}$
the following claims are valid:

$(a)$ The operator
$$
Tf:=\sum_{\xi\in\cX} \langle f, \tilde\psi_\xi\rangle \theta_\xi,
$$
is invertible on $\BB^s_{pq}$ and $T$, $T^{-1}$ are bounded on $\BB^s_{pq}$.
%The same holds true when $B^s_{pq}$ is replaced by $\tB^s_{pq}$, $F^s_{pq}$, or $\tF^s_{pq}$.

$(b)$ The system $\{\tilde\theta_\xi\}_{\xi\in\cX}$ defined by
$$
\tilde\theta_\xi := \sum_{\eta\in\cX} \overline{\langle T^{-1}\psi_\eta, \tilde\psi_\xi\rangle} \tilde\psi_\eta,
\quad\xi\in\cX,
$$
is a dual frame to $\{\theta_\xi\}_{\xi\in\cX}$ in the following sense:
For any $f\in \BB^s_{pq}$
\begin{equation}\label{frame-B}
f=\sum_{\xi\in\cX} \langle f, \tilde\theta_\xi\rangle \theta_\xi
\quad\hbox{and}\quad
\|f\|_{\BB^s_{pq}} \sim \|(\langle f, \tilde\theta_\xi\rangle)\|_{\bb^s_{pq}},
\end{equation}
where $\langle f, \tilde\theta_\xi\rangle$ is defined by
\begin{equation}\label{def-f-dual-B}
\langle f, \tilde\theta_\xi\rangle
:= \sum_{\eta\in\cX} \langle T^{-1}\psi_\eta, \tilde\psi_\xi\rangle \langle f, \tilde\psi_\eta\rangle
\end{equation}
and the convergence in $(\ref{frame-B})$ is in $\cS'/\PP$ and unconditional in $\BB^s_{pq}$.

Furthermore, $(a)$ and $(b)$ hold true when
$\BB^s_{pq}$ is replaced by $\tBB^s_{pq}$, $\FF^s_{pq}$, or $\tFF^s_{pq}$, and
$\bb^s_{pq}$  by $\tbb^s_{pq}$, $\ff^s_{pq}$, or $\tff^s_{pq}$, respectively.
\end{theorem}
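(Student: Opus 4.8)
The plan is to reduce the statement entirely to the algebra property of almost diagonal operators (Theorem~\ref{thm:algebra}) together with the frame characterization of the homogeneous spaces (Theorems~\ref{thm:B-character}, \ref{thm:F-character}). First I would compute the matrix of the composite operator
\[
T_{\tilde\psi}\circ T = S_{\tilde\psi}\circ T_\psi^{\theta}\circ S_{\tilde\psi} \circ T_\psi,
\]
acting on the appropriate sequence space, where $T_\psi^{\theta}$ denotes the synthesis map $\{a_\xi\}\mapsto \sum_\xi a_\xi\theta_\xi$. Since $T_\psi\circ S_{\tilde\psi}=\Id$ on the space by Theorem~\ref{thm:B-character}(a) (resp.\ its $\FF$-, $\tBB$-, $\tFF$-analogues), the operator $T$ is conjugate via $S_{\tilde\psi}$ to the sequence-space operator $G$ with matrix
\[
g_{\xi\eta}=\langle \theta_\eta, \tilde\psi_\xi\rangle,
\quad \xi,\eta\in\cX.
\]
So it suffices to show that $G$ is almost diagonal with $\|I-G\|_\varepsilon$ as small as we wish once $\eps$ in Proposition~\ref{prop:constr-1} is chosen small, and then invoke Theorem~\ref{thm:algebra}(ii) to get $G^{-1}\in\ad^s_{pq}$, hence $G$ and $G^{-1}$ bounded by Theorem~\ref{thm:AlmDiag}; transferring back through $S_{\tilde\psi}$, $T_\psi$ gives invertibility and boundedness of $T$ and $T^{-1}$ on $\BB^s_{pq}$, proving $(a)$.

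For $(b)$, with $T$ invertible on the space, set $\tilde\theta_\xi$ as in the statement; the identity $f=\sum_\xi\langle f,\tilde\theta_\xi\rangle\theta_\xi$ follows formally by writing $f=T(T^{-1}f)$, expanding $T^{-1}f=\sum_\eta\langle T^{-1}f,\tilde\psi_\eta\rangle\psi_\eta$ via Theorem~\ref{thm:frames}(a) — legitimate because $T^{-1}f\in\BB^s_{pq}\subset\cS'/\PP$ — applying $T$ termwise using its continuity, and regrouping. The coefficient identity \eqref{def-f-dual-B} comes out of this regrouping. The norm equivalence $\|f\|_{\BB^s_{pq}}\sim\|(\langle f,\tilde\theta_\xi\rangle)\|_{\bb^s_{pq}}$ follows because $(\langle f,\tilde\theta_\xi\rangle)_\xi = (G^{-1})^{*}$ applied to $(\langle f,\tilde\psi_\xi\rangle)_\xi$ up to conjugation, and $(G^{-1})^{*}$, $G^{*}$ are likewise almost diagonal (the class $\ad^s_{pq}$ is stable under the appropriate transpose in these weights — a point I would verify from the symmetry of $\omega_{\xi\eta}(\delta)$), hence bounded on $\bb^s_{pq}$. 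Unconditional convergence in $\BB^s_{pq}$ is inherited from the unconditional convergence of the frame expansion of $T^{-1}f$ via Theorem~\ref{thm:frames} and the boundedness of $T$. The extension to $\tBB^s_{pq}$, $\FF^s_{pq}$, $\tFF^s_{pq}$ is verbatim, using the corresponding parts of Theorems~\ref{thm:B-character}, \ref{thm:F-character}, \ref{thm:AlmDiag}, \ref{thm:algebra} and the corresponding weight $\omega_{\xi\eta}$ from \eqref{def-omega-d-2}.

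The main obstacle — and the heart of the argument — is the estimate on the matrix $g_{\xi\eta}=\langle\theta_\eta,\tilde\psi_\xi\rangle$, specifically showing that $I-G$ has arbitrarily small $\|\cdot\|_\varepsilon$-norm. For $\xi\in\cX_i$ and $\eta\in\cX_j$ one has, up to the normalizing factors $|A_\xi|^{1/2}|A_\eta|^{1/2}$,
\[
g_{\xi\eta}=|A_\xi|^{1/2}|A_\eta|^{1/2}\big\langle \Theta(b^{-j}\sqrt L)(\cdot,\eta),\, \tilde\psi_\xi\big\rangle,
\]
and since $\tilde\psi_\xi$ is spectrally localized to $[b^{i-2},b^{i+2}]$ while $\Theta$ approximates $\Psi$ (supported near $[b^{-1},b]$), the composition $\Theta(b^{-j}\sqrt L)\tilde\psi_\xi$ vanishes unless $|i-j|$ is bounded, so one only needs the near-diagonal band $|i-j|\le C$. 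On that band, \eqref{approx-1} forces $\|\Psi-\Theta\|$ (in the relevant $C^K$-weighted sense) to be $O(\eps)$, and Theorem~\ref{thm:S-local-kernels} applied to $\Psi-\Theta$ — using the hypotheses \eqref{cond-NK} on $K,N$ to guarantee enough decay and smoothness, together with the vanishing moments encoded in $u^{-m}\Theta$ via \eqref{supp-hat-Theta} — yields kernel bounds of the form $\eps$ times the weight $\omega_{\xi\eta}(\delta)$ for a $\delta$ depending on $K,N,s_0,\cJ_0$ through \eqref{cond-NK}. Carrying out this kernel estimate carefully, matching powers of $b^{\min(i,j)-\max(i,j)}$ against the three factors of $\omega_{\xi\eta}(\delta)$ in \eqref{def-omega-d-1}, and absorbing the volume ratios using \eqref{doubling}, \eqref{D2}, is the technical core; it is essentially the homogeneous analogue of the corresponding computation in \cite{DKKP}, and I would refer to the appendix for the full details.
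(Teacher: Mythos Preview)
Your strategy is correct and is exactly the adaptation of the proof of Theorem~4.2 in \cite{DKKP} that the paper invokes; the paper itself omits the argument entirely and refers to \cite{DKKP}.

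One small correction in part~(b): the sequence $(\langle f,\tilde\theta_\xi\rangle)_\xi$ is not obtained from $(\langle f,\tilde\psi_\eta\rangle)_\eta$ by a transpose of $G^{-1}$, but by the composition $G^{-1}P$, where $P$ has matrix $p_{\zeta\eta}=\langle\psi_\eta,\tilde\psi_\zeta\rangle$ (just compute $h_{\xi\eta}=\langle T^{-1}\psi_\eta,\tilde\psi_\xi\rangle=(S_{\tilde\psi}T^{-1}\psi_\eta)_\xi=(G^{-1}S_{\tilde\psi}\psi_\eta)_\xi$). Since $P$ is almost diagonal by Lemmas~\ref{lem:frame-molec}--\ref{lem:molecule-ad} and $G^{-1}\in\ad^s_{pq}$ by Theorem~\ref{thm:algebra}(ii), the algebra property Theorem~\ref{thm:algebra}(i) gives the boundedness you need directly---no transpose-stability of $\ad^s_{pq}$ is required, which is fortunate because $\omega_{\xi\eta}(\delta)$ in \eqref{def-omega-d-1}--\eqref{def-omega-d-2} is \emph{not} symmetric in $\xi,\eta$ (the $\min$ factor is asymmetric), so the verification you propose would fail.
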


The proof of this theorem is a straightforward adaptation of the proof of Theorem~4.2 in \cite{DKKP};
we omit it.

\section{Smooth molecular and atomic decompositions}\label{sec:molecules}

Families of smooth atoms and molecules on Homogeneous Besov and Tribel-Lizorkin spaces in the classical case on $\R^n$
are introduced and studied in \cite{FJ1,FJ2}. 
They not only provide convenient building blocks for various spaces of distributions,
but also can be used in establishing boundedness of operators \cite{Bownik,CGO,GT,Skrz, Tor,YSY}. 
%Their importance is that not only these can generalize the $\varphi$-transform, under weaker assumptions,
%but these can be further used in the boundedness of operators \cite{Bownik,CGO,GT,Skrz, Tor,YSY}.

\subsection{Smooth molecules for \boldmath $\BB$ and $\FF$-spaces}\label{subsec:molecules-BF}

In this section we generalize the boundedness of the operators $T_{\psi}$, $S_{\tilde{\psi}}$
from (\ref{anal_synth_oprts2}) (see Theorems \ref{thm:B-character} and \ref{thm:F-character})
by replacing $\{\psi_\xi\}$ and $\{\tilde\psi_\xi\}$ by families of smooth molecules.
We present the results for the $\FF^{s}_{pq}$ spaces, but they also hold for the $\BB^{s}_{pq}$ spaces.
We recall from Definition~\ref{def:almost-diagonal} that
$\cJ:= d/\min \{1, p\}$ for $\BB^s_{pq}$, $\tBB^s_{pq}$ and
$\cJ:= d/\min \{1, p, q\}$ for $\FF^s_{pq}$, $\tFF^s_{pq}$.
Also, we set
\begin{equation}\label{def-K-N}
K:=\lfloor(\cJ-s)/2\rfloor + 1 \;\;\hbox{ if $s\le \cJ$}
\quad\hbox{and}\quad
N:=\lfloor s/2\rfloor+1\;\; \hbox{if $s\ge 0$}
\end{equation}
($K$, $N$ will be needed only for the indicated values of $s$).
As before $\cX=\cup_{j\in\bZ}\cX_j$ will be the set centers of the frame elements
$\{\psi_\xi\}$ and $\{\tilde\psi_\xi\}$
and $\{A_\xi\}_{\xi\in\cX_j}$ will be the companion disjoint partitions of $M$,
see Remark~\ref{rem:X-A-xi}.
Recall that $\ell(\xi):=b^{-j}$ and $\AA_\xi:= B(\xi, \delta_j)$, $\delta_j=\gamma b^{-j-2}$, $\xi\in \cX_j$, $j\in\bZ$.

\subsection*{Definition of smooth synthesis molecules}

Let $s\in \R$, $0<p<\infty$, $0<q\le \infty$,
and let $\cJ, K,N$ be as above.
We say that $\{m_{\xi}\}_{\xi\in\cX}$ is a family of smooth synthesis molecules for $\FF^{s}_{pq}$,
if there exists $\MM>\cJ$  such that for any $\xi\in\cX$:

(i)
\begin{equation}\label{fssmbb}
|m_{\xi}(x)|\le \frac{|\AA_{\xi}|^{-1/2}}{(1+\ell(\xi)^{-1}\rho(x,\xi))^{\MM}}.
%\quad \xi\in\cX.
\end{equation}

(ii) If $s\ge 0$ it is assumed that $m_\xi\in D(L^N)$ and for $0<\nu\le N$
\begin{equation}\label{fssmb}
|L^{\nu}m_{\xi}(x)|\le \frac{\ell(\xi)^{-2\nu}|\AA_{\xi}|^{-1/2}}{(1+\ell(\xi)^{-1}\rho(x,\xi))^{\MM}}.
\end{equation}

(iii) In addition, if $s\le \cJ$, it is also assumed that there exists
a family of functions $\{b_{\xi}\}_{\xi\in\cX}$, $b_{\xi} \in D(L^K)$, such that
\begin{equation}\label{fssma}
m_{\xi}=L^{K}b_{\xi},
\end{equation}
and for  $0\le\nu<K$
\begin{equation}\label{fssmc}
|L^{\nu}b_{\xi}(x)|
\le \frac{\ell(\xi)^{2(K-\nu)}|\AA_{\xi}|^{-1/2}}{(1+\ell(\xi)^{-1}\rho(x,\xi))^{\MM}}.
\end{equation}

Note that (\ref{fssmb}) is void if $s<0$, and (\ref{fssma})-(\ref{fssmc}) are void if $s>\cJ$.

\subsection*{Definition of smooth analysis molecules}

Let $s,p,q$ and $\cJ, K,N$ be as above.
We say that $\{\tilde{m}_{\xi}\}_{\xi\in\cX}$ is a family of smooth analysis molecules for $\FF^{s}_{pq}$,
if there exists $\MM>\cJ$ such that for any $\xi\in\cX$:

(i)
\begin{equation}\label{fsambb}
|\tilde m_{\xi}(x)|
\le \frac{|\AA_{\xi}|^{-1/2}}{(1+\ell(\xi)^{-1}\rho(x,\xi))^{\MM+d}}.
\end{equation}

(ii) If $s\le \cJ$ it is assumed that $\tilde{m}\in D(L^K)$ and for $0\le \nu\le K$
\begin{equation}\label{fsamb}
|L^{\nu}\tilde{m}_{\xi}(x)|
\le \frac{\ell(\xi)^{-2\nu}|\AA_{\xi}|^{-1/2}}{(1+\ell(\xi)^{-1}\rho(x,\xi))^{\MM+d}}.
\end{equation}

(iii) In addition, if $s\ge 0$,
it is also assumed that there exists a family of functions
$\{\tilde b_{\xi}\}_{\xi\in\cX}$, $\tilde b_{\xi}\in D(L^N)$,
such that:
\begin{equation}\label{fsama}
\tilde{m}_{\xi}=L^{N}\tilde{b}_{\xi},
\end{equation}
and for $0\leq\nu\le N$
\begin{equation}\label{fsamc}
|L^{\nu}\tilde{b}_{\xi}(x)|
\le \frac{\ell(\xi)^{2(N-\nu)}|\AA_{\xi}|^{-1/2}}{(1+\ell(\xi)^{-1}\rho(x,\xi))^{\MM+d}}.
\end{equation}

As before condition (\ref{fsamb}) is void whenever $s>\cJ$, and (\ref{fsama})-(\ref{fsamc}) are void if $s<0$.

\begin{remark}
If $\{m_\xi\}_{\xi\in\cX}$ is a family of smooth synthesis $($or analysis$)$ molecules,
then we shall say that $m_\eta$ $(\eta\in\cX)$ is a molecule centered at $\eta$.
\end{remark}

\smallskip

The first step here is to establish the following:

\begin{lemma}\label{lem:frame-molec}
There exist constants $\sc, \ch >0$ such that
each of the frames $\{\sc\psi_{\xi}\}_{\xi\in\cX}$ and $\{\ch\tilde{\psi}_{\xi}\}_{\xi\in\cX}$
is a family of smooth synthesis and analysis molecules for $\FF^s_{pq}$.
\end{lemma}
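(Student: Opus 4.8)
The plan is to verify, one by one, that the frame elements $\psi_\xi$ and $\tilde\psi_\xi$ satisfy the defining estimates of smooth synthesis and analysis molecules, after multiplication by suitable absolute constants $\sc,\ch$. The basic ingredients are already recorded in Theorem~\ref{thm:frames}: the space localization bound \eqref{prop-tpsi-1}, namely $|L^m\psi_\xi(x)|,\,|L^m\tilde\psi_\xi(x)|\le c_m b^{2jm}|B(\xi,b^{-j})|^{-1/2}\exp\{-\kaph(b^j\rho(x,\xi))^\beta\}$ for $\xi\in\cX_j$, and the spectral localization $\psi_\xi\in\Sigma^p_{[b^{j-1},b^{j+1}]}$, $\tilde\psi_\xi\in\Sigma^p_{[b^{j-2},b^{j+2}]}$. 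The first observation is that sub-exponential decay dominates any polynomial decay: for any $\MM>0$ there is a constant $c=c(\MM,\kaph,\beta)$ with $\exp\{-\kaph u^\beta\}\le c(1+u)^{-\MM-d}$ for all $u\ge0$. Recalling $\ell(\xi)=b^{-j}$ and $|\AA_\xi|\sim|B(\xi,\ell(\xi))|$, this immediately converts \eqref{prop-tpsi-1} (with $m=0$) into the size estimates \eqref{fssmbb} and \eqref{fsambb}, and (with $m=\nu$, using $b^{2j\nu}=\ell(\xi)^{-2\nu}$) into the derivative estimates \eqref{fssmb} and \eqref{fsamb}, up to the fixed constants $c_\nu$; taking $\sc,\ch$ to be the reciprocal of the largest such constant over the finitely many $\nu\le\max\{N,K\}$ handles conditions (i) and (ii) for both families.

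Next I would treat the vanishing-moment-type conditions (iii), which are the only substantive point. For the synthesis molecules one needs $b_\xi$ with $m_\xi=L^K b_\xi$ and $|L^\nu b_\xi(x)|\lesssim \ell(\xi)^{2(K-\nu)}|\AA_\xi|^{-1/2}(1+\ell(\xi)^{-1}\rho(x,\xi))^{-\MM}$ for $0\le\nu<K$; for the analysis molecules one needs $\tilde b_\xi$ with $\tilde m_\xi=L^N\tilde b_\xi$ and the analogous bound. Here I would exploit the spectral localization: since $\psi_\xi\in\Sigma^2_{[b^{j-1},b^{j+1}]}$, one may choose an even real-valued $\theta\in C_0^\infty(\R_+)$ with $\theta\equiv1$ on $[b^{-1},b]$ and set $b_\xi:=\sc\,L^{-K}[\theta(b^{-j}\sqrt L)\psi_\xi]$, which is well-defined because $\lambda^{-2K}\theta(\lambda)$ is smooth and compactly supported away from $0$, and which satisfies $L^K b_\xi=\sc\psi_\xi$ by the spectral calculus. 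The operator $L^\nu L^{-K}\theta(b^{-j}\sqrt L)=b^{-2j(K-\nu)}\cdot(b^{-j}\sqrt L)^{-2(K-\nu)}\theta(b^{-j}\sqrt L)$ has a kernel to which Theorem~\ref{thm:band-sub-exp} (or Theorem~\ref{thm:S-local-kernels}) applies at scale $\delta=b^{-j}=\ell(\xi)$, producing a factor $\ell(\xi)^{2(K-\nu)}$ together with a kernel bound $\lesssim |B(x,\ell(\xi))|^{-1}(1+\ell(\xi)^{-1}\rho(x,u))^{-\sigma}$ for large $\sigma$. Convolving this kernel against the bound \eqref{prop-tpsi-1} for $\psi_\xi(u)$ via Lemma~\ref{lem:gen-ineq} (with $\delta_1=\delta_2=\ell(\xi)$) collapses the two localized factors into a single one of the same type, yielding \eqref{fssmc}; the analysis case \eqref{fsamc} is identical with $K$ replaced by $N$ and $\tilde\psi_\xi\in\Sigma^2_{[b^{j-2},b^{j+2}]}$, adjusting $\theta$ to equal $1$ on $[b^{-2},b^2]$.

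Finally I would check consistency of the moment parameters: the lemma requires $\MM>\cJ$, which is free since the sub-exponential decay allows $\MM$ as large as we like; and it requires $K$ and $N$ as in \eqref{def-K-N}, but conditions (iii) only demand that $b_\xi\in D(L^K)$ and $\tilde b_\xi\in D(L^N)$ with the displayed bounds for $\nu$ in the stated ranges, which the above construction delivers for every $K,N\ge1$. One must also remember that \eqref{fssmb} is required only for $s\ge0$, \eqref{fssma}--\eqref{fssmc} only for $s\le\cJ$, and symmetrically for the analysis side, so no conflict arises. The main obstacle, and the only place where real work is needed, is the estimate in (iii): one must show that applying $L^{-K}$ (respectively $L^{-N}$) to a frame element, after localizing its spectrum, costs exactly the scale factor $\ell(\xi)^{2K}$ and preserves the polynomial spatial localization. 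This is precisely the content of the smooth functional calculus of Theorems~\ref{thm:S-local-kernels} and \ref{thm:band-sub-exp} combined with the convolution estimate of Lemma~\ref{lem:gen-ineq}, so the proof is a routine but careful bookkeeping argument; I would organize it by first proving a short auxiliary lemma to the effect that if $g\in\Sigma^2_{[b^{j-c},b^{j+c}]}$ obeys a bound of the form $|g(x)|\le A|B(x,b^{-j})|^{-1/2}(1+b^j\rho(x,\xi))^{-\MM'}$ then $L^{-\nu}g$ obeys the same bound with $A$ replaced by $cAb^{-2j\nu}$ and $\MM'$ decreased by $d$, and then apply it $K$ (or $N$) times.
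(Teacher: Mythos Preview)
Your proposal is correct and, for the dual frame $\{\tilde\psi_\xi\}$, takes a genuinely different and cleaner route than the paper. For $\{\psi_\xi\}$ the paper does essentially what you do but without the extra cutoff or convolution: since $\psi_\xi(x)=|A_\xi|^{1/2}\Psi(b^{-j}\sqrt L)(x,\xi)$ with $\supp\Psi\subset[b^{-1},b]$ already bounded away from $0$, it writes $L^\nu b_\xi(x)=c_*|A_\xi|^{1/2}b^{-2j(K-\nu)}g(b^{-j}\sqrt L)(x,\xi)$ with $g(u)=u^{2(\nu-K)}\Psi(u)$ and reads \eqref{fssmc} directly off Theorem~\ref{thm:S-local-kernels}. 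The real divergence is for $\tilde\psi_\xi$: the paper goes back to the explicit construction $\tilde\psi_\xi=c_\varepsilon|A_\xi|^{1/2}T_{\lambda_j}[\Gamma_{\lambda_j}(\cdot,\xi)]$, notes that $L^{-N}$ does \emph{not} commute with $T_{\lambda_j}$ (which is not a spectral multiplier), and works around this by decomposing $T_{\lambda_j}=\mathrm{Id}+R_{\lambda_j}+R_{\lambda_j}S_{\lambda_j}$ and estimating each piece separately via Theorem~\ref{thm:S-local-kernels}, Lemma~\ref{lem:gen-ineq}, and Lemma~\ref{lem:est-discr-sum}. Your argument bypasses the internal structure of $T_{\lambda_j}$ entirely: you use only the black-box properties $\tilde\psi_\xi\in\Sigma^2_{[b^{j-2},b^{j+2}]}$ and the bound \eqref{prop-tpsi-1} from Theorem~\ref{thm:frames}, so that $L^{\nu-N}\theta(b^{-j}\sqrt L)$ is a single well-localized spectral operator whose kernel you convolve against $\tilde\psi_\xi$ via Lemma~\ref{lem:gen-ineq}. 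This is shorter and works verbatim for any system with the properties of Theorem~\ref{thm:frames}(b)--(c). One small simplification of your plan: the iteration of the auxiliary lemma is unnecessary, since applying $L^{\nu-K}\theta(b^{-j}\sqrt L)$ (resp.\ $L^{\nu-N}\theta(b^{-j}\sqrt L)$) in one shot already has a kernel of the right form by Theorem~\ref{thm:S-local-kernels}, and a single application of Lemma~\ref{lem:gen-ineq} loses only a fixed power $d$ in the decay exponent rather than $Kd$.
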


\begin{proof}
We first show that there exists a constant $\sc>0$ such that $\{\sc\psi_{\xi}\}_{\xi\in\cX}$
is a family of smooth synthesis molecules.
Let
$$
m_\xi(x):=\sc \psi_{\xi}(x)=\sc |A_{\xi}|^{1/2}\Psi(b^{-j}\sqrt{L})(x,\xi),
\quad\hbox{(see (\ref{def-frame}))}
$$
where $\sc >0$ is a constant that will be selected later on.
From (\ref{prop-tpsi-1}) it readily follows that $m_\xi$ obeys (\ref{fssmbb})-(\ref{fssmb})
if the constant $\sc$ is sufficiently small.

Let now $s\le \cJ$ and $\xi\in\cX_{j}$.
Write (see (\ref{def-frame}) and Remark~\ref{rem:calculus})
$$
b_\xi(x):=\sc L^{-K}\psi_{\xi}(x)=\sc |A_{\xi}|^{1/2}[L^{-K}\Psi(b^{-j}\sqrt{L})](x,\xi).
%\quad\hbox{(see (\ref{def-frame}))}
$$
Then
$$
L^Kb_\xi(x)%=\sc |A_{\xi}|^{1/2}\Psi(b^{-j}\sqrt{L})(x,\xi)
=\sc \psi_\xi(x)= m_\xi(x).
$$
Assuming that $0\le\nu\le K$, $K=\lfloor (\cJ-s)/2\rfloor+1$, we set $g(u):=u^{2(\nu-K)}\Psi(u)$.
Clearly
$L^{\nu-K}\Psi(b^{-j}\sqrt{L})= b^{-2j(K-\nu)}g(b^{-j}\sqrt{L})$,
$g\in C^\infty(\R_+)$ and $\supp g \subset [b^{-1},b]$.
Then by Theorem~\ref{thm:S-local-kernels}, applied to $g$, it follows that for any $\MM>0$
\begin{align*}
|L^{\nu}b_{\xi}(x)|
\le \frac{c \sc |A_{\xi}|^{1/2}b^{-2j(K-\nu)}}{|B(\xi, b^{-j})|(1+b^j\rho(\xi, x))^\MM}
\le \frac{c \sc \ell(\xi)^{2(K-\nu)}|\AA_{\xi}|^{-1/2}}{(1+\ell(\xi)^{-1}\rho(x,\xi))^{\MM}},
\end{align*}
where the constant $c>0$ depends on $\MM$.
We fix $\MM>\cJ$.
By choosing the constant $\sc$ sufficiently small we conclude that
$\{m_\xi\}$ with $m_\xi:=\sc \psi_\xi$ is a family of smooth synthesis molecules.

Just as above one shows that there exists a constant $\sc>0$ such that
$\{\sc \psi_{\xi}\}_{\xi\in\cX}$ is a family of smooth analysis molecules.
We omit the details.

%%%%%%%%%%%%%%%%%%%%%%%

\smallskip

We next show that there exists a constant $\ch>0$ such that $\{\ch\tilde{\psi}_{\xi}\}_{\xi\in\cX}$
is a family of smooth analysis molecules.
Denote (see (\ref{def-dual}))
$$
\tilde{m}_\xi(x):=\ch\tilde{\psi}_{\xi}(x)
=\ch c_{\epsilon}|A_{\xi}|^{1/2}T_{\lambda_{j}}\left(\Gamma_{\lambda_{j}}(\cdot,\xi)\right)(x),
\quad \lambda_j:=b^{j-1},\;\; \xi\in\cX_j.
$$
where $\ch >0$ is a constant that will be selected later on.
From (\ref{prop-tpsi-1}) it readily follows that $\tilde{m}_\xi$ obeys (\ref{fsambb})-(\ref{fsamb})
if the constant $\ch$ is sufficiently small.

Let $s\ge 0$ and $\xi\in\cX_{j}$.
Set
\begin{equation}\label{def-b-xi}
\tilde{b}_\xi(x) :=\ch c_{\epsilon}|A_{\xi}|^{1/2}L^{-N}T_{\lambda_{j}}\left(\Gamma_{\lambda_{j}}(\cdot,\xi)\right)(x).
\end{equation}
Clearly,
$\tilde{m}_\xi = L^N \tilde{b}_\xi$
and it remains to show that for $0\le \nu \le N$ the function
$L^\nu \tilde{b}_\xi$ obeys (\ref{fsamc}).
Observe that the operators $L^{-N}$ and $T_{\lambda_j}$ do not necessarily commute.
We go round this obstacle just as in the proof of Theorem~4.3 in \cite{KP}.
We have
\begin{equation}\label{rep-Tl-Sl}
T_{\lambda_j}:= \Id + S_{\lambda_j}
\quad\hbox{and}\quad
S_{\lambda_j} := \sum_{k\ge 1} R_{\lambda_j} = R_{\lambda_j}(\Id + S_{\lambda_j}),
\end{equation}
where the operator $R_{\lambda_j}$ is from the proof of Lemma~4.2 in \cite{KP}.
In fact, we have
$R_{\lambda_j} = \Gamma_{\lambda_j}^2- V_{\lambda_j}$,
where $V_{\lambda_j}$ is the operator with kernel
\begin{equation}\label{def-V-lam}
V_{\lambda_j}(x, y):= \sum_{\eta\in\cX_j}\omega_\eta \Gamma_{\lambda_j}(x, \eta)\Gamma_{\lambda_j}(\eta, y).
\end{equation}
Here $\omega_\eta:=(1+\eps)^{-1}|A_\eta|$, just as in Lemma~\ref{lem:instrument}.
From (\ref{def-b-xi})-(\ref{rep-Tl-Sl}) we derive the following representation of $L^\nu \tilde{b}_\xi$:
\begin{align*}
L^\nu \tilde{b}_\xi(x)
= \ch c_\eps |A_\xi|^{1/2}\Big(L^{\nu-N}\Gamma_{\lambda_j}(x, \xi)
&+ L^{\nu-N} R_{\lambda_j}[\Gamma_{\lambda_j}(\cdot, \xi)](x)\\
&+ L^{\nu-N} R_{\lambda_j} S_{\lambda_j}[\Gamma_{\lambda_j}(\cdot, \xi)](x)\Big).
\end{align*}
Let $h(u):= u^{2(\nu-N)}\Gamma(u)$.
We have
$L^{\nu-N}\Gamma_{\lambda_j} = \lambda_j^{-2(N-\nu)}h(\lambda_j^{-1}\sqrt{L})$,
and clearly $h\in C^\infty(\R_+)$ and $\supp h \subset [b^{-1},b^3]$.
We now apply Theorem~\ref{thm:S-local-kernels} to $h(\lambda_j^{-1}\sqrt{L})$ to conclude that
for any $\sigma >0$ there exists a constant $c_\sigma$ such that
the kernel of the operator $L^{\nu-N}\Gamma_{\lambda_j}$ obeys
\begin{equation*}
|[L^{\nu-N}\Gamma_{\lambda_j}](x, y)|
\le \frac{c_\sigma \lambda_j^{-2(N-\nu)}}{|B(y, \lambda_j)|(1+\lambda_j\rho(x, y))^\sigma}.
%\le \frac{c b^{-2j(N-\nu)}}{|B(y, b^{-j})|(1+b^j\rho(x, y))^\sigma}.
\end{equation*}
We choose $\sigma:=\MM+4d$, where $\MM>\cJ$ is fixed.
Then
\begin{equation}\label{est-L-Gamma}
|[L^{\nu-N}\Gamma_{\lambda_j}](x, y)|
%\le \frac{c_\sigma \lambda_j^{-2(N-\nu)}}{|B(y, \lambda_j)|(1+\lambda_j\rho(x, y))^\sigma}
\le \frac{c b^{-2j(N-\nu)}}{|B(y, b^{-j})|(1+b^j\rho(x, y))^{\MM+4d}}.
\end{equation}
Also, by Theorem~\ref{thm:S-local-kernels}
\begin{equation}\label{est-Gamma}
|\Gamma_{\lambda_j}(x, y)|
%\le \frac{c_\sigma}{|B(\xi, \lambda_j)|(1+\lambda_j\rho(\xi, x))^\sigma}
\le \frac{c}{|B(y, b^{-j})|(1+b^j\rho(x, y))^{\MM+4d}}.
\end{equation}
Estimates (\ref{est-L-Gamma})-(\ref{est-Gamma}) along with (\ref{def-V-lam}) and Lemma~\ref{lem:est-discr-sum} yield
$$
|[L^{\nu-N}V_{\lambda_j}](x, y)|
\le \frac{c b^{-2j(N-\nu)}}{|B(y, b^{-j})|(1+b^j\rho(x, y))^{\MM+4d}}.
$$
On the other hand, (\ref{est-L-Gamma})-(\ref{est-Gamma}) and Lemma~\ref{lem:gen-ineq} imply
$$
|[L^{\nu-N}\Gamma_{\lambda_j}^2](x, y)|
\le \frac{c b^{-2j(N-\nu)}}{|B(y, b^{-j})|(1+b^j\rho(x, y))^{\MM+3d}}.
$$
Therefore, the kernel of the operator $R_{\lambda_j}$ satisfies
\begin{equation}\label{est-R-lam}
|[L^{\nu-N}R_{\lambda_j}](x, y)|
\le \frac{c b^{-2j(N-\nu)}}{|B(y, b^{-j})|(1+b^j\rho(x, y))^{\MM+3d}}.
\end{equation}
We apply inequality (\ref{int-est-1}) twice using (\ref{est-R-lam}), (\ref{local-S}), and (\ref{est-Gamma})
to obtain
\begin{equation*}
|L^{\nu-N} R_{\lambda_j} S_{\lambda_j}[\Gamma_{\lambda_j}(\cdot, \xi)](x)|
\le \frac{c b^{-2j(N-\nu)}}{|B(\xi, b^{-j})|(1+b^j\rho(x, \xi))^{\MM+d}}.
\end{equation*}
By the same token we get a similar estimate for
$|L^{\nu-N} R_{\lambda_j}[\Gamma_{\lambda_j}(\cdot, \xi)](x)|$.
These estimates and (\ref{est-L-Gamma}) imply
\begin{align*}
|L^{\nu}\tilde{b}_{\xi}(x)|
\le \frac{c \ch |A_{\xi}|^{1/2}b^{-2j(N-\nu)}}{|B(\xi, b^{-j})|(1+b^j\rho(\xi, x))^{\MM+d}}
\le \frac{c \ch \ell(\xi)^{2(N-\nu)}|\AA_{\xi}|^{-1/2}}{(1+\ell(\xi)^{-1}\rho(x,\xi))^{\MM+d}},
\end{align*}
Therefore, inequality (\ref{fsamc}) holds if the constant $\ch$ is sufficiently small.
Consequently, $\{\tilde{m}_\xi\}$ with $\tilde{m}_\xi:=\ch \tilde{\psi}_\xi$
is a family of analysis molecules.

Exactly as above one shows that there exists a constant $\ch>0$ such that
$\{\ch \tilde{\psi}_\xi\}_{\xi\in\cX}$ is a family of synthesis molecules.
We omit the details.
\end{proof}

%---------------------------------------------------------

\begin{lemma}\label{lem:molecule-ad}
Suppose $\{m_{\xi}\}$ and $\{\tilde{m}_{\xi}\}$ are families of smooth synthesis and analysis molecules
for $\FF^{s}_{pq}$, respectively, and let $A$ be the operator with matrix
$$
(a_{\xi\eta})_{\xi,\eta\in\cX}=(\langle m_{\eta},\tilde{m}_{\xi}\rangle)_{\xi,\eta\in\cX}.
$$
Then there exist constants $c,\delta>0$ such that
\begin{equation}\label{est-a-xi-eta}
|a_{\xi\eta}|\le c \omega_{\delta}(\xi,\eta), \quad \forall \xi,\eta\in \cX,
\end{equation}
where $\omega_{\delta}(\xi,\eta)$ is from $(\ref{def-omega-d-1})$.
Therefore, $A$ is almost diagonal on $\ff^s_{pq}$ and by Theorem~\ref{thm:AlmDiag} the operator $A$ is bounded on $\ff^s_{pq}$.
\end{lemma}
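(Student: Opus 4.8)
The plan is to estimate the inner product $\langle m_\eta, \tilde m_\xi\rangle$ directly, exploiting the pointwise size and smoothness/vanishing-moment conditions built into the molecule definitions, and to split into cases according to whether $\ell(\xi) \le \ell(\eta)$ or $\ell(\xi) > \ell(\eta)$. By symmetry of the required bound (the two cases of $\omega_{\xi\eta}(\delta)$ in $(\ref{def-omega-d-1})$ correspond exactly to the two scale orderings), it suffices to treat, say, $\ell(\eta)\le\ell(\xi)$, i.e. $\eta\in\cX_{j}$, $\xi\in\cX_{k}$ with $j\ge k$. Writing $m_\eta$ as $L^{K}b_\eta$ when $s\le\cJ$ (or leaving it alone when $s>\cJ$, in which case the factor $\min\{\cdots\}$ in $\omega_{\xi\eta}$ reduces to $(\ell(\eta)/\ell(\xi))^{\cJ+\delta}$ and no cancellation is needed), and moving the $K$ copies of $L$ onto the analysis molecule via self-adjointness, $\langle m_\eta,\tilde m_\xi\rangle = \langle b_\eta, L^{K}\tilde m_\xi\rangle$. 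Then I would use $(\ref{fssmc})$ to bound $|b_\eta(x)|$ by $\ell(\eta)^{2K}|\AA_\eta|^{-1/2}(1+\ell(\eta)^{-1}\rho(x,\eta))^{-\MM}$ and $(\ref{fsamb})$ to bound $|L^{K}\tilde m_\xi(x)|$ by $\ell(\xi)^{-2K}|\AA_\xi|^{-1/2}(1+\ell(\xi)^{-1}\rho(x,\xi))^{-\MM-d}$, and integrate the product over $M$.

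The key computation is then the integral
\[
\int_M \frac{d\mu(x)}{(1+\ell(\eta)^{-1}\rho(x,\eta))^{\MM}(1+\ell(\xi)^{-1}\rho(x,\xi))^{\MM+d}},
\]
which is exactly of the form handled by Lemma~\ref{lem:gen-ineq} with $\delta_1=\ell(\eta)$, $\delta_2=\ell(\xi)$, $\sigma_1=\MM$, $\sigma_2=\MM+d$ (both exceed $d$ since $\MM>\cJ\ge d$). Since $\ell(\xi)=\delta_{\max}$ here, $(\ref{int-est-2})$ gives the bound $c|B(\eta,\ell(\eta))|(1+\ell(\xi)^{-1}\rho(\xi,\eta))^{-(\MM-d)\wedge(\MM+d)} = c|B(\eta,\ell(\eta))|(1+\ell(\xi)^{-1}\rho(\xi,\eta))^{-\MM+d}$. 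Collecting the scale factors $\ell(\eta)^{2K}\ell(\xi)^{-2K}$ together with $|\AA_\eta|^{-1/2}|\AA_\xi|^{-1/2}|B(\eta,\ell(\eta))|$, and using $|B(\eta,\ell(\eta))|\sim|\AA_\eta|$, produces the target weight: the $|\AA_\xi|/|\AA_\eta|$ ratio to the power $1/2$ (after also inserting the $\ell(\xi)^s\ell(\eta)^{-s}$ factor, which I keep track of from the start), the decay $(1+\rho(\xi,\eta)/\ell(\xi))^{-\cJ-\delta}$ for a suitable $\delta$, and the factor $(\ell(\eta)/\ell(\xi))^{\cJ+\delta}$ coming from the surplus powers of $\ell(\eta)/\ell(\xi)$ after $2K$ exceeds $\cJ+\delta$ (which is why $K=\lfloor(\cJ-s)/2\rfloor+1$ is chosen so that $2K+s>\cJ$). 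One must check that $\MM$ can be taken large enough that $\MM - d - |s| - \text{(scale exponents)} > \cJ$, giving the needed $\delta>0$; this is where the inequality $\MM>\cJ$ is used with room to spare, and in the write-up I would simply fix $\MM$ large at the end.

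The other scale ordering $\ell(\xi)<\ell(\eta)$ is symmetric but uses the analysis-molecule representation $\tilde m_\xi = L^{N}\tilde b_\xi$ when $s\ge 0$ (and the plain pointwise bound when $s<0$), moving the $N$ derivatives onto $m_\eta$ via $(\ref{fssmb})$ and using $(\ref{fsamc})$ for $\tilde b_\xi$; the role of $N=\lfloor s/2\rfloor+1$ is to guarantee $2N>\delta$ so that the min in $\omega_{\xi\eta}$ is realized by its first branch $(\ell(\xi)/\ell(\eta))^\delta$. The main obstacle is purely bookkeeping: keeping the exponents of $\ell(\xi),\ell(\eta)$, the volume ratios, and $\MM$ aligned so that the output is literally $\omega_{\xi\eta}(\delta)$ for one fixed $\delta>0$ valid in all cases, rather than several case-dependent weights; the analytic content is entirely contained in Lemma~\ref{lem:gen-ineq} (and Theorem~\ref{thm:S-local-kernels}, already used to produce the molecule bounds) and requires no new estimates. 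Once $(\ref{est-a-xi-eta})$ is established, the concluding sentence is immediate: $A$ is almost diagonal on $\ff^s_{pq}$ by Definition~\ref{def:almost-diagonal}, hence bounded on $\ff^s_{pq}$ by Theorem~\ref{thm:AlmDiag}.
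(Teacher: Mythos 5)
Your overall plan coincides with the paper's: split by scale ordering, move the powers of $L$ across the pairing by self-adjointness (using the $b_\eta$ or $\tilde b_\xi$ representation according to the sign of $s$), reduce to the integral $I=\int_M (1+\ell(\eta)^{-1}\rho(x,\eta))^{-\MM}(1+\ell(\xi)^{-1}\rho(x,\xi))^{-\MM-d}\,d\mu(x)$, and apply Lemma~\ref{lem:gen-ineq}. The paper's proof does exactly this. However, there is a concrete computational gap in your treatment of the case $\ell(\eta)\le\ell(\xi)$.

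You invoke the \emph{first} inequality of (\ref{int-est-2}) and get $I\le c|B(\eta,\ell(\eta))|(1+\ell(\xi)^{-1}\rho(\xi,\eta))^{-\MM+d}$. Combined with the prefactor $|\AA_\eta|^{-1/2}|\AA_\xi|^{-1/2}$, this produces $(|\AA_\eta|/|\AA_\xi|)^{1/2}$, which is the \emph{reciprocal} of the target ratio $(|\AA_\xi|/|\AA_\eta|)^{1/2}$ appearing in $\omega_{\xi\eta}(\delta)$ — your write-up says it "produces the target weight," but the algebra gives the wrong side. Correcting this by the ball-comparison inequality (\ref{D2}) costs another factor $(1+\ell(\xi)^{-1}\rho(\xi,\eta))^{d}$, on top of the factor $d$ you already lost in the decay exponent. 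The net shortfall is $2d$, so your argument would require $\MM>\cJ+2d+\delta$. Your proposal to "simply fix $\MM$ large at the end" does not repair this: $\MM$ is a parameter of the \emph{given} molecule families (the definitions only guarantee \emph{some} $\MM>\cJ$), and the lemma must hold for those families as given — you cannot enlarge $\MM$ after the fact. The correct move, which the paper makes, is to use the \emph{second} inequality in (\ref{int-est-2}), keyed to the larger scale: with $\delta_{\max}=\ell(\xi)$ one gets $I\le c|B(\xi,\ell(\xi))|(1+\ell(\xi)^{-1}\rho(\xi,\eta))^{-\MM}$, and then $|\AA_\eta|^{-1/2}|\AA_\xi|^{-1/2}|B(\xi,\ell(\xi))|\sim(|\AA_\xi|/|\AA_\eta|)^{1/2}$ with the full decay $\MM$, so that $\delta\le\min\{2K+s-\cJ,\MM-\cJ\}$ closes the estimate for the given $\MM>\cJ$. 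The same care is needed in the opposite scale ordering. (Also, your remark that $N$ is chosen "to guarantee $2N>\delta$ so that the min is realized by its first branch" is misplaced: which branch of the min is active depends only on the scale ordering, not on $N$; the role of $N=\lfloor s/2\rfloor+1$ is to ensure $2N-s>0$ so a positive $\delta$ exists.)
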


\begin{proof}
Under the hypothesis of the lemma, two cases present themselves here.

{\em Case 1:} $\ell(\xi)\ge \ell(\eta)$.
We consider two subcases depending on whether $s\le \cJ$ or $s>\cJ$.

Let $s\le \cJ$. By the definition of synthesis molecules (\S\ref{subsec:molecules-BF})
there exists a function $b_{\eta}\in D(L^K)$, $K:=\lfloor(\cJ-s)/2\rfloor+1$, such that $m_{\eta}=L^{K}b_{\eta}$
and
\begin{equation}\label{CI2}
|b_{\eta}(x)|\le \ell(\eta)^{2K}|\AA_{\eta}|^{-1/2}\Big(1+\frac{\rho(x,\eta)}{\ell(\eta)}\Big)^{-\MM}.
\end{equation}
On the other hand, from (\ref{fsamb})
\begin{equation}\label{CI3}
|L^{K}\tilde{m}_{\xi}(x)|
\le \ell(\xi)^{-2K}|\AA_{\xi}|^{-1/2}\Big(1+\frac{\rho(x,\xi)}{\ell(\xi)}\Big)^{-\MM-d}.
\end{equation}
Clearly,
$
a_{\xi\eta}
=\langle L^{K}b_{\eta},\tilde{m}_{\xi}\rangle
=\langle b_{\eta},L^{K}\tilde{m}_{\xi}\rangle
$
and using (\ref{CI2})-(\ref{CI3}) and (\ref{int-est-2}) we obtain
\begin{align*}
|a_{\xi\eta}|
&\le |\AA_{\eta}|^{-1/2}|\AA_{\xi}|^{-1/2}\Big(\frac{\ell(\eta)}{\ell(\xi)}\Big)^{2K}
\int_{M}\Big(1+\frac{\rho(x,\eta)}{\ell(\eta)}\Big)^{-\MM}\Big(1+\frac{\rho(\xi,x)}{\ell(\xi)}\Big)^{-\MM-d}d\mu(x)
\notag\\
&\le c|\AA_{\eta}|^{-1/2}|\AA_{\xi}|^{-1/2}\Big(\frac{\ell(\eta)}{\ell(\xi)}\Big)^{2K}
|B(\xi, \ell(\xi))|\Big(1+\frac{\rho(\xi,x)}{\ell(\xi)}\Big)^{-\MM}.
%\\
%&\le c \Big(\frac{\ell(\eta)}{\ell(\xi)}\Big)^{2K}\Big(\frac{|\AA_{\xi}|}{|\AA_{\eta}|}\Big)^{1/2}
%\Big(1+\frac{\rho(\xi,\eta)}{\ell(\xi)}\Big)^{-\MM}.
\end{align*}
Here for the last inequality we  used  that $|B(\xi,\ell(\xi))|\sim |\AA_\xi|$.
Hence
\begin{equation}\label{est-a-xi-et}
|a_{\xi\eta}| \le c \Big(\frac{\ell(\eta)}{\ell(\xi)}\Big)^{2K}\Big(\frac{|\AA_{\xi}|}{|\AA_{\eta}|}\Big)^{1/2}
\Big(1+\frac{\rho(\xi,\eta)}{\ell(\xi)}\Big)^{-\MM}.
\end{equation}
In the light of (\ref{def-omega-d-1}) the above implies (\ref{est-a-xi-eta})
for any $0< \delta\le \min\{2K-\cJ+s, \MM-\cJ\}$.

If $s>\cJ$, we use that $a_{\xi\eta}=\langle m_{\eta}, \tilde{m}_{\xi} \rangle$,
the fact that $m_{\eta}$, $\tilde{m}_{\xi}$ satisfy (\ref{fssmbb}), (\ref{fsambb}),
and (\ref{int-est-2}) to obtain
\begin{equation}\label{nomoments}
\begin{aligned}
|a_{\xi\eta}|
& \le |\AA_{\eta}|^{-1/2}|\AA_{\xi}|^{-1/2}
\int_{M}\Big(1+\frac{\rho(x,\eta)}{\ell(\eta)}\Big)^{-\MM}\Big(1+\frac{\rho(\xi,x)}{\ell(\xi)}\Big)^{-\MM-d}d\mu(x)
\\
&\le c\Big(\frac{|\AA_{\xi}|}{|\AA_{\eta}|}\Big)^{1/2}\Big(1+\frac{\rho(\xi,\eta)}{\ell(\xi)}\Big)^{-\MM}.
\end{aligned}
\end{equation}
It is easy to see that this implies (\ref{est-a-xi-eta})
for $0<\delta \le \min\{s-\cJ, \MM-\cJ\}$.

\smallskip

{\em Case 2:} $\ell(\eta)> \ell(\xi)$. Here we consider two subcases: $s\ge 0$ or $s<0$.

Let $s\ge 0$. By the definition of analysis molecules there exists
%a function
$\tilde{b}_{\xi}\in D(L^N)$, $N:=\lfloor s/2\rfloor+1$, such that
$\tilde{m}_{\xi}=L^{N}\tilde{b}_{\xi}$
and $\tilde{b}_{\xi}$ obeys
\begin{equation}\label{est-t-b}
|\tilde{b}_\xi(x)|\le \ell(\xi)^{2N}|\AA_{\xi}|^{-1/2}\Big(1+\frac{\rho(x,\xi)}{\ell(\xi)}\Big)^{-\MM-d}.
\end{equation}
Furthermore, by (\ref{fssmb})
\begin{equation}\label{est-L-N-m}
|L^N m_{\eta}(x)|
\le \ell(\eta)^{-2N}|\AA_{\eta}|^{-1/2}\Big(1+\frac{\rho(x,\eta)}{\ell(\eta)}\Big)^{-\MM}.
\end{equation}
Clearly,
$a_{\xi\eta}=\langle m_{\eta}, L^N\tilde{b}_{\xi}\rangle
=\langle L^Nm_{\eta}, \tilde{b}_{\xi}\rangle$.
Then using  (\ref{est-t-b})-(\ref{est-L-N-m}), and  (\ref{int-est-2}) we obtain just as in Case 1
\begin{align}\label{a-xi-eta-est}
|a_{\xi\eta}|
&\le\Big(\frac{\ell(\xi)}{\ell(\eta)}\Big)^{2N}|\AA_{\eta}|^{-1/2}|\AA_{\xi}|^{-1/2}
\int_{M}\Big(1+\frac{\rho(x,\eta)}{\ell(\eta)}\Big)^{-\MM}\Big(1+\frac{\rho(\xi,x)}{\ell(\xi)}\Big)^{-\MM-d}d\mu(x)
\\
&\le c \Big(\frac{\ell(\xi)}{\ell(\eta)}\Big)^{2N}\left(\frac{|\AA_{\xi}|}{|\AA_{\eta}|}\right)^{1/2}
\Big(1+\frac{\rho(\xi,\eta)}{\ell(\eta)}\Big)^{-\MM}.\notag
\end{align}
This estimate and the fact that $N=\lfloor s/2\rfloor+1$ readily imply (\ref{est-a-xi-eta}) for an arbitrary
$0<\delta \le\min\{2N-s, \MM-\cJ\}$.

If $s<0$, we use that $a_{\xi\eta}=\langle m_{\eta}, \tilde{m}_{\xi} \rangle$,
the fact that
$m_{\eta}$, $\tilde{m}_{\xi}$ obey (\ref{fssmbb}), and (\ref{fsambb}) to obtain
\begin{equation}\label{nomoments-2}
\begin{aligned}
|a_{\xi\eta}|&
\le |\AA_{\eta}|^{-1/2}|\AA_{\xi}|^{-1/2}
\int_{M}\Big(1+\frac{\rho(x,\eta)}{\ell(\eta)}\Big)^{-\MM}\Big(1+\frac{\rho(\xi,x)}{\ell(\xi)}\Big)^{-\MM-d}d\mu(x)
\\
&\le c\Big(\frac{|\AA_{\xi}|}{|\AA_{\eta}|}\Big)^{1/2}\Big(1+\frac{\rho(\xi,\eta)}{\ell(\eta)}\Big)^{-\MM},
\end{aligned}
\end{equation}
implying (\ref{est-a-xi-eta})
%
%$$
%|a_{\xi\eta}|\le c w_{\xi\eta}(\delta),
%$$
for any $0<\delta \le \min\{-s, \MM-\cJ\}$.

Finally, choosing $\delta >0$ sufficiently small we arrive at (\ref{est-a-xi-eta}).
\end{proof}
%\smallskip
%
%{\em Case 3:} $\ell(\xi)=\ell(\eta)$.
%Here we only have to treat the case when
%$\ell(\xi)=\ell(\eta)=1$ since for $\ell(\xi)<1$
%the result follows directly  from the proofs of any of the Cases 1, 2.
%Indeed, assume that $\ell(\xi)=\ell(\eta)=1$ .
%Then $a_{\xi\eta}=\langle m_{\eta},\tilde{m}_{\xi}\rangle$
%and the result follows identically as in (\ref{nomoments}) for any $0<\delta <\min\{-s, \MM-\cJ\}$.
%\end{proof}

After this preparation we come to the main assertions in this section.

\begin{theorem}\label{thm:molec-synthesis}$($Smooth molecular synthesis$)$
If
$\{m_{\xi}\}_{\xi\in \cX}$ is a family of smooth synthesis molecules for $\FF^{s}_{pq}$,
then for any sequence
$t=\{t_{\xi}\}_{\xi\in \cX} \in \ff^s_{pq}$
\begin{equation}\label{molec-synthesis}
\Big\|\sum_{\xi\in\cX}t_{\xi}m_{\xi}\Big\|_{\FF^{s}_{pq}}\le c\|t\|_{\ff^{s}_{pq}},
\end{equation}
where the constant $c>0$ is independent of $\{m_{\xi}\}$ and $\{t_{\xi}\}$.
\end{theorem}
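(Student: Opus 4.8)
The plan is to reduce the estimate $(\ref{molec-synthesis})$ to two facts and then read off the conclusion from the frame characterization of $\FF^s_{pq}$: first, that the series $f:=\sum_{\xi\in\cX}t_\xi m_\xi$ defines an element of $\cS'/\PP$; and second, that the matrix $(\langle m_\eta,\tilde\psi_\xi\rangle)_{\xi,\eta\in\cX}$ is almost diagonal on $\ff^s_{pq}$. The second fact is already at our disposal: by Lemma~\ref{lem:frame-molec} the family $\{\ch\tilde\psi_\xi\}_{\xi\in\cX}$ is a family of smooth analysis molecules for $\FF^s_{pq}$, so Lemma~\ref{lem:molecule-ad} applied to the synthesis molecules $\{m_\eta\}_{\eta\in\cX}$ and the analysis molecules $\{\ch\tilde\psi_\xi\}_{\xi\in\cX}$ shows that $(\langle m_\eta,\tilde\psi_\xi\rangle)_{\xi,\eta\in\cX}=\ch^{-1}(\langle m_\eta,\ch\tilde\psi_\xi\rangle)_{\xi,\eta\in\cX}$ is almost diagonal in the sense of Definition~\ref{def:almost-diagonal}, hence by Theorem~\ref{thm:AlmDiag} the associated operator $A$, $A\,t=\bigl\{\sum_{\eta\in\cX}\langle m_\eta,\tilde\psi_\xi\rangle\,t_\eta\bigr\}_{\xi\in\cX}$, satisfies $\|A\,t\|_{\ff^s_{pq}}\le c\,\|t\|_{\ff^s_{pq}}$.

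Granting that $f\in\cS'/\PP$ and that $\langle f,\tilde\psi_\xi\rangle=\sum_{\eta\in\cX}t_\eta\langle m_\eta,\tilde\psi_\xi\rangle$ for every $\xi$ (so that $\{\langle f,\tilde\psi_\xi\rangle\}_{\xi\in\cX}=A\,t\in\ff^s_{pq}$), the conclusion is immediate: by the frame representation $(\ref{rep-L2})$ we have $f=\sum_{\xi\in\cX}\langle f,\tilde\psi_\xi\rangle\,\psi_\xi=T_{\psi}(A\,t)$ in $\cS'/\PP$, and since $T_{\psi}:\ff^s_{pq}\to\FF^s_{pq}$ is bounded (Theorem~\ref{thm:F-character}(a)) it follows that $f\in\FF^s_{pq}$ and
\[
\Bigl\|\sum_{\xi\in\cX}t_\xi m_\xi\Bigr\|_{\FF^s_{pq}}=\|T_{\psi}(A\,t)\|_{\FF^s_{pq}}\le c\,\|A\,t\|_{\ff^s_{pq}}\le c\,\|t\|_{\ff^s_{pq}},
\]
which is $(\ref{molec-synthesis})$. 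The $\BB^s_{pq}$ case is identical with Theorem~\ref{thm:B-character} in place of Theorem~\ref{thm:F-character}, and the spaces $\tBB^s_{pq}$, $\tFF^s_{pq}$ (with $\tbb^s_{pq}$, $\tff^s_{pq}$) are handled the same way.

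The delicate point, and the one I expect to be the main obstacle, is establishing that $f=\sum_\xi t_\xi m_\xi$ converges in $\cS'/\PP$ together with the attendant identity for $\langle f,\tilde\psi_\xi\rangle$. I would prove it by estimating $\sum_{j\in\bZ}\sum_{\xi\in\cX_j}|t_\xi|\,|\langle m_\xi,\phi\rangle|$ for $\phi\in\cS_\infty$: for large scales one moves powers of $L$ onto $\phi$, using either the cancellation representation $m_\xi=L^K b_\xi$ with $(\ref{fssmc})$ and $L^K\phi\in\cS_\infty$ (available when $s\le\cJ$), or $\langle m_\xi,\phi\rangle=\langle L^N m_\xi,L^{-N}\phi\rangle$ with $(\ref{fssmb})$ and $L^{-N}\phi\in\cS$ (available when $s\ge 0$, in particular when $s>\cJ$); for small scales one uses only boundedness and decay of $\phi$ against the molecule bound $(\ref{fssmbb})$. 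Combined with the rapid decay of $\phi$ and $L^{\pm N}\phi$, the volume estimates $(\ref{doubling})$, $(\ref{D2})$, the discrete sum bound of Lemma~\ref{lem:est-discr-sum}, and the elementary pointwise bound $|t_\xi|\le c\,\ell(\xi)^{s}|\AA_\xi|^{1/2-1/p}\|t\|_{\ff^s_{pq}}$ read off from the definition of $\ff^s_{pq}$, this yields a bound of the form $c\,\cP_m^\star(\phi)\,\|t\|_{\ff^s_{pq}}$ and hence convergence in $\cS'_\infty$. Alternatively, when $q<\infty$ one can avoid this step: the argument above already bounds every finite partial sum $\sum_{\xi\in F}t_\xi m_\xi$ (a manifest element of $\cS'/\PP$) in $\FF^s_{pq}$ by $c\,\|t\|_{\ff^s_{pq}}$, and the same estimate applied to differences shows the partial sums are Cauchy in $\FF^s_{pq}$, so completeness and the embedding $\FF^s_{pq}\hookrightarrow\cS'/\PP$ give the convergence of the full series. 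Running the large-scale estimate carefully in the two regimes $s\le\cJ$ and $s>\cJ$ is the only genuinely technical part; once it is in place, the rest is a direct assembly of the almost-diagonal machinery.
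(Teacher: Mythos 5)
Your proposal is correct and follows essentially the same route as the paper's: show the matrix $(\langle m_\eta,\tilde\psi_\xi\rangle)_{\xi,\eta}$ is almost diagonal via Lemmas~\ref{lem:frame-molec}--\ref{lem:molecule-ad}, apply Theorem~\ref{thm:AlmDiag} together with the boundedness of $T_\psi$ from Theorem~\ref{thm:F-character}, and read off the bound from $f=T_\psi(At)$. Your extra discussion of the convergence of $\sum_\xi t_\xi m_\xi$ in $\cS'/\PP$ and the justification of the summation interchange is more careful than the paper, which states the identity $T_\psi At=\sum_\eta t_\eta m_\eta$ without comment.
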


\begin{proof}
By (\ref{rep-L2}) we have
$
m_{\eta}=\sum_{\xi\in\mathcal{X}}\langle m_{\eta},\tilde{\psi}_{\xi}\rangle\psi_{\xi},
$
where the convergence is in $\cS'/\PP$.
By Lemmas~\ref{lem:frame-molec}-\ref{lem:molecule-ad} it follows that the operator $A$ with matrix
$$
(a_{\xi\eta})_{\xi,\eta\in\cX} := (\langle m_{\eta},\tilde{\psi}_{\xi}\rangle)_{\xi,\eta\in\cX}
$$
is almost diagonal on $\ff^s_{pq}$ and hence, by Theorem~\ref{thm:AlmDiag}, $A$ is bounded on $\ff^s_{pq}$.
On the other hand, by Theorem~\ref{thm:F-character} the synthesis operator
$T_{\psi}:\ff^{s}_{pq}\rightarrow \FF^{s}_{pq}$
from (\ref{anal_synth_oprts2}) is also bounded.
Observe that
\begin{align*}
T_{\psi}At
=&\sum_{\xi\in\mathcal{X}}(At)_{\xi}\psi_{\xi}
=\sum_{\xi\in\mathcal{X}}\sum_{\eta\in\mathcal{X}}a_{\xi\eta}t_{\eta}\psi_{\xi}
\\
&=\sum_{\eta\in\mathcal{X}}\Big(\sum_{\xi\in\mathcal{X}}a_{\xi\eta}\psi_{\xi}\Big)t_{\eta}
=\sum_{\eta\in\mathcal{X}}m_{\eta}t_{\eta}=:f.
\end{align*}
Now, using the boundedness of the operators $T_{\psi}$ and $A$ we infer
\begin{align*}
\|f\|_{\FF^{s}_{pq}}= \|T_{\psi}At\|_{\FF^s_{pq}}
\le c\|At\|_{\ff^s_{pq}} \le c\|t\|_{\ff^s_{pq}},
\end{align*}
which completes the proof.
\end{proof}

%%%%%%%%%%%%%%

\begin{theorem}\label{thm:molec-analysis} $($Smooth molecular analysis$)$
If $\{\tilde{m}_{\xi}\}$ is a family of smooth analysis molecules for $\FF^{s}_{pq}$,
then for any $f\in\FF^{s}_{pq}$
\begin{equation}\label{molec-analysis}
\|\{\langle f,\tilde{m}_{\xi}\rangle\}\|_{\ff^{s}_{pq}}\le c\|f\|_{\FF^{s}_{pq}},
\end{equation}
where $c>0$ is a constant independent of $f$ and $\{\tilde{m}_{\xi}\}$.
Here $\langle f,\tilde{m}_{\xi}\rangle$ is defined by
\begin{equation}\label{def-analysis}
\langle f,\tilde{m}_{\xi}\rangle
:= \sum_{\eta\in\cX}\langle \tilde{m}_{\xi}, \psi_\eta \rangle \langle f,\tilde\psi_{\eta}\rangle,
\end{equation}
where the series converges absolutely.
\end{theorem}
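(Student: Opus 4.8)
The plan is to dualize the argument used for Theorem~\ref{thm:molec-synthesis}. Put $a:=S_{\tilde\psi}f=\{\langle f,\tilde\psi_\eta\rangle\}_{\eta\in\cX}$, the analysis coefficients of $f$ from $(\ref{anal_synth_oprts2})$; by Theorem~\ref{thm:F-character}(a) we have $a\in\ff^s_{pq}$ with $\|a\|_{\ff^s_{pq}}\le c\|f\|_{\FF^s_{pq}}$. Next introduce the operator $B$ with matrix $(b_{\xi\eta})_{\xi,\eta\in\cX}:=(\langle\tilde m_\xi,\psi_\eta\rangle)_{\xi,\eta\in\cX}$, so that, by the very definition $(\ref{def-analysis})$, $(Ba)_\xi=\sum_{\eta\in\cX}\langle\tilde m_\xi,\psi_\eta\rangle\langle f,\tilde\psi_\eta\rangle=\langle f,\tilde m_\xi\rangle$. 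Thus the asserted estimate $(\ref{molec-analysis})$ reduces to $\|Ba\|_{\ff^s_{pq}}\le c\|a\|_{\ff^s_{pq}}$, i.e. to the boundedness of $B$ on $\ff^s_{pq}$.

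To obtain this I would show that $B$ is almost diagonal. By Lemma~\ref{lem:frame-molec} there is a constant $\sc>0$ such that $\{\sc\psi_\eta\}_{\eta\in\cX}$ is a family of smooth synthesis molecules for $\FF^s_{pq}$; since $\{\tilde m_\xi\}_{\xi\in\cX}$ is a family of smooth analysis molecules by hypothesis, Lemma~\ref{lem:molecule-ad} applies to the pair $\{\sc\psi_\eta\}$, $\{\tilde m_\xi\}$ and shows that the operator with matrix $(\langle\sc\psi_\eta,\tilde m_\xi\rangle)_{\xi,\eta\in\cX}$ is almost diagonal on $\ff^s_{pq}$. Since $|b_{\xi\eta}|=\sc^{-1}|\langle\sc\psi_\eta,\tilde m_\xi\rangle|$ (the two entries differ only by the positive scalar $\sc$ and a complex conjugation, which is irrelevant for the majorant $\omega_{\xi\eta}(\delta)$ of Definition~\ref{def:almost-diagonal}), the matrix $(b_{\xi\eta})$ satisfies the same estimate, so $B$ is almost diagonal on $\ff^s_{pq}$. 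Theorem~\ref{thm:AlmDiag} then yields $\|Bh\|_{\ff^s_{pq}}\le c\|h\|_{\ff^s_{pq}}$ for every $h\in\ff^s_{pq}$; taking $h=a$ and combining with the bound for $\|a\|_{\ff^s_{pq}}$ gives $(\ref{molec-analysis})$.

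It remains to justify that the series in $(\ref{def-analysis})$ converges absolutely, so that $\langle f,\tilde m_\xi\rangle$ is well defined; this is the only genuinely technical point, and I expect it to be the main (minor) obstacle. It is, however, already contained in the proof of Theorem~\ref{thm:AlmDiag}: for an almost diagonal matrix $(b_{\xi\eta})$ and any $h\in\ff^s_{pq}$ one shows there that $\sum_{\eta\in\cX}|b_{\xi\eta}|\,|h_\eta|<\infty$ for each fixed $\xi$, the rapid off-diagonal decay of $\omega_{\xi\eta}(\delta)$ dominating the at most polynomial growth of the coordinates of a sequence in $\ff^s_{pq}$. One should also note, as in \cite{FJ2}, that formula $(\ref{def-analysis})$ is the natural surrogate for the (in general undefined) pairing of $f$ with $\tilde m_\xi$: via the frame representation $f=\sum_{\eta\in\cX}\langle f,\tilde\psi_\eta\rangle\psi_\eta$ in $\cS'/\PP$ from Theorem~\ref{thm:frames}(a) one checks it is independent of the chosen frame. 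Apart from this bookkeeping the argument is purely structural, the analytic content having been absorbed into Lemmas~\ref{lem:frame-molec} and \ref{lem:molecule-ad} and Theorem~\ref{thm:AlmDiag}.
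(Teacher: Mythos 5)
Your proposal is correct and follows essentially the same route as the paper: identify the matrix $B=(\langle\tilde m_\xi,\psi_\eta\rangle)$, use Lemmas~\ref{lem:frame-molec} and \ref{lem:molecule-ad} (with $\{\sc\psi_\eta\}$ as synthesis molecules and $\{\tilde m_\xi\}$ as analysis molecules) to see that $B$ is almost diagonal, invoke Theorem~\ref{thm:AlmDiag} for boundedness on $\ff^s_{pq}$, and compose with the bound $\|S_{\tilde\psi}f\|_{\ff^s_{pq}}\le c\|f\|_{\FF^s_{pq}}$ from Theorem~\ref{thm:F-character}. The only cosmetic difference is in the absolute-convergence step: the paper makes it explicit by introducing the auxiliary operator $\hat A$ with entries $|a_{\xi\eta}|$ (still almost diagonal, since $\omega_{\xi\eta}(\delta)$ is a nonnegative majorant) and applying it to the sequence $\{|\langle f,\tilde\psi_\eta\rangle|\}\in\ff^s_{pq}$, whereas you appeal directly to the pointwise absolute summability implicit in the proof of Theorem~\ref{thm:AlmDiag}; these are the same observation, merely packaged differently.
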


\begin{proof}
Let $A$ and $\hat{A}$ be the operators with matrices
$$
(a_{\xi\eta})_{\xi,\eta\in\cX} := (\langle \tilde{m}_{\xi}, \psi_\eta \rangle)_{\xi,\eta\in\cX}
\quad\hbox{and}\quad
(\hat{a}_{\xi\eta})_{\xi,\eta\in\cX} :=(|a_{\xi\eta}|)_{\xi,\eta\in\cX}.
$$
By Lemmas~\ref{lem:frame-molec}-\ref{lem:molecule-ad} it follows that
there exist constants $c, \delta>0$ such that
$$
|\langle \tilde{m}_{\xi}, \psi_\eta \rangle| \le c\omega_{\xi\eta}(\delta),
\quad \forall \xi, \eta\in\cX.
$$
Then Theorem~\ref{thm:AlmDiag} implies that both operators $A$ and $\hat{A}$
are bounded on $\ff^s_{pq}$.

On the other hand, by Theorem~\ref{thm:F-character}, for any $f\in\FF^s_{pq}$
the sequence $\{\langle f,\tilde\psi_{\eta}\rangle\}$
belongs to $\ff^s_{pq}$ and hence $\{|\langle f,\tilde\psi_{\eta}\rangle|\}\in\ff^s_{pq}$.
From this and the boundedness of the operator $\hat{A}$ it follows that
$$
\sum_{\eta\in\cX}|\langle \tilde{m}_{\xi}, \psi_\eta \rangle| |\langle f,\tilde\psi_{\eta}\rangle| <\infty,
\quad \forall \xi\in\cX.
$$
Thus the absolute convergence of the series in (\ref{def-analysis}) is established.

Let $f\in\FF^s_{pq}$. By the boundedness of the operator $A: \ff^s_{pq}\to \ff^s_{pq}$ and
the analysis operator $T_{\tilde\psi}: \FF^s_{pq}\to \ff^s_{pq}$ from (\ref{anal_synth_oprts2})
(Theorem~\ref{thm:F-character}), and (\ref{def-analysis}) it follows that
\begin{align*}
\|\{\langle f,\tilde{m}_{\xi}\rangle\}\|_{\ff^s_{pq}}
= \|AS_{\tilde\psi}f\|_{\ff^s_{pq}}
\le c\|S_{\tilde\psi}f\|_{\ff^s_{pq}}
\le c\|f\|_{\FF^s_{pq}},
\end{align*}
which confirms (\ref{molec-analysis}).
\end{proof}

\subsection{Smooth molecules for \boldmath $\tBB$ and $\tFF$-spaces}\label{subsec:molecules-tilde-BF}

In this section we establish results analogous to the ones from \S\ref{subsec:molecules-BF}
for the $\tFF^{s}_{pq}$ spaces.
Similar results hold as well for the Besov spaces $\tBB^{s}_{pq}$,
which we shall not treat here.
There are a lot of similarities between these results for the $\FF$-spaces and $\tFF$-spaces.
Therefore, we shall put the emphasis on the new features.
Thus, unlike the case of $\FF$-spaces here we also use %assume
the reverse doubling condition (\ref{RD}).

For $s\le \cJ d/d^*$  %$\cJ\ge sd^*/d$
with $\cJ:= d/\min \{1, p, q\}$ and $d^*$ the constant form (\ref{GRD}),
we define
\begin{equation}\label{def-K}
K:=\lfloor(\cJ-s)/2\rfloor+1, \hbox{ if $s<0$}
\quad\hbox{and}\quad
K:=\lfloor(\cJ-sd^*/d)/2\rfloor+1, \hbox{ if $s\ge 0$.}
\end{equation}
As before we set $N:=\lfloor s/2 \rfloor +1$, if $s\ge 0$.

\smallskip

\noindent
{\bf Definition of smooth synthesis molecules.}
Assume $s\in \R$, $0<p<\infty$, $0<q\le \infty$,
and let $\cJ, K,N$ be as above.
We say that $\{m_{\xi}\}_{\xi\in\cX}$ is a family of smooth synthesis molecules for
$\tFF^{s}_{pq}$, if there exists $\MM>\cJ+|s|$, such that:

(i)
\begin{equation}\label{fssm2a}
|m_{\xi}(x)|\le \frac{|\AA_{\xi}|^{-1/2}}{(1+\ell(\xi)^{-1}\rho(x,\xi))^{\MM}}.
\end{equation}

(ii) For $s\ge 0$ and $0\le\nu\le N,$
\begin{equation}\label{fssm2b}
|L^{\nu}m_{\xi}(x)|\le \frac{\ell(\xi)^{-2\nu}|\AA_{\xi}|^{-1/2}}{(1+\ell(\xi)^{-1}\rho(x,\xi))^{\MM}}.
\end{equation}

(iii) In addition, if $0\le s\le \cJ d/d^*$ %$\cJ\ge sd^*/d$,
it is assumed that there exists a family of functions
$\{b_{\xi}\}_{\xi\in\cX}$, $b_{\xi}\in D(L^K)$,
such that
\begin{equation}\label{fssm2c}
m_{\xi}=L^{K}b_{\xi},
\end{equation}
and for $0\le\nu\le K$
\begin{equation}\label{fssm2d}
|L^{\nu}b_{\xi}(x)|\le \frac{\ell(\xi)^{2(K-\nu)}|\AA_{\xi}|^{-1/2}}{(1+\ell(\xi)^{-1}\rho(x,\xi))^{\MM}}.
\end{equation}

Note that (\ref{fssm2b}) is void if $s<0$ and (\ref{fssm2c})-(\ref{fssm2d}) are void
if $s>\cJ d/d^*$. % $\cJ<sd^*/d$

\smallskip

\noindent
{\bf Definition of smooth analysis molecules.}
Let $s,p,q$ and $\cJ,K,N$ be as above.
We say that $\{\tilde{m}_{\xi}\}_{\xi\in\cX}$ is a family of smooth analysis molecules for $\tFF^{s}_{pq}$,
if there exist $\MM>\cJ+|s|$, such that:

(i)
\begin{equation}\label{fsam2a}
|\tilde{m}_{\xi}(x)|\le \frac{|\AA_{\xi}|^{-1/2}}{(1+\ell(\xi)^{-1}\rho(x,\xi))^{\MM+d}}.
\end{equation}

(ii) For $s\le\cJ d/d^*$ %$\cJ\ge sd^*/d$
and $0\le\nu\le K,$
\begin{equation}\label{fsam2b}
|L^{\nu}\tilde{m}_{\xi}(x)|\le \frac{\ell(\xi)^{-2\nu}|\AA_{\xi}|^{-1/2}}{(1+\ell(\xi)^{-1}\rho(x,\xi))^{\MM+d}}.
\end{equation}

(iii) In addition, if $s\ge 0$,
it is assumed that there exists a family of distributions
$\{\tilde{b}_{\xi}\}_{\xi\in\cX}$, $\tilde{b}_{\xi}\in D(L^N)$, such that
\begin{equation}\label{fsam2c}
\tilde{m}_{\xi}=L^{N}\tilde{b}_{\xi},
\end{equation}
and for $0\le\nu\le N$,
\begin{equation}\label{fsam2d}
|L^{\nu}\tilde{b}_{\xi}(x)|
\le \frac{\ell(\xi)^{2(N-\nu)}|\AA_{\xi}|^{-1/2}}{(1+\ell(\xi)^{-1}\rho(x,\xi))^{\MM+d}}.
\end{equation}

Note that (\ref{fsam2b}) is void if $s>\cJ d/d^*$
and (\ref{fsam2c})-(\ref{fsam2d}) are void if $s<0$.

As in \S\ref{subsec:molecules-BF} two lemmas will be needed.

\begin{lemma}\label{lem:frame-molec-t}
There exist constants $\sc, \ch >0$ such that
each of the frames $\{\sc\psi_{\xi}\}_{\xi\in\cX}$ and $\{\ch\tilde{\psi}_{\xi}\}_{\xi\in\cX}$
is a family of smooth synthesis and analysis molecules for $\tFF^s_{pq}$.
\end{lemma}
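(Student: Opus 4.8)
The plan is to follow the proof of Lemma~\ref{lem:frame-molec} almost verbatim, changing only the values of $K$ and $N$ and the admissible decay exponent $\MM>\cJ+|s|$ dictated by the definitions of smooth synthesis and analysis molecules in this section, and keeping track of where the exponent $sd^*/d$ enters. First I would check that $\{\sc\psi_\xi\}_{\xi\in\cX}$ is a family of smooth synthesis molecules for $\tFF^s_{pq}$. With $m_\xi(x):=\sc\psi_\xi(x)=\sc|A_\xi|^{1/2}\Psi(b^{-j}\sqrt{L})(x,\xi)$, $\xi\in\cX_j$, conditions (\ref{fssm2a})--(\ref{fssm2b}) are immediate from the space localization estimate (\ref{prop-tpsi-1}): for any fixed $\MM$ the sub-exponential factor $\exp\{-\kaph(b^j\rho(x,\xi))^\beta\}$ dominates $(1+b^j\rho(x,\xi))^{-\MM}$ once $\sc$ is small, and $|A_\xi|\sim|\AA_\xi|$. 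For the moment condition (iii), which is active only when $0\le s\le\cJ d/d^*$, I would set $b_\xi(x):=\sc L^{-K}\psi_\xi(x)=\sc|A_\xi|^{1/2}[L^{-K}\Psi(b^{-j}\sqrt{L})](x,\xi)$ with $K=\lfloor(\cJ-sd^*/d)/2\rfloor+1$, so that $L^Kb_\xi=m_\xi$; since $L^{\nu-K}\Psi(b^{-j}\sqrt{L})=b^{-2j(K-\nu)}g(b^{-j}\sqrt{L})$ with $g(u):=u^{2(\nu-K)}\Psi(u)\in C^\infty(\R_+)$ and $\supp g\subset[b^{-1},b]$, Theorem~\ref{thm:S-local-kernels} applied to $g$ gives, for every $\MM$, $|L^\nu b_\xi(x)|\le c\sc\,\ell(\xi)^{2(K-\nu)}|\AA_\xi|^{-1/2}(1+\ell(\xi)^{-1}\rho(x,\xi))^{-\MM}$; fixing $\MM>\cJ+|s|$ and then $\sc$ small yields (\ref{fssm2c})--(\ref{fssm2d}). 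The same argument, with the polynomial order raised to $\MM+d$, with $K$ replaced by $\lfloor(\cJ-s)/2\rfloor+1$ when $s<0$, and with $N=\lfloor s/2\rfloor+1$ in the moment step when $s\ge0$, shows that $\{\sc\psi_\xi\}$ is also a family of smooth analysis molecules.

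For the dual frame I would argue as in Lemma~\ref{lem:frame-molec}. Set $\tilde m_\xi(x):=\ch\tilde\psi_\xi(x)=\ch c_\eps|A_\xi|^{1/2}T_{\lambda_j}[\Gamma_{\lambda_j}(\cdot,\xi)](x)$, $\lambda_j=b^{j-1}$, $\xi\in\cX_j$; conditions (\ref{fsam2a})--(\ref{fsam2b}) again follow from (\ref{prop-tpsi-1}) for $\ch$ small. For the moment condition (iii) ($s\ge0$) the obstacle, exactly as in the $\FF$-case, is that $L^{-N}$ and $T_{\lambda_j}$ need not commute; I would handle it following the proof of Theorem~4.3 in \cite{KP}, writing $T_{\lambda_j}=\Id+S_{\lambda_j}$, $S_{\lambda_j}=R_{\lambda_j}(\Id+S_{\lambda_j})$ with $R_{\lambda_j}=\Gamma_{\lambda_j}^2-V_{\lambda_j}$ and $V_{\lambda_j}$ as in (\ref{def-V-lam}), putting $\tilde{b}_\xi(x):=\ch c_\eps|A_\xi|^{1/2}L^{-N}T_{\lambda_j}[\Gamma_{\lambda_j}(\cdot,\xi)](x)$, estimating the kernel of $L^{\nu-N}\Gamma_{\lambda_j}$ by applying Theorem~\ref{thm:S-local-kernels} to $h(u):=u^{2(\nu-N)}\Gamma(u)$ with a large exponent (say $\sigma=\MM+4d$, to absorb the losses below), and then combining this with Lemma~\ref{lem:gen-ineq}, Lemma~\ref{lem:est-discr-sum}, and two applications of (\ref{int-est-1}) together with (\ref{local-S}). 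This produces $|L^\nu\tilde{b}_\xi(x)|\le c\ch\,\ell(\xi)^{2(N-\nu)}|\AA_\xi|^{-1/2}(1+\ell(\xi)^{-1}\rho(x,\xi))^{-\MM-d}$, that is, (\ref{fsam2c})--(\ref{fsam2d}) for $\ch$ small, and an entirely parallel computation shows that $\{\ch\tilde\psi_\xi\}$ is also a family of smooth synthesis molecules.

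The computation is essentially the one already carried out for the $\FF$-spaces; the only genuinely new bookkeeping is that the admissible decay order is now $\MM>\cJ+|s|$ rather than $\MM>\cJ$ --- harmless, since Theorem~\ref{thm:S-local-kernels} produces kernel estimates with arbitrarily fast polynomial decay --- and that for $s\ge0$ the number $K$ of smoothing factors built into $b_\xi$ is governed by $\cJ-sd^*/d$, forcing slightly more vanishing than in the $\FF$-case. I expect the dual-frame step, with its $T_{\lambda_j}=\Id+S_{\lambda_j}$ decomposition and the chain of kernel estimates it entails, to be the only technically delicate point; note that the reverse doubling condition (\ref{RD}) itself plays no role in this lemma, entering only in the subsequent molecule-to-almost-diagonal estimate and in the molecular decomposition theorems for the $\tFF^s_{pq}$ spaces.
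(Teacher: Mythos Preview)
Your proposal is correct and matches the paper's own approach: the paper simply states that the proof is almost identical to that of Lemma~\ref{lem:frame-molec}, with only the ranges for $s$ (and hence the values of $K$, $N$, and the decay exponent $\MM$) differing, and omits the details. Your write-up faithfully supplies exactly those details, correctly tracking the modified parameters $\MM>\cJ+|s|$ and the $sd^*/d$-dependent choice of $K$, and your observation that the reverse doubling condition plays no role here is also accurate.
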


The proof of this lemma is almost identical to the proof of Lemma~\ref{lem:frame-molec};
only the ranges for $s$ are different, which is not essential for the proof. We omit it.

\begin{lemma}\label{lem:molecule-ad-t}
Suppose $\{m_{\xi}\}$ and $\{\tilde{m}_{\xi}\}$ are families of smooth synthesis and analysis molecules
for $\tFF^{s}_{pq}$, respectively, and let $A$ be the operator with matrix
$$
(a_{\xi\eta})_{\xi,\eta\in\cX}=(\langle m_{\eta},\tilde{m}_{\xi}\rangle)_{\xi,\eta\in\cX}.
$$
Then there exist constants $c,\delta>0$ such that
\begin{equation}\label{est-a-xi-eta-t}
|a_{\xi\eta}|\le c \omega_{\delta}(\xi,\eta), \quad \forall \xi,\eta\in \cX,
\end{equation}
where $\omega_{\delta}(\xi,\eta)$ is from $(\ref{def-omega-d-2})$.
Therefore, $A$ is almost diagonal on $\tff^s_{pq}$ and by Theorem~\ref{thm:AlmDiag} the operator $A$ is bounded on $\tff^s_{pq}$.
\end{lemma}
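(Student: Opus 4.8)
The plan is to mirror closely the proof of Lemma~\ref{lem:molecule-ad}, the new ingredient being that the reverse doubling condition (\ref{RD}) (equivalently (\ref{GRD})) is what allows one to pass from powers of $\ell(\xi)/\ell(\eta)$ to powers of $|\AA_\xi|/|\AA_\eta|$, as is now demanded by the weight $\omega_{\xi\eta}(\delta)$ of $(\ref{def-omega-d-2})$.

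First I would split into the two cases $\ell(\xi)\ge\ell(\eta)$ and $\ell(\eta)>\ell(\xi)$, exactly as in the proof of Lemma~\ref{lem:molecule-ad}. In the first case, when $s\le\cJ d/d^*$ one writes $a_{\xi\eta}=\langle L^{K}b_\eta,\tilde m_\xi\rangle=\langle b_\eta,L^{K}\tilde m_\xi\rangle$, invokes the synthesis bound (\ref{fssm2d}) for $b_\eta$ together with the analysis bound (\ref{fsam2b}) for $L^{K}\tilde m_\xi$, and estimates the resulting integral by (\ref{int-est-2}); when $s>\cJ d/d^*$ one instead uses $a_{\xi\eta}=\langle m_\eta,\tilde m_\xi\rangle$ with the size bounds (\ref{fssm2a}), (\ref{fsam2a}) and again (\ref{int-est-2}). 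In the second case, when $s\ge 0$ one uses $a_{\xi\eta}=\langle L^{N}m_\eta,\tilde b_\xi\rangle$ with (\ref{fssm2b}) and (\ref{fsam2d}), and when $s<0$ the size bounds only. In every subcase this yields, with $\ell_{\min}:=\min\{\ell(\xi),\ell(\eta)\}$ and $\ell_{\max}:=\max\{\ell(\xi),\ell(\eta)\}$, a preliminary estimate of the shape
\begin{equation*}
|a_{\xi\eta}|\le c\Big(\frac{\ell_{\min}}{\ell_{\max}}\Big)^{2M_0}\Big(\frac{|\AA_\xi|}{|\AA_\eta|}\Big)^{1/2}\Big(1+\frac{\rho(\xi,\eta)}{\ell_{\max}}\Big)^{-\MM'},
\end{equation*}
where $M_0\in\{K,N,0\}$ according to the subcase and $\MM'$ is any exponent strictly between $\cJ$ and $\MM$ (a fixed power of $1+\rho(\xi,\eta)/\ell_{\max}$ being spent when transferring the volume comparison between the two centers via (\ref{D2}), which is harmless since $\MM>\cJ+|s|$).

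The main -- and essentially the only genuinely new -- point compared with Lemma~\ref{lem:molecule-ad} is the final conversion of this bound into $\omega_{\xi\eta}(\delta)$ from $(\ref{def-omega-d-2})$, whose leading factor is $(|\AA_\xi|/|\AA_\eta|)^{s/d+1/2}$ rather than $(\ell(\xi)/\ell(\eta))^{s}(|\AA_\xi|/|\AA_\eta|)^{1/2}$. Here I would use that, since $|\AA_\xi|\sim|B(\xi,\ell(\xi))|$, the doubling inequality (\ref{doubling}) and the reverse doubling inequality (\ref{GRD}) give, when $\ell(\xi)\le\ell(\eta)$,
\begin{equation*}
c\Big(\frac{\ell(\xi)}{\ell(\eta)}\Big)^{d}\le\frac{|B(\xi,\ell(\xi))|}{|B(\xi,\ell(\eta))|}\le c'\Big(\frac{\ell(\xi)}{\ell(\eta)}\Big)^{d^*},
\end{equation*}
and symmetrically with $\xi,\eta$ interchanged; combined once more with (\ref{D2}) this bounds $|\AA_\xi|/|\AA_\eta|$ from above and below by powers of $\ell_{\min}/\ell_{\max}$, up to harmless $\rho$-factors absorbed into the decay term. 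Feeding these two-sided bounds into the preliminary estimate, one checks that the exponents $K=\lfloor(\cJ-s)/2\rfloor+1$ (for $s<0$), $K=\lfloor(\cJ-sd^*/d)/2\rfloor+1$ (for $s\ge 0$), and $N=\lfloor s/2\rfloor+1$ are calibrated precisely so that after substitution one still retains a surplus factor $(\ell_{\min}/\ell_{\max})^{\cJ+\delta}$ (when $\ell_{\min}=\ell(\eta)$) or $(\ell_{\min}/\ell_{\max})^{\delta}$ (when $\ell_{\min}=\ell(\xi)$) beyond what produces $(|\AA_\xi|/|\AA_\eta|)^{s/d+1/2}$ -- which is exactly the $\min\{(\ell(\xi)/\ell(\eta))^{\delta},(\ell(\eta)/\ell(\xi))^{\cJ+\delta}\}$ factor in $(\ref{def-omega-d-2})$. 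This bookkeeping, carried out in each of the four subcases, is where I expect the real work to lie; choosing $\delta>0$ small enough at the end yields (\ref{est-a-xi-eta-t}). The remaining two assertions are then immediate: almost diagonality on $\tff^s_{pq}$ holds by Definition~\ref{def:almost-diagonal}, and the boundedness of $A$ on $\tff^s_{pq}$ follows from Theorem~\ref{thm:AlmDiag}.
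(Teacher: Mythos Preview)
Your proposal is correct and follows essentially the same approach as the paper: the same case split $\ell(\xi)\gtrless\ell(\eta)$ with the same subcases governed by the thresholds $s\lessgtr\cJ d/d^*$ and $s\gtrless 0$, the same integration-by-parts moves ($a_{\xi\eta}=\langle b_\eta,L^K\tilde m_\xi\rangle$ or $\langle L^N m_\eta,\tilde b_\xi\rangle$) combined with (\ref{int-est-2}), and the same use of the reverse doubling estimate (\ref{GRD}) together with (\ref{D2}) to convert between $\ell(\xi)/\ell(\eta)$ and $|\AA_\xi|/|\AA_\eta|$ at the cost of a factor $(1+\rho(\xi,\eta)/\ell_{\max})^{|s|}$, which is absorbed since $\MM>\cJ+|s|$. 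The only cosmetic difference is that the paper records the preliminary bound with full exponent $\MM$ and then spends the $|s|$ power during the volume conversion, whereas you reserve it upfront in $\MM'$; the arithmetic is the same.
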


\begin{proof}
This proof will follow in the footsteps of the proof of Lemma~\ref{lem:molecule-ad}.
We shall use some of the estimates derived in the proof of Lemma~\ref{lem:molecule-ad} as well.
Under the assumptions of the lemma, we consider two cases.

{\em Case 1:} $\ell(\xi)\ge\ell(\eta)$.
There are two subcases to be considered depending of whether
$s\le \cJ d/{d^*}$ or $s> \cJ d/{d^*}$.
%$\cJ\geq sd^*/d$ and $\cJ<sd^*/d$.

Let $s\le \cJ d/{d^*}$. %$\cJ\geq sd^{*}/d$.
From the definition of synthesis molecules, there exists a function $b_{\eta}\in D(L^K)$
such that $m_{\eta}=L^{K}b_{\eta}$.
Thus
$|a_{\xi\eta}|=|\langle L^{K}b_{\eta},\tilde{m}_{\xi}\rangle|=|\langle b_{\eta},L^{K}\tilde{m}_{\xi}\rangle|.$
Now just as in the proof of (\ref{est-a-xi-et}) we obtain
\begin{equation}\label{a-est}
|a_{\xi\eta}| \le c\Big(\frac{\ell(\eta)}{\ell(\xi)}\Big)^{2K}\Big(\frac{|\AA_{\xi}|}{|\AA_{\eta}|}\Big)^{1/2}
\Big(1+\frac{\rho(\xi,\eta)}{\ell(\xi)}\Big)^{-\MM}.
\end{equation}
Let $s>0$. By (\ref{D2}) and (\ref{GRD}) it follows that
\begin{align*}
|B(\eta, \ell(\xi))| \le c_0\Big(1+\frac{\rho(\xi, \eta)}{\ell(\xi)}\Big)^d|B(\xi, \ell(\xi))|,
\quad
|B(\eta, \ell(\xi))| \ge c_3\Big(\frac{\ell(\xi)}{\ell(\eta)}\Big)^{d^*}|B(\eta, \ell(\eta))|,
\end{align*}
implying
\begin{equation}\label{B-B-est}
\Big(\frac{|\AA_{\eta}|}{|\AA_{\xi}|}\Big)^{s/d}
\leq c\Big(\frac{\ell(\eta)}{\ell(\xi)}\Big)^{sd^*/d}
\Big(1+\frac{\rho(\xi,\eta)}{\ell(\xi)}\Big)^{s}.
\end{equation}
%On the other hand, by (\ref{D2})
%\begin{equation}\label{doubl-est}
%\Big(1+\frac{\rho(\xi,\eta)}{\ell(\xi)}\Big)^{-d}\le c \frac{|\AA_{\xi}|}{|\AA_{\eta}|}.
%\end{equation}
%
Combining (\ref{a-est}) and (\ref{B-B-est}) we obtain
\begin{align*}
|a_{\xi\eta}|
\le c\Big(\frac{\ell(\eta)}{\ell(\xi)}\Big)^{2K+sd^{*}/d}\Big(\frac{|\AA_{\xi}|}{|\AA_{\eta}|}\Big)^{1/2+s/d}
\Big(1+\frac{\rho(\xi,\eta)}{\ell(\xi)}\Big)^{-\MM+s}
\le c  \omega_{\xi\eta}(\delta),
\end{align*}
whenever $0<\delta\le\min\{2K+sd^{*}/d-\cJ, \MM-s-\cJ\}$.

Let $s<0$. Using (\ref{doubling}) and (\ref{D2}) we get
$$ %\begin{equation}\label{B-B-est-2}
|B(\xi, \ell(\xi))|
\le c\Big(1+\frac{\rho(\xi, \eta)}{\ell(\xi)}\Big)^d\Big(\frac{\ell(\xi)}{\ell(\eta)}\Big)^d|B(\eta, \ell(\eta))|
$$ %\end{equation}
implying
\begin{equation}\label{ball-comp}
\Big(1+\frac{\rho(\xi,\eta)}{\ell(\xi)}\Big)^{s}
\le c \Big(\frac{\ell(\eta)}{\ell(\xi)}\Big)^{s} \Big(\frac{|\AA_{\xi}|}{|\AA_{\eta}|}\Big)^{s/d}.
\end{equation}
This coupled with (\ref{a-est}) leads to
\begin{equation*}
\begin{aligned}
|a_{\xi\eta}|
&\le c\Big(\frac{\ell(\eta)}{\ell(\xi)}\Big)^{2K+s}\Big(\frac{|\AA_{\xi}|}{|\AA_{\eta}|}\Big)^{1/2+s/d}
\Big(1+\frac{\rho(\xi,\eta)}{\ell(\xi)}\Big)^{-\MM-s}
\le c  \omega_{\xi\eta}(\delta),
\end{aligned}
\end{equation*}
whenever $0<\delta\le\min\{2K+s-\cJ, \MM+s-\cJ\}$.

Assume $s>\cJ d/{d^*}$ (hence $s>0$). Just as in the proof of (\ref{nomoments}) we obtain
$$
|a_{\xi\eta}|\le c\Big(\frac{|\AA_{\xi}|}{|\AA_{\eta}|}\Big)^{1/2}\Big(1+\frac{\rho(\xi,\eta)}{\ell(\xi)}\Big)^{-\MM}.
$$
From this and (\ref{B-B-est}) we obtain
\begin{align*}
|a_{\xi\eta}|
\le c\Big(\frac{\ell(\eta)}{\ell(\xi)}\Big)^{s d^{*}/d}\Big(\frac{|\AA_{\xi}|}{|\AA_{\eta}|}\Big)^{1/2+s/d}
\Big(1+\frac{\rho(\xi,\eta)}{\ell(\xi)}\Big)^{-\MM+s}
\le c \omega_{\xi\eta}(\delta),
\end{align*}
whenever $0<\delta\le\min\{sd^{*}/d-\cJ, \MM-s-\cJ\}$.

\smallskip

{\em Case 2:} $\ell(\xi)<\ell(\eta)$.
We consider two subcases: $s\ge 0$ or $s<0$.

Let $s\ge 0$. From the definition of analysis molecules, there exists a distribution $\tilde{b}_{\xi}$
such that $\tilde{m}_{\xi}=L^{N}\tilde{b}_{\xi}$, where $N=\lfloor s/2\rfloor +1$.
Thus  $a_{\xi\eta}=\langle L^{N}m_{\eta},\tilde{b}_{\xi}\rangle$ and
just as in (\ref{a-xi-eta-est}) it follows that
\begin{equation}\label{a-est-2}
|a_{\xi\eta}|\le c\Big(\frac{\ell(\xi)}{\ell(\eta)}\Big)^{2N}
\Big(\frac{|\AA_{\xi}|}{|\AA_{\eta}|}\Big)^{1/2}
\Big(1+\frac{\rho(\xi,\eta)}{\ell(\eta)}\Big)^{-\MM}.
\end{equation}
As in the proof of (\ref{ball-comp}) we get
\begin{equation}
\Big(1+\frac{\rho(\xi,\eta)}{\ell(\eta)}\Big)^{-s}\le c \Big(\frac{\ell(\eta)}{\ell(\xi)}\Big)^{s} \Big(\frac{|\AA_{\xi}|}{|\AA_{\eta}|}\Big)^{s/d}
\end{equation}
and combining this with (\ref{a-est-2}) it follows that
\begin{align*}
|a_{\xi\eta}|\le c\Big(\frac{\ell(\xi)}{\ell(\eta)}\Big)^{2N-s}
\Big(\frac{|\AA_{\xi}|}{|\AA_{\eta}|}\Big)^{1/2+s/d}
\Big(1+\frac{\rho(\xi,\eta)}{\ell(\eta)}\Big)^{-\MM+s}
\le c  \omega_{\xi\eta}(\delta),
\end{align*}
whenever $0<\delta\le\min\{2N-s, \MM-\cJ-s \}$.

Let $s<0$. As in the proof of (\ref{nomoments-2}) we obtain
\begin{equation*}
|a_{\xi\eta}|
\le c\Big(\frac{|\AA_{\xi}|}{|\AA_{\eta}|}\Big)^{1/2}\Big(1+\frac{\rho(\xi,\eta)}{\ell(\eta)}\Big)^{-\MM}.
\end{equation*}
On the other hand, as in the proof of (\ref{B-B-est}) we get
\begin{equation}\label{B-B-est-3}
\Big(1+\frac{\rho(\xi,\eta)}{\ell(\eta)}\Big)^{s}
\le c \Big(\frac{|\AA_{\xi}|}{|\AA_{\eta}|}\Big)^{s/d} \Big(\frac{\ell(\eta)}{\ell(\xi)}\Big)^{sd^*/d}.
\end{equation}
From the above two inequalities it follows that
\begin{equation*}
\begin{aligned}
|a_{\xi\eta}|\le c\Big(\frac{\ell(\xi)}{\ell(\eta)}\Big)^{-sd^*/d}
\Big(\frac{|\AA_{\xi}|}{|\AA_{\eta}|}\Big)^{1/2+s/d}
\Big(1+\frac{\rho(\xi,\eta)}{\ell(\eta)}\Big)^{-\MM-s}
\le c  \omega_{\xi\eta}(\delta),
\end{aligned}
\end{equation*}
whenever $0<\delta\le \min\{-sd^*/d, \MM+s-\cJ\}$.

Choosing $\delta$ sufficiently small the above estimates of $|a_{\xi\eta}|$ imply (\ref{est-a-xi-eta-t}).
\end{proof}

The next two theorems contain the main results of this section for $\tBB$ and $\tFF$-spaces.

\begin{theorem}\label{thm:molec-synthesis-t}$($Smooth molecular synthesis$)$
If $\{m_{\xi}\}_{\xi\in \cX}$ is a family of smooth synthesis molecules for $\tFF^{s}_{pq}$,
then for any sequence $\{t_{\xi}\}_{\xi\in \cX} \in \tff^s_{pq}$
\begin{equation}\label{molec-synthesis-t}
\Big\|\sum_{\xi\in\cX}t_{\xi}m_{\xi}\Big\|_{\tFF^{s}_{pq}}\le c\|t\|_{\tff^{s}_{pq}},
\end{equation}
where the constant $c>0$ is independent of $\{m_{\xi}\}$ and $\{t_{\xi}\}$.
\end{theorem}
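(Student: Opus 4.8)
The plan is to follow verbatim the argument used for Theorem~\ref{thm:molec-synthesis}, now invoking the $\tFF$-counterparts of the two preparatory lemmas. First I would use the frame representation (\ref{rep-L2}) to write, for each $\eta\in\cX$,
\[
m_\eta=\sum_{\xi\in\cX}\langle m_\eta,\tilde\psi_\xi\rangle\psi_\xi
\qquad\hbox{(convergence in $\cS'/\PP$).}
\]
By Lemma~\ref{lem:frame-molec-t} the rescaled frame $\{\ch\tilde\psi_\xi\}_{\xi\in\cX}$ is a family of smooth analysis molecules for $\tFF^s_{pq}$, so Lemma~\ref{lem:molecule-ad-t} applied with $\{\tilde m_\xi\}=\{\ch\tilde\psi_\xi\}$ shows that the matrix $(\langle m_\eta,\ch\tilde\psi_\xi\rangle)_{\xi,\eta}$, and hence also
\[
(a_{\xi\eta})_{\xi,\eta\in\cX}:=(\langle m_\eta,\tilde\psi_\xi\rangle)_{\xi,\eta\in\cX}
=\ch^{-1}(\langle m_\eta,\ch\tilde\psi_\xi\rangle)_{\xi,\eta\in\cX},
\]
satisfies $|a_{\xi\eta}|\le c\,\omega_{\xi\eta}(\delta)$ with $\omega_{\xi\eta}(\delta)$ from (\ref{def-omega-d-2}) for some $\delta>0$; that is, the operator $A$ with matrix $(a_{\xi\eta})$ is almost diagonal on $\tff^s_{pq}$. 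By Theorem~\ref{thm:AlmDiag}, $A$ is bounded on $\tff^s_{pq}$.

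Next I would factor the molecular synthesis operator through $A$. By Theorem~\ref{thm:F-character}(b) the synthesis operator $T_\psi:\tff^s_{pq}\to\tFF^s_{pq}$ from (\ref{anal_synth_oprts2}) is bounded, and for any $t=\{t_\xi\}_{\xi\in\cX}\in\tff^s_{pq}$ one has $At\in\tff^s_{pq}$ and
\[
T_\psi At=\sum_{\xi\in\cX}(At)_\xi\psi_\xi
=\sum_{\xi\in\cX}\Big(\sum_{\eta\in\cX}a_{\xi\eta}t_\eta\Big)\psi_\xi
=\sum_{\eta\in\cX}\Big(\sum_{\xi\in\cX}a_{\xi\eta}\psi_\xi\Big)t_\eta
=\sum_{\eta\in\cX}t_\eta m_\eta=:f,
\]
the interchange of the order of summation being justified exactly as in the proof of Theorem~\ref{thm:molec-synthesis}. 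Consequently, using the boundedness of $T_\psi$ and of $A$,
\[
\Big\|\sum_{\xi\in\cX}t_\xi m_\xi\Big\|_{\tFF^s_{pq}}=\|T_\psi At\|_{\tFF^s_{pq}}\le c\|At\|_{\tff^s_{pq}}\le c'\|t\|_{\tff^s_{pq}},
\]
which is (\ref{molec-synthesis-t}).

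The genuinely new content of the theorem is entirely contained in Lemma~\ref{lem:molecule-ad-t} (already proved above), where the reverse doubling condition (\ref{RD}) enters through the volume comparisons (\ref{B-B-est}), (\ref{ball-comp}), (\ref{B-B-est-3}); once that almost-diagonal bound is in hand, the present theorem is formal. The only step requiring a modicum of care is the convergence of $\sum_{\eta\in\cX}t_\eta m_\eta$ in $\cS'/\PP$ together with the displayed interchange: testing against $\phi\in\cS_\infty$, the double series $\sum_{\xi,\eta}|t_\eta|\,|a_{\xi\eta}|\,|\langle\psi_\xi,\phi\rangle|$ is absolutely convergent by the almost-diagonal bound on $(a_{\xi\eta})$, the size and localization estimates for $\psi_\xi$ from Theorem~\ref{thm:frames}(b),(d), and the continuous embedding of $\cS_\infty$ into the relevant dual spaces — the same routine verification as in the $\FF$-case. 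Finally, the identical argument with Theorem~\ref{thm:B-character}(b) in place of Theorem~\ref{thm:F-character}(b) yields the corresponding statement for the Besov spaces $\tBB^s_{pq}$.
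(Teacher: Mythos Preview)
Your proof is correct and follows exactly the approach the paper intends: it reproduces the argument of Theorem~\ref{thm:molec-synthesis} verbatim, replacing Lemmas~\ref{lem:frame-molec}--\ref{lem:molecule-ad} and Theorem~\ref{thm:F-character}(a) by their $\tFF$-counterparts Lemmas~\ref{lem:frame-molec-t}--\ref{lem:molecule-ad-t} and Theorem~\ref{thm:F-character}(b). The paper itself omits the details and simply asserts that the proof is identical to that of Theorem~\ref{thm:molec-synthesis}.
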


\begin{theorem}\label{thm:molec-analysis-t}$($Smooth molecular analysis$)$
If $\{\tilde{m}_{\xi}\}_{\xi\in \cX}$ is a family of smooth analysis molecules for $\tFF^{s}_{pq}$,
then for any $f\in \tFF^{s}_{pq}$
\begin{equation}\label{molec-analysis-t}
\|\{\langle f,\tilde{m}_{\xi}\rangle\}\|_{\tff^{s}_{pq}}
\le c\|f\|_{\tFF^{s}_{pq}},
\end{equation}
where $c>0$ is a constant independent of $f$ and $\{\tilde{m}_{\xi}\}$.
As before $\langle f,\tilde{m}_{\xi}\rangle$ is defined by
\begin{equation}\label{def-analysis-t}
\langle f,\tilde{m}_{\xi}\rangle
:= \sum_{\eta\in\cX}\langle \tilde{m}_{\xi}, \psi_\eta \rangle \langle f,\tilde\psi_{\eta}\rangle,
\end{equation}
where the series converges absolutely.
\end{theorem}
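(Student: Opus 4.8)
The plan is to transcribe the proof of Theorem~\ref{thm:molec-analysis} with the $\FF$-ingredients replaced by their $\tFF$-counterparts. First I would introduce the operator $A$ acting on $\tff^s_{pq}$ with matrix $(a_{\xi\eta})_{\xi,\eta\in\cX}:=(\langle\tilde m_\xi,\psi_\eta\rangle)_{\xi,\eta\in\cX}$, together with the operator $\hat A$ whose matrix is the entrywise absolute value $(|a_{\xi\eta}|)_{\xi,\eta\in\cX}$. By Lemma~\ref{lem:frame-molec-t}, for a suitable fixed constant $\sc>0$ the system $\{\sc\psi_\eta\}_{\eta\in\cX}$ is a family of smooth synthesis molecules for $\tFF^s_{pq}$; pairing these (synthesis) molecules with the analysis molecules $\{\tilde m_\xi\}$ furnished by the hypothesis and applying Lemma~\ref{lem:molecule-ad-t} (using that the $\psi_\eta$ are real-valued, so that $|\langle\tilde m_\xi,\psi_\eta\rangle|=|\langle\psi_\eta,\tilde m_\xi\rangle|$), I obtain constants $c,\delta>0$ with $|\langle\tilde m_\xi,\psi_\eta\rangle|\le c\,\omega_{\xi\eta}(\delta)$ for all $\xi,\eta\in\cX$, where $\omega_{\xi\eta}(\delta)$ is the weight from \eqref{def-omega-d-2}. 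Hence both $A$ and $\hat A$ are almost diagonal on $\tff^s_{pq}$, and by Theorem~\ref{thm:AlmDiag} they are bounded on $\tff^s_{pq}$.

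Next I would verify the absolute convergence of the series in \eqref{def-analysis-t}. For $f\in\tFF^s_{pq}$, Theorem~\ref{thm:F-character}(b) gives $\{\langle f,\tilde\psi_\eta\rangle\}_{\eta\in\cX}\in\tff^s_{pq}$, hence also $\{|\langle f,\tilde\psi_\eta\rangle|\}_{\eta\in\cX}\in\tff^s_{pq}$; feeding this sequence into the bounded operator $\hat A$ shows that $\sum_{\eta\in\cX}|\langle\tilde m_\xi,\psi_\eta\rangle|\,|\langle f,\tilde\psi_\eta\rangle|<\infty$ for every $\xi\in\cX$, which is precisely the asserted absolute convergence and shows that $\langle f,\tilde m_\xi\rangle$ is well defined.

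Finally, I would note that by \eqref{def-analysis-t} the sequence $\{\langle f,\tilde m_\xi\rangle\}_{\xi\in\cX}$ equals $A\big(S_{\tilde\psi}f\big)$, where $S_{\tilde\psi}:\tFF^s_{pq}\to\tff^s_{pq}$ is the bounded analysis operator of Theorem~\ref{thm:F-character}(b). Composing the two bounds gives
\[
\|\{\langle f,\tilde m_\xi\rangle\}\|_{\tff^s_{pq}}=\|A S_{\tilde\psi}f\|_{\tff^s_{pq}}\le c\|S_{\tilde\psi}f\|_{\tff^s_{pq}}\le c\|f\|_{\tFF^s_{pq}},
\]
which is \eqref{molec-analysis-t}, with $c$ independent of $f$ and of $\{\tilde m_\xi\}$ since all intermediate constants depend only on the parameters of the space and on the structural constants. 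The only step carrying genuine content is the almost-diagonal estimate $|\langle\tilde m_\xi,\psi_\eta\rangle|\le c\,\omega_{\xi\eta}(\delta)$ — where, unlike in the $\FF$-case, the reverse doubling condition \eqref{RD} enters through the volume comparisons \eqref{B-B-est}--\eqref{B-B-est-3} — but that obstacle has already been handled in Lemma~\ref{lem:molecule-ad-t}, so that the present proof reduces to assembling these pieces exactly as in the proof of Theorem~\ref{thm:molec-analysis}.
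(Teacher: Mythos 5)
Your proposal is correct and follows exactly the route the paper indicates: it transcribes the proof of Theorem~\ref{thm:molec-analysis}, replacing Lemmas~\ref{lem:frame-molec} and \ref{lem:molecule-ad} and the $\ff$-norms by their $\tFF$/$\tff$ counterparts (Lemmas~\ref{lem:frame-molec-t} and \ref{lem:molecule-ad-t}), and then composes the bounded almost diagonal operator $A$ with the bounded analysis operator $S_{\tilde\psi}$, using the auxiliary operator $\hat A$ to secure the absolute convergence in \eqref{def-analysis-t}. This is precisely the argument the paper intends (and explicitly leaves to the reader), so nothing is missing.
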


These theorems follow from Lemmas~\ref{lem:frame-molec-t} - \ref{lem:molecule-ad-t}
exactly as Theorems~\ref{thm:molec-synthesis} - \ref{thm:molec-analysis}
follow from Lemmas~\ref{lem:frame-molec} - \ref{lem:molecule-ad}.
We omit the details.

\subsection{Smooth atomic decomposition}\label{sec:atomic-decomp}

Here we focus on decompositions, where the building blocks are compactly supported smooth functions - smooth atoms.
We shall only consider atomic decompositions of $\FF$-spaces but, as before, the results hold for $\BB$-spaces as well.

\smallskip

\noindent
{\bf Definition of smooth atoms.}
We say that $\{a_{\xi}\}_{\xi\in\cX}$ is a family of smooth atoms for $\FF^{s}_{pq},$
if there exist integers
\begin{equation}\label{def-KtK}
K\ge(\lfloor(\cJ-s)/2\rfloor +1)_+
\quad\hbox{and}\quad
\tilde{K}\ge (\lfloor s/2\rfloor +2)_+
\end{equation}
and a family of functions $\{b_{\xi}\}_{\xi\in\cX}$, $b_\xi\in D(L^K)$, such that
for any $\xi\in\cX_j$, $j\in\bZ$,
\begin{equation}\label{fsaa}
a_{\xi}=L^{K}b_{\xi},
\end{equation}
\begin{equation}\label{fsab}
|L^{n}a_{\xi}(x)| \le \ell(\xi)^{-2n}|\AA_{\xi}|^{-1/2}
\quad\hbox{for $0\le n\le\tilde{K}$,}
\end{equation}
\begin{equation}\label{fsac}
|L^{\nu}b_{\xi}(x)|\le\ell(\xi)^{2(K-\nu)}|\AA_{\xi}|^{-1/2}
\quad\hbox{for $0\le \nu\le K$, and}
\end{equation}
\begin{equation}\label{fsad}
\supp L^\nu b_\xi \subset c\AA_{\xi}
\quad\hbox{for $0\le \nu\le K$,}
\end{equation}
where $c>0$ is a constant independent of $\xi$.

Clearly a family $\{a_\xi\}$ of smooth atoms is a family of smooth synthesis molecules.

%%%%%%% Theorem

\begin{theorem}\label{thm:atomicdecomposition}
Let $s\in\R$ and $0<p, q<\infty$.
Then for every $f\in \FF^{s}_{pq}$
there exist a family of smooth atoms $\{a_\xi\}_{\xi\in\cX}$
and a sequence $\{t_\xi\}_{\xi\in\cX}$ of complex numbers such that
\begin{equation}\label{est:atom-1}
f=\sum\limits_{\xi\in\cX}t_{\xi}a_{\xi}
\quad\text{and}\quad
\|t\|_{\ff^{s}_{pq}}\le c\|f\|_{\FF^{s}_{pq}},
\end{equation}
where the series converges in $\cS'/\PP$ and in the norm of $\FF^{s}_{pq}$.

Conversely, for every family of smooth atoms $\{a_{\xi}\}_{\xi\in\cX}$
\begin{equation}\label{est:atom-2}
\Big\|\sum_{\xi\in\cX}t_\xi a_\xi\Big\|_{\FF^s_{pq}} \le c\|t\|_{\ff^{s}_{pq}}.
\end{equation}
\end{theorem}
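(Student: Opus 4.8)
The plan is to deduce both directions from the molecular machinery of \S\ref{subsec:molecules-BF} together with the compactly supported frames of \S\ref{subsec:construction}. For the converse inequality \eqref{est:atom-2}, the key observation is the remark just above the statement: every family of smooth atoms $\{a_\xi\}_{\xi\in\cX}$ is, in particular, a family of smooth synthesis molecules for $\FF^s_{pq}$ (the support condition \eqref{fsad} is an extra constraint, and the pointwise bounds \eqref{fsaa}--\eqref{fsac} are exactly \eqref{fssmbb}--\eqref{fssmc} with $\MM$ taken as large as one likes since the molecules are compactly supported in $c\AA_\xi$). Hence \eqref{est:atom-2} is immediate from Theorem~\ref{thm:molec-synthesis}. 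One subtlety to address: \eqref{fssmbb}--\eqref{fssmc} require $m_\xi\in D(L^N)$ for $s\ge 0$ and control of $L^\nu m_\xi$ for $\nu\le N$, which is why the definition of smooth atoms carries the parameter $\tilde K\ge(\lfloor s/2\rfloor+2)_+\ge N$ in \eqref{fsab}; this must be pointed out so that atoms really are synthesis molecules in the full range of $s$.

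For the forward direction, the plan is to run the compactly supported frame construction. Given $f\in\FF^s_{pq}$, fix $s_0,p_0,p_1,q_0$ so that $(s,p,q)$ lies in the range $\Omega$ of \eqref{omega}, and choose $K,N$ in Proposition~\ref{prop:constr-1} to satisfy \eqref{cond-NK}; by Theorem~\ref{thm:main}(b) we get the representation $f=\sum_{\xi\in\cX}\langle f,\tilde\theta_\xi\rangle\theta_\xi$ with $\|(\langle f,\tilde\theta_\xi\rangle)\|_{\ff^s_{pq}}\sim\|f\|_{\FF^s_{pq}}$, convergence holding in $\cS'/\PP$ and in $\FF^s_{pq}$. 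I would then set $t_\xi:=\langle f,\tilde\theta_\xi\rangle$ and $a_\xi:=\theta_\xi$. It remains only to verify that, for $N$ and $K$ chosen large enough in \eqref{cond-NK} (which we are free to do, enlarging $s_0,\cJ_0$ as needed), the functions $\theta_\xi=|A_\xi|^{1/2}\Theta(b^{-j}\sqrt L)(x,\xi)$ form a family of smooth atoms in the sense of \eqref{def-KtK}--\eqref{fsad}. The support condition \eqref{fsad} is precisely \eqref{support}, coming from $\supp\hat\Theta\subset[-R,R]$, the finite speed propagation property (Proposition~\ref{prop:finite-sp}), and \eqref{supp-hat-Theta} applied to $u^{-m}\Theta(u)$ for $m\le N$ (this is why \eqref{supp-hat-Theta} is stated). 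The pointwise bounds \eqref{fsab}--\eqref{fsac} follow by applying Theorem~\ref{thm:S-local-kernels} (or rather its band-limited variant) to the functions $u^{2n}\Theta(u)$ and $u^{-2(K-\nu)}\Theta(u)$, whose smoothness and decay are guaranteed by \eqref{approx-1} together with the corresponding properties of $\Psi$ (recall $\Psi^{(\nu)}(0)=0$ so $u^{-2(K-\nu)}\Psi(u)$ is smooth, and \eqref{approx-1} transfers this to $\Theta$); the resulting kernel bounds are of the form $\ell(\xi)^{-2n}|B(\xi,\ell(\xi))|^{-1}$, which after multiplying by $|A_\xi|^{1/2}\sim|\AA_\xi|^{1/2}$ gives exactly \eqref{fsab}--\eqref{fsac}. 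Finally \eqref{fsaa}, $a_\xi=L^K b_\xi$ with $b_\xi:=|A_\xi|^{1/2}[L^{-K}\Theta(b^{-j}\sqrt L)](x,\xi)$, makes sense because $u^{-2K}\Theta(u)$ is smooth and band-limited.

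The main obstacle, and the only genuinely new point beyond citing Theorem~\ref{thm:main} and Theorem~\ref{thm:molec-synthesis}, is the bookkeeping of the constants: one must check that the thresholds on $K,\tilde K$ demanded by the \emph{atom} definition \eqref{def-KtK} are compatible with the thresholds on $N,K$ demanded by the compactly supported \emph{frame} construction \eqref{cond-NK}, uniformly over the compact parameter range one has fixed around $(s,p,q)$. Since both sets of conditions are lower bounds that grow linearly in $|s|$ and in $\cJ$ (equivalently $\cJ_0$), this is arranged simply by taking $s_0\ge|s|$, $p_0\le p\le p_1$, $q_0\le q$ and then choosing $N,K$ large enough; I would spell this out explicitly so the reader sees that \eqref{cond-NK} forces $K\ge\lfloor(\cJ-s)/2\rfloor+1$ and $N\ge\lfloor s/2\rfloor+2$ once $s_0,\cJ_0$ dominate the relevant quantities. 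With that in hand, \eqref{est:atom-1} follows from Theorem~\ref{thm:main}(b), and the proof is complete.
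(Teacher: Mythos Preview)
Your proposal is correct and follows essentially the same approach as the paper: deduce \eqref{est:atom-2} from Theorem~\ref{thm:molec-synthesis} via the observation that atoms are synthesis molecules, and obtain \eqref{est:atom-1} from the compactly supported frame representation of Theorem~\ref{thm:main} after verifying that the $\theta_\xi$ are smooth atoms using \eqref{supp-hat-Theta}, finite speed propagation, and Theorem~\ref{thm:S-local-kernels}. One small normalization point you glossed over: the kernel bounds from Theorem~\ref{thm:S-local-kernels} come with a structural constant, so strictly speaking one takes $a_\xi:=\sc\,\theta_\xi$ for a suitably small $\sc>0$ (absorbing $\sc^{-1}$ into $t_\xi$) to meet the exact inequalities \eqref{fsab}--\eqref{fsac}; the paper does this explicitly.
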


\begin{proof}
Since a family of smooth atoms is also a family of smooth synthesis molecules,
(\ref{est:atom-2}) follows readily by Theorem~\ref{thm:molec-synthesis}.

To prove the first part of the theorem, we shall use the compactly supported frames
$\{\theta_\xi\}$ from \S\ref{subsec:construction}.
In the construction of $\{\theta_\xi\}$ and $\{\tilde\theta_\xi\}$ in \S\ref{subsec:construction}
we impose in addition the condition
that the constant $K$ in (\ref{cond-NK}) be larger than the constants $K$ and $\tilde K$ from (\ref{def-KtK}).
We also choose the parameters $s_0, p_0, p_1, q_0$ so that
$(s, p, q)\in\Omega$ with $\Omega$ from (\ref{omega}).
From Theorem \ref{thm:main} we have for any $f\in \FF^{s}_{pq}$
$$
f=\sum_{\xi\in\cX} \langle f, \tilde\theta_\xi\rangle \theta_\xi
\quad \hbox{(convergence in $\cS'/\PP$ and in $\FF^{s}_{pq}$)}
$$
%with the convergence in $\cS'/\PP$ and in $\FF^{s}_{pq}$,
and
$
\|f\|_{\FF^{s}_{pq}} \sim \|\{\langle f, \tilde\theta_\xi\rangle\}\|_{\ff^{s}_{pq}}.
$
Therefore, it only remains to show that there exists a~constant $\sc>0$ such that
$\{\sc \theta_{\xi}\}_{\xi\in\cX}$ is a family of smooth atoms.
%
%From (\ref{support}) we have
%$$
%\supp \theta_{\xi} \subset B(\xi,cRb^{-j})=c\AA_{\xi}.
%$$
By definition
$$
\theta_{\xi}(x):=|A_{\xi}|^{1/2}\Theta(b^{-j}\sqrt{L})(x,\xi),
\quad\xi\in\cX_j, j\in \bZ.
$$
For each $\xi\in\cX_j$, $j\in\bZ$, we set
$$
a_\xi(x):= \sc \theta_{\xi}(x) = \sc |A_{\xi}|^{1/2}\Theta(b^{-j}\sqrt{L})(x,\xi)
$$
and
$$
b_{\xi}(x):= \sc |A_{\xi}|^{1/2}L^{-K}\Theta(b^{-j}\sqrt{L})(x,\xi),
$$
where $\sc>0$ is a constant to be determined.
Evidently, $a_\xi=L^K b_\xi$.
Consider the function
$$
g(u):=u^{-2(K-\nu)}\Theta(u), \quad 0\le \nu\le K.
$$
Clearly, $g\in \cS(\R)$, $g$ is real-valued and even, and
$$
g(b^{-j}\sqrt{L})= b^{2j(K-\nu)}L^{\nu-K}\Theta(b^{-j}\sqrt{L}).
$$
Hence
\begin{equation}\label{Lb-est}
L^\nu b_{\xi}(x)= \sc |A_{\xi}|^{1/2} b^{-2j(K-\nu)} g(b^{-j}\sqrt{L})(x, \xi).
\end{equation}
From (\ref{supp-hat-Theta})
$
\supp \hat{g} = \supp \cF(u^{-2(K-\nu)}\Theta(u)) \subset [-R, R]
$
and, therefore, using Proposition~\ref{prop:finite-sp} it follows that
$$
\supp L^\nu b_{\xi} \subset B(\xi, \ct Rb^{-j}) \subset cB_\xi.
$$
On the other hand, from Theorem~\ref{thm:S-local-kernels} it follows that
\begin{equation*}
\big|g(b^{-j} \sqrt L)(x, \xi)\big| \le c|B(\xi, b^{-j})|^{-1}
\end{equation*}
and on account of (\ref{Lb-est}) we obtain
$$
|L^{\nu}b_{\xi}(x)|\le c\sc\ell(\xi)^{2(k-\nu)}|\AA_{\xi}|^{-1/2},
\quad 0\le\nu\le K.
$$
Just as above we obtain as a consequence of Theorem~\ref{thm:S-local-kernels} that
$$
|L^{n}a_{\xi}(x)| \le c\sc\ell(\xi)^{-2n}|\AA_{\xi}|^{-1/2},
\quad 0\le n\le \tilde{K}.
$$
Finally, choosing the constant $\sc$ sufficiently small it follows that
$\{\sc \theta_{\xi}\}_{\xi\in\cX}$ is a family of smooth atoms.
\end{proof}

\section{Spectral multipliers}\label{sec:multipliers}

We next utilize almost diagonal operators (\S\ref{sec:almost-diag}) and smooth molecules (\S\ref{sec:molecules})
to establish the boundedness of spectral multipliers of Mihlin type on homogeneous Triebel-Lizorkin spaces.

\begin{theorem}\label{thm:multipliers}
Let $s\in\R$, $0<p<\infty$, and $0<q\le\infty$.
Suppose $m\in C^\kk(\R)$ for some $\kk>\cJ+d/2$ with %$\cJ:=\frac{d}{\min\{1, p, q\}}$,
$\cJ:=d/\min\{1, p, q\}$,
$m$ is even and real-valued,
%$m^{(2\nu+1)}(0)=0$ for $\nu\ge0$, $2\nu+1\le \kk$,
and
\begin{equation}\label{multiplier}
\sup_{\lambda\in\R_+}|\lambda^{\nu}m^{(\nu)}(\lambda)|<\infty,
\quad 0\le\nu\le \kk.
\end{equation}
Then the operator $m(\sqrt{L})$ is bounded on $\FF^{s}_{pq}$, that is,
\begin{equation*}
\|m(\sqrt{L})f\|_{\FF^{s}_{pq}} \le c \|f\|_{\FF^{s}_{pq}},
\quad\forall f\in\FF^{s}_{pq}.
\end{equation*}
Here $m(\sqrt{L})f$ for $f\in\FF^{s}_{pq}$ is defined by
\begin{equation}\label{multipl-1}
m(\sqrt{L})f := \sum_{\xi\in\cX}\langle f, \tilde{\psi}_\xi \rangle m(\sqrt{L})\psi_\xi
\qquad \hbox{$($convergence in $\cS'/\PP$$)$,}
\end{equation}
that is,
\begin{equation}\label{multipl-2}
\langle m(\sqrt{L})f, \phi\rangle
:= \sum_{\xi\in\cX}\langle f, \tilde{\psi}_\xi \rangle \langle m(\sqrt{L})\psi_\xi, \phi\rangle,
\quad \forall \phi\in \cS_\infty,
\end{equation}
where the series converges absolutely.
The motivation for the above definition is the fact that for any $f\in\cS'/\PP$ one has
$f=\sum_{\eta\in\cX}\langle f,\tilde\psi_\eta \rangle\psi_\eta$ in $\cS'/\PP$
$($Theorem~\ref{thm:frames}$)$.
\end{theorem}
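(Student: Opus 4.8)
The plan is to reduce the theorem to smooth molecular synthesis, Theorem~\ref{thm:molec-synthesis}: the key step is to show that there is a sufficiently small constant $\sc>0$ for which $\{\sc\,m(\sqrt L)\psi_\xi\}_{\xi\in\cX}$ is a family of smooth synthesis molecules for $\FF^s_{pq}$. Granting this, Theorem~\ref{thm:molec-synthesis} gives, for every $t=\{t_\xi\}\in\ff^s_{pq}$, convergence of $\sum_{\xi\in\cX}t_\xi\,(\sc\,m(\sqrt L)\psi_\xi)$ in $\cS'/\PP$ together with
\[
\Bigl\|\sum_{\xi\in\cX}t_\xi\,(\sc\,m(\sqrt L)\psi_\xi)\Bigr\|_{\FF^s_{pq}}\le c\,\|t\|_{\ff^s_{pq}}.
\]
For $f\in\FF^s_{pq}$ we take $t_\xi:=\langle f,\tilde\psi_\xi\rangle$; by Theorem~\ref{thm:F-character} we have $t\in\ff^s_{pq}$ with $\|t\|_{\ff^s_{pq}}\sim\|f\|_{\FF^s_{pq}}$, and since $m(\sqrt L)f=\sc^{-1}\sum_{\xi\in\cX}t_\xi\,(\sc\,m(\sqrt L)\psi_\xi)$ by $(\ref{multipl-1})$, the estimate $\|m(\sqrt L)f\|_{\FF^s_{pq}}\le c'\|f\|_{\FF^s_{pq}}$ follows. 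The absolute convergence of the series in $(\ref{multipl-2})$ will follow from the molecule size bound $(\ref{fssmbb})$ for $m(\sqrt L)\psi_\xi$, the rapid decay of $\phi\in\cS_\infty$, and the polynomial control of $|\langle f,\tilde\psi_\xi\rangle|$ afforded by the embedding $\FF^s_{pq}\hookrightarrow\cS'/\PP$, cf. $(\ref{cont-embed-B})$.

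The core is thus the verification of the molecule conditions for $m(\sqrt L)\psi_\xi$, which is a transcription of the proof of Lemma~\ref{lem:frame-molec}, the only new feature being the uniform-in-$j$ control of a $C^{\kk}$ (rather than $C^\infty$) symbol. For $\xi\in\cX_j$, by $(\ref{def-frame})$ and Remark~\ref{rem:calculus},
\[
m(\sqrt L)\psi_\xi(x)=|A_\xi|^{1/2}\,(m\Psi_j)(\sqrt L)(x,\xi),\qquad \Psi_j(u)=\Psi(b^{-j}u),
\]
and I would write $(m\Psi_j)(u)=g_j(b^{-j}u)$ with $g_j(v):=m(b^jv)\Psi(v)$. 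Since $\supp\Psi\subset[b^{-1},b]$ one has $b^jv\sim b^j$ on $\supp g_j$, and differentiating $g_j$ while using the Mihlin bound $(\ref{multiplier})$ in its dilation-invariant form shows that $g_j$ is real-valued, even, supported in $[b^{-1},b]$, with $\|g_j^{(\nu)}\|_\infty\le c$ for $0\le\nu\le\kk$ uniformly in $j$. Theorem~\ref{thm:S-local-kernels}, applied to $g_j$ with a smoothness index $\mm$ satisfying $\cJ+d/2<\mm\le\kk$ (available because $\kk>\cJ+d/2$), then yields $(\ref{fssmbb})$ with an exponent $\MM>\cJ$, after absorbing $\sc$. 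Condition $(\ref{fssmb})$ (needed when $s\ge0$) is obtained the same way after writing $u^{2\nu}(m\Psi_j)(u)=b^{2j\nu}h_{j,\nu}(b^{-j}u)$, $h_{j,\nu}(v):=v^{2\nu}m(b^jv)\Psi(v)$, again uniformly $C^{\kk}$-bounded and supported in $[b^{-1},b]$, which produces the factor $\ell(\xi)^{-2\nu}$. For $(\ref{fssma})$--$(\ref{fssmc})$ (needed when $s\le\cJ$), with $K:=\lfloor(\cJ-s)/2\rfloor+1$, I set $b_\xi:=\sc|A_\xi|^{1/2}(u^{-2K}m\Psi_j)(\sqrt L)(\cdot,\xi)$, which is legitimate since $u^{-2K}m(u)\Psi(b^{-j}u)=b^{-2jK}\tilde h_j(b^{-j}u)$ with $\tilde h_j(v):=v^{-2K}m(b^jv)\Psi(v)$ supported away from $0$; then $m(\sqrt L)\psi_\xi=L^Kb_\xi$ and $L^\nu b_\xi$ obeys $(\ref{fssmc})$ by the same application of Theorem~\ref{thm:S-local-kernels}. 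Choosing $\sc$ small enough makes all implied constants at most $1$.

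I expect the main obstacle, and the only place where the Mihlin hypothesis is genuinely used, to be precisely this uniform-in-$j$ bound on the $C^{\kk}$ norms of $g_j$, $h_{j,\nu}$, $\tilde h_j$: the cutoff $\Psi$ lives at the fixed scale $[b^{-1},b]$ while $m$ is evaluated at $b^jv$, so in the Leibniz expansion of $(m(b^j\cdot)\Psi)^{(\nu)}$ each term $b^{jk}m^{(k)}(b^jv)\Psi^{(\nu-k)}(v)=(b^jv)^km^{(k)}(b^jv)\,v^{-k}\Psi^{(\nu-k)}(v)$ stays bounded only thanks to the scaling-invariant estimate $(\ref{multiplier})$ together with $\supp\Psi\subset[b^{-1},b]$ — mere boundedness of the derivatives $m^{(\nu)}$ would not suffice. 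The second point requiring care is matching the $\kk$ available derivatives to the decay exponent $\MM>\cJ$ demanded of a molecule: the second inequality in $(\ref{local-ker})$ costs $d/2$, which is exactly what the hypothesis $\kk>\cJ+d/2$ compensates. All the remaining steps merely transcribe Lemma~\ref{lem:frame-molec}, Theorem~\ref{thm:F-character}, and Theorem~\ref{thm:molec-synthesis}; the analogous statements for $\tFF^s_{pq}$, $\BB^s_{pq}$, $\tBB^s_{pq}$ are obtained in the same way using $\S\ref{subsec:molecules-tilde-BF}$.
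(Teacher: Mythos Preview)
Your proposal is correct and follows essentially the same approach as the paper: the heart of both arguments is the verification that $\{\sc\,m(\sqrt L)\psi_\xi\}_{\xi\in\cX}$ is a family of smooth synthesis molecules via the functions $g_j(v)=m(b^jv)\Psi(v)$ (and their weighted variants), with the Mihlin condition~$(\ref{multiplier})$ providing the uniform-in-$j$ $C^\kk$ bounds and the gap $\kk>\cJ+d/2$ absorbing the $d/2$ loss in~$(\ref{local-ker})$. The only difference is in the packaging of the final step: you invoke Theorem~\ref{thm:molec-synthesis} directly on the coefficients $t_\xi=\langle f,\tilde\psi_\xi\rangle$, whereas the paper unpacks that theorem and instead bounds $\|(\langle m(\sqrt L)f,\psi_\xi\rangle)\|_{\ff^s_{pq}}$ by showing that the matrix $(\langle m(\sqrt L)\psi_\eta,\psi_\xi\rangle)_{\xi,\eta}$ is almost diagonal via Lemma~\ref{lem:molecule-ad} and then applying Theorem~\ref{thm:AlmDiag}; since the proof of Theorem~\ref{thm:molec-synthesis} is precisely this matrix argument (with $\tilde\psi_\xi$ in place of $\psi_\xi$), the two routes coincide. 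One small point: your justification of the absolute convergence in~$(\ref{multipl-2})$ is a bit informal---the paper handles it by observing that any $\phi\in\cS_\infty$ is (up to a constant) a smooth analysis molecule centered at a fixed $\xi_0\in\cX_0$, so that Lemma~\ref{lem:molecule-ad} bounds $|\langle m(\sqrt L)\psi_\eta,\phi\rangle|$ by $c\,\omega_\delta(\xi_0,\eta)$ and Theorem~\ref{thm:AlmDiag} then gives the absolute summability against $\{|\langle f,\tilde\psi_\eta\rangle|\}\in\ff^s_{pq}$; this is cleaner than the polynomial-growth sketch you indicate.
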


\begin{proof}
Let $f\in\FF^{s}_{pq}$, $s\in\R$, $0<p<\infty$, and $0<q\le\infty$.
We first show that the series in (\ref{multipl-1}) converges absolutely
and hence $m(\sqrt{L})f$ is well defined.
Suppose the point $x_0$ from the definition of distributions in \S\ref{subsec:basic-facts}
belongs to $A_{\xi_0}$ for some $\xi_0\in\cX_0$.
We claim that any test function $\phi\in\cS_\infty$ is a constant multiple of
a smooth analysis and synthesis molecule centered at $\xi_0$ (\S\ref{subsec:molecules-BF}).
Indeed, by (\ref{norm-S}) it follows that for any $\nu \ge 0$ and $\sigma>0$
$$
|L^\nu\phi(x)| \le c\big(1+\rho(x, x_0)\big)^{-\sigma} \le c\big(1+\rho(x, \xi_0)\big)^{-\sigma}
$$
and the claim follows.

We next show that there exists a constant $\cf>0$ such that
$\{\cf m(\sqrt{L})\psi_\xi\}_{\xi\in\cX}$ is a family of smooth synthesis molecules.
We shall carry out the proof of this claim just as in the proof of the first part of Lemma~\ref{lem:frame-molec}.
Write
$$
\fm_\xi(x):=\cf m(\sqrt{L})\psi_\xi(x)= \cf |A_{\xi}|^{1/2}[m(\sqrt{L})\Psi(b^{-j}\sqrt{L})](x,\xi), \quad \xi\in\cX_j,
$$
where we used (\ref{def-frame}) and Remark~\ref{rem:calculus}.
Let $g(u):= u^{2\nu}m(b^{j}u)\Psi(u)$ for an arbitrary $\nu\ge 0$.
Clearly,
$$
L^\nu m(\sqrt{L})\Psi(b^{-j}\sqrt{L})= b^{2\nu j}g(b^{-j}\sqrt{L}),
$$
$g\in C^{\kk}(\R_+)$,
$\supp g\subset [b^{-1}, b]$,
and for $0\le n\le \kk$
\begin{align*}
|g^{(n)}(u)| \le c\max_{0\le r\le n} |m^{(r)}(b^ju)|b^{jr}
\le c\max_{0\le r\le \ell}\sup_{\lambda\in\R_+}|\lambda^{\nu}m^{(\nu)}(\lambda)| \le c<\infty.
\end{align*}
Then by Theorem~\ref{thm:S-local-kernels}, applied to $g$, it follows that
\begin{align*}
|[L^\nu m(\sqrt{L})\Psi(b^{-j}\sqrt{L})](x, \xi)|
\le \frac{c b^{\nu j}|B(\xi, b^{-j})|^{-1}}{(1+b^j\rho(x,\xi))^{\kk-d/2}}.
\end{align*}
Since $\kk >\cJ+d/2$ we may choose $\MM$ so that
$\cJ<\MM\le \ell -d/2$.
Now, using that $|A_\xi| \sim |B(\xi, b^{-j})| \sim |B_\xi|$ for $\xi\in\cX_j$
we arrive at
\begin{align*}
|L^\nu\fm_\xi(x)|
=|\cf L^\nu m(\sqrt{L})\psi_\xi(x)|
\le \frac{c \cf \ell(\xi)^{-2\nu}|B_\xi|^{-1/2}}{(1+\ell(\xi)^{-1}\rho(x,\xi))^{\MM}},
\quad \xi\in\cX_j, \;\; 0\le \nu\le N.
\end{align*}
This shows that $\{\fm_\xi\}$ %$\cf m(\sqrt{L})\psi_\xi$
obey (\ref{fssmbb})-(\ref{fssmb})
if the constant $\cf$ is sufficiently small.

%%%%%%%%%%%%%%%

Assume $s\le \cJ$ and let $\xi\in\cX_{j}$.
Define
$$
b_\xi(x):=\cf L^{-K}m(\sqrt{L})\psi_{\xi}(x)=\cf |A_{\xi}|^{1/2}[L^{-K}m(\sqrt{L})\Psi(b^{-j}\sqrt{L})](x,\xi).
%\quad\hbox{(see (\ref{def-frame}))}
$$
Hence
$$
L^Kb_\xi(x)=\cf m(\sqrt{L})\psi_\xi(x)= \fm_\xi(x).
$$
Assuming that $0\le\nu\le K$, $K=\lfloor (\cJ-s)/2\rfloor+1$,
we consider the following function
$h(u):=u^{2(\nu-K)}m(b^ju)\Psi(u)$.
Clearly
$$
L^{\nu-K}m(\sqrt{L})\Psi(b^{-j}\sqrt{L})= b^{-2j(K-\nu)}h(b^{-j}\sqrt{L}),
$$
$h\in C^\kk(\R_+)$ and $\supp g \subset [b^{-1},b]$.
Furthermore, for $0\le n\le \kk$
\begin{align*}
|h^{(n)}(u)| \le c\max_{0\le r\le n} |m^{(r)}(b^ju)|b^{jr}
\le c\max_{0\le r\le \ell}\sup_{\lambda\in\R_+}|\lambda^{\nu}m^{(\nu)}(\lambda)| \le c<\infty.
\end{align*}
As before we choose $\MM$ so that $\cJ<\MM\le \ell -d/2$.
Then by Theorem~\ref{thm:S-local-kernels}, applied to $h$, we infer
\begin{align*}
|L^{\nu}b_{\xi}(x)|
\le \frac{c \cf |A_{\xi}|^{1/2}b^{-2j(K-\nu)}}{|B(\xi, b^{-j})|(1+b^j\rho(\xi, x))^{\kk-d/2}}
\le \frac{c \cf \ell(\xi)^{2(K-\nu)}|\AA_{\xi}|^{-1/2}}{(1+\ell(\xi)^{-1}\rho(x,\xi))^{\MM}}.
\end{align*}
This shows that $\fm_\xi:=\cf m(\sqrt{L})\psi_\xi$ verifies (\ref{fssmc})
if $\cf\le c^{-1}$.
Therefore, if the constant $\cf$ is sufficiently small
$\{\fm_\xi\}$ is a family of smooth synthesis molecules.

%%%%%%%%%%

Given $\phi\in\cS_\infty$ write
$$
d_{\xi_0\eta}:=\langle m(\sqrt{L})\psi_\eta, \phi\rangle.
$$
%We use the fact that $\{\fm_\eta\}_{\xi\in\cX}$ with
From above we know that there exists a constant $\cf>0$ such that $\{\cf m(\sqrt{L})\psi_\eta\}_{\eta\in\cX}$ is
a family of smooth synthesis molecules for $\ff^s_{pq}$,
and $\phi$ is a smooth analysis molecule for $\ff^s_{pq}$.
Then applying Lemma~\ref{lem:molecule-ad}
we conclude that there exist constants $c, \delta>0$ such that
$$
|d_{\xi_0\eta}| \le c\omega_\delta(\xi_0, \eta), \quad \forall \eta\in\cX,
$$
where $\omega_\delta(\xi, \eta)$ is defined in (\ref{def-omega-d-1}).

On the other hand, by Theorem~\ref{thm:F-character}, for any $f\in\FF^s_{pq}$
the sequence $\{\langle f,\tilde\psi_{\eta}\rangle\}$
belongs to $\ff^s_{pq}$ and hence $\{|\langle f,\tilde\psi_{\eta}\rangle|\}\in\ff^s_{pq}$.
From this and the boundedness of the operator with matrix $\{\omega_\delta(\xi, \eta)\}$ on $\ff^s_{pq}$
it follows that
$$
\sum_{\eta\in\cX}|\langle m(\sqrt{L})\psi_\eta, \phi\rangle| |\langle f,\tilde\psi_{\eta}\rangle| <\infty.
$$
Thus the absolute convergence of the series in (\ref{multipl-2}) is established.

By Theorem~\ref{thm:F-character} it follows that to prove the theorem it suffices to show that
for any $f\in \FF^s_{pq}$
\begin{equation}\label{multipl-est}
\big\|\big(\langle m(\sqrt{L})f,\psi_\xi \big\rangle\big)\big\|_{\ff^s_{pq}}
\le c \|(\langle f,\tilde\psi_\xi \rangle)\|_{\ff^s_{pq}}.
\end{equation}
Let $f\in \FF^s_{pq}$. Then by (\ref{multipl-2})
\begin{equation}\label{mL-f-psi}
\langle m(\sqrt{L})f, \psi_\xi \rangle
=\sum_{\eta\in\cX}\langle f,\tilde\psi_\eta \rangle \langle m(\sqrt{L})\psi_\eta, \psi_\xi\rangle,
\quad \xi\in\cX.
\end{equation}
Let $A$ be the operators with matrix
$$
(a_{\xi\eta})_{\xi,\eta\in\cX} := (\langle m(\sqrt{L})\psi_\eta, \psi_\xi\rangle)_{\xi,\eta\in\cX}.
$$
From above we know that there exists a constant $\cf>0$  such that
$\{\cf m(\sqrt{L})\psi_\eta\}_{\eta\in\cX}$ is a family of smooth synthesis molecules for $\ff^s_{pq}$.
Also, by Lemma~\ref{lem:frame-molec} there exists a constant $\sc>0$ such that
$\{\sc\psi_\xi\}_{\xi\in\cX}$ is a family of smooth analysis molecules.
Then by Lemma~\ref{lem:molecule-ad} and Theorem~\ref{thm:AlmDiag} it follows that
the operator $A$ is bounded on $\ff^s_{pq}$.

Let $f\in\FF^s_{pq}$. By the boundedness of the operator $A: \ff^s_{pq}\to \ff^s_{pq}$
%and the analysis operator $T_{\tilde\psi}: \FF^s_{pq}\to \ff^s_{pq}$ from (\ref{anal_synth_oprts2})
%(Theorem~\ref{thm:F-character}),
and (\ref{mL-f-psi}) we infer
\begin{align*}
\|(\langle m(\sqrt{L})f, \psi_\xi \rangle)\|_{\ff^s_{pq}}
= \|A(\langle f,\tilde\psi_\eta \rangle)\|_{\ff^s_{pq}}
\le c\|(\langle f,\tilde\psi_\eta \rangle)\|_{\ff^s_{pq}},
\end{align*}
%\begin{align*}
%\|(\langle m(\sqrt{L})f, \psi_\xi \rangle)\|_{\ff^s_{pq}}
%= \|AS_{\tilde\psi}f\|_{\ff^s_{pq}}
%\le c\|S_{\tilde\psi}f\|_{\ff^s_{pq}}
%\le c\|f\|_{\FF^s_{pq}},
%\end{align*}
which verifies (\ref{multipl-est}).
\end{proof}

\begin{remark}
Several clarifying remarks about spectral multipliers are in order.

$(a)$ Spectral multipliers like the ones from Theorem~\ref{thm:multipliers} can be established for
the spaces $\tFF^{s}_{pq}$, where the condition $\kk>\cJ+d/2$ is replaced by $\kk>\cJ+d/2+|s|$.

$(b)$ Theorem~\ref{thm:multipliers} also holds for Besov spaces $\BB^{s}_{pq}$
and with the above replacement to $\tBB^{s}_{pq}$,
where $\cJ:=d/\min\{1,p\}$.

$(c)$ If we restrict the hypotheses of Theorem~\ref{thm:multipliers} to the case when
$(M,\rho,\mu)$ is an Ahlfors $d$-regular space, meaning that there exists a constant $c_4\ge 1$ such that
\begin{equation}\label{Ahlfors}
c_4^{-1}r^{d}\le|B(x,r)|\le c_4 r^{d}, \quad\forall x\in M,\; \forall r>0,
\end{equation}
then it suffices to assume that $\kk>\cJ$ as in \cite{FJ2} rather than $\kk>\cJ+d/2$.
$($The only difference in the proof is that when applying Theorem~\ref{thm:S-local-kernels} to $g$
we may use that $|B(x,b^{-j})|\sim |B(\xi,b^{-j})|$, which makes the difference.$)$
We omit the further details.
\end{remark}

\section{Atomic and molecular decompositions in the inhomogeneous case}\label{sec:inhomogeneous-case}

Inhomogeneous Besov and Triebel-Lizorkin spaces in the general setting, described in \S\ref{Introduction},
have been developed in \cite{KP}.
An advantage of the inhomogeneous spaces over the homogeneous spaces is that they are defined also in the case when
the set $M$ is compact, like the sphere, ball, and more general compact Riemannian manifolds.
On the other hand, this theory is more coherent in the homogeneous case.

We next briefly indicate how the atomic and molecular decompositions developed so far should be changed
in the inhomogeneous case.
Generally speaking in the inhomogeneous case the frequencies corresponding to eigenvalues
$0\le \lambda \le 1$ are grouped together.
Thus, in the definitions of inhomogeneous Besov and Triebel-Lizorkin spaces
(Definitions~\ref{def-B-spaces}-\ref{def-F-spaces}) the terms $\varphi_j(\sqrt{L})f$, $j\le 0$,
are replace by one term $\varphi_0(\sqrt{L})f$, where $\varphi_0\in C^\infty(\R_+)$ is such that
$\supp \varphi_0\subset [0, 2]$ and $|\varphi_0(\lambda)|>0$ on $[0, 2^{3/4}]$.
The frames are of the form
$\{\psi_\xi\}_{\xi\in\cX}$ and $\{\tilde\psi_\xi\}_{\xi\in\cX}$, where
$\cX = \cup_{j\ge 0} \cX_j$,
hence, in the definition of the inhomogeneous Besov and Triebel-Lizorkin sequence spaces spaces
sets $\cX_j$ are involved with $j\ge 0$.
It should be pointed out that the convergence in the inhomogeneous case is simpler than the one in the homogeneous case.
For more details, see \cite{KP}.

The definition of almost diagonal operators in the inhomogeneous case is the same as
in the homogeneous case, but $j\ge 0$.
Further, the definitions of smooth synthesis and analysis molecules
$m_\xi$ and $\tilde{m}_\xi$ for $\xi\in \cX_j$, $j\ge1$,
are the same as in the homogeneous case (\S\ref{subsec:molecules-BF}, \S\ref{subsec:molecules-tilde-BF}),
but for $\xi\in \cX_0$ (the zero level) Condition (iii) on $m_\xi$ and $\tilde{m}_\xi$ is dropped.
The same modification is applied for the definition of smooth atoms.
All theorems about molecular and atomic decompositions established in the homogeneous case in \S\ref{sec:molecules}
hold in the inhomogeneous case as well with almost identical proofs.
The spectral multipliers established in Theorem~\ref{thm:multipliers} are the same as in the inhomogeneous case.
We refrain from providing further details here.

\section{Appendix}\label{sec:appendix}

%-------------- Proof of Lemma 2.8

\subsection{Proof of Lemma~\ref{lem:est-discr-sum}}

The proof of Lemma~\ref{lem:est-discr-sum} relies on the following two lemmata:

\begin{lemma}\label{lem:est-num-net}
Let $\cX$ be a $\delta-$net on $M$ and $0 <\delta \le \delta^\star$. Then
\begin{equation}\label{num-net}
\#\{\cX\cap B(x,\delta^\star)\} \le c_{0}6^d\left(\delta^\star/\delta\right)^{d},
\quad \forall x\in M.
\end{equation}
Here $c_0$ is the constant from $(\ref{doubling-0})$.
\end{lemma}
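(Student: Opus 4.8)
The plan is to bound the number of net points in a ball by a volume-packing argument using the separation property of the net together with the doubling condition. First I would fix $x\in M$ and let $S:=\cX\cap B(x,\delta^\star)$; since $\cX$ is a $\delta$-net, the balls $\{B(\xi,\delta/2):\xi\in S\}$ are pairwise disjoint (any two distinct net points are at distance $\ge\delta$). Moreover, for every $\xi\in S$ we have $\rho(\xi,x)<\delta^\star$, so $B(\xi,\delta/2)\subset B(x,\delta^\star+\delta/2)\subset B(x,2\delta^\star)$, using $\delta\le\delta^\star$.

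Next I would compare volumes. On one hand, by disjointness,
\[
\sum_{\xi\in S}|B(\xi,\delta/2)| = \Big|\bigcup_{\xi\in S}B(\xi,\delta/2)\Big| \le |B(x,2\delta^\star)|.
\]
On the other hand, I need a \emph{lower} bound for each $|B(\xi,\delta/2)|$ in terms of a fixed reference volume. Since $x\in B(\xi,\delta^\star)$ and $\delta^\star = (2\delta^\star/\delta)\cdot(\delta/2) \ge (\delta/2)$, inequality (\ref{doubling}) applied with center $\xi$, radius $\delta/2$, and dilation factor $\lambda = 2\delta^\star/\delta>1$ (note $\delta^\star\ge\delta$ guarantees $\lambda\ge 2>1$) gives
\[
|B(x,2\delta^\star)| \le |B(\xi,\rho(\xi,x)+2\delta^\star)| \le |B(\xi,3\delta^\star)| \le c_0\Big(\frac{3\delta^\star}{\delta/2}\Big)^d|B(\xi,\delta/2)| = c_0 6^d\Big(\frac{\delta^\star}{\delta}\Big)^d|B(\xi,\delta/2)|,
\]
where I used $B(\xi,3\delta^\star)\supset B(x,2\delta^\star)$ since $\rho(\xi,x)<\delta^\star$. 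Hence $|B(\xi,\delta/2)|\ge c_0^{-1}6^{-d}(\delta/\delta^\star)^d|B(x,2\delta^\star)|$ for every $\xi\in S$.

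Combining the two bounds: $\#S\cdot c_0^{-1}6^{-d}(\delta/\delta^\star)^d|B(x,2\delta^\star)| \le |B(x,2\delta^\star)|$, and since $|B(x,2\delta^\star)|>0$ by (\ref{doubling-0}) we may cancel it to get $\#S\le c_0 6^d(\delta^\star/\delta)^d$, which is exactly (\ref{num-net}). I do not anticipate a serious obstacle here; the only point requiring a little care is choosing the right radii so that one ball around $\xi$ both contains $B(x,2\delta^\star)$ and is a controlled dilate of $B(\xi,\delta/2)$, and checking that the dilation factors stay $\ge 1$ so that (\ref{doubling}) applies — this is handled by the hypothesis $\delta\le\delta^\star$.
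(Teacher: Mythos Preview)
Your proof is correct and follows essentially the same volume-packing argument as the paper: pairwise-disjoint balls $B(\xi,\delta/2)\subset B(x,2\delta^\star)$ combined with the doubling estimate $|B(\xi,3\delta^\star)|\le c_0 6^d(\delta^\star/\delta)^d|B(\xi,\delta/2)|$. One small slip: the dilation factor you actually apply is $\lambda=6\delta^\star/\delta$, not $2\delta^\star/\delta$ as stated in your parenthetical, though the displayed chain of inequalities is correct.
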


\begin{proof}
It is easily seen that if $\xi\in\cX\cap B(x,\delta^{*})$,
then
$B(\xi,\delta)\subset B(x,2\delta^\star)\subset B(\xi,3\delta^\star)$.
Therefore, for every $\eta\in\cX\cap B(x,\delta^\star)$
$$
\sum_{\xi\in\cX\cap B(x,\delta^\star)}|B(\xi,\delta/2)|
\le |B(x,2\delta^\star)|
\le |B(\eta, 3\delta^\star)|
\le c_06^{d}\left(\delta^\star/\delta\right)^{d}|B(\eta,\delta/2)|,
$$
where for the last inequality we used (\ref{doubling}).
Summing up the above inequalities over all $\eta\in\cX\cap B(x,\delta^\star)$
leads to (\ref{num-net}).
\end{proof}

\begin{lemma}\label{lem:basic-est}
Suppose $\sigma >d$ and let $\cX$ be a $\delta-$net on $M$, $\delta>0$.
Then for any $x\in M$ and $\delta^\star\ge \delta$
\begin{equation}\label{basic-est}
\sum_{\xi\in\cX}\Big(1+\frac{\rho(x, \xi)}{\delta^\star}\Big)^{-\sigma}
\le \frac{c_06^d 2^\sigma}{1-2^{d-\sigma}}\Big(\frac{\delta^\star}{\delta}\Big)^d.
\end{equation}
\end{lemma}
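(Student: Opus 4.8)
The plan is to group the points of the net $\cX$ into dyadic annuli centred at $x$ and estimate each annulus separately, using the counting bound of Lemma~\ref{lem:est-num-net}. Concretely, I would set $A_0:=\cX\cap B(x,\delta^\star)$ and, for each integer $k\ge 1$, $A_k:=\cX\cap\bigl(B(x,2^k\delta^\star)\setminus B(x,2^{k-1}\delta^\star)\bigr)$, so that $\cX=\bigcup_{k\ge 0}A_k$ is a disjoint decomposition and
\[
\sum_{\xi\in\cX}\Bigl(1+\frac{\rho(x,\xi)}{\delta^\star}\Bigr)^{-\sigma}=\sum_{k\ge 0}\ \sum_{\xi\in A_k}\Bigl(1+\frac{\rho(x,\xi)}{\delta^\star}\Bigr)^{-\sigma}.
\]

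For the size of the summand I would use the trivial bound $(1+\rho(x,\xi)/\delta^\star)^{-\sigma}\le 1$ on $A_0$, and, on $A_k$ with $k\ge 1$, the bound $(1+\rho(x,\xi)/\delta^\star)^{-\sigma}\le 2^{-(k-1)\sigma}$, which holds because $\rho(x,\xi)\ge 2^{k-1}\delta^\star$ there. For the number of points I would invoke Lemma~\ref{lem:est-num-net} with the radius $2^k\delta^\star$ in place of $\delta^\star$ (legitimate since $\delta\le\delta^\star\le 2^k\delta^\star$), which gives $\#A_k\le\#\bigl(\cX\cap B(x,2^k\delta^\star)\bigr)\le c_0 6^d\,2^{kd}(\delta^\star/\delta)^d$ for every $k\ge 0$.

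Putting the two estimates together, the $k=0$ term contributes at most $c_0 6^d(\delta^\star/\delta)^d$, while for $k\ge 1$ the $k$-th term contributes at most $c_0 6^d\,2^{kd}(\delta^\star/\delta)^d\,2^{-(k-1)\sigma}$. Summing the resulting geometric series, whose ratio is $2^{d-\sigma}<1$ because $\sigma>d$, yields
\[
\sum_{\xi\in\cX}\Bigl(1+\frac{\rho(x,\xi)}{\delta^\star}\Bigr)^{-\sigma}\le c_0 6^d\Bigl(\frac{\delta^\star}{\delta}\Bigr)^d\Bigl(1+2^\sigma\sum_{k\ge 1}2^{k(d-\sigma)}\Bigr)=c_0 6^d\Bigl(\frac{\delta^\star}{\delta}\Bigr)^d\Bigl(1+\frac{2^d}{1-2^{d-\sigma}}\Bigr).
\]
It then remains to verify the elementary inequality $1+\frac{2^d}{1-2^{d-\sigma}}\le\frac{2^\sigma}{1-2^{d-\sigma}}$, equivalently $1+2^d\le 2^\sigma+2^{d-\sigma}$; this follows since the function $\sigma\mapsto 2^\sigma+2^{d-\sigma}$ is increasing on $(d/2,\infty)$ and takes the value $1+2^d$ at $\sigma=d$. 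This gives exactly the stated bound $\frac{c_0 6^d 2^\sigma}{1-2^{d-\sigma}}(\delta^\star/\delta)^d$.

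The argument is routine throughout; the only place requiring a little care is the matching of constants at the end, where one should verify the last elementary inequality rather than be content with a bound featuring $1+2^d/(1-2^{d-\sigma})$ instead of $2^\sigma/(1-2^{d-\sigma})$.
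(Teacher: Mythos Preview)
Your proof is correct and follows essentially the same approach as the paper: dyadic annuli around $x$, the counting bound from Lemma~\ref{lem:est-num-net} on each annulus, and summation of the geometric series. The only cosmetic difference is that the paper applies the cruder bound $(1+\rho(x,\xi)/\delta^\star)^{-\sigma}\le 2^{-(j-1)\sigma}$ uniformly for all $j\ge 0$ (in particular $2^\sigma$ on the central ball), so that the sum becomes $2^\sigma\sum_{j\ge 0}2^{j(d-\sigma)}$ directly, avoiding your final elementary inequality.
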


\begin{proof}
Set $\Omega_0 :=\{\xi\in\cX: \rho(x,\xi) \le \delta^\star\}$ and
$$
\Omega_{j}:=\{\xi\in\cX: 2^{j-1}\delta^\star<\rho(x,\xi) \le 2^{j}\delta^\star\}, \quad j\ge 1.
$$
Then using Lemma~\ref{lem:est-num-net} we get
\begin{align*}
\sum_{\xi\in\cX}\Big(1+\frac{\rho(x, \xi)}{\delta^\star}\Big)^{-\sigma}
&\le \sum_{j\ge 0}\sum_{\xi\in\Omega_j}\Big(1+\frac{\rho(x, \xi)}{\delta^\star}\Big)^{-\sigma}
\\
&\le \sum_{j\geq0}\#\{\cX\cap B(x,2^j\delta^\star)\}2^{-(j-1)\sigma}
\\
&\le c_06^d 2^\sigma (\delta^\star/\delta)^d\sum_{j\ge 0}2^{-j(\sigma-d)}
\\
&\le \frac{c_{0}6^d 2^\sigma}{1-2^{d-\sigma}}\Big(\frac{\delta^\star}{\delta}\Big)^d,
\end{align*}
which confirms (\ref{basic-est}).
\end{proof}

\smallskip

\noindent
{\em Proof of Lemma~\ref{lem:est-discr-sum}.}
Under the hypotheses of Lemma~\ref{lem:est-discr-sum}, denote by $\Sigma$ the quantity
on the left in (\ref{est-discr-sum}) and set
$$
\cX':=\{\xi\in\cX: \rho(x, \xi)\ge \rho(x, y)/2\}
\quad\hbox{and}\quad
\cX'':=\{\xi\in\cX: \rho(y, \xi) > \rho(x, y)/2\}.
$$
Then
$\Sigma \le \sum_{\xi\in \cX'}\dots + \sum_{\xi\in \cX''}\dots =:\Sigma'+\Sigma''$.
To estimate $\Sigma''$ we use Lemma~\ref{lem:basic-est} and obtain
\begin{align*}
\Sigma'' \le \frac{c}{(1+\delta_2^{-1}\rho(x,y))^\sigma}
\sum_{\xi\in\cX}\frac{1}{(1+\delta_1^{-1}\rho(y,\xi))^\sigma}
\le \frac{c(\delta_1/\delta)^d}{(1+\delta_2^{-1}\rho(x,y))^\sigma}.
\end{align*}

To estimate $\Sigma'$ we consider two cases.

{\em Case 1:} $\delta_2^{-1}\rho(x, y) \ge 1$.
Just as above we obtained
\begin{align*}
\Sigma' \le \frac{c}{(1+\delta_1^{-1}\rho(x,y))^\sigma}
\sum_{\xi\in\cX}\frac{1}{(1+\delta_2^{-1}\rho(y,\xi))^\sigma}
\le \frac{c(\delta_2/\delta)^d}{(1+\delta_1^{-1}\rho(x,y))^\sigma}
\end{align*}
and using that $\delta_2^{-1}\rho(x, y) \ge 1$
\begin{align*}
\Sigma' &\le \frac{c(\delta_2/\delta_1)^d(\delta_1/\delta)^d}{(\delta_1^{-1}\rho(x,y))^\sigma}
= \frac{c(\delta_1/\delta)^d}{(\delta_2^{-1}\rho(x,y))^d(\delta_1^{-1}\rho(x,y))^{\sigma-d}}
\\
&\le \frac{c(\delta_1/\delta)^d}{(\delta_2^{-1}\rho(x,y))^\sigma}
\le \frac{c2^\sigma(\delta_1/\delta)^d}{(1+\delta_2^{-1}\rho(x,y))^\sigma}.
\end{align*}

{\em Case 2:} $\delta_2^{-1}\rho(x, y) < 1$.
We use Lemma~\ref{lem:basic-est} to obtain
\begin{align*}
\Sigma' \le \sum_{\xi\in\cX}\frac{1}{(1+\delta_1^{-1}\rho(x,\xi))^\sigma}
\le c(\delta_1/\delta)^d
\le \frac{c2^\sigma(\delta_1/\delta)^d}{(1+\delta_2^{-1}\rho(x,y))^\sigma}.
\end{align*}

Putting the above estimates together we arrive at (\ref{est-discr-sum}).
\qed

\subsection{Proof of Theorem \ref{thm:AlmDiag}}

We shall carry out the proof of this theorem only for the spaces $\ff^s_{pq}$ and $\tbb^s_{pq}$;
the proof in the case of the spaces $\tff^s_{pq}$ and $\bb^s_{pq}$ is similar and will be omitted.

We need two lemmata.

%%%%%%%% Lemma

\begin{lemma}\label{lem:mrax}
Let  $0<t\le 1$ and $\MM> d/t$. Then for any sequence of complex numbers
$\{h_{\eta}\}_{\eta\in \cX_{m} }$,   $m\in \bZ$,  we have for $x\in A_\xi$, $\xi\in\cX$,
$$
\sum_{\eta\in\cX_m} |h_{\eta}|\biggl(1+\frac{\rho(\xi,\eta)}
{\max \{\ell(\xi), \ell(\eta)\}}\biggr)^{-\MM}
\le c \max \left\{b^{(m-j)d/t}, 1 \right\} \cM_t\Big(\sum_{\eta\in \cX_m}
|h_{\eta}|\ONE_{A_\eta}\Big)(x),
$$
where the constant $c>0$ depends only on $t, \MM$, and the constant $c_0$ from $(\ref{doubling-0})$.
\end{lemma}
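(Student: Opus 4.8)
The plan is to reduce the estimate to a single application of the Fefferman--Stein maximal operator $\cM_t$, much as in the classical Frazier--Jawerth setting and in \cite{DKKP}. Fix $x\in A_\xi$ with $\xi\in\cX_j$, and recall from \eqref{def-ell} and \eqref{B-A-B} that $\ell(\eta)=b^{-m}$ for $\eta\in\cX_m$, $|A_\eta|\sim|\AA_\eta|\sim|B(\eta,b^{-m})|$, and $A_\eta\subset B(\eta,\delta_m)$ with $\delta_m=\gamma b^{-m-2}$. First I would split the sum over $\eta\in\cX_m$ into the dyadic annuli $\Omega_0:=\{\eta:\rho(\xi,\eta)\le \max\{\ell(\xi),\ell(\eta)\}\}$ and $\Omega_i:=\{\eta: 2^{i-1}<\rho(\xi,\eta)/\max\{\ell(\xi),\ell(\eta)\}\le 2^i\}$ for $i\ge1$, so that on $\Omega_i$ the weight is comparable to $2^{-i\MM}$.

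Next, on each annulus I would estimate $\sum_{\eta\in\Omega_i}|h_\eta|$ by a local $L^t$-average. Since the sets $\{A_\eta\}_{\eta\in\cX_m}$ are pairwise disjoint and each is contained in a fixed ball around $\eta$, for $\eta\in\Omega_i$ all the $A_\eta$ lie inside a ball $B$ centered near $\xi$ of radius $\sim 2^i\max\{\ell(\xi),\ell(\eta)\}$. Using $t\le1$ we bound $\sum_{\eta\in\Omega_i}|h_\eta|\le\big(\sum_{\eta\in\Omega_i}|h_\eta|^t\big)^{1/t}$; then
\[
\sum_{\eta\in\Omega_i}|h_\eta|^t
=\frac{1}{|A_\eta|}\sum_{\eta\in\Omega_i}\int_{A_\eta}|h_\eta|^t\,d\mu
\cdot\frac{|A_\eta|}{\;\cdots\;}
\]
-- more precisely, since $|A_\eta|\sim|B(\eta,b^{-m})|$ and, by \eqref{D2} and the doubling property, $|B(\eta,b^{-m})|$ is comparable, up to a factor $\max\{b^{(m-j)d},1\}(1+2^i)^d$, to $|B|$, I would write $\sum_{\eta\in\Omega_i}|h_\eta|^t\le c\,\max\{b^{(m-j)d},1\}(1+2^i)^d |B|^{-1}\int_B\big(\sum_{\eta\in\cX_m}|h_\eta|\ONE_{A_\eta}\big)^t\,d\mu$, which is $\le c\,\max\{b^{(m-j)d},1\}(1+2^i)^d\big[\cM_t\big(\sum_\eta|h_\eta|\ONE_{A_\eta}\big)(x)\big]^t$ because $x\in A_\xi\subset B$. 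Taking the $1/t$ power, multiplying by the annulus weight $2^{-i\MM}$, and summing over $i$ gives the geometric series $\sum_i 2^{i(d/t-\MM)}$, which converges precisely because $\MM>d/t$; the factor $\max\{b^{(m-j)d},1\}^{1/t}=\max\{b^{(m-j)d/t},1\}$ comes out front, yielding the claimed bound.

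The main obstacle is the bookkeeping of the volume comparison: I must control the ratio $|B(\eta,\ell(\eta))|/|B|$ uniformly over $\eta$ in the annulus $\Omega_i$ and over the two regimes $m\ge j$ and $m<j$, since $\max\{\ell(\xi),\ell(\eta)\}$ switches between $b^{-j}$ and $b^{-m}$. This is where \eqref{doubling}, \eqref{D2} and the relation $|A_\eta|\sim|\AA_\eta|$ are used, and where the asymmetric factor $\max\{b^{(m-j)d/t},1\}$ (rather than a symmetric one) is forced: when $m\ge j$ the balls $B(\eta,b^{-m})$ are smaller than $B(\xi,b^{-j})$ so packing them into $B$ costs $b^{(m-j)d}$, while for $m<j$ no such loss occurs. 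Once this volume estimate is in hand, the rest is the routine annular decomposition plus the summability condition $\MM>d/t$, exactly as in \cite[Lemma~A.2]{DKKP} and \cite[Appendix~A]{FJ2}.
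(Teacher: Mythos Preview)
Your sketch is correct and is exactly the standard argument: annular decomposition around $\xi$, the $\ell^1\hookrightarrow\ell^t$ embedding for $t\le 1$, the doubling estimate $|B|/|A_\eta|\le c\,2^{id}\max\{b^{(m-j)d},1\}$ to pass to an $L^t$-average over the ball $B\ni x$, and summation of $\sum_i 2^{i(d/t-\MM)}$. The paper itself does not prove this lemma at all---it simply cites \cite[Lemma~7.1]{DKKP}---so your proposal is in fact \emph{more} than what the paper provides, and it coincides with the proof given in that reference.
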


This is Lemma 7.1 in \cite{DKKP}.

We shall also need the well known Hardy inequalities, given in the following

\begin{lemma}\label{lem:hardy}
Let $\gamma >0$, $0<q<\infty$, $b>1$, and $a_m \ge 0$ for $m\in\bZ$. Then
\begin{equation}\label{hardy-1}
\Big(\sum_{j\in\bZ}\Big(\sum_{m\ge j} b^{-(m-j)\gamma}a_m\Big)^q\Big)^{1/q}
\le c\Big(\sum_{m\in\bZ} a_m^q\Big)^{1/q}
\end{equation}
and
\begin{equation}\label{hardy-2}
\Big(\sum_{j\in\bZ}\Big(\sum_{m\le j} b^{-(j-m)\gamma}a_m\Big)^q\Big)^{1/q}
\le c\Big(\sum_{m\in \bZ} a_m^q\Big)^{1/q},
\end{equation}
where the constant $c>0$ depends on $\gamma, q, b$.
\end{lemma}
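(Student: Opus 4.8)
The plan is to prove both inequalities by the classical device of distributing the geometric weight over the summation index and then interchanging the order of summation. Set $\theta:=b^{-\gamma}\in(0,1)$, so that the left-hand side of $(\ref{hardy-1})$ is $\big(\sum_{j\in\bZ}\big(\sum_{m\ge j}\theta^{m-j}a_m\big)^q\big)^{1/q}$ and the left-hand side of $(\ref{hardy-2})$ is $\big(\sum_{j\in\bZ}\big(\sum_{m\le j}\theta^{j-m}a_m\big)^q\big)^{1/q}$. Reflecting the indices via $j\mapsto -j$, $m\mapsto -m$ turns $(\ref{hardy-2})$ into $(\ref{hardy-1})$, so it suffices to establish $(\ref{hardy-1})$; I will split the argument according to whether $0<q\le 1$ or $q>1$.

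For $0<q\le 1$ I would use the elementary inequality $\big(\sum_k x_k\big)^q\le\sum_k x_k^q$, valid for nonnegative $x_k$, with $x_m=\theta^{m-j}a_m$. This gives $\big(\sum_{m\ge j}\theta^{m-j}a_m\big)^q\le\sum_{m\ge j}\theta^{(m-j)q}a_m^q$; summing over $j\in\bZ$ and swapping the two sums (legitimate by nonnegativity) yields $\sum_{j\in\bZ}\sum_{m\ge j}\theta^{(m-j)q}a_m^q=\sum_{m\in\bZ}a_m^q\sum_{k\ge 0}\theta^{kq}=(1-\theta^q)^{-1}\sum_{m\in\bZ}a_m^q$, which is $(\ref{hardy-1})$ with $c=(1-\theta^q)^{-1/q}$.

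For $q>1$ I would instead write $\theta^{m-j}=\theta^{(m-j)/q'}\,\theta^{(m-j)/q}$, where $q'$ is the conjugate exponent of $q$, and apply H\"older's inequality in the inner sum: $\sum_{m\ge j}\theta^{m-j}a_m\le\big(\sum_{m\ge j}\theta^{m-j}\big)^{1/q'}\big(\sum_{m\ge j}\theta^{m-j}a_m^q\big)^{1/q}\le(1-\theta)^{-1/q'}\big(\sum_{m\ge j}\theta^{m-j}a_m^q\big)^{1/q}$. Raising to the power $q$, summing over $j\in\bZ$, and again interchanging the order of summation gives $\sum_{j\in\bZ}\big(\sum_{m\ge j}\theta^{m-j}a_m\big)^q\le(1-\theta)^{-q/q'}\sum_{m\in\bZ}a_m^q\sum_{k\ge 0}\theta^{k}=(1-\theta)^{-q}\sum_{m\in\bZ}a_m^q$, since $q/q'+1=q$; this is $(\ref{hardy-1})$ with $c=(1-\theta)^{-1}$. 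Finally $(\ref{hardy-2})$ follows from the index reflection noted above, with the same constants.

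There is no substantial obstacle here. The only points requiring care are that for $q>1$ one must use H\"older rather than the triangle inequality, so that the right-hand side remains an $\ell^q$-sum, and that each interchange of summation is justified by Tonelli's theorem since all terms are nonnegative. In every case the resulting constant $c$ depends only on $\gamma$, $q$, and $b$ through $\theta=b^{-\gamma}$, as asserted.
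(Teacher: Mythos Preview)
Your proof is correct and complete; the paper itself omits the proof entirely, stating only that ``the proof of the Hardy inequalities is standard and simple; we omit it.'' Your argument---splitting into the cases $0<q\le 1$ (using the $q$-triangle inequality) and $q>1$ (using H\"older with the weight split as $\theta^{(m-j)/q'}\theta^{(m-j)/q}$), interchanging sums by Tonelli, and reducing \eqref{hardy-2} to \eqref{hardy-1} by index reflection---is exactly the standard proof the authors presumably had in mind.
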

The proof of the Hardy inequalities is standard and simple; we omit it.

\medskip

%------------------------------------------------------------------------------

Assume that the hypotheses of Theorem \ref{thm:AlmDiag} are valid for $\ff^s_{pq}$,
that is,
$A$ is an operator with matrix $(a_{\xi\eta})_{\xi,\eta\in\cX}$ such that
for some $\delta>0$
\begin{equation}\label{operator-A}
\|A\|_\delta :=\sup_{\xi, \eta\in\cX}\frac{|a_{\xi\eta}|}{\omega_{\xi\eta}(\delta)}\le c<\infty,
\end{equation}
where $\omega_{\xi\eta}(\delta)$ is defined in (\ref{def-omega-d-1}).
Also, let $h=\{h_\xi\}_{\xi\in\cX} \in\ff^s_{pq}$.
We next prove the estimate
\begin{equation}\label{F-AlmDiag}
\|\Aa h\|_{ \ff^s_{pq}}\le c\|\Aa\|_\delta \|h\|_{\ff^s_{pq}}.
\end{equation}
We only consider the case: $q<\infty$; the case when $q=\infty$ is easier and we omit it.

We have
$(Ah)_\xi= \sum_{\eta\in\cX} a_{\xi\eta}h_\eta$.
(By the proof below it follows that the series converges absolutely.)
Using this in the definition of $\|\cdot\|_{\ff^s_{pq}}$ in (\ref{def-f-space}), we have
\begin{align*}
\|\Aa h\|_{\ff^s_{pq}}
&:=\Big\|\Big(\sum_{\xi\in\cX}
\bigl[\ell(\xi)^{-s}|(\Aa h)_\xi|\tONE_{A_{\xi}}(\cdot)\bigr]^q\Big)^{1/q}\Big\|_{L^p}\\
&\le \Big\|\Big(\sum_{\xi\in\cX}\bigl[\ell(\xi)^{-s}\sum_{\eta\in\cX} |a_{\xi\eta}||h_{\eta}|
\tONE_{A_{\xi}}(\cdot)\bigr]^q\Big)^{1/q}\Big\|_{L^p}
\le c(\Sigma_1+\Sigma_2), \nonumber
\end{align*}
where
\begin{align*}
&\Sigma_1:=\Big\|\Big(\sum_{\xi\in\cX}\bigl[\ell(\xi)^{-s}
\sum_{\ell(\eta)\le \ell(\xi)}
|a_{\xi\eta}||h_{\eta}| \tONE_{A_{\xi}}(\cdot)\bigr]^q\Big)^{1/q}\Big\|_{L^p}
\quad
\mbox{and}\\
\quad
&\Sigma_2:=\Big\|\Big(\sum_{\xi\in\cX}\bigl[\ell(\xi)^{-s}
\sum_{\ell(\eta)>\ell(\xi)}
|a_{\xi\eta}||h_{\eta}|\tONE_{A_{\xi}}(\cdot)\bigr]^q\Big)^{1/q}\Big\|_{L^p}.
\end{align*}

%%%%%%%%%%%%%%%%% Estimate Sigma_1

To estimate $\Sigma_1$ we use (\ref{operator-A}).
By (\ref{def-omega-d-1}) we have whenever $\ell(\eta)\le \ell(\xi)$
$$
|a_{\xi\eta}|
\le c \|\Aa\|_\delta\Big(\frac{\ell(\eta)}{\ell(\xi)}\Big)^{\cJ+\delta-s}
\Big(\frac{|A_\xi|}{|A_\eta|}\Big)^{1/2}
\Big(1+\frac{\rho(\xi,\eta)}{\ell(\xi)}\Big)^{-\cJ-\delta}.
$$
Set $\LL_{\xi}(x):=\ell(\xi)^{-s}|A_\xi|^{-1/2}\ONE_{A_{\xi}}(x)$
and choose $t$ so that $0<t<\min\{1,p,q\}$ and $\cJ+ \delta -d/t>0$.
Then we have
\begin{align*}
&\|\Aa\|_\delta^{-1}\Sigma_1
%&\frac{\Sigma_1}{\|\Aa\|_\delta}
\le c  \Big\|\Big(\sum_{\xi\in \cX}\Big[\sum_{\ell(\eta)\le \ell(\xi)}
\Big( \frac{\ell(\eta)}{\ell(\xi)} \Big)^{\cJ+\delta -s}\Big(\frac{|A_\xi|}{|A_\eta|}\Big)^{1/2}\\
&\hspace{2.5in}\times
\Big(1+\frac{\rho(\xi,\eta)}{\ell(\xi) }\Big)^{-\cJ-\delta}
|h_{\eta}| \LL_{\xi}(\cdot)\Big]^q\Big)^{1/q}\Big\|_{L^p}\\
&= c  \Big\|\Big(\sum_{j\in\bZ}\sum_{\xi\in\cX_j}
\Big[\sum_{m\ge j}  b^{(j-m)
(\cJ+\delta )}\sum_{\eta\in\cX_m}\Big( \frac{\ell(\eta)}{\ell(\xi)} \Big)^{-s}\Big(\frac{|A_\xi|}{|A_\eta|}\Big)^{1/2}\\
&\hspace{2.5in}\times
|h_{\eta}|
 \bigr(1+b^j \rho(\xi,\eta) \bigr)^{-\cJ-\delta}\LL_{\xi}(\cdot)\Big]^{q}\Big)^{1/q}\Big\|_{L^p}.
\end{align*}
We now apply Lemma~\ref{lem:mrax}, the Hardy inequality (\ref{hardy-1}), and
the maximal inequality (\ref{max-ineq}) to obtain
\begin{align*}
\|\Aa\|_\delta^{-1}\Sigma_1
%\frac{\Sigma_1}{\|\Aa\|_\delta}
&\le c  \Big\|\Big(\sum_{j\in\bZ}\sum_{\xi\in\cX_j}
\Big[\sum_{m\ge j}  b^{(j-m)
(\cJ+\delta -d/t)}\\
& \hspace{0.7in} \times M_t\Big(\sum_{\eta\in\cX_m}
\Big( \frac{\ell(\eta)}{\ell(\xi)} \Big)^{-s}\Big(\frac{|A_\xi|}{|A_\eta|}\Big)^{1/2}|h_{\eta}|\ONE_{A_\eta}\Big)(\cdot)
\LL_{\xi}(\cdot)\Big]^q\Big)^{1/q}\Big\|_{L^p}\\
&\le c \Big\|\biggl(\sum_{j\in\bZ}
 \Big[\sum_{m\ge j} b^{(j-m)(\cJ+\delta -d/t)}
 M_t\Big(\sum_{\eta\in\cX_m}|h_{\eta}|\LL_{\eta}\Big)
 \Big]^q\Big)^{1/q}\Big\|_{L^p}\\
&\le c \Big\|\Big(\sum_{j\in\bZ}
\Big[  M_t\Big(\sum_{\xi\in\cX_j}|h_{\xi}|\LL_{\xi}\Big)\Big]^q\Big)^{1/q}\Big\|_{L^p}\\
&\le c \Big\|\Big(\sum_{j\in\bZ}
\Big[\sum_{\xi\in\cX_j}|h_{\xi}|\LL_{\xi}\Big]^q\Big)^{1/q}\Big\|_{L^p}
\le c\|h\|_{\ff^s_{pq}}.
\end{align*}

%%%%%%%%%%%%%%%%% Estimate Sigma_2

To estimate $\Sigma_2$ we use again Lemma~\ref{lem:mrax}, the Hardy inequality (\ref{hardy-2}) instead of (\ref{hardy-2}), and
the maximal inequality (\ref{max-ineq}).
The estimates for $\Sigma_1$ and $\Sigma_2$ imply (\ref{F-AlmDiag}).

\medskip

%-------------------------------- Besov spaces

We next proceed with the proof of the estimate
\begin{equation}\label{AlmDiag-tb}
\|\Aa h\|_{\tbb^s_{pq}}\le c\|\Aa\|_\delta \|h\|_{\tbb^s_{pq}}.
\end{equation}
Here we assume that $A$ is an operator with matrix $\{a_{\xi\eta}\}_{\xi\eta\in\cX}$
obeying (\ref{operator-A}) for some constants $\delta, c>0$,
where $\omega_{\xi\eta}(\delta)$ is defined in (\ref{def-omega-d-2}).
We also assume that $h=\{h_\xi\}_{\xi\in\cX} \in\tbb^s_{pq}$.
We consider the case when $p, q<\infty$;
the cases when $p=\infty$ or $q=\infty$ are easier to handle and we omit the details.

We know that
$|B(\xi,b^{-j})| \sim |A_\xi| \sim |B_\xi|$ for $\xi\in\cX_j$
and by Definition~\ref{def:b-spaces} it follows that
\begin{equation}\label{equiv-norms}
\|h\|_{\tbb^s_{pq}}
\sim \Big(\sum_{j\in\bZ}
\Big\|\sum_{\xi\in\cX_j}|A_\xi|^{-s/d}|h_\xi|\tONE_{A_{\xi}}(\cdot)\Big\|_{L^p}^q\Big)^{1/q},
\qquad \tONE_{A_{\xi}}:= |A_\xi|^{-1/2}\ONE_{A_{\xi}}.
\end{equation}
We have
$
(\Aa h)_{\xi}=\sum_{\eta\in\cX}a_{\xi\eta}h_{\eta}
$
and using (\ref{equiv-norms})
\begin{align*}\label{ssone-b}
\norm{\Aa h}_{\tbb^s_{pq}}
&\le c\Big(\sum_{j\in\bZ}
\Big\|\sum_{\xi\in\cX_j}|A_\xi|^{-s/d}|(\Aa h)_\xi|\tONE_{A_{\xi}}(\cdot)\Big\|_{L^p}^q\Big)^{1/q}\\
&\le c\Big(\sum_{j\in\bZ}
\Big\|\sum_{\xi\in\cX_j}|A_\xi|^{-s/d}\sum_{\eta\in\cX} |a_{\xi\eta}||h_{\eta}|
\tONE_{A_{\xi}}(\cdot)\Big\|_{L^p}^q\Big)^{1/q}
\le c(\Sigma_1+\Sigma_2),
\end{align*}
where
\begin{align*}
&\Sigma_1:= \Big(\sum_{j\in\bZ}
\Big\|\sum_{\xi\in\cX_j}|A_\xi|^{-s/d}\sum_{\ell(\eta)\le \ell(\xi)} |a_{\xi\eta}||h_{\eta}|
\tONE_{A_{\xi}}(\cdot)\Big\|_{L^p}^q\Big)^{1/q}
\quad
\mbox{and}\\
\quad
&\Sigma_2:= \Big(\sum_{j\in\bZ}
\Big\|\sum_{\xi\in\cX_j}|A_\xi|^{-s/d}\sum_{\ell(\eta) > \ell(\xi)} |a_{\xi\eta}||h_{\eta}|
\tONE_{A_{\xi}}(\cdot)\Big\|_{L^p}^q\Big)^{1/q}.
\end{align*}

%%%%%%%%%%%%%%%%% Estimate Sigma_1

We shall only estimate $\Sigma_1$.
%Suppose $\xi\in\cX_j$, $\eta\in\cX_m$, and $m\ge j$; hence $\ell(\eta)\le \ell(\xi)$.
Using that $\|\Aa\|_\delta<\infty$, see ({\ref{def-omega-d-2})-(\ref{AlmDiag-Norm}),
it readily follows that whenever $\ell(\eta)\le \ell(\xi)$
$$
|a_{\xi\eta}|
\le c\|\Aa\|_\delta\biggl(\frac{\ell(\eta)}{\ell(\xi)}\biggr)^{\cJ+\delta}
\biggl(\frac{|A_\xi|}{|A_\eta|} \biggr)^{s/d+1/2}
\bigg( 1+\frac{\rho(\xi,\eta)}{\ell(\xi)}\biggr)^{-\cJ-\delta}.
$$
Denote briefly $F_{\xi}:=|A_\xi|^{-s/d-1/2}\ONE_{A_{\xi}}(\cdot)$
and choose $t$ so that
$d/t=\cJ+\delta/2$.
Then $0<t<\min\{1,p\}$ and $\cJ+ \delta -d/t>0$.
We have
\begin{align*}
& \|A\|_\delta^{-1}\Big\|\sum_{\xi\in\cX_j}|A_\xi|^{-s/d}\sum_{\ell(\eta)\le \ell(\xi)} |a_{\xi\eta}||h_{\eta}|
\tONE_{A_{\xi}}(\cdot)\Big\|_{L^p}\\
&\le c\Big\|\sum_{\xi\in \cX_j}\sum_{\ell(\eta)\le \ell(\xi)}
\Big( \frac{\ell(\eta)}{\ell(\xi)} \Big)^{\cJ+\delta }\biggl(\frac{|A_\xi|}{|A_\eta|} \biggr)^{s/d+1/2}%\\
\Big(1+\frac{\rho(\xi,\eta)}{\ell(\xi) }\Big)^{-\cJ-\delta}
|h_{\eta}| F_{\xi}(\cdot)\Big\|_{L^p}\\
&= c\Big\|\sum_{\xi\in \cX_j}\sum_{m\ge j}
b^{(j-m)(\cJ+\delta)}\sum_{\eta\in\cX_m}\biggl(\frac{|A_\xi|}{|A_\eta|} \biggr)^{s/d+1/2}
|h_{\eta}|\bigr(1+b^j \rho(\xi,\eta) \bigr)^{-\cJ-\delta}F_{\xi}(\cdot)
\Big\|_{L^p}.
\end{align*}
We now apply Lemma~\ref{lem:mrax}
and the maximal inequality (\ref{max-ineq}) to obtain
\begin{align*}
& \|A\|_\delta^{-1}\Big\|\sum_{\xi\in\cX_j}|A_\xi|^{-s/d}\sum_{\ell(\eta)\le \ell(\xi)} |a_{\xi\eta}||h_{\eta}|
\tONE_{A_{\xi}}(\cdot)\Big\|_{L^p}\\
&\le c\Big\|\sum_{\xi\in \cX_j}\sum_{m\ge j}
b^{(j-m)(\cJ+\delta-d/t)}
M_t\biggl(\sum_{\eta\in\cX_m}
\biggl(\frac{|A_\xi|}{|A_\eta|} \biggr)^{s/d+1/2}|h_{\eta}|\ONE_{A_\eta}\biggr)(\cdot)
F_{\xi}(\cdot)
\Big\|_{L^p}\\
&= c\Big\|\sum_{m\ge j}
b^{(j-m)\delta/2}
M_t\biggl(\sum_{\eta\in\cX_m}|h_{\eta}|F_\eta\biggr)(\cdot)
\Big\|_{L^p}\\
&\le c\Big\|\sum_{m\ge j}
b^{(j-m)\delta/2}
\sum_{\eta\in\cX_m}|h_{\eta}|F_\eta(\cdot)
\Big\|_{L^p}.
\end{align*}
Consider the case when $p\ge 1$. Then applying the Hardy inequality (\ref{hardy-1}) we get
\begin{align*}
\|A\|_\delta^{-1}\Sigma_1
&=\Big(\sum_{j\in\bZ}\Big\|\sum_{\xi\in\cX_j}|A_\xi|^{-s/d}\sum_{\ell(\eta)\le \ell(\xi)} |a_{\xi\eta}||h_{\eta}|
\tONE_{A_{\xi}}(\cdot)\Big\|_{L^p}^q\Big)^{1/q}\\
&\le c\Big(\sum_{j\in\bZ}\Big\|\sum_{m\ge j}
b^{(j-m)\delta/2}
\sum_{\eta\in\cX_m}|h_{\eta}|F_\eta(\cdot)
\Big\|_{L^p}^q\Big)^{1/q}\\
&\le c\Big(\sum_{j\in\bZ}
\Big[\sum_{m\ge j} b^{(j-m)\delta/2}
\Big\|\sum_{\eta\in\cX_m}|h_{\eta}|F_\eta(\cdot)
\Big\|_{L^p}\Big]^q\Big)^{1/q}\\
&\le c\Big(\sum_{m\in\bZ}
\Big\|\sum_{\eta\in\cX_m}|h_{\eta}|F_\eta(\cdot)
\Big\|_{L^p}^q\Big)^{1/q}
\le c\|h\|_{\tbb^s_{pq}}.
\end{align*}
Now, let $0<p<1$. Then applying the $p$-triangle inequality and the Hardy inequality (\ref{hardy-1}) we get
\begin{align*}
\|A\|_\delta^{-1}\Sigma_1
&=\Big(\sum_{j\in\bZ}\Big\|\sum_{\xi\in\cX_j}|A_\xi|^{-s/d}\sum_{\ell(\eta)\le \ell(\xi)} |a_{\xi\eta}||h_{\eta}|
\tONE_{A_{\xi}}(\cdot)\Big\|_{L^p}^q\Big)^{1/q}\\
&\le c\Big(\sum_{j\in\bZ}\Big\|\sum_{m\ge j}
b^{(j-m)\delta/2}
\sum_{\eta\in\cX_m}|h_{\eta}|F_\eta(\cdot)
\Big\|_{L^p}^q\Big)^{1/q}\\
&\le c\Big(\sum_{j\in\bZ}
\Big[\sum_{m\ge j} b^{(j-m)p\delta/2}
\Big\|\sum_{\eta\in\cX_m}|h_{\eta}|F_\eta(\cdot)
\Big\|_{L^p}^p\Big]^{q/p}\Big)^{1/q}\\
&\le c\Big(\sum_{m\in\bZ}
\Big\|\sum_{\eta\in\cX_m}|h_{\eta}|F_\eta(\cdot)
\Big\|_{L^p}^q\Big)^{1/q}
\le c\|h\|_{\tbb^s_{pq}}.
\end{align*}

We similarly estimate $\Sigma_2$ and get the same bound.
The estimates for $\Sigma_1$ and $\Sigma_2$ yield (\ref{AlmDiag-tb}).
\qed

\end{document}